\newcolumntype{H}{>{\setbox0=\hbox\bgroup}c<{\egroup}@{}}
\newcommand{\R}{{\mathbb R}}
\newcommand{\E}{{\mathbb E}}
\newcommand{\V}{{\mathbb V}}
\newcommand{\VC}{{\mathbb{V}}}
\renewcommand{\P}{{\mathbb P}}
\newcommand{\N}{{\mathbb N}}
\newcommand{\eps}{\varepsilon}
\newcommand{\M}{\mathbb M}
\newcommand{\lmax}{\lambda_{\max}}
\newcommand{\lmin}{\lambda_{\min}}
\DeclareMathOperator{\trace}{trace}
\DeclareMathOperator{\diag}{diag}
\DeclareMathOperator{\rank}{rank}
\DeclareMathOperator{\s}{span}
\DeclareMathOperator{\corr}{corr}
\newtheorem{theorem}{Theorem}[section]
\newtheorem{proposition}[theorem]{Proposition}
\newtheorem{lemma}[theorem]{Lemma}
\newtheorem{corollary}[theorem]{Corollary}
\newtheorem{remark}[theorem]{Remark}
\newtheorem*{remark*}{Remark}
\newtheorem{condition}{Condition}
\newtheorem{cond}{Condition}
\newtheorem*{condition*}{Condition}
\newtheorem*{definition*}{Definition}
\numberwithin{equation}{section}
\newcounter{rcnt}[section]
\renewcommand{\thercnt}{(\roman{rcnt})}
\newcommand{\rem}[1]{}
\newcounter{desccount}
\newcommand{\descref}[1]{\hyperref[#1]{#1}}
\begin{document}

\begin{frontmatter}

\title{Uniformly valid confidence intervals post-model-selection}

\runauthor{Bachoc, Preinerstorfer, Steinberger}

\runtitle{}

\begin{aug}
\author{\fnms{Fran\c{c}ois} \snm{Bachoc}\ead[label=a1]{francois.bachoc@math.univ-toulouse.fr}},
\author{\fnms{David} \snm{Preinerstorfer}\thanksref{t2}\ead[label=a2]{david.preinerstorfer@ulb.ac.be}}
\\\and
\author{\fnms{Lukas} \snm{Steinberger}\thanksref{t3}\ead[label=a3]{lukas.steinberger@stochastik.uni-freiburg.de}}

\thankstext{t2}{David Preinerstorfer was supported by the Austrian Science Fund (FWF): P27398 and by the Danish National Research Foundation (grant DNRF 78, CREATES).}
\thankstext{t3}{Lukas Steinberger was supported by the Austrian Science Fund (FWF): P28233 and by the German Research Foundation (DFG): RO 3766/401.}

\affiliation{Universit\'e Paul Sabatier, Universit\'e libre de Bruxelles \\and University of Freiburg}

\address{
	Fran\c{c}ois Bachoc\\
	Institut de Math\'ematiques de Toulouse\\
	Universit\'e Paul Sabatier\\
	118 route de Narbonne\\
	31062 Toulouse Cedex 9, France\\
	\printead{a1}
	}
\address{	
	David Preinerstorfer\\
	ECARES \\
	Universit\'e libre de Bruxelles \\
	50 Ave. F.D. Roosevelt \\
	CP 114/04\\
	B1050 - Bruxelles\\
	\printead{a2}
	}
\address{	
	Lukas Steinberger\\
	Department of Mathematical Stochastics \\
University of Freiburg \\
Eckerstra{\ss}e 1 \\
79104 Freiburg im Breisgau, Germany \\
	\printead{a3}
	}
	
\end{aug}

\bigskip

\begin{abstract}
We suggest general methods to construct asymptotically uniformly valid confidence intervals post-model-selection. The constructions are based on principles recently proposed by \cite{Berk13}. In particular the candidate models used can be misspecified, the target of inference is model-specific, and coverage is guaranteed for \textit{any} data-driven model selection procedure. After developing a general theory we apply our methods to practically important situations where the candidate set of models, from which a working model is selected, consists of fixed design homoskedastic or heteroskedastic linear models, or of binary regression models with general link functions.
In an extensive simulation study, we find that the proposed confidence intervals perform remarkably well, even when compared to existing methods that are tailored only for specific model selection procedures.
\end{abstract}

\begin{keyword}[class=MSC]
\kwd[Primary ]{62F12}
\kwd{62F25}
\kwd[; secondary ]{62F35}
\kwd{62J02}
\end{keyword}

\begin{keyword}
\kwd{inference post-model-selection}
\kwd{uniform asymptotic inference}
\kwd{regression}
\end{keyword}

\end{frontmatter}

\section{Introduction}

Fitting a statistical model to data is often preceded by a model selection step, and practically always has to face the possibility that the candidate set of models from which a model is selected does not contain the true distribution. The construction of valid statistical procedures in such situations is quite challenging, even if the candidate set of models does contain the true distribution (cf. \cite{leeb05model, leeb2006, leeb08model}, \cite{kabaila06large} and \cite{Poe09a}, and the references given in that literature), and has recently attained a considerable amount of attention. In a Gaussian homoskedastic location model and fitting possibly misspecified linear candidate models to data, \cite{Berk13} have shown how one can obtain valid confidence intervals post-model-selection for (non-standard) model-dependent targets of inference in finite samples (cf. also the discussion in \cite{leeb13various}, and related results obtained for prediction post-model-selection in \cite{bachoc14valid}). In this setup, their approach leads to valid confidence intervals post-model-selection \emph{regardless} of the specific model selection procedure applied. This aspect is of fundamental importance, because many model selection procedures used in practice are almost impossible to formalize: researchers typically use combinations of visual inspection and numerical algorithms, and sometimes they simply select models that let them reject many hypotheses, i.e., they are hunting for significance. 
%, though morally questionable, 
These often unreported and informal practices of model selection prior to conducting the actual analysis may also play a key role in the current crisis of reproducibility. Thus, to establish and popularize statistical methods that are in some sense robust to `bad practice' is highly desirable.

The methods discussed in \cite{Berk13} are based on the assumption that the true distribution is Gaussian and homoskedastic, and the authors consider only situations where linear models are fit to data. It is of substantial interest to generalize this approach, and to obtain generic methods for constructing confidence intervals post-model-selection that are widely applicable beyond the Gaussian homoskedastic model considered in \cite{Berk13}. We develop a general asymptotic theory for the construction of uniformly valid confidence sets post-model-selection. These results are applicable whenever the estimation error can be expanded as the sum of independent centered random vectors and a remainder term that is negligible relative to the variance of the leading term. Such a representation typically follows from standard first order linearization arguments, and can therefore be obtained in many situations. 

Our confidence intervals can be based on either consistent estimators of the variance of the previously mentioned sum, or, more importantly, if such estimators are not available (which is usually the case when all working models are misspecified), can be based on variance estimators that consistently \emph{overestimate} their targets. We also present results that allow one to obtain such estimators in general and demonstrate their construction in specific applications, where they often coincide with well known sandwich-type estimators. This overcomes another limitation present in \cite{Berk13}, namely the assumption that there exists an unbiased (and chi-square distributed) or uniformly consistent estimator of the variance of the observations (cf. the discussion in Remark~2.1 of \cite{leeb13various} and in Appendix~A of \cite{bachoc14valid}). The usage of variance estimators that overestimate their targets, while leading to more conservative inference, renders the approach applicable to the fully misspecified setting. Moreover, the suggested conservative estimators usually have the property that their bias vanishes if the selected model is correct (cf. Remark~\ref{rem:adaptive} and Subsection~\ref{sec:homlinmods:varest}).

Another important aspect of the results obtained is that they are valid uniformly over wide classes of potential underlying distributions, which is particularly important as this guarantees that the results provide a better description of finite sample properties than `pointwise' asymptotic results (cf. \cite{leeb03finite}, \cite{leeb05model} and \cite{Tib15a} for a discussion of related issues in a model selection context).

Moreover, we apply our general theory to three important modeling situations: First, we consider the case where linear homoskedastic models are fitted to non-Gaussian homoskedastic data. This provides an extension of the results of \cite{Berk13} to the non-Gaussian case, without requiring a consistent variance estimator. Next, we study the problem of fitting heteroskedastic linear models to non-Gaussian heteroskedastic data. This scenario necessitates a more careful choice of variance estimators and leads to an extension of the influential results of \cite{eicker67} to the misspecified post-model-selection context. Our third application then considers the problem of fitting binary regression models to binary data. In this case, also the link function may be chosen in a data driven way. On a technical level, the third example is quite different from the previous ones, because here non-trivial existence and uniqueness questions concerning the targets of inference and the (quasi-)maximum likelihood estimators have to be addressed. 

Our confidence intervals obtained in these specific situations are particularly convenient for practitioners, because they are structurally very similar to the confidence sets one would use in practice following the naive (and invalid \citep[see, e.g.,][]{leeb13various, bachoc14valid}) approach that ignores that the model has been selected using the same data set. The main difference of our construction to the naive (and invalid) approach is the choice of a critical value: Quantiles from a standard normal or $t$-distribution are replaced by so-called POSI-constants (cf. \cite{Berk13} and Section~2.5 below). Thus, the procedures are conceptually simple and easy to implement. Moreover, we provide mild and easily verifiable regularity conditions on observable quantities (e.g., the design or the link functions) under minimal restrictions on the unknown data generating process.

Finally, in a series of numerical examples, we illustrate that the proposed confidence intervals are valid also in small samples while their lengths appear to be practically reasonable when compared to naive (and invalid) procedures. Furthermore, we compare our methods to those of \cite{Tib15a} and \cite{taylor17post}, and find that our intervals are often shorter than their competitors, even when we study the exact same scenarios for which those competing methods were tailored for and even though our confidence intervals offer much stronger theoretical guarantees.

The structure of the present article is as follows: We first develop a general asymptotic theory for the construction of uniformly valid confidence sets post-model-selection in Section \ref{sec:genposi}. In Section \ref{sec:applications}, we apply our theoretical results to the three previously mentioned modeling scenarios. Of course, the selection of examples in Section~\ref{sec:applications} is by no means exhaustive. But besides covering three very important modeling frameworks, Section~\ref{sec:applications} serves as an illustration of how the general theory developed in Section~\ref{sec:genposi} can be applied. An outline of the numerical results is presented in Section~\ref{sec:sim}. In Section~\ref{sec:concl} we conclude and discuss possible extensions of the results obtained in this paper that are currently under investigation. Details of the simulations as well as all the proofs are collected in Sections~\ref{app:sim}, \ref{app:aux}, \ref{app:posi} and \ref{app:appl} of the appendix.

\subsection{Related work}

The present article is devised in the spirit of \cite{Berk13}, in the sense that we aim at inference post-model-selection that is valid irrespective of the employed model selection procedure. Very recently, \cite{Rinaldo16} have investigated a classical sample spitting procedure that is also independent of the underlying selection method. However, they consider only the i.i.d. case, thereby excluding, for instance, fixed design regression. Several other authors have proposed inference procedures post-model-selection that are tailored towards specific selection methods and for specific modeling situations. In the context of fitting linear regression models to Gaussian data, methods that provide valid confidence sets post-model-selection, and that are constructed for specific model selection procedures (e.g., forward stepwise, least-angle-regression or the lasso) and for targets of inference similar to those considered in the present article, have been recently obtained by \cite{tibshirani2014exact}, \cite{LeeTay14}, \cite{Fit15a} and \cite{lee15exact}. \cite{Tib15a} extended the approach of \cite{tibshirani2014exact} to non-Gaussian data by obtaining uniform asymptotic results. Furthermore, valid inference post-model-selection on conventional regression parameters under sparsity conditions was considered, among others, by \citet{Bel11a, Bel14a, van14a} and \citet{Zha14a}.

\newpage

\section{Inference post-model-selection: A general asymptotic theory}
\label{sec:genposi}

\subsection{Framework, problem description, and approach}\label{sec:fram}

Consider a situation where we observe a data set $y\in\R^{n\times\ell}$ that is a realization of an unknown probability distribution $\P_n$ on the Borel sets of the sample space $\R^{n \times \ell}$. We denote the $i$-th row of the data vector (matrix) $y$ by $y_i\in\R^{1\times \ell}$, so that $y=(y_1',\dots, y_n')'$, and write $\P_{i,n}$ for the marginal distribution corresponding to that row. Throughout, we assume that the data generating distribution is of product form, that is $\P_n = \bigotimes_{i=1}^n \P_{i,n}$. Suppose further that one wants to conduct inference on $\mathbb{P}_n$, and intends to use as a working model an element of $\mathsf{M}_n$, a set consisting of $d$ nonempty sets of distributions $\mathbb{M}_{1,n}, \hdots,\mathbb{M}_{d,n}$ on the Borel sets of $\R^{n \times \ell}$. Throughout $d$ is fixed, i.e., does not depend on $n$. We emphasize that it is \textit{not} assumed that $\mathbb{P}_n$ is contained in one of the sets $\mathbb{M}_{j,n}$ for $j = 1, \hdots, d$. That is, the candidate set $\mathsf{M}_n$ might be \textit{misspecified}. 

For each model $\mathbb{M} \in \mathsf{M}_n$ one has to define a corresponding target of inference $\theta^*_{\mathbb{M},n} = \theta^*_{\mathbb{M},n}(\mathbb{P}_n)$, say, which we take as given throughout the present section. Furthermore we assume that for every $\mathbb{M}_{j,n} \in \mathsf{M}_n$ the target is an element of a Euclidean space of finite dimension $m(\mathbb{M}_{j,n})$ which does not depend on $n$. As an example in the case $\ell=1$, consider the situation where $\P_n$ has mean vector $\mu_n\in\R^n$ and $\mathbb{M} \in \mathsf{M}_n$ is given by the collection of all $n$-dimensional normal distributions with covariance matrix proportional to identity and mean $X_\M\beta$, for different values of $\beta\in\R^{m(\M)}$, and where $X_\M$ is an $n\times m(\M)$ matrix obtained by selecting certain columns from a given fixed design matrix $X\in\R^{n\times p}$. In this setting, \cite{Berk13} consider the target $\theta_{\M,n}^*(\P_n) = (X_\M'X_\M)^{-1}X_\M'\mu_n$ (cf. also Section \ref{sec:applications} for more on this and further examples). In general, $\theta^*_{\mathbb{M},n}$ will typically be the value of the parameter that corresponds to the projection of $\mathbb{P}_n$ onto $\mathbb{M}$ w.r.t. some measure of closeness, e.g., the Kullback-Leibler divergence, or the Hellinger-distance. Note that in general such a projection might not uniquely exist, or might not exist at all, and that in each application additional conditions -- on $\mathbb{P}_n$ and/or the candidate set $\mathsf{M}_n$ of models -- need to be imposed to obtain well defined targets. Note also that the target is model-specific, i.e., it depends on $\mathbb{M}$. Lastly we emphasize that defining and working with (pseudo) targets of inference in potentially misspecified models has a long-standing tradition in statistics, dating back at least to \cite{Huber67}, and we confer the reader to this strand of literature for further discussion.

Given data $y$ the statistician now has two problems to solve: (i) model selection, i.e., the statistician needs to choose an ``appropriate'' working model from the candidate set $\mathsf{M}_n$; and (ii) statistical inference post-model-selection, i.e., given the selected model, the statistician typically wants to conduct inference on the targets in this model. Note that such targets are  random, as they depend on the data via the model selection procedure used. We do not contribute anything new to how models can be selected from data. We take a model selection procedure as given, and denote the model selection procedure used by $\hat{\mathbb{M}}_n: \R^{n \times \ell} \to \mathsf{M}_n$ (measurable). That is, the quantity $\hat{\mathbb{M}}_n(y)$ denotes the selected model upon observing $y$. We also assume that for every model $\mathbb{M} \in \mathsf{M}_n$ an estimator $\hat{\theta}_{\mathbb{M},n}: \R^{n \times \ell} \to \R^{m(\mathbb{M})}$ (measurable) of the corresponding target $\theta^*_{\mathbb{M},n}$ is available. Summarizing, the statistician selects the model using $\hat{\mathbb{M}}_n$, and estimates $\theta^*_{\hat{\mathbb{M}}_n,n}$ using $\hat{\theta}_{\hat{\mathbb{M}}_n, n}$. In this article we address the question how valid confidence intervals can be constructed for the coordinates of the target $\theta^*_{\hat{\mathbb{M}}_n,n}$. Our approach is as follows:
\begin{enumerate}
\item Given $\alpha \in (0, 1)$, we construct confidence intervals $\mathrm{\mathrm{CI}}^{(j)}_{1-\alpha, \mathbb{M}}$ for the $j$-th component $\theta_{\mathbb{M}, n}^{*(j)}$ of $\theta_{\mathbb{M}, n}^{*}$, for every $j = 1, \hdots, m(\mathbb{M})$ and every $\mathbb{M} \in \mathsf{M}_n$ so that
\begin{equation*}
\liminf_{n \to \infty} \mathbb{P}_n\left( \theta^{*(j)}_{\mathbb{M}, n} \in \mathrm{CI}^{(j)}_{1-\alpha, \mathbb{M}} \text{ for all } j = 1, \hdots, m(\mathbb{M}) \text{ and all } \mathbb{M} \in \mathsf{M}_n \right)
\end{equation*}
is not smaller than $1-\alpha$.
\item For a model selection procedure $\hat{\M}_n$, our suggested confidence intervals are then obtained via 
\begin{equation*}
\mathrm{\mathrm{CI}}^{(j)}_{1-\alpha, \hat{\mathbb{M}}_n} \text{ for } j = 1, \hdots, m(\hat{\mathbb{M}}_n).
\end{equation*}
From the coverage property in Part~1 we obtain
\begin{equation*}
\liminf_{n \to \infty} \mathbb{P}_n\left( \theta^{*(j)}_{\hat{\mathbb{M}}_n, n} \in \mathrm{CI}^{(j)}_{1-\alpha, \hat{\mathbb{M}}_n} \text{ for all } j = 1, \hdots, m(\hat{\mathbb{M}}_n) \right) \geq 1-\alpha.
\end{equation*}
\end{enumerate}

As already discussed in the introduction, the fact that our approach does not restrict the model selection procedure used is important. It is precisely this aspect that allows practitioners to obtain valid confidence intervals post-model-selection in situations where a wide variety of (formal or informal) mechanisms have been incorporated to select the model. 

%Given those ingredients, inference proceeds in two steps: Firstly, the model selection procedure is applied to select a model $\hat{\mathbb{M}}_n$. The target of inference, which is data-dependent as well, then is $\theta^*_{\hat{\mathbb{M}}_n, n}$, and is estimated by $\hat{\theta}_{\hat{\mathbb{M}}_n, n}$. 
%Often, in practice, it is naively ignored that the estimation step was preceded by a model selection step using the same data, which can then lead to invalid conclusions. 

\subsection{Discussion} \label{ssc:discfram}
The above framework is certainly somewhat abstract, but its generality is necessary to achieve the scope of the present paper, which is the development of results for the construction of confidence intervals post-model-selection that are widely applicable. In particular, apart from allowing for a misspecified candidate set of models, the framework allows the marginals $\P_{i,n}$ for $i = 1, \hdots, n$ to be non-identical. This property is not just a mere technical aspect, but is necessary if one wants to cover situations such as fixed-design regression models. 

Most importantly, we work with a sequence $\P_n$ of data generating mechanisms. Again, this is not a technical nuisance. Rather, this aspect ensures that the results obtained can be used to construct uniformly valid confidence intervals post-model-selection. For specific applications we refer to Section~\ref{sec:applications}, but the approach is conceptually simple, generally applicable and extends substantially beyond our examples. We shall give a brief outline of the underlying idea subsequently, also to convince the reader that working with sequences of data generating mechanisms is worth the effort. Suppose $\mathbb{P}_n$, the distribution that generated the data $y$, is known to be an element of a set $\mathbf{P}_n$. The set $\mathbf{P}_n$ describes the assumptions one is willing to impose on the unknown distribution in a particular modeling scenario, and will typically be large and potentially nonparametric. Suppose further that one wants to work with a candidate set of models $\mathsf{M}_n$ (possibly misspecified, i.e., $\mathbf{P}_n \not \subseteq \bigcup_{\M\in\mathsf{M}_n}\M$) and corresponding model specific targets $\theta_{\mathbb{M}, n}^*$ as above, and that the goal is to construct confidence sets post-model-selection. Under weak assumptions on $\mathbf{P}_n$, the general results developed in this paper allow one to construct confidence intervals so that 
\begin{equation*}
\liminf_{n \to \infty} \mathbb{P}_n\left( \theta^{*(j)}_{\hat{\mathbb{M}}_n, n} \in \mathrm{CI}^{(j)}_{1-\alpha, \hat{\mathbb{M}}_n} \text{ for all } j = 1, \hdots, m(\hat{\mathbb{M}}_n) \right) \geq 1-\alpha
\end{equation*}
holds for any (measurable) model selection procedure $\hat{\mathbb{M}}_n$, and for every sequence of distributions $\mathbb{P}_n$ that satisfies $\mathbb{P}_n \in \mathbf{P}_n$ for every $n \in \N$. Certainly, this then implies
\begin{equation*}
\liminf_{n \to \infty} \inf_{\mathbb{P}_n \in \mathbf{P}_n}  \mathbb{P}_n\left( \theta^{*(j)}_{\hat{\mathbb{M}}_n, n} \in \mathrm{CI}^{(j)}_{1-\alpha, \hat{\mathbb{M}}_n} \text{ for all } j = 1, \hdots, m(\hat{\mathbb{M}}_n) \right) \geq 1-\alpha,
\end{equation*}
i.e., asymptotic validity of the constructed confidence sets \textit{uniformly} over $\mathbf{P}_n$. That the development of results that hold uniformly over large classes of distributions is important, in particular so in the context of inference post-model-selection, is well understood (see \cite{leeb03finite} and \cite{leeb05model}). One recent article that studies uniform coverage properties post-model-selection is \cite{Tib15a}. Merits of uniform results in contrast to pointwise asymptotic results are discussed in their Section 1.1. \cite{Tib15a} consider a setup similar to the example we consider in Section~\ref{sec:homlinmods} and for specific model selectors, but compared to our results uniform validity is established only over substantially smaller sets of distributions, and they need to impose stronger conditions on the design matrices, which rule out some important cases our results allow for, e.g., polynomial trends. See also Section~\ref{sec:sim:Tib} for numerical results and comparisons. 
%
%Essentially, checking our main assumption amounts to verifying uniform versions of conditions that need to be verified in establishing asymptotic normality of the model-specific estimators in the \textit{correctly-specified} case . Since asymptotic normality results in correctly-specified models are available in many situations, the verification of our assumptions often reduces to turning the available non-uniform arguments into uniform statements. Albeit this can be technically challenging at times, one can often use similar arguments and exploit insights used in obtaining the already available non-uniform asymptotic results. Hence, our general theory is potentially instrumental for the development of uniformly valid confidence sets post-model-selection in a wide variety of modeling scenarios.

\subsection{Notation}

Before we proceed to our general theory and the corresponding basic assumption, we introduce some notation that is used throughout this article: A normal distribution with mean $\mu$ and (possibly singular) covariance matrix $\Sigma$ is denoted by $N(\mu, \Sigma)$. For $\alpha \in (0, 1)$ and a covariance matrix $\Gamma$ we denote by $K_{1-\alpha}(\Gamma)$ the $1-\alpha$-quantile of the distribution of the supremum-norm $\|Z\|_{\infty}$ of $Z \sim N(0, \Gamma)$. The correlation matrix corresponding to a covariance matrix $\Sigma$ is denoted by $\corr(\Sigma) = \diag(\Sigma)^{\dagger/2} \Sigma \diag(\Sigma)^{\dagger/2}$, where $\diag(\Sigma)$ denotes the diagonal matrix obtained from $\Sigma$ by setting all off-diagonal elements equal to $0$, $A^{\dagger}$ denotes the Moore-Penrose inverse of the quadratic matrix $A$, $A^{1/2}$ denotes the symmetric non-negative definite square root of the non-negative definite matrix $A$, and where we abbreviate $[A^{\dagger}]^{1/2}$ by $A^{\dagger/2}$. The smallest and largest eigenvalue of a real symmetric matrix $A$ is denoted by $\lmin(A)$ and $\lmax(A)$, respectively. For a vector $v$ with coordinates $v^{(1)}, \hdots, v^{(l)}$ we also use the symbol $\diag(v)$ to denote the diagonal matrix with first diagonal entry $v^{(1)}$, second $v^{(2)}$, and so on. The operator norm of a matrix $A$ (w.r.t. the Euclidean norm) is denoted by $\|A\|$, and the Euclidean norm of a vector $v$ is denoted by $\|v\|$. Furthermore, $A_{ii}$, the $i$-th diagonal element of a quadratic matrix $A$, is occasionally abbreviated as $A_i$. We also identify the indicator function $\mathbbm{1}_B$ of a set $B$ with the set $B$ itself, whenever there is no risk of confusion. Weak convergence of a sequence of probability measures $\mathbb{Q}_n$ to $\mathbb{Q}$ is denoted by $\mathbb{Q}_n \Rightarrow \mathbb{Q}$. The image measure induced by a random variable (or vector) $x$ defined on a probability space $(F, \mathcal{F}, \mathbb{Q})$ is denoted by $\mathbb{Q} \circ x$. If not stated otherwise, limits are taken as $n \to \infty$. For a sequence $(a_n)_{n\in\N}$, we say that a property \emph{holds eventually} if there exists a positive integer $n_0$ such that the property holds for every $a_n$ with $n\ge n_0$. The expectation operator and the variance-covariance operator w.r.t. $\mathbb{P}_n$ is denoted by $\mathbb{E}_n$ and $\mathbb{V}_n$, respectively; and the expectation operator and the variance-covariance operator w.r.t. $\mathbb{P}_{i,n}$ is denoted by $\mathbb{E}_{i,n}$ and $\mathbb{V}_{i,n}$, respectively.

\subsection{Main assumption}\label{sec:ass}

Our methods for constructing uniformly valid confidence intervals post-model-selection are developed under a high-level condition imposed on the stacked vector of estimators $\hat{\theta}_n = (\hat{\theta}_{\mathbb{M}_1, n}', \hdots, \hat{\theta}_{\mathbb{M}_d, n}')'$ centered at the corresponding stacked vector of targets $\theta_n^* = (\theta^{*'}_{\mathbb{M}_1, n}, \hdots, \theta^{*'}_{\mathbb{M}_d, n})'$. In this section we denote the dimension of $\hat{\theta}_n$ by 
$$k \quad := \quad \sum_{j = 1}^d m(\mathbb{M}_{j,n}),$$ 
which does not depend on $n$. The condition is as follows:
\begin{condition}\label{cond:sum}
There exist Borel measurable functions $g_{i,n}: \R^{1 \times \ell}\to \R^k$ for $i = 1, \hdots, n$, and $\Delta_n:\R^{n\times \ell}\to\R^k$, possibly depending on $\theta^*_n$, so that for $y\in\R^{n\times \ell}$
\begin{equation}\label{eq:repres}
\hat{\theta}_n(y) - \theta_n^*   \quad = \quad  \sum_{i = 1}^n g_{i,n}(y_i) + \Delta_n(y), 
\end{equation}
where, writing $r_n(y) := \sum_{i = 1}^n g_{i,n}(y_i)$, it holds for every $i \in \{1, \hdots, n\}$ and every $j \in \{1, \hdots, k\}$ that
\begin{equation}\label{eq:mom}
\mathbb{E}_{i,n}\left(g^{(j)}_{i,n}\right) = 0 \quad \text{ and } \quad 0 < \mathbb{V}_{n}\left(r_n^{(j)} \right) < \infty.
\end{equation}
Furthermore, for every coordinate $j \in \{1, \hdots, k\}$ we have
\begin{equation}\label{eq:linde}
\begin{aligned}
&\mathbb{V}_n^{-1}\left(r^{(j)}_n\right)\sum_{i = 1}^n \int_{\R^{1 \times \ell}} \left[g^{(j)}_{i, n}\right]^2 \left\{|g^{(j)}_{i, n}| \geq \varepsilon \mathbb{V}^{\frac{1}{2}}_n(r^{(j)}_n) \right\}  d\mathbb{P}_{i,n} \to 0 \\ &\text{for every } \eps > 0,
\end{aligned}
\end{equation}
and
\begin{equation*}
\mathbb{P}_n \left( \big|\mathbb{V}^{-1/2}_{n}\left(r_n^{(j)} \right) \Delta_n^{(j)} \big| \geq \eps  \right) \to 0 \text{ for every } \eps > 0.
\end{equation*}
\end{condition}

Clearly, an expansion as in Equation \eqref{eq:repres} of Condition~\ref{cond:sum} is satisfied in many applications, and can typically be obtained by a standard linearization argument (see Subsection~\ref{sec:binary} for an example and further discussion). We emphasize that the two last assumptions in Condition \ref{cond:sum} are formulated in terms of rescaled summands, which, in applications, can be exploited to circumvent restrictive compactness assumptions on moments of the distribution generating the data or the design (e.g., in Subsections~\ref{sec:homlinmods} and \ref{sec:hetlinmods} we do not need to restrict variance parameters to a compact set - as opposed to the conditions used by, e.g., \cite{eicker67} or \cite{Tib15a}; and in Subsection~\ref{sec:binary}, we do not require the smallest singular value of the design matrix to diverge to infinity - as opposed to, e.g., \cite{lv14model}).

\begin{remark}\label{rem:gin}
The careful reader will have noticed, that the functions $g_{i,n}:\R^{1\times\ell}\to\R^k$ in Condition~\ref{cond:sum} do not depend on all of the observation matrix $y\in\R^{n\times\ell}$, but only on its $i$-th row $y_i\in\R^{1\times\ell}$. This is crucial. In the sequel, however, it will be convenient to also consider $g_{i,n}$ as a function on the full sample space $\R^{n\times \ell}$. Thus, we sometimes identify $g_{i,n}$ with the composition $g_{i,n}\circ\pi_{i,n} : \R^{n\times \ell}\to \R^k$, where $\pi_{i,n}:\R^{n\times\ell}\to\R^{1\times\ell}$ is the coordinate projection $\pi_{i,n}(y) = y_i$. 
\end{remark}

Before proceeding to the main results, we briefly highlight the most important consequence of Condition~\ref{cond:sum} for our method of constructing confidence sets post-model-selection. The first step of our approach outlined in Subsection \ref{sec:fram} required the construction of confidence intervals for \textit{each} coordinate of the stacked vector of targets $\theta^*_n$. Naturally, such confidence intervals will be centered at the respective coordinates of $\hat{\theta}_n$. Now, as a first step towards the construction of such confidence intervals, Condition \ref{cond:sum} can be used to provide a useful asymptotic approximation to $\hat{\theta}_n - \theta^*_n$. More specifically, the first part of the subsequent Lemma~\ref{lemma:equivSn} provides an asymptotic approximation to the distribution  
\begin{equation}\label{eqn:distresc}
\mathbb{P}_n \circ  \left[\diag(\mathbb{V}_n(r_n))^{\dagger/2} \left(\hat{\theta}_n - \theta_n^* \right)\right].
\end{equation}
One can not expect, in general, that the distribution in the previous display converges weakly to a limiting distribution as $n \to \infty$, simply because the correlations may not stabilize. However, under Condition~\ref{cond:sum} we can show that the distributions are ``well approximated'' by the sequence of Gaussian distributions $N(0, \corr(\mathbb{V}_n(r_n)))$. Being ``well approximated'' is understood in the sense that
\begin{equation*}
d_w\left(\mathbb{P}_n \circ  \left[\diag(\mathbb{V}_n(r_n))^{\dagger/2} \left(\hat{\theta}_n - \theta_n^* \right)\right], N(0, \corr(\mathbb{V}_n(r_n))) \right) \to 0 
\end{equation*}
holds as $n \to \infty$. Here $d_w$ denotes a distance metrizing weak convergence of probability measures on the Borel sets of the respective Euclidean space the dimension of which is not shown in the notation (cf. the discussion in \cite{dudleyreal} pp. 393 for specific examples). Note that in case $\mathrm{corr}(\mathbb{V}_n(r_n))$ is constant this reduces to weak convergence. Furthermore, in the second part of Lemma~\ref{lemma:equivSn}, defining under Condition~\ref{cond:sum} the matrix
\begin{equation}\label{eq:Sn}
S_n(y) := \sum_{i = 1}^n g_{i,n}(y_i) g'_{i,n}(y_i),
\end{equation}
we show that a suitable approximation statement continues to hold if $\mathbb{V}_n(r_n)$ is replaced by $S_n$: the $d_w$-distance between
\begin{equation}\label{eqn:distresc2}
\mathbb{P}_n \circ  \left[\diag(S_n)^{\dagger/2} \left(\hat{\theta}_n - \theta_n^* \right)\right],
\end{equation}
and the sequence of (random) Gaussian distributions $N\left(0, \corr(S_n)\right)$ converges to $0$ in $\P_n$-probability as $n \to \infty$. This latter property is instrumental for our approach to constructing covariance estimators, as will be explained after the lemma.
\begin{lemma}\label{lemma:equivSn}
Under Condition \ref{cond:sum}
\begin{equation*}
d_w\left(\mathbb{P}_n \circ  \left[\diag(\mathbb{V}_n(r_n))^{\dagger/2} \left(\hat{\theta}_n - \theta_n^* \right)\right] , N\left(0, \corr(\mathbb{V}_n(r_n))\right)\right) \to 0,
\end{equation*}
and, for every $\eps > 0$, it holds that
\begin{equation*}
\mathbb{P}_n \left(d_w\left(\mathbb{P}_n \circ  \left[\diag(S_n)^{\dagger/2} \left(\hat{\theta}_n - \theta_n^* \right)\right] , N(0, \corr(S_n))\right) \geq \eps \right) \to 0.
\end{equation*}
\end{lemma}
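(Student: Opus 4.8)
The plan is to reduce both assertions to a classical Lindeberg--Feller central limit theorem for the leading term $r_n$ and then transfer the conclusion, first to $\hat\theta_n-\theta_n^*$ and finally to the random normalization by $S_n$. Write $\tilde g_{i,n}:=\diag(\mathbb{V}_n(r_n))^{-1/2}g_{i,n}$; since every $\mathbb{V}_n(r_n^{(j)})>0$ by \eqref{eq:mom} we have $\diag(\mathbb{V}_n(r_n))^{\dagger/2}=\diag(\mathbb{V}_n(r_n))^{-1/2}$, and \eqref{eq:linde} says exactly that, for each fixed coordinate $j$, the triangular array $\{\tilde g_{i,n}^{(j)}\}_i$ is centered, has total variance $1$, and satisfies the scalar Lindeberg condition. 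For the first assertion I would discard $\Delta_n$ at the outset: by the last display of Condition~\ref{cond:sum} each coordinate of $\diag(\mathbb{V}_n(r_n))^{-1/2}\Delta_n$ tends to $0$ in $\P_n$-probability, so by Slutsky it suffices to approximate the law of $\diag(\mathbb{V}_n(r_n))^{-1/2}r_n=\sum_i\tilde g_{i,n}$, a sum of independent centered vectors whose covariance is exactly $\Gamma_n:=\corr(\mathbb{V}_n(r_n))$.

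Because the correlations need not stabilize, I would not aim for a fixed limit but invoke the subsequence principle: it suffices to show that every subsequence admits a further subsequence along which the $d_w$-distance tends to $0$. Along any subsequence, compactness of the set of correlation matrices lets me extract a further subsequence with $\Gamma_n\to C$, so that $N(0,\Gamma_n)\Rightarrow N(0,C)$. It then remains to prove $\sum_i\tilde g_{i,n}\Rightarrow N(0,C)$ along this subsequence, which I would obtain from the Cram\'er--Wold device: for fixed $\lambda$ the scalar sum $\sum_i(\lambda'\tilde g_{i,n})$ has independent centered summands with variance $\lambda'\Gamma_n\lambda\to\lambda'C\lambda$, and the scalar Lindeberg condition for $\lambda'\tilde g_{i,n}$ follows from the coordinate-wise conditions \eqref{eq:linde} by a routine truncation argument controlling the cross terms $\E[(\tilde g_{i,n}^{(j)})^2\mathbf{1}\{|\tilde g_{i,n}^{(j')}|\ge c\}]$. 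The Lindeberg--Feller theorem then yields $\sum_i(\lambda'\tilde g_{i,n})\Rightarrow N(0,\lambda'C\lambda)$. Since $d_w$ metrizes weak convergence and both measures converge to $N(0,C)$, the distance tends to $0$ along the subsequence, establishing the first assertion.

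For the second assertion the key new ingredient is that $S_n$, suitably normalized, concentrates around $\mathbb{V}_n(r_n)$. Observing that $\E_n S_n=\sum_i\E_{i,n}(g_{i,n}g_{i,n}')=\mathbb{V}_n(r_n)$, I would set $\hat C_n:=\diag(\mathbb{V}_n(r_n))^{-1/2}S_n\diag(\mathbb{V}_n(r_n))^{-1/2}$, whose $(j,l)$ entry is $\sum_i\tilde g_{i,n}^{(j)}\tilde g_{i,n}^{(l)}$, and prove $\hat C_n-\Gamma_n\to0$ in $\P_n$-probability entrywise. For the diagonal this is $\sum_i(\tilde g_{i,n}^{(j)})^2\to1$, which holds under the Lindeberg condition alone (no fourth moments) by truncating at level $\delta$: the truncated part has variance $O(\delta^2)$ while the discarded tail has mean $o(1)$, and letting $\delta\downarrow0$ closes the argument. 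The off-diagonal entries follow by polarization, applying the same concentration to the arrays $\tilde g_{i,n}^{(j)}\pm\tilde g_{i,n}^{(l)}$, which are themselves Lindeberg. Writing $\hat D_n:=\diag(\hat C_n)\to I$, a short diagonal computation gives $\hat\Gamma_n:=\corr(S_n)=\hat D_n^{-1/2}\hat C_n\hat D_n^{-1/2}$, whence $\hat\Gamma_n-\Gamma_n\to0$ in probability.

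Finally I would combine these facts through the triangle inequality, writing $\mu_n,\nu_n$ for the (deterministic) laws of $\diag(S_n)^{\dagger/2}(\hat\theta_n-\theta_n^*)$ and $\diag(\mathbb{V}_n(r_n))^{-1/2}(\hat\theta_n-\theta_n^*)$:
\[
d_w(\mu_n,N(0,\hat\Gamma_n))\;\le\;d_w(\mu_n,\nu_n)+d_w(\nu_n,N(0,\Gamma_n))+d_w(N(0,\Gamma_n),N(0,\hat\Gamma_n)).
\]
The middle term vanishes by the first assertion. On an event of probability tending to one all diagonal entries of $S_n$ are positive and $\diag(S_n)^{\dagger/2}(\hat\theta_n-\theta_n^*)=\hat D_n^{-1/2}\diag(\mathbb{V}_n(r_n))^{-1/2}(\hat\theta_n-\theta_n^*)$ with $\hat D_n^{-1/2}\to I$, so the first term tends to $0$ by Slutsky (again via the subsequence device). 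The last term is random only through $\hat\Gamma_n$; since $\Gamma\mapsto N(0,\Gamma)$ is continuous, hence uniformly continuous for $d_w$ on the compact set of correlation matrices, the convergence $\hat\Gamma_n-\Gamma_n\to0$ in probability forces this term to $0$ in probability. Adding the three bounds yields the claim. The main obstacle is the concentration $\hat C_n\to\Gamma_n$ under the bare Lindeberg condition, with no moment assumptions beyond those in Condition~\ref{cond:sum}; the truncation scheme above is precisely what makes this possible, and the same care with cross terms reappears when verifying the one-dimensional Lindeberg condition for $\lambda'\tilde g_{i,n}$.
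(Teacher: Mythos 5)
Your proof is correct, but it takes a genuinely different route from the paper's. The paper's own argument (Lemma~\ref{lemma:equivSn:full}, built on Lemmas~\ref{lemma:zmom} and \ref{lemma:nonstd}) never forms a joint Lindeberg condition: it first passes to the equivalent formulation of Remark~\ref{rem:lindeequiv} (marginal asymptotic normality plus negligibility of individual summands), extracts convergent subsequences by tightness, invokes the result of \cite{pollak1972} — an infinitely divisible law with normal marginals is normal — to upgrade marginal normality of the limit to joint normality, identifies the limiting covariance through uniform integrability of $[r_n^{(j)}]^2$, and then obtains the concentration $\|S_n - \V_n(r_n)\| \to 0$ from Raikov's theorem applied to the quadratic forms $\gamma' S_n \gamma$ along a finite family of directions $\gamma$. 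You instead observe that the $k$ marginal conditions in \eqref{eq:linde} already imply the vector Lindeberg condition: your cross-term bound $\E\bigl[(\tilde g_{i,n}^{(j)})^2\mathbbm{1}\{|\tilde g_{i,n}^{(j')}|\ge c\}\bigr] \le \E\bigl[(\tilde g_{i,n}^{(j)})^2\mathbbm{1}\{|\tilde g_{i,n}^{(j)}|\ge c'\}\bigr] + (c')^2\,\P_n\bigl(|\tilde g_{i,n}^{(j')}|\ge c\bigr)$ does close, since $\P_n(|X|\ge c)\le c^{-2}\E[X^2\mathbbm{1}\{|X|\ge c\}]$ makes both pieces vanish after summing over $i$; Cramér--Wold plus the classical Lindeberg--Feller theorem then replaces Pollak's result, and your truncation scheme (truncated squares have variance $O(\delta^2)$, the discarded tail has mean $o(1)$, off-diagonal entries by polarization) replaces Raikov's theorem. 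This makes your proof entirely self-contained, at the cost of the cross-term bookkeeping; what the paper's route buys is that it operates directly under the Remark~\ref{rem:lindeequiv} formulation — marginal CLTs, the form in which results are typically imported from the misspecification literature — without ever reconstituting a Lindeberg condition, and Raikov disposes of the sums of squares in one stroke. Your final assembly via the triangle inequality, with the deterministic law of $y\mapsto\diag(S_n(y))^{\dagger/2}(\hat\theta_n(y)-\theta_n^*)$ as one argument and randomness entering only through $\corr(S_n)$, matches the paper's reading of the statement. Two details you should pin down in a full write-up: in the Cramér--Wold step, handle degenerate directions with $\lambda' C\lambda = 0$ separately by Chebyshev, since Lindeberg--Feller presumes a nondegenerate limit; and in the last triangle-inequality term, note that $\corr(S_n)$ is a priori only an element of the compact set of nonnegative definite matrices with diagonal entries in $[0,1]$ (zero diagonal entries of $S_n$ occur with vanishing probability by your $\hat D_n \to I$), on which the map $\Gamma\mapsto N(0,\Gamma)$ is still uniformly $d_w$-continuous, so the argument goes through unchanged.
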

The result is proved in Section~\ref{sec:equivSn} of the appendix using tightness arguments, a result in \cite{pollak1972}, and Raikov's theorem (cf. the statement in \cite{gnedenko} on p. 143, originally published in \cite{raikov}). At first sight one might be tempted to think that one can now immediately use $S_n$ as a covariance estimator to construct confidence intervals as envisioned in Subsection~\ref{sec:fram}. However, we emphasize that $S_n$ is in general \textit{not} an estimator of $\mathbb{V}_n(r_n)$. Typically $g_{i,n}$ depends on $\theta_n^*$, which is unknown, and thus $S_n$ is \textit{infeasible}. Hence, while Lemma~\ref{lemma:equivSn} presents a first step towards the construction of confidence sets post-model-selection, the construction of suitable covariance estimators is another step that we need to address. We nevertheless note that although Lemma~\ref{lemma:equivSn} does not answer how such estimators can be obtained, it suggests that in applications one might use as an estimator for $\mathbb{V}_n(r_n)$ a ``suitable'' predictor for $S_n$, e.g., by using ``suitable'' predictors for the unobserved components $g_{i,n}$. 

Our setup allows for substantial misspecification of the candidate set of models. Importantly, the extent to which finding consistent estimators of $\V_n(r_n)$ is possible, depends crucially on the degree of misspecification of the candidate set of models. This aspect is discussed in detail in Section~\ref{sec:CIPOSI}. In particular, there we need to distinguish between the two cases where consistent estimators are available, and the practically more relevant case where estimators need to be used that, due to the presence of a non-negligible bias component, consistently \textit{overestimate} their targets. An important part of the theory in Section~\ref{sec:CIPOSI} is that we present general results showing \textit{how} such estimators can actually be constructed. 

\subsubsection{Checking Condition \ref{cond:sum}}\label{sec:checking}
In light of Lemma \ref{lemma:equivSn}, a remarkable aspect of Condition~\ref{cond:sum} is perhaps that we obtain a multivariate central limit theorem even though the condition \textit{does not require} a joint Lindeberg-type condition concerning the random vectors $g_{i,n}$. Instead, it requires $k$ separate Lindeberg conditions concerning the behavior of the marginals only. To verify that marginal Lindeberg conditions are sufficient for our theory to go through, we exploit a result due to \cite{pollak1972}, showing that an infinitely-divisible distribution is normal if and only if each of its marginals is normal. This aspect can be very convenient when applying our results developed below, since in particular applications results on $\hat{\theta}_{\M,n}$ as required in Condition \ref{cond:sum} are likely to be available in the literature concerning asymptotic properties of estimators in misspecified models \textit{without} a model selection procedure being applied before conducting inference. Note, however, that additional arguments might be needed to obtain asymptotic results that are uniform in the true distribution, which is one of our main objectives. We also emphasize the following alternative formulation of the Lindeberg condition appearing in Condition~\ref{cond:sum} above.
 
\begin{remark}\label{rem:lindeequiv}
Using, e.g., \cite{gnedenko} Theorem 3 in Paragraph 21, one obtains that Equation \eqref{eq:linde} in Condition \ref{cond:sum} can be \emph{equivalently} phrased as
\begin{align*}
\mathbb{P}_n\circ \left( \frac{r_n^{(j)}}{\mathbb{V}^{1/2}_n(r_n^{(j)})} \right) &\Rightarrow N(0, 1) \quad\text{and}\\[7pt]
\max_{i = 1, \hdots, n} \mathbb{P}_n \left(|g_{i,n}^{(j)} | \geq \eps\mathbb{V}^{1/2}_n(r_n^{(j)}) \right) &\to 0\; \text{ for every } \eps > 0.
\end{align*}
In some applications it might be easier to check these two conditions directly (for every $j$), in particular in case one can use existing results in the literature on misspecified models without model selection as indicated above.
\end{remark}

\subsection{Confidence intervals post-model-selection}
\label{sec:CIPOSI}

In this subsection we shall now present our general asymptotic results for the construction of valid confidence intervals post-model-selection under Condition \ref{cond:sum}. We consider two different situations: (i) a situation where a consistent estimator of $\mathbb{V}_n(r_n)$ is available; (ii) a situation where a consistent estimator of $\mathbb{V}_n(r_n)$ is \textit{not} available, but it is possible to construct estimators that ``consistently overestimate'' the diagonal entries of $\mathbb{V}_n(r_n)$. Concrete examples of such consistent or ``consistently overestimating'' 
estimators are also provided, based on approximating the summands $g_{i,n}$ appearing in Condition~\ref{cond:sum}. 
%The main theoretical results in this section are Theorem \ref{thm:cons} and Proposition \ref{prop:overest}.

Given $\mathbb{M} = \mathbb{M}_{j,n} \in \mathsf{M}_n$ we abbreviate
\begin{equation*}
\rho(\mathbb{M}) \quad := \quad \sum_{l = 1}^{j-1} m(\mathbb{M}_{l,n}),
\end{equation*}
where sums over an empty index set are to be interpreted as $0$. 

\subsubsection{Confidence intervals based on consistent estimators of $\mathbb{V}_n(r_n)$}

Our first result considers the construction of confidence intervals post-model-selection under Condition \ref{cond:sum}, and under the additional assumption that it is possible to construct a consistent estimator $\hat{S}_n$ of $\mathbb{V}_n(r_n)$. The latter assumption is certainly very restrictive, due to possible misspecification of the model, and is relaxed substantially in the following subsection.

\begin{theorem}\label{thm:fbl}
Let $\alpha \in (0, 1)$, suppose Condition \ref{cond:sum} holds, and let $\hat{S}_n: \R^{n \times \ell} \to \R^{k \times k}$ be a sequence of Borel-measurable functions so that for every $\varepsilon > 0$ 
\begin{equation*} 
\mathbb{P}_n \left( \|\corr(\hat{S}_n) - \corr\left(\mathbb{V}_n(r_n)\right)\|  + \|
\diag(\mathbb{V}_n(r_n))^{-1} \diag(\hat{S}_n) - I_k \| \geq \eps \right)
\end{equation*}
converges to $0$, or equivalently, that for every $\varepsilon > 0$
\begin{equation} \label{eq:consest}
\mathbb{P}_n \left( \|\corr(\hat{S}_n) - \corr\left(S_n\right)\|  + \|
\diag(S_n)^{\dagger} \diag(\hat{S}_n) - I_k \| \geq \varepsilon \right) \to 0.
\end{equation}
Define for every $\mathbb{M} \in \mathsf{M}_n$ and every $j = 1, \hdots, m(\mathbb{M})$ the confidence interval
\begin{equation*}
\mathrm{CI}_{1-\alpha, \mathbb{M}}^{(j), \mathrm{est}} \quad = \quad \hat{\theta}^{(j)}_{\mathbb{M}, n} \pm \sqrt{[\hat{S}_n]_{\rho(\M) + j }} ~  K_{1-\alpha}\left(\corr(\hat{S}_n)\right).
\end{equation*}
Then, 
$
\mathbb{P}_n\left( \theta^{*(j)}_{\mathbb{M}, n} \in \mathrm{CI}_{1-\alpha, \mathbb{M}}^{(j), \mathrm{est}} \text{ for all } \M\in\mathsf{M}_n \text{ and all } j = 1, \hdots, m(\mathbb{M}) \right) 
$
converges to $1-\alpha$ as $n\to\infty$. In particular, for every (measurable) model selection procedure $\hat{\mathbb{M}}_n$, we have
\begin{equation}\label{eq:infcov}
\liminf_{n \to \infty} \mathbb{P}_n\left( \theta^{*(j)}_{\hat{\mathbb{M}}_n, n} \in \mathrm{CI}_{1-\alpha, \hat{\mathbb{M}}_n}^{(j), \mathrm{est}} \text{ for all } j = 1, \hdots, m(\hat{\mathbb{M}}_n) \right) \geq 1-\alpha.
\end{equation}
\end{theorem}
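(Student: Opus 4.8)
The plan is to rewrite the simultaneous coverage event as a single sup-norm statement and then show that its probability converges to $1-\alpha$. For the global coordinate index $l=\rho(\mathbb{M})+j$, the event $\theta^{*(l)}_n\in\mathrm{CI}^{(j),\mathrm{est}}_{1-\alpha,\mathbb{M}}$ is just $|\hat\theta^{(l)}_n-\theta^{*(l)}_n|\le\sqrt{[\hat S_n]_l}\,K_{1-\alpha}(\corr(\hat S_n))$, and as $(\mathbb{M},j)$ ranges over all models and components $l$ ranges over $\{1,\dots,k\}$. Since the consistency hypothesis forces $[\hat S_n]_l/\mathbb{V}_n(r^{(l)}_n)\to 1$ in $\P_n$-probability while $\mathbb{V}_n(r^{(l)}_n)>0$, all diagonal entries of $\hat S_n$ are positive with probability tending to one, and on that event simultaneous coverage is equivalent to
\[
T_n:=\big\|\diag(\hat S_n)^{\dagger/2}(\hat\theta_n-\theta^*_n)\big\|_\infty\le K_{1-\alpha}(\corr(\hat S_n))=:c_n .
\]
It therefore suffices to prove $\P_n(T_n\le c_n)\to 1-\alpha$.

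Because the correlation matrices $\corr(\mathbb{V}_n(r_n))$ need not converge, I would argue along subsequences: it is enough to show that every subsequence admits a further subsequence along which $\P_n(T_n\le c_n)\to 1-\alpha$. Along any subsequence, compactness of the set of correlation matrices yields a further subsequence with $\corr(\mathbb{V}_n(r_n))\to\Gamma$ for some correlation matrix $\Gamma$. Writing $W_n:=\diag(\mathbb{V}_n(r_n))^{\dagger/2}(\hat\theta_n-\theta^*_n)$, the triangle inequality for $d_w$ together with the first assertion of Lemma~\ref{lemma:equivSn} and the weak convergence $N(0,\corr(\mathbb{V}_n(r_n)))\Rightarrow N(0,\Gamma)$ give $\P_n\circ W_n\Rightarrow W\sim N(0,\Gamma)$ along this subsequence. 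Next, $T_n=\|A_nW_n\|_\infty$ with the diagonal matrix $A_n:=\diag(\hat S_n)^{\dagger/2}\diag(\mathbb{V}_n(r_n))^{1/2}$, and the consistency hypothesis \eqref{eq:consest} (in its $\mathbb{V}_n(r_n)$ form) forces $A_n\to I_k$ in $\P_n$-probability; by Slutsky and the continuous mapping theorem $T_n\Rightarrow\|W\|_\infty$. Finally, the distribution function of $\|W\|_\infty$ is continuous and strictly increasing on $(0,\infty)$ for $W\sim N(0,\Gamma)$ with unit-diagonal $\Gamma$; this regularity makes $\Gamma'\mapsto K_{1-\alpha}(\Gamma')$ continuous, so that $\corr(\hat S_n)\to\Gamma$ in $\P_n$-probability yields $c_n\to c:=K_{1-\alpha}(\Gamma)$ in probability.

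Since $c_n\to c$ in probability (a constant) while $T_n\Rightarrow\|W\|_\infty$, the pair converges jointly and $\P_n(T_n\le c_n)\to\P(\|W\|_\infty\le c)$ because $c$ is a continuity point of the law of $\|W\|_\infty$. By definition $c=K_{1-\alpha}(\Gamma)$ is the $1-\alpha$-quantile of $\|W\|_\infty$, and continuity of that distribution function there gives exactly $\P(\|W\|_\infty\le c)=1-\alpha$. Thus every sub-subsequence of $\P_n(T_n\le c_n)$ tends to $1-\alpha$, and hence so does the full sequence. The crux of the argument is the regularity of the Gaussian sup-norm law -- continuity and strict monotonicity of the distribution function of $\|W\|_\infty$ on $(0,\infty)$ -- which simultaneously delivers continuity of $K_{1-\alpha}(\cdot)$ and upgrades the conclusion from a conservative $\ge 1-\alpha$ to the exact limit; this, together with the subsequence/compactness device made necessary by the possibly non-convergent correlations, is where the real work lies, whereas the Slutsky-type replacement of $\hat S_n$ by the true variances is routine given \eqref{eq:consest}.

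The post-selection statement \eqref{eq:infcov} then follows with no further probabilistic work. For any measurable selector $\hat{\mathbb{M}}_n$ and any realization $y$, simultaneous coverage over all models and components implies in particular coverage for the selected model $\hat{\mathbb{M}}_n(y)$; hence the simultaneous coverage event is contained in the post-selection coverage event, and the latter has probability at least that of the former. As the former converges to $1-\alpha$, taking $\liminf$ yields \eqref{eq:infcov}. This monotonicity is precisely the mechanism by which simultaneity over the whole candidate set buys validity irrespective of the model selection rule.
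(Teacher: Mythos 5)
Your proof is correct and follows essentially the same route as the paper's: reduction of simultaneous coverage to the sup-norm event, a subsequence/compactness argument to handle the possibly non-convergent correlation matrices, the Gaussian approximation of Lemma~\ref{lemma:equivSn}, a Slutsky-type replacement of $\diag(\mathbb{V}_n(r_n))$ by $\diag(\hat{S}_n)$ via \eqref{eq:consest}, continuity of $\Gamma \mapsto K_{1-\alpha}(\Gamma)$ (the paper's Lemma~\ref{lemma:POSIcont}) combined with continuity of the law of $\|W\|_\infty$ at its $1-\alpha$-quantile to obtain the exact limit, and finally the containment/monotonicity argument for \eqref{eq:infcov}. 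The only cosmetic difference is that the paper packages the final step as joint weak convergence of $\left(\diag(\hat{S}_n)^{\dagger/2}(\hat{\theta}_n-\theta_n^*),\, \corr(\hat{S}_n)\right)$ to $Q_{\bar{\Sigma}}\otimes\delta_{\bar{\Sigma}}$ and applies the continuous map $(z,\Sigma)\mapsto\|z\|_\infty - K_{1-\alpha}(\Sigma)$ with the Portmanteau theorem, whereas you separate the two convergences ($T_n\Rightarrow\|W\|_\infty$ and $c_n\to c$ in probability) and recombine them by Slutsky.
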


Theorem~\ref{thm:fbl} is based on the assumption that an estimator $\hat{S}_n$ is available that consistently estimates $\mathbb{V}_n(r_n)$. Coming back to the discussion at the end of Subsection~\ref{sec:ass}, the vectors $g_{i,n}(y_i)$ appearing in the definition of $S_n$ are typically \textit{not} observable, because they will depend on the unknown target $\theta^*_n$, i.e., they are, more explicitly, of the form $g_{i,n}(y_i, \theta^*_n)$. In such cases $S_n$ is not a feasible candidate for $\hat{S}_n$ in the previous theorem, and therefore one will, in most cases, naturally try to obtain predictors $\hat{g}_{i,n}(y)$ for $g_{i,n}(y_i)$ by replacing the unknown target by its estimator $\hat{\theta}_n$, i.e., by setting $\hat{g}_{i,n}(y) = g_{i,n}(y_i, \hat{\theta}_n(y))$. The subsequent proposition now provides conditions on predictors $\hat{g}_{i,n}(y)$, which, if satisfied, immediately allow the construction of a consistent estimator $\hat{S}_n$ of $\mathbb{V}_n(r_n)$ by replacing each $g_{i,n}(y_i)$ in Equation \eqref{eq:Sn} by its predictor  $\hat{g}_{i,n}(y)$. In the result the predictor $\hat{g}_{i,n}(y)$ may be of the form $g_{i,n}(y_i, \hat{\theta}_n(y))$ as discussed above, but the proposition is not restricted to that particular case. Again, the conditions are assumptions concerning the large sample behavior of the marginals only, which facilitates their verification in practice.
\begin{proposition}\label{prop:obtest}
Suppose Condition \ref{cond:sum} is satisfied, and let $\hat{g}_{i,n}: \R^{n \times \ell} \to \R^k$ be Borel measurable for $i = 1, \hdots, n$ and for every $n$. Suppose that for every $j = 1, \hdots, k$ and for every $\varepsilon > 0$ it holds that
\begin{equation}\label{eq:negldiff}
\mathbb{P}_n \left( \frac{\sum_{i = 1}^n \left(g_{i,n}^{(j)} -  \hat{g}_{i,n}^{(j)} \right)^2}{\sum_{i = 1}^n [g_{i,n}^{(j)}]^2} \geq \varepsilon \right) \to 0, 
\end{equation}
or equivalently that 
\begin{equation}\label{eq:negldiff2}
\mathbb{P}_n \left( \frac{\sum_{i = 1}^n \left(g_{i,n}^{(j)} -  \hat{g}_{i,n}^{(j)} \right)^2}{\sum_{i = 1}^n \mathbb{V}_n \left( g_{i,n}^{(j)}\right)} \geq \varepsilon \right) \to 0.
\end{equation}
Then the convergence in \eqref{eq:consest} is satisfied for
\begin{equation*}
\hat{S}_n = \sum_{i = 1}^n \hat{g}_{i,n}\hat{g}'_{i,n}.
\end{equation*}
\end{proposition}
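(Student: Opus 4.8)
The plan is to reduce the conclusion \eqref{eq:consest} to a handful of deterministic Hilbert-space inequalities relating $\hat{S}_n = \sum_{i=1}^n \hat{g}_{i,n}\hat{g}_{i,n}'$ to $S_n$ from \eqref{eq:Sn}, all driven by the single quantitative input \eqref{eq:negldiff}, together with one genuinely probabilistic ingredient. For fixed $j$ I abbreviate $D_n^{(j)} := [S_n]_{jj} = \sum_i [g_{i,n}^{(j)}]^2$, $\hat{D}_n^{(j)} := [\hat{S}_n]_{jj} = \sum_i [\hat{g}_{i,n}^{(j)}]^2$, $E_n^{(j)} := \sum_i (g_{i,n}^{(j)} - \hat{g}_{i,n}^{(j)})^2$, and $\sigma_{j,n}^2 := \mathbb{V}_n(r_n^{(j)})$. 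By independence and the centering in \eqref{eq:mom} one has $\mathbb{E}_n(D_n^{(j)}) = \sigma_{j,n}^2 \in (0,\infty)$. Viewing $(g_{i,n}^{(j)})_i$, $(\hat{g}_{i,n}^{(j)})_i$ and $(g_{i,n}^{(j)}-\hat{g}_{i,n}^{(j)})_i$ as vectors in $\R^n$, every estimate below is just Cauchy--Schwarz or the (reverse) triangle inequality in $\R^n$.

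\emph{Step 1 (weak law for the sums of squares).} First I would show that the Lindeberg condition \eqref{eq:linde} forces $D_n^{(j)}/\sigma_{j,n}^2 \to 1$ in $\mathbb{P}_n$-probability for each $j$. After dividing $g_{i,n}^{(j)}$ by $\sigma_{j,n}$ this is the classical statement that a Lindeberg triangular array has variance-normalized sum of squares tending to $1$; the usual truncation argument delivers it: split $D_n^{(j)}/\sigma_{j,n}^2$ at the threshold $\varepsilon\sigma_{j,n}$, bound the tail part in $L^1$ by the Lindeberg sum, control the variance of the truncated part by $\varepsilon^2$ via $x^4\mathbbm{1}(|x|<\varepsilon) \le \varepsilon^2 x^2$, and let $\varepsilon\downarrow 0$. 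This lemma does double duty. Since $\sigma_{j,n}^2$ is deterministic and positive, it yields at once the asserted equivalence of \eqref{eq:negldiff} and \eqref{eq:negldiff2}, the two denominators differing by the factor $D_n^{(j)}/\sigma_{j,n}^2 \to 1$; and it shows $\mathbb{P}_n(D_n^{(j)} = 0) \to 0$, so that the Moore--Penrose inverses below are harmless on an event of probability tending to one. (That last point can alternatively be read off the central limit theorem of Remark~\ref{rem:lindeequiv}: $\{D_n^{(j)}=0\}\subseteq\{r_n^{(j)}=0\}$, and since the weak limit $N(0,1)$ is atomless, Portmanteau gives $\mathbb{P}_n(r_n^{(j)}=0)\to 0$.) I expect this weak law to be the main obstacle, as it is the only non-elementary step; everything else is bookkeeping fed by \eqref{eq:negldiff}.

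\emph{Step 2 (diagonal factor).} On $\{D_n^{(j)}>0\}$ the reverse triangle inequality gives $|\sqrt{\hat{D}_n^{(j)}}-\sqrt{D_n^{(j)}}|\le\sqrt{E_n^{(j)}}$, hence $|\sqrt{\hat{D}_n^{(j)}/D_n^{(j)}}-1|\le\sqrt{E_n^{(j)}/D_n^{(j)}}\to 0$ in probability by \eqref{eq:negldiff}. Thus $\hat{D}_n^{(j)}/D_n^{(j)}\to 1$, and as $\diag(S_n)^{\dagger}\diag(\hat{S}_n)-I_k$ is diagonal with entries $\hat{D}_n^{(j)}/D_n^{(j)}-1$ there, its norm tends to $0$ in probability. \emph{Step 3 (correlation matrix).} Writing $\hat{g}_{i,n}^{(j)} = g_{i,n}^{(j)}-(g_{i,n}^{(j)}-\hat{g}_{i,n}^{(j)})$ and expanding $[\hat{S}_n]_{jl}-[S_n]_{jl}$ into three cross terms, Cauchy--Schwarz bounds their absolute sum by $\sqrt{D_n^{(j)}E_n^{(l)}}+\sqrt{E_n^{(j)}D_n^{(l)}}+\sqrt{E_n^{(j)}E_n^{(l)}}$, so that
$$\left| \frac{[\hat{S}_n]_{jl}}{\sqrt{D_n^{(j)}D_n^{(l)}}} - [\corr(S_n)]_{jl}\right| \le \sqrt{\frac{E_n^{(j)}}{D_n^{(j)}}} + \sqrt{\frac{E_n^{(l)}}{D_n^{(l)}}} + \sqrt{\frac{E_n^{(j)}E_n^{(l)}}{D_n^{(j)}D_n^{(l)}}} \to 0$$
in probability. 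Since $|[\corr(S_n)]_{jl}|\le 1$ by Cauchy--Schwarz, and $[\corr(\hat{S}_n)]_{jl}$ is $[\hat{S}_n]_{jl}/\sqrt{D_n^{(j)}D_n^{(l)}}$ times the correction $\sqrt{D_n^{(j)}/\hat{D}_n^{(j)}}\,\sqrt{D_n^{(l)}/\hat{D}_n^{(l)}}$, which tends to $1$ by Step 2, each entry obeys $[\corr(\hat{S}_n)]_{jl}-[\corr(S_n)]_{jl}\to 0$ in probability.

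\emph{Step 4 (assembly).} Because $k$ is fixed, entrywise convergence in probability upgrades to convergence in Frobenius, hence operator, norm, giving $\|\corr(\hat{S}_n)-\corr(S_n)\|\to 0$ in probability. Adding the diagonal bound of Step 2 and intersecting with $\bigcap_j\{D_n^{(j)}>0\}$ (of probability $\to 1$ by Step 1) then yields \eqref{eq:consest} for $\hat{S}_n=\sum_i\hat{g}_{i,n}\hat{g}_{i,n}'$. The architecture is therefore clean: apart from the weak law of Step 1, every bound is a deterministic inequality controlled solely by the ratios $E_n^{(j)}/D_n^{(j)}$, and the only delicate point is keeping all Moore--Penrose inverses meaningful on the high-probability event where the denominators are positive.
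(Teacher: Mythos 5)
Your proof is correct, and its overall architecture coincides with the paper's: first a weak law giving $\sum_{i=1}^n [g_{i,n}^{(j)}]^2 / \mathbb{V}_n(r_n^{(j)}) \to 1$ in $\mathbb{P}_n$-probability, which simultaneously yields the equivalence of \eqref{eq:negldiff} and \eqref{eq:negldiff2} and the nondegeneracy $\mathbb{P}_n(\sum_{i=1}^n [g_{i,n}^{(j)}]^2 = 0)\to 0$, and then Cauchy--Schwarz bookkeeping for the diagonal ratios and the correlation entries. The one genuine difference is how the weak law is obtained: the paper invokes its Lemma~\ref{lemma:nonstd} (whose proof, via Lemma~\ref{lemma:zmom}, rests on Pollak's result and Raikov's theorem, machinery it needs anyway for Lemma~\ref{lemma:equivSn}), whereas you prove exactly this Raikov-type statement from scratch by the standard truncation argument. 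Your version is valid: independence of the $g_{i,n}^{(j)}$ across $i$ holds by the product structure of $\mathbb{P}_n$, the centering in \eqref{eq:mom} gives $\mathbb{E}_n\big(\sum_{i=1}^n [g_{i,n}^{(j)}]^2\big) = \mathbb{V}_n(r_n^{(j)})$, and after normalization \eqref{eq:linde} is precisely the Lindeberg condition your truncation needs, so this step makes your proof self-contained where the paper's is modular. Your Steps~2 and~3 then reproduce, in inlined form, the content of the paper's Lemma~\ref{lemma:auxi}, Parts~1 and~2 (applied there with $a_{i,n}=g_{i,n}^{(j)}$ and $b_{i,n}=\hat{g}_{i,n}^{(j)}$): your reverse-triangle-inequality bound $\big|\sqrt{\hat{D}_n^{(j)}}-\sqrt{D_n^{(j)}}\big|\le\sqrt{E_n^{(j)}}$ is a slightly slicker route to the diagonal statement than the paper's expansion of the ratio plus Cauchy--Schwarz, and your three-cross-term decomposition together with the correction factor $\sqrt{D_n^{(j)}/\hat{D}_n^{(j)}}\,\sqrt{D_n^{(l)}/\hat{D}_n^{(l)}}$ matches the paper's $A_n,B_n,C_n,D_n$ decomposition essentially term for term, with the same care about Moore--Penrose inverses on the high-probability event where the denominators are positive. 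What the paper's route buys is reuse --- Lemma~\ref{lemma:auxi} is formulated for general arrays precisely so that its Part~3 can later drive Proposition~\ref{prop:overest} --- while what yours buys is a shorter, elementary, fully self-contained argument for this proposition in isolation.
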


\subsubsection{Confidence intervals based on estimators that consistently overestimate the diagonal entries of $\mathbb{V}_n(r_n)$}\label{sec:overest}

Due to an asymptotically non-negligible bias term arising from misspecification of the model, it is typically difficult to obtain an estimator $\hat{S}_n$ satisfying the condition in Theorem~\ref{thm:fbl} (see Remark~\ref{rem:adaptive} and Section~\ref{sec:homlinmods:varest} for details). Nevertheless, it is often still possible to construct estimators of the diagonal entries of the matrix $\mathbb{V}_n(r_n)$ that, while possibly inconsistent, asymptotically \textit{overestimate} their targets; for a corresponding constructive result see Proposition \ref{prop:overest} below. Similarly, it is in general not difficult to find an estimator of $K_{1-\alpha}(\corr(S_n))$ that consistently overestimates that quantity, see the discussion and the result following Proposition \ref{prop:overest} below concerning upper bounds on the function $K_{1-\alpha}(.)$ over the set of all correlation matrices (using this upper bound, although leading to wider confidence intervals, also leads to substantial computational advantages). Based on such estimators it is then possible to construct asymptotically valid confidence intervals post-model-selection, even though the candidate set of models might be (severely) misspecified. This is the content of the subsequent result, which, together with Proposition \ref{prop:overest} below, is the main theoretical result in this section.

\begin{theorem}\label{thm:cons}
Let $\alpha \in (0, 1)$, and suppose Condition \ref{cond:sum} is satisfied. For every $n$ and every $j = 1, \hdots, k$ let $\hat{\nu}^2_{j,n} \geq 0$ be an estimator of $\mathbb{V}_n(r_n^{(j)})$, and let $\hat{K}_n \geq 0$ be an estimator of $K_{1-\alpha}(\corr(\mathbb{V}_n(r_n)))$, so that the sequence
\begin{equation*}
\kappa_n = \frac{K_{1-\alpha}(\corr(\mathbb{V}_n(r_n)))}{\hat{K}_n} \max_{j = 1, \hdots, k} \sqrt{\frac{[\mathbb{V}_n(r_n)]_{j} }{  \hat{\nu}^2_{j,n} }},
\end{equation*}
satisfies
\begin{equation}\label{eqn:asptup}
\mathbb{P}_n \left( \kappa_n \geq 1+ \eps \right) \to 0 \text{ for every } \eps > 0,
\end{equation}
(implicitly including that $\mathbb{P}_n(\kappa_n \text{ is well defined} ) \to 1$) or, equivalently, that the condition in \eqref{eqn:asptup} holds with $\kappa_n$ replaced by
\begin{equation*}
\frac{K_{1-\alpha}(\corr(S_n))}{\hat{K}_n} \max_{j = 1, \hdots, k} \sqrt{\frac{[S_n]_{j} }{  \hat{\nu}^2_{j,n} }}.
\end{equation*}
For every $\mathbb{M} \in \mathsf{M}_n$ and every $j = 1, \hdots, m(\mathbb{M})$, define the confidence interval
\begin{equation*}
\mathrm{CI}_{1 - \alpha, \mathbb{M}}^{(j), \mathrm{oest}} \quad = \quad \hat{\theta}^{(j)}_{\mathbb{M}, n} \pm \sqrt{\hat{\nu}^2_{\rho(\M) + j, n}} ~  \hat{K}_n.
\end{equation*}
Then, for every (measurable) model selection procedure $\hat{\mathbb{M}}_n$, we have
\begin{equation*}
\liminf_{n \to \infty} \mathbb{P}_n \left( \theta_{\hat{\mathbb{M}}_n, n}^{*(j)} \in \mathrm{CI}_{1 - \alpha, \hat{\mathbb{M}}_n}^{(j), \mathrm{oest}} \text{ for all } j = 1, \hdots, m(\hat{\mathbb{M}}_n)  \right) \geq 1-\alpha.
\end{equation*}
In the important special case where $\hat{K}_n \geq K_{1-\alpha}(\corr(\mathbb{V}_n(r_n)))$ holds eventually,  the condition in Equation \eqref{eqn:asptup} is implied by the condition that for every $j = 1, \hdots, k$ it holds that
\begin{equation}\label{eqn:special1}
\mathbb{P}_n \left( \sqrt{\frac{[S_n]_{j} }{  \hat{\nu}^2_{j,n} }} \geq 1+ \eps \right) \to 0 \text{ for every } \eps > 0,
\end{equation}
or equivalently, that for every $j = 1, \hdots, k$ it holds that
\begin{equation}\label{eqn:special2}
\mathbb{P}_n \left( \sqrt{\frac{[\mathbb{V}_n(r_n)]_{j} }{  \hat{\nu}^2_{j,n} }} \geq 1+ \eps \right) \to 0 \text{ for every } \eps > 0.
\end{equation}
\end{theorem}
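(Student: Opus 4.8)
The plan is to establish the stronger \emph{simultaneous} statement
\[
\liminf_{n\to\infty}\mathbb{P}_n\Big(\theta^{*(j)}_{\mathbb{M},n}\in\mathrm{CI}^{(j),\mathrm{oest}}_{1-\alpha,\mathbb{M}}\ \text{for all }\mathbb{M}\in\mathsf{M}_n\text{ and all }j=1,\dots,m(\mathbb{M})\Big)\ge 1-\alpha,
\]
since $\hat{\mathbb{M}}_n(y)\in\mathsf{M}_n$ for every $y$, so the simultaneous event is contained pointwise in the selected-model event and the conclusion for an arbitrary $\hat{\mathbb{M}}_n$ follows by monotonicity. Abbreviate $\Gamma_n:=\corr(\mathbb{V}_n(r_n))$, $\sigma_{s,n}^2:=[\mathbb{V}_n(r_n)]_s>0$, $\hat\nu_{s,n}:=\sqrt{\hat\nu^2_{s,n}}$, and $T_n:=\diag(\mathbb{V}_n(r_n))^{\dagger/2}(\hat\theta_n-\theta^*_n)$, so that $T_n^{(s)}=(\hat\theta^{(s)}_n-\theta^{*(s)}_n)/\sigma_{s,n}$. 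Reindexing the pair $(\mathbb{M},j)$ by the single coordinate $s=\rho(\mathbb{M})+j\in\{1,\dots,k\}$, the simultaneous coverage event becomes $C_n=\{\,|T_n^{(s)}|\,\sigma_{s,n}\le \hat\nu_{s,n}\hat K_n\ \text{for all }s\,\}$.

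Fix $\eps>0$ and let $\Omega_n:=\{\kappa_n\text{ is well defined and }\kappa_n\le 1+\eps\}$; by \eqref{eqn:asptup} (and its well-definedness part) $\mathbb{P}_n(\Omega_n^c)\to0$. On $\Omega_n$ all $\hat\nu_{s,n}$ and $\hat K_n$ are positive and, directly from the definition of $\kappa_n$,
\[
\hat K_n\,\frac{\hat\nu_{s,n}}{\sigma_{s,n}}\ \ge\ \frac{K_{1-\alpha}(\Gamma_n)}{\kappa_n}\ \ge\ \frac{K_{1-\alpha}(\Gamma_n)}{1+\eps}\qquad\text{for every }s.
\]
Hence $\{\|T_n\|_\infty\le K_{1-\alpha}(\Gamma_n)/(1+\eps)\}\cap\Omega_n\subseteq C_n$, and therefore, for every $\eps>0$ and every $n$,
\[
\mathbb{P}_n(C_n)\ \ge\ \mathbb{P}_n\!\big(\|T_n\|_\infty\le K_{1-\alpha}(\Gamma_n)/(1+\eps)\big)-\mathbb{P}_n(\Omega_n^c).
\]

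To pass to the limit I would argue by subsequences, the main difficulty being that $\Gamma_n$ need not converge, so there is no single Gaussian limit to compare against. Put $L:=\liminf_n\mathbb{P}_n(C_n)$ and choose a subsequence along which $\mathbb{P}_n(C_n)\to L$; by compactness of the set of $k\times k$ correlation matrices extract a further subsequence along which $\Gamma_n\to\Gamma$. Along it $N(0,\Gamma_n)\Rightarrow N(0,\Gamma)$, so by the triangle inequality for $d_w$ together with the first part of Lemma~\ref{lemma:equivSn} we get $\mathcal{L}(T_n)\Rightarrow N(0,\Gamma)$, whence $\|T_n\|_\infty\Rightarrow\|Z\|_\infty$ with $Z\sim N(0,\Gamma)$ by the continuous mapping theorem. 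Using continuity of the POSI-constant map $\Gamma\mapsto K_{1-\alpha}(\Gamma)$ (so that $K_{1-\alpha}(\Gamma_n)\to K_{1-\alpha}(\Gamma)=:q$) and continuity of the distribution function of $\|Z\|_\infty$ on $(0,\infty)$, the displayed lower bound yields, for this fixed $\Gamma$ and every $\eps>0$, $L\ge \mathbb{P}(\|Z\|_\infty\le q/(1+\eps))$. Crucially $\Gamma$ (hence $q$ and $Z$) is extracted before $\eps$ is sent to $0$; letting $\eps\downarrow0$ and invoking continuity of the distribution function of $\|Z\|_\infty$ at its $(1-\alpha)$-quantile $q=K_{1-\alpha}(\Gamma)$ gives $L\ge \mathbb{P}(\|Z\|_\infty\le q)\ge 1-\alpha$, as desired. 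The two facts about the Gaussian sup-norm used here — continuity of its distribution function and continuity of the quantile map $K_{1-\alpha}(\cdot)$ — are the technical crux and would be established separately; they follow from $N(0,\Gamma_n)\Rightarrow N(0,\Gamma)$ and strict monotonicity of the sup-norm distribution function at its positive quantiles, the latter being available because each marginal of $Z$ is standard normal.

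Finally, the stated equivalence of the hypothesis with its $S_n$-version, and the sufficiency of \eqref{eqn:special1}/\eqref{eqn:special2} in the special case $\hat K_n\ge K_{1-\alpha}(\Gamma_n)$ eventually, I would obtain from the in-probability approximations $[S_n]_s/[\mathbb{V}_n(r_n)]_s\to1$ and $\corr(S_n)-\Gamma_n\to0$ underlying Lemma~\ref{lemma:equivSn} (combined once more with continuity of $K_{1-\alpha}(\cdot)$ to compare $K_{1-\alpha}(\corr(S_n))$ with $K_{1-\alpha}(\Gamma_n)$); in the special case $K_{1-\alpha}(\Gamma_n)/\hat K_n\le1$ eventually, so that $\kappa_n\le\max_s\sqrt{[\mathbb{V}_n(r_n)]_s/\hat\nu^2_{s,n}}$ and $\mathbb{P}_n(\kappa_n\ge1+\eps)$ is controlled by a union bound over the fixed number $k$ of coordinates, reducing \eqref{eqn:asptup} to \eqref{eqn:special2} (and to \eqref{eqn:special1} by the same $S_n$-approximation).
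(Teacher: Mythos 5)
Your proposal is correct, and its core coincides with the paper's own proof: the same reduction to simultaneous coverage over all $\mathbb{M}\in\mathsf{M}_n$, and the same key inequality — on the event where $\kappa_n\le 1+\eps$, the coverage event contains $\bigl\{\|T_n\|_\infty\le K_{1-\alpha}(\corr(\V_n(r_n)))/(1+\eps)\bigr\}$ — followed by the same ingredients (the CLT approximation of Lemma~\ref{lemma:equivSn}, continuity of $\Gamma\mapsto K_{1-\alpha}(\Gamma)$ as in Lemma~\ref{lemma:POSIcont}, and atomlessness of the Gaussian sup-norm distribution). Where you genuinely depart from the paper is the organization of the limit passage. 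The paper argues by contradiction: assuming the liminf bound fails by some $\delta>0$, it extracts, for each fixed $\eps$, a subsequence $n''(\eps)$ along which the correlation matrices converge to an $\eps$-dependent limit $\bar\Sigma(\eps)$, and then needs a second extraction along $\eps_m\downarrow 0$ with $\bar\Sigma(\eps_m)\to\bar\Sigma$ to reach the contradiction via Portmanteau. You avoid this two-layer diagonalization by observing that your lower bound holds for every $\eps>0$ along one and the same subsequence: you first fix a subsequence realizing $L=\liminf_n\P_n(C_n)$ along which $\Gamma_n\to\Gamma$, deduce $L\ge \P(\|Z\|_\infty\le K_{1-\alpha}(\Gamma)/(1+\eps))$ for all $\eps>0$ with $\Gamma$ already fixed, and only then send $\eps\downarrow 0$, using continuity of the distribution function of $\|Z\|_\infty$ at the quantile (available because $\Gamma$, as a limit of correlation matrices, has unit diagonal, so the marginals of $Z$ are standard normal and $\P(\|Z\|_\infty\le K_{1-\alpha}(\Gamma))\ge 1-\alpha$ by definition of the quantile). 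This direct version is valid and arguably cleaner; the quantifier issue you explicitly flag — extracting $\Gamma$ before sending $\eps\to 0$ — is precisely what the paper's contradiction structure was designed to manage. One small refinement for the equivalence claims, which you sketch at the same level of terseness as the paper's ``remaining part'': to conclude $K_{1-\alpha}(\corr(S_n))/K_{1-\alpha}(\Gamma_n)\to 1$ in $\P_n$-probability from $\|\corr(S_n)-\Gamma_n\|\to 0$, you should invoke \emph{uniform} continuity of $K_{1-\alpha}$ on the compact set of $k\times k$ correlation matrices (since $\corr(S_n)$ is random and $\Gamma_n$ need not converge), together with the lower bound $K_{1-\alpha}(\Gamma_n)\ge$ the $(1-\alpha)$-quantile of the absolute value of a standard normal, which is positive and lets you pass from differences to ratios.
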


The preceding theorem operates under the assumption that estimators are available that consistently overestimate the diagonal entries of $\mathbb{V}_n(r_n)$ and ~$K_{1-\alpha}\linebreak[1](\corr(\mathbb{V}_n(r_n)))$. The following result now shows how such estimators for the diagonal entries of $\mathbb{V}_n(r_n)$ can be obtained. To construct an estimator $\hat{K}_n$ that eventually satisfies $\hat{K}_n \geq K_{1-\alpha}(\corr(\mathbb{V}_n(r_n)))$ (as required for the special case of Theorem~\ref{thm:cons}) one can numerically compute the upper bound in Lemma \ref{lem:upper} below. The subsequent result considers the case where the vectors $g_{i,n}$ from Condition~\ref{cond:sum} are well approximated in the sense of the condition appearing in Proposition \ref{prop:obtest}, but where the approximating quantities are now \textit{unobservable} due to non-stochastic additive error terms. These additive error terms typically are bias terms due to misspecification of the model. This is further discussed after the proposition.

\begin{proposition}\label{prop:overest}
Suppose Condition \ref{cond:sum} is satisfied, and let $\tilde{g}_{i,n}: \R^{n \times \ell} \to \R^k$ and $\hat{g}_{i,n}: \R^{n \times \ell} \to \R^k$ be Borel measurable for $i = 1, \hdots, n$ and for every $n$. Suppose that for every $j = 1, \hdots, k$ and for every $\varepsilon > 0$ the condition \eqref{eq:negldiff}, or equivalently \eqref{eq:negldiff2}, is satisfied. Suppose further that there exist real numbers $a_{i,n}^{(j)}$ so that for $y\in\R^{n\times\ell}$
\begin{equation*}
\tilde{g}^{(j)}_{i,n}(y) \quad = \quad \hat{g}^{(j)}_{i,n}(y) + a_{i,n}^{(j)}
\end{equation*}
holds for every $n \in \N$, $i \in \{1, \hdots, n\}$ and $j \in \{1, \hdots, k\}$. Then the statement in \eqref{eqn:special2} is satisfied for
\begin{equation*}
\hat{\nu}^2_{j,n} \quad = \quad \sum_{i = 1}^n \left[\tilde{g}^{(j)}_{i,n}\right]^2 \quad \text{ for } \quad j = 1, \hdots, k.
\end{equation*}
\end{proposition}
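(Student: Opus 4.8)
Since Theorem~\ref{thm:cons} records that \eqref{eqn:special1} and \eqref{eqn:special2} are equivalent via the weak law $[S_n]_j/[\mathbb{V}_n(r_n)]_j\to 1$ in $\mathbb{P}_n$-probability (a standard consequence of the Lindeberg condition \eqref{eq:linde}), my plan is to establish \eqref{eqn:special1}. Fix a coordinate $j\in\{1,\dots,k\}$ and suppress it from the notation, writing $g_i := g_{i,n}^{(j)}(y_i)$, $\hat g_i := \hat g_{i,n}^{(j)}(y)$, $a_i := a_{i,n}^{(j)}$, and collecting these into vectors $G,\hat G,A\in\R^n$, so that $\tilde G := \hat G + A$ has the $i$-th entry $\tilde g_{i,n}^{(j)}$. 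By \eqref{eq:Sn} one has $[S_n]_j=\sum_i g_i^2=\|G\|^2$, by \eqref{eq:mom} and the independence of the rows (Remark~\ref{rem:gin}) one has $[\mathbb{V}_n(r_n)]_j=\sum_i \mathbb{V}_{i,n}(g_i)=:s_n^2$, and by hypothesis $\hat\nu^2_{j,n}=\sum_i \tilde g_i^2=\|\tilde G\|^2$. Thus \eqref{eqn:special1} amounts to showing $\|\tilde G\|^2\ge(1-o(1))\|G\|^2$ in $\mathbb{P}_n$-probability. Two facts about the array $(g_i)$ will be used repeatedly: the \emph{Feller/uniform-negligibility} bound $\max_{i}\mathbb{V}_{i,n}(g_i)/s_n^2\to 0$ (obtained from \eqref{eq:linde} by truncating $g_i^2$ at $\eps^2 s_n^2$ and dominating the tail by the Lindeberg sum), and the weak law $\|G\|^2/s_n^2\to 1$ in $\mathbb{P}_n$-probability just mentioned.

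The first step is a purely deterministic manipulation that decouples the approximation error $\hat G-G$ from the bias $A$. Writing $\tilde G=(G+A)+(\hat G-G)$ and applying the reverse triangle inequality gives $\|\tilde G\|\ge\|G+A\|-\|\hat G-G\|$. For the bias term I complete the square: on the event $\{\|A\|>0\}$,
\begin{equation*}
\|G+A\|^2=\|G\|^2+\big(\|A\|^2+2\langle G,A\rangle\big)\ \ge\ \|G\|^2-\frac{\langle G,A\rangle^2}{\|A\|^2},
\end{equation*}
since $\|A\|^2+2\langle G,A\rangle+\langle G,A\rangle^2/\|A\|^2=\big(\|A\|+\langle G,A\rangle/\|A\|\big)^2\ge 0$; on $\{\|A\|=0\}$ one has $\tilde G=\hat G$ and the conclusion below is immediate. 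Consequently the whole problem reduces to proving that the two error terms $\|\hat G-G\|^2$ and $\langle G,A\rangle^2/\|A\|^2$ are each $o(\|G\|^2)$ in $\mathbb{P}_n$-probability.

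The first error term is handled by hypothesis: $\|\hat G-G\|^2/\|G\|^2=\sum_i(g_i-\hat g_i)^2/\sum_i g_i^2\to 0$ in $\mathbb{P}_n$-probability is exactly \eqref{eq:negldiff}. The cross term is where the probabilistic content enters, and I expect it to be the main obstacle, because the naive Cauchy--Schwarz bound $\langle G,A\rangle^2\le\|G\|^2\|A\|^2$ is too weak and one must instead exploit the centering of the summands. Since $A$ is deterministic while the $g_i$ are independent and centered under $\mathbb{P}_n$, the variable $\langle G,A\rangle=\sum_i a_i g_i$ has mean zero and
\begin{equation*}
\mathbb{E}_n\!\left[\langle G,A\rangle^2\right]=\sum_i a_i^2\,\mathbb{V}_{i,n}(g_i)\ \le\ \Big(\max_{i}\mathbb{V}_{i,n}(g_i)\Big)\|A\|^2 ,
\end{equation*}
so that $\mathbb{E}_n\!\big[\langle G,A\rangle^2/(\|A\|^2 s_n^2)\big]\le\max_i\mathbb{V}_{i,n}(g_i)/s_n^2\to 0$ by the Feller bound. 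Markov's inequality then gives $\langle G,A\rangle^2/(\|A\|^2 s_n^2)\to 0$ in $\mathbb{P}_n$-probability, and multiplying by $s_n^2/\|G\|^2\to 1$ (the weak law) yields $\langle G,A\rangle^2/(\|A\|^2\|G\|^2)\to 0$ in $\mathbb{P}_n$-probability.

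It remains to combine the pieces. On the event where both error terms are bounded by $\eta\|G\|^2$, the square-completion bound gives $\|G+A\|\ge\sqrt{1-\eta}\,\|G\|$ and the approximation bound gives $\|\hat G-G\|\le\sqrt{\eta}\,\|G\|$, whence $\|\tilde G\|\ge(\sqrt{1-\eta}-\sqrt{\eta})\|G\|$ and therefore $\hat\nu^2_{j,n}=\|\tilde G\|^2\ge(\sqrt{1-\eta}-\sqrt{\eta})^2\|G\|^2=(\sqrt{1-\eta}-\sqrt{\eta})^2[S_n]_j$. Given $\eps>0$, choosing $\eta$ small enough that $(\sqrt{1-\eta}-\sqrt{\eta})^{-1}<1+\eps$ forces the event $\{\sqrt{[S_n]_j/\hat\nu^2_{j,n}}\ge 1+\eps\}$ into the complement of the above high-probability event, whose $\mathbb{P}_n$-measure tends to $0$ by the two convergences just established. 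This proves \eqref{eqn:special1}, and hence \eqref{eqn:special2}, for the chosen $j$; as $j$ was arbitrary the proof is complete.
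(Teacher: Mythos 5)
Your proof is correct, but it takes a genuinely different route from the paper's. The paper delegates the work to Part~3 of its auxiliary Lemma~\ref{lemma:auxi}: writing $\tilde g^{(j)}_{i,n} = b_{i,n} + c_{i,n}$ with $b_{i,n} = (\hat g^{(j)}_{i,n} - g^{(j)}_{i,n}) + g^{(j)}_{i,n}$ and $c_{i,n} = a^{(j)}_{i,n}$, that lemma expands $\sum_{i}(b_{i,n}+c_{i,n})^2/\mathbb{V}_n(r_n^{(j)}) - 1$ into three terms, controls the variance of the cross term by a Chebyshev bound as in \eqref{eqn:kappavb}, and then needs a subsequence-compactness contradiction argument split according to whether the normalized bias $d_n = \sum_i c_{i,n}^2/\mathbb{V}_n(r_n^{(j)})$ stays bounded or diverges (in the divergent case proving ratio-consistency $\kappa_n/d_n \to 1$ rather than $\kappa_n - d_n \to 0$). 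You sidestep this case analysis entirely with the deterministic completion-of-squares bound $\|G+A\|^2 \ge \|G\|^2 - \langle G,A\rangle^2/\|A\|^2$, which compresses all bias interference into the single self-normalized quantity $\langle G,A\rangle^2/\|A\|^2$; since its expectation is at most $\max_i \mathbb{V}_{i,n}(g^{(j)}_{i,n})$ \emph{uniformly in the magnitude of} $\|A\|$, one Markov step plus the Feller bound (derived from \eqref{eq:linde} exactly as the paper does at \eqref{eqn:feller}) kills it regardless of whether the bias is negligible, moderate, or dominant. The probabilistic ingredients are the same in both proofs --- centering and row-independence for the cross-term variance, Feller from Lindeberg, and the Raikov-type weak law $\sum_i [g^{(j)}_{i,n}]^2/\mathbb{V}_n(r_n^{(j)}) \to 1$, which the paper records as \eqref{eqn:overest1} via the opening argument of Proposition~\ref{prop:obtest} and which you correctly cite rather than reprove --- but your arrangement is shorter and more direct for this specific proposition, whereas the paper's detour buys a reusable lemma (Parts~1 and 2 also serve Proposition~\ref{prop:obtest}) stated under the weaker hypothesis of uncorrelated rather than independent summands; incidentally, uncorrelatedness would also suffice for your computation $\mathbb{E}_n[\langle G,A\rangle^2] = \sum_i a_i^2\,\mathbb{V}_{i,n}(g_i)$, so your argument is no less general on this point. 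Two cosmetic remarks: $A$ is deterministic, so $\{\|A\|>0\}$ is a per-$n$ dichotomy rather than an event (your handling is nonetheless fine, splitting the index sequence), and your final step implicitly uses $\mathbb{P}_n(\|G\|^2 = 0)\to 0$ so that the ratios in \eqref{eq:negldiff} and in the concluding event are well defined --- both points are immediate from the weak law and are treated with the same implicitness in the paper.
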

The proposition is developed for situations where random variables $\tilde{g}_{i,n}^{(j)}$ are observed, that can be decomposed as the sum of unobserved random variables $\hat{g}_{i,n}^{(j)}$, which satisfy \eqref{eq:negldiff}, and unobserved real numbers $a_{i,n}^{(j)}$. In contrast to the situation in Proposition~\ref{prop:obtest}, now the (unobservable) random variables $\hat{g}_{i,n}^{(j)}$ can not be used for the construction of estimators. Nevertheless, the proposition shows how suitable variance estimators can then still be constructed based on the observed quantities $\tilde{g}_{i,n}^{(j)}$. Confidence intervals post-model-selection can then be obtained via Theorem~\ref{thm:cons}. Besides being suitable for situations where random variables satisfying \eqref{eq:negldiff} are not observed (otherwise one could use Proposition \ref{prop:obtest} to obtain consistent estimators), Proposition~\ref{prop:overest} is particularly geared towards the case where the non-stochastic additive components $a_{i,n}^{(j)}$ are non-negligible in the sense that 
\begin{equation*}
\frac{\sum_{i = 1}^n [a_{i,n}^{(j)}]^2}{\mathbb{V}_n(r_n^{(j)})} \not \to 0 \text{ holds for some } j \in \{1, \hdots, k\}.
\end{equation*}
For if the non-stochastic additive components are negligible in this sense, a consistent estimator of $\mathbb{V}_n(r_n)$ in the sense of \eqref{eq:consest} can be constructed:

\begin{remark}\label{rem:adaptive}
Using the simple bound $(g_{i,n}^{(j)} - \tilde{g}_{i,n}^{(j)})^2 \le 2(g_{i,n}^{(j)} - \hat{g}_{i,n}^{(j)})^2 + 2 a_{i,n}^2$, it is easy to verify that if the non-stochastic additive components $a_{i,n}$ are negligible in the previously defined sense, then $\tilde{g}_{i,n}$ satisfies the assumptions of $\hat{g}_{i,n}$ appearing in Proposition \ref{prop:obtest}. As a consequence, the estimator 
\begin{equation*}
\tilde{S}_n = \sum_{i = 1}^n \tilde{g}_{i,n}\tilde{g}'_{i,n}
\end{equation*}
satisfies \eqref{eq:consest}, and one can construct confidence intervals based on this estimator as discussed in Theorem \ref{thm:fbl}. Note that $\hat{\nu}^2_{j,n} = [\tilde{S}_n]_{j}$.
\end{remark}

Let us finally consider an upper bound on $K_{1-\alpha}(\Gamma)$ as required in the special case of Theorem \ref{thm:cons} above. The bound we shall discuss is based on the quantity $B_{\alpha}(q,N)$, for $q,N \in \mathbb{N}$, defined as the smallest $t >0$ so that
\[
\mathbb{E}_G \left( \min\left(1, N \left[  1 - F_{Beta,1/2,(q-1)/2} (t^2/G^2) \right] \right) \right) \leq \alpha,
\]
where $F_{Beta,1/2,(q-1)/2}$ is the cumulative distribution function of the \linebreak Beta($1/2$,$(q-1)/2$) distribution, and where $G^2$ follows a chi-squared distribution with $q$ degrees of freedom.
The quantity $B_{\alpha}(q,N)$ corresponds to the quantity $K_4$ of \cite{bachoc14valid} in the known variance case (for a discussion of numerical algorithms for obtaining $B_{\alpha}(q,N)$ in practice we confer the reader to that reference). We have \citep{bachoc14valid,Berk13} that $B_{\alpha}(q,N)$ is larger than all the $1-\alpha$ quantiles of random variables of the form
$\max_{i=1,...,N} |v_i' \epsilon|$, where $v_1,...,v_N$ are column vectors of $\mathbb{R}^q$ with $||v_i|| \leq 1$ and where $\epsilon \sim N(0,I_q)$; furthermore, for fixed $\alpha$ and $N$ the function $q\mapsto B_{\alpha}(q, N)$ is monotonically increasing.

Asymptotic approximations of $B_{\alpha}(q,N)$ for large $q$ and $N$ are provided in \cite{bachoc14valid}, \cite{Berk13} and \cite{zhang15spherical}. In particular, as $q,N \to \infty$, 
\[
B_{\alpha}(q,N)/\sqrt{q \left( 1-N
^{-2/\left( q-1\right) }\right) }\rightarrow 1,
\]
from Proposition 2.10 in \cite{bachoc14valid}, itself building on results from \cite{Berk13} and \cite{zhang15spherical}. 

An often useful upper bound on $K_{1-\alpha}(\Gamma)$ with $\Gamma$ a $k \times k$-dimensional correlation matrix is provided in the following lemma:

\begin{lemma} \label{lem:upper}
For every $\alpha \in (0, 1)$ and a $k \times k$ correlation matrix $\Gamma$ we have
\[
K_{1-\alpha}(\Gamma)
\leq 
B_{\alpha}(\rank(\Gamma),k). 
\]
\end{lemma}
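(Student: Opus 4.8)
The plan is to represent the supremum norm of an $N(0,\Gamma)$ vector in exactly the form to which the bounding property of $B_{\alpha}(\cdot,\cdot)$ recorded before the lemma applies, and then to read off the inequality. Recall that this property guarantees that $B_{\alpha}(q,N)$ dominates every $1-\alpha$ quantile of a random variable of the form $\max_{i=1,\dots,N}|v_i'\epsilon|$, where $v_1,\dots,v_N\in\R^q$ satisfy $\|v_i\|\le 1$ and $\epsilon\sim N(0,I_q)$. It therefore suffices to exhibit such a representation of $\|Z\|_{\infty}$ for $Z\sim N(0,\Gamma)$, with $q=\rank(\Gamma)$ and $N=k$.

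First I would factor the correlation matrix. Since $\Gamma$ is symmetric and positive semidefinite with $q:=\rank(\Gamma)$, its spectral decomposition yields a $k\times q$ matrix $V$ with $\Gamma=VV'$; concretely one may take $V=U_q D_q^{1/2}$, where $D_q$ is the diagonal matrix of the $q$ nonzero eigenvalues of $\Gamma$ and $U_q$ collects the associated orthonormal eigenvectors. Write $v_i'\in\R^{1\times q}$ for the $i$-th row of $V$, $i=1,\dots,k$. Because $\Gamma$ is a correlation matrix, its diagonal entries equal $1$, whence $\|v_i\|^2=v_i'v_i=[VV']_{ii}=\Gamma_{ii}=1$ for every $i$; in particular $\|v_i\|\le 1$.

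Next I would match the distributions. Let $\epsilon\sim N(0,I_q)$. Then $V\epsilon\sim N(0,VV')=N(0,\Gamma)$, so for $Z\sim N(0,\Gamma)$ we obtain the distributional identity $\|Z\|_{\infty}=\max_{i=1,\dots,k}|Z^{(i)}|\overset{d}{=}\max_{i=1,\dots,k}|v_i'\epsilon|$. Hence the $1-\alpha$ quantile of $\|Z\|_{\infty}$, which is by definition $K_{1-\alpha}(\Gamma)$, coincides with the $1-\alpha$ quantile of $\max_{i=1,\dots,k}|v_i'\epsilon|$. Applying the cited property with $N=k$ and $q=\rank(\Gamma)$, using that $\|v_i\|\le 1$, yields $K_{1-\alpha}(\Gamma)\le B_{\alpha}(\rank(\Gamma),k)$, as claimed.

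There is essentially no substantive obstacle here: the argument is a one-line representation followed by the already-established bounding property of $B_{\alpha}$. The only points needing a little care are that the spectral factorization uses exactly $\rank(\Gamma)$ columns (so that the ambient dimension entering $B_{\alpha}$ is the rank rather than $k$), and that the unit-norm identity $\|v_i\|=1$ relies on $\Gamma$ having unit diagonal, i.e.\ on $\Gamma$ being a \emph{correlation} matrix rather than a general covariance matrix. If one only assumed $\Gamma_{ii}\le 1$ the same bound would still go through, but here the equality makes the verification of $\|v_i\|\le 1$ immediate.
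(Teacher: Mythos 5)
Your proof is correct and follows essentially the same route as the paper's own argument: a spectral factorization $\Gamma = VV'$ with $V \in \R^{k\times \rank(\Gamma)}$, the observation that the rows of $V$ have (unit) norm at most $1$ because $\Gamma$ has unit diagonal, the distributional identity $\|Z\|_\infty \overset{d}{=} \max_i |v_i'\epsilon|$, and an appeal to the stated bounding property of $B_\alpha(q,N)$. No gaps; your extra remark that $\|v_i\| = 1$ exactly (rather than merely $\le 1$) is a harmless refinement of the same argument.
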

In a particular application it might of course be possible to obtain better upper bounds by exploiting structural properties of the specific correlation matrix $\Gamma$ at hand, cf. Subsection~\ref{sec:homlinmods}. Using the upper bound of Lemma~\ref{lem:upper} can also be very useful in situations where the computation of $K_{1-\alpha}(\Gamma)$ is infeasible.

\section{Applications}
\label{sec:applications}

In this section we now apply the general results obtained in Section \ref{sec:genposi} to some important special cases that are frequently encountered in practice. As already mentioned in Section \ref{ssc:discfram}, we now consider situations of the following type:
\begin{enumerate}
\item The underlying distribution $\mathbb{P}_n$ is assumed to be an element of a set of distributions $\mathbf{P}_n$.
\item A model $\hat{\mathbb{M}}_n$ is selected in a data-driven way from a candidate set $\mathsf{M}_n$, which is potentially misspecified, i.e., $\mathbf{P}_n \not \subseteq \bigcup_{\M\in\mathsf{M}_n}\M$.
\item One aims at constructing confidence intervals for all coordinates of the model-specific target parameter $\theta^*_{\hat{\mathbb{M}}_n, n}$.
\end{enumerate}
The scenarios we discuss in this section are all concerned with the case $\ell=1$ (the case $\ell>1$ is of interest, e.g., in a regression problem with random design where one observes a data matrix $(y_i, x_{i1}, \dots, x_{ip})_{i=1}^n$ which is a realization of a probability distribution $\P_n$ on the sample space $\R^{n\times (p+1)}$), that is, we observe realizations of a random $n$-vector $Y_n=(Y_{1,n}, \dots, Y_{n,n})'$ defined on some probability space $(\Omega, \mathcal A, \P)$, whose distribution under $\P$ coincides with $\P_n\in\mathbf{P}_n$ (we write $\E$ and $\V$ to denote the expectation and variance-covariance operator with respect to $\P$). In Subsection~\ref{sec:homlinmods}, we consider the case where the candidate set $\mathsf{M}_n$ consists of fixed design homoskedastic linear models. In this framework, the model selection problem is equivalent to a subset-selection problem of regressors. Here, the model-specific target we consider is the coefficient vector of the projection of the mean vector $\mu_n = \E(Y_n)\in\R^n$ onto the model-specific fixed regressor matrix. In such a setup, intervals post-model-selection have also been suggested in \cite{Tib15a}, but for specific model selection methods. Our approach can also be used to obtain confidence intervals in their setup, and requires less assumptions on the set of distributions over which uniformity is achieved and on the design matrices allowed. In Subsection~\ref{sec:hetlinmods} we then discuss the case where $\mathsf{M}_n$ consists of fixed design heteroskedastic linear models. While the model-specific target is the same as in the homoskedastic case, the construction of confidence sets is more complicated as the heteroskedasticity needs to be taken into account. The results of this section can be viewed as an extension of the influential results in \cite{eicker67} to the potentially misspecified, post-model-selection context. Comparable results do not exist to the best of our knowledge. Finally, in Subsection~\ref{sec:binary}, we consider the situation where $\mathsf{M}_n$ consists of binary regression models. We allow for situations where both the regressors and the link function is chosen in a data-driven way. In each candidate model the model-specific target vector is here obtained as a minimizer of the Kullback-Leibler divergence. For numerical results concerning the methods discussed in Sections~\ref{sec:homlinmods} and \ref{sec:binary} see Section~\ref{sec:sim} as well as Section~\ref{app:sim} of the appendix.

\subsection{Inference post-model-selection when fitting fixed design linear models to homoskedastic data}\label{sec:homlinmods}

One important application of our general theory is the case where homoskedastic linear regression models are fit to data. The feasible sets for the true underlying distribution $\mathbb{P}_n$ we can allow for in this setup is denoted as $\mathbf{P}_n^{(\mathrm{lm})}\left(\delta, \tau \right)$, where $\delta > 0$ and $\tau \ge 1$, and is defined as follows: the distribution $\P_n$ of the random $n$-vector $Y_n = (Y_{1,n}, \hdots, Y_{n,n})'$ is an element of $\mathbf{P}_n^{(\mathrm{lm})}\left(\delta, \tau \right)$ if and only if the $n$ coordinates of $Y_n$ are independent, homoskedastic (i.e., the variances of the coordinates are equal to some $\V(Y_{i,n}) = \sigma_n^2 \in (0, \infty)$, for all $i=1,\dots, n$), and 
\begin{equation*}
\max_{i = 1, \hdots, n} \mathbb{E}\left(|Y_{i,n} - \mathbb{E}(Y_{i,n})|^{2+\delta}\right)^{\frac{2}{2+\delta}} \leq \tau \sigma_n^2.
\end{equation*}
%
%
 %are defined as
%%
%\begin{equation}
%\mathbf{P}_n^{(\mathrm{lm})} \left(\delta, \tau \right) := \left\{Q \in \bigotimes_{i = 1}^n M(\R): 
%\begin{array}{l}
%0 < m_{2}(Q_1) = \hdots = m_{2}(Q_n) < \infty \\
%\frac{\max_{i = 1, \hdots, n} m_{2+\delta}(Q_i)^{\frac{2}{2+\delta}}}{m_2(Q_1)} \leq \tau 
%\end{array}
 %\right\}.
%\end{equation}
%
Note that $\mathbf{P}_n^{(\mathrm{lm})} \left(\delta, \tau \right)$ is empty for $\delta>0$ and $\tau< 1$, because then the inequality in the previous display can never be satisfied. Furthermore, observe that $\mathbf{P}_n^{(\mathrm{lm})} \left(\delta, \tau \right)$ contains the set of $n$-variate spherical normal distributions with unrestricted mean vector if 
$$
\mathbf{\Gamma}\left( \frac{3+\delta}{2}\right) \le \left( \tau/2 \right)^{1+\delta/2}\sqrt{\pi},
$$
where $\mathbf{\Gamma}(.)$ denotes the Gamma-function. For such a pair $(\delta,\tau)$ the set $\mathbf{P}_n^{(\mathrm{lm})} \left(\delta, \tau \right)$ thus contains the Gaussian model considered in \citet{Berk13}. Finally note that there is no restriction on the mean vector $\mu_n = \E(Y_n)\in\R^n$ of elements of $\mathbf{P}_n^{(\mathrm{lm})} \left(\delta, \tau \right)$.

We are interested in a situation where one works with candidate sets consisting of homoskedastic linear models. That is, a situation where one wants to conduct  inference on the mean vector $\mu_n$ of the underlying distribution $\mathbb{P}_n$, and it is \emph{assumed} by the practitioner that $\mu_n$ is an element of  $\s(X_n)$, the column span of a design matrix $X_n \in \R^{n \times p}$, with $p$ not depending on $n$, or that $\mu_n$ is at least ``well-approximated'' by an element of that linear space; and that the practitioner \textit{knows} (and takes into account in the construction of the confidence sets) that the observations have identical variances (for a situation where the observations are heteroskedastic see Subsection~\ref{sec:hetlinmods}). In such a situation one then often tries to decide in a data-driven way \textit{which} regressors to use, i.e., one needs to solve a subset-selection problem. We assume that we are given a nonempty set $\mathcal{I} = \{M_1, \hdots, M_d\}$ of nonempty subsets of $\{1, 2, \hdots, p\}$, that does not depend on $n$. Given $M \in \mathcal{I}$ we shall denote by $X_n[M]$ the matrix obtained from $X_n$ by striking all columns whose index is not an element of $M$. We then consider for each $j \in \{1, \hdots, d\}$ a linear, homoskedastic candidate model $\mathbb{M}_{j, n}$ with fixed design $X_n[M_j]$, i.e., the distribution of a random vector $z = (z_1, \hdots, z_n)'$ is an element of $\mathbb{M}_{j, n}$ if and only if there exists a $\beta \in \R^{|M_j|}$ so that the random (residual) vector $z - X_n[M_j]\beta$ has independent, homoskedastic coordinates with mean zero.
%
%\begin{equation}
%\mathbb{M}_{j, n} = \left\{Q \in \bigotimes_{i = 1}^n M(\R):
%\begin{array}{l}
%0 < m_{2}(Q_1) = \hdots = m_{2}(Q_n) < \infty \\[6pt]
%\mu(Q) \in \s(X_n[M_j])
%\end{array} \right\},
%\end{equation}
%%
Our candidate set of models is then given by
\begin{equation*}
\mathsf{M}_n = \left\{\mathbb{M}_{j,n}: j = 1, \hdots, d \right\}.
\end{equation*}
We assume that $X_n$ satisfies the following condition, where we denote the $i$-th row of $X_n$ by $X_{i,n}$:
\renewcommand{\thecond}{X1}
\begin{cond}\label{cond:X} 
Eventually $\rank(X_n) = p$, and for every $M\in\mathcal I$,
\begin{equation*}
\max_{i = 1, \hdots, n} X_{i,n}[M] \left( X_{n}[M]'X_{n}[M]  \right)^{-1} X_{i,n}[M]' \to 0.
\end{equation*}
\end{cond}

\begin{remark}
Condition~\ref{cond:X} particularly holds if $\rank(X_n) = p$, eventually, and $\max_{i=1,\dots, n} X_{i,n}(X_n'X_n)^{-1}X_{i,n}' \to 0$. Moreover, it also holds in case $\|X_{i,n}\|$ is bounded and $\lambda_{\min}(\frac{1}{n} X_n'X_n)$ is bounded away from $0$, which is typically the case in sufficiently balanced factorial designs, but Condition~\ref{cond:X} is obviously much more general. For example, it also covers the important cases of polynomial regressors, trigonometric regressors, or mixed polynomial and trigonometric regressors (cf. the discussion in \cite{eicker67}, pp. 64).
Finally, we point out that the condition
\begin{equation*}
\max_{i = 1, \hdots, n} X_{i,n}[M] \left( X_{n}[M]'X_{n}[M]  \right)^{-1} X_{i,n}[M]' \to 0
\end{equation*}
is classical, and is necessary for asymptotic normality of the ordinary-least-squares estimator in the fixed model $\M$ \citep[see][]{Huber73, Arnold80}.

\end{remark}

The model-specific target of inference is then (eventually) defined as follows: Given $\mathbb{M}\in \mathsf{M}_n$ with a corresponding index set $M$, we let
\begin{equation}\label{eqn:targethom}
\beta_{\mathbb{M}, n}^* = \beta_{\mathbb{M}, n}^*(\mathbb{P}_n) = \left(X_n[M]'X_n[M]\right)^{-1} X_n[M]'  \mu_n,
\end{equation}
i.e., $\beta_{\M,n}^*$ is the coefficient vector corresponding to the orthogonal projection of $\mu_n$ onto $\s(X_n[M])$. 

We shall now describe how asymptotically uniformly valid confidence sets can be constructed post-model-selection for the target defined in Equation \eqref{eqn:targethom} above: Given $\mathbb{M} \in \mathsf{M}_n$ with index set $M$, we estimate the corresponding target by the model-specific ordinary-least-squares estimator, i.e., by
\begin{equation}\label{eq:OLS}
\hat{\beta}_{\mathbb{M}, n}(y) = \left(X_n[M]'X_n[M]\right)^{-1} X_n[M]'y;
\end{equation}
let
\begin{equation*}
\hat{\sigma}^2_{\mathbb{M}, n}(y) = \frac{1}{n- m(\mathbb{M})} \sum_{i = 1}^n (y_i - X_{i,n}[M] \hat{\beta}_{\mathbb{M}, n}(y) )^2,
\end{equation*}
where $m(\mathbb{M})$ here coincides with $|M|$, the cardinality of $M$, and define for $\alpha \in (0, 1)$ and $j = 1, \hdots, m(\mathbb{M})$
\begin{equation}\label{eq:POSICIlm}
\mathrm{CI}_{1-\alpha, \mathbb{M}}^{(j), \mathrm{lm}} = \hat{\beta}^{(j)}_{\mathbb{M}, n} \pm  \sqrt{\hat{\sigma}^2_{\mathbb{M}, n} \left[\left( X_n[M]'X_n[M]  \right)^{-1}\right]_j} K_{1-\alpha}(\corr(\Gamma_n)), 
\end{equation}
where the block-matrix $\Gamma_n$ is defined via its $s,t$-th block of dimension $|M_s|\times|M_t|$ given by
\begin{align*}
&\E_n\left[ \left(\hat{\beta}_{\M_s,n} - \beta^*_{\M_s,n}\right) \left(\hat{\beta}_{\M_t,n} - \beta^*_{\M_t,n}\right)'  \right] \\
&\quad= \sigma_n^2\left( X_n[M_s]' X_n[M_s] \right)^{-1} X_n[M_s]' X_n[M_t] \left( X_n[M_t]' X_n[M_t] \right)^{-1}, 
\end{align*}
for $s, t \in \{1, \hdots, d\}$. Note that while $\Gamma_n$ depends on $\sigma_n^2$, $\corr(\Gamma_n)$ is observed. Essentially, the construction in \eqref{eq:POSICIlm} coincides with the confidence intervals of \citet{Berk13}. However, there are two major differences. First of all, we here do not assume that the data are Gaussian, which is why we resort to asymptotic results. This is also the reason why our constant $K_{1-\alpha}$, the so called POSI constant, is the quantile of a maximum of Gaussian rather than t-distributed random variables, as is the case in \citet{Berk13}. Furthermore, we simply use the usual variance estimator $\hat{\sigma}^2_{\mathbb{M}, n}$ which, in general, is not unbiased or uniformly consistent (due to potential misspecification) as required in \citet{Berk13}, but we still obtain uniformly valid inference asymptotically. This shows that the restrictive assumption of \citet{Berk13}, that there exists an unbiased or a uniformly consistent estimator for $\sigma_n^2$ (cf. Proposition~\ref{prop:no:consistent:sigma} below, as well as the discussion in Remark~2.1 of \cite{leeb13various} and in Appendix~A of \cite{bachoc14valid}), is not needed for uniform asymptotic validity. If the estimator $\hat{\sigma}^2_{\mathbb{M}, n}$ is used in the construction of \citet{Berk13}, then their confidence intervals asymptotically coincide with our procedure.
We also point out that the classical variance estimator used here adapts to misspecification in the sense that it is consistent for $\sigma_n^2$ if a first order correct model is selected and it otherwise overestimates the target in the sense of Section~\ref{sec:overest} (cf. Remark~\ref{rem:adaptive} and Subsection~\ref{sec:homlinmods:varest} for details). 
 
It is also worth noting that up to the choice of the last multiplicative factor $K_{1-\alpha}(\corr(\Gamma_n))$ in the definition of the confidence intervals above, i.e., the POSI constant, this is just the usual confidence interval for the $j$-th coordinate of the coefficient vector one would typically use in practice working with homoskedastic linear models, and by following the naive way of ignoring the data-driven model selection step. The crucial difference, however, is that the naive approach is invalid \citep[see, e.g.,][]{leeb13various, bachoc14valid}.

We now present the main result of this subsection, where we emphasize once more that the (measurable) model selection procedure $\hat{\M}_n$ is data-driven and unrestricted, and that some, or all of the candidate models in $\mathsf{M}_n$ may be misspecified, i.e., $\mathbf{P}_n^{(\mathrm{lm})} \left(\delta, \tau \right) \not \subseteq \bigcup_{\M\in\mathsf{M}_n}\M$. Nevertheless it is possible to construct an asymptotically uniformly valid confidence set for the model-specific target vector $\beta_{\hat{\M}_n,n}^*$. 
\begin{theorem}\label{thm:homlinmods}
Let $\alpha \in (0, 1)$, $\delta > 0$ and $\tau \ge 1$, suppose Condition~\ref{cond:X} holds, and let $\hat{\mathbb{M}}_n$ be a (measurable) model selection procedure, i.e., a measurable map from the sample space $\R^n$ to $\mathsf{M}_n$. Then
\begin{equation*}
\liminf_{n \to \infty} \inf_{\mathbb{P}_n \in \mathbf{P}_n^{(\mathrm{lm})} \left(\delta, \tau \right)} \mathbb{P}_n \left(\beta_{\hat{\mathbb{M}}_n, n}^{* (j)} \in \mathrm{CI}_{1-\alpha, \hat{\mathbb{M}}_n}^{(j), \mathrm{lm}} \text{ for all } j = 1, \hdots, m(\hat{\mathbb{M}}_n)  \right) \geq 1-\alpha.
\end{equation*}
\end{theorem}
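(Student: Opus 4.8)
The plan is to deduce the theorem from the abstract Theorem~\ref{thm:cons}, applied coordinatewise to the stacked vector of the $d$ model-specific OLS estimators. First I would record the exact linear representation
\[
\hat\beta_{\M,n} - \beta^*_{\M,n} = (X_n[M]'X_n[M])^{-1}X_n[M]'(Y_n-\mu_n),
\]
which shows that Condition~\ref{cond:sum} holds with remainder $\Delta_n\equiv 0$ and with $g_{i,n}(Y_{i,n})$ the stacked vector whose block corresponding to $\M_j$ equals $(X_n[M_j]'X_n[M_j])^{-1}X_{i,n}[M_j]'(Y_{i,n}-\mu_{i,n})$; each coordinate is a centered scalar multiple of $Y_{i,n}-\mu_{i,n}$, so the mean-zero requirement in \eqref{eq:mom} is immediate. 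A direct computation gives $\V_n(r_n)=\Gamma_n$, the block matrix defined above \eqref{eq:POSICIlm}; in particular $\corr(\V_n(r_n))=\corr(\Gamma_n)$, so the critical value $K_{1-\alpha}(\corr(\Gamma_n))$ used in \eqref{eq:POSICIlm} equals $K_{1-\alpha}(\corr(\V_n(r_n)))$ exactly (and is free of the unknown $\sigma_n^2$). Setting $\hat\nu^2_{\rho(\M)+j,n}=\hat\sigma^2_{\M,n}[(X_n[M]'X_n[M])^{-1}]_j$ and $\hat K_n=K_{1-\alpha}(\corr(\Gamma_n))$, the intervals \eqref{eq:POSICIlm} are exactly those of Theorem~\ref{thm:cons}, and since $\hat K_n=K_{1-\alpha}(\corr(\V_n(r_n)))$ we land in its special case, where it suffices to verify \eqref{eqn:special2}. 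To pass from a single-sequence statement to the uniform $\inf_{\P_n}$ statement, I would use the standard subsequence argument: were the uniform liminf below $1-\alpha$, one extracts near-minimizing distributions along a subsequence, completes them to a full sequence $\P_n\in\mathbf P_n^{(\mathrm{lm})}(\delta,\tau)$, and contradicts the single-sequence conclusion; this is legitimate precisely because the hypotheses below hold for \emph{every} sequence in the class, with rates depending only on $(\delta,\tau)$ and the design.

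Next I would check the marginal Lindeberg condition \eqref{eq:linde}. For a fixed coordinate, writing $a_n$ for the corresponding row of $(X_n[M]'X_n[M])^{-1}X_n[M]'$, one has $g_{i,n}^{(\cdot)}=a_{n,i}(Y_{i,n}-\mu_{i,n})$ and $\V_n(r_n^{(\cdot)})=\sigma_n^2\|a_n\|^2$. A Lyapunov bound together with the moment constraint $\E|Y_{i,n}-\mu_{i,n}|^{2+\delta}\le(\tau\sigma_n^2)^{(2+\delta)/2}$ gives
\[
\frac{\sum_i \E\big[(g_{i,n}^{(\cdot)})^2\mathbbm{1}\{|g_{i,n}^{(\cdot)}|\ge\eps\V_n^{1/2}(r_n^{(\cdot)})\}\big]}{\V_n(r_n^{(\cdot)})}
\le \eps^{-\delta}\tau^{(2+\delta)/2}\Big(\tfrac{\max_i|a_{n,i}|}{\|a_n\|}\Big)^{\delta},
\]
and Cauchy--Schwarz in the inner product induced by $(X_n[M]'X_n[M])^{-1}$ bounds $\max_i a_{n,i}^2/\|a_n\|^2$ by $\max_i X_{i,n}[M](X_n[M]'X_n[M])^{-1}X_{i,n}[M]'$, which tends to $0$ by Condition~\ref{cond:X}. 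Since $\sigma_n^2$ cancels, the bound is uniform over the class (the equivalent formulation in Remark~\ref{rem:lindeequiv} could be used instead).

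It then remains to establish \eqref{eqn:special2}. Because $[\V_n(r_n)]_{\rho(\M)+j}=\sigma_n^2[(X_n[M]'X_n[M])^{-1}]_j$ and $\hat\nu^2_{\rho(\M)+j,n}=\hat\sigma^2_{\M,n}[(X_n[M]'X_n[M])^{-1}]_j$, the ratio in \eqref{eqn:special2} collapses to $\sigma_n^2/\hat\sigma^2_{\M,n}$, so I must show that $\hat\sigma^2_{\M,n}$ asymptotically overestimates $\sigma_n^2$, i.e. $\P_n(\hat\sigma^2_{\M,n}/\sigma_n^2 < c)\to 0$ for every $c<1$ and each of the finitely many models $\M$ (then a union bound finishes). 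Writing $H_n[M]$ for the hat matrix, $m=m(\M)$, and $u_n=(I-H_n[M])\mu_n$, I would decompose
\[
(n-m)\hat\sigma^2_{\M,n}=\|(I-H_n[M])(Y_n-\mu_n)\|^2+2\langle u_n,Y_n-\mu_n\rangle+\|u_n\|^2.
\]
The normalized leading term $A_n=\|(I-H_n[M])(Y_n-\mu_n)\|^2/((n-m)\sigma_n^2)$ converges to $1$ in $\P_n$-probability by a uniform weak law of large numbers for $\sum_i(Y_{i,n}-\mu_{i,n})^2/\sigma_n^2$ (whose summands have $\P_n$-uniformly bounded $(1+\delta/2)$-th moments, hence are uniformly integrable) together with $m/n\to 0$; the bias term $\|u_n\|^2\ge 0$ only helps; and the cross term $C_n$ has mean zero with $\E_n[(C_n/b_n^2)^2]=4/((n-m)b_n^2)$, where $b_n^2=\|u_n\|^2/((n-m)\sigma_n^2)$. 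Splitting according to whether $b_n^2<1$ (so $C_n\to 0$ in probability) or $b_n^2\ge 1$ (so $C_n/b_n^2\to 0$ in probability and $b_n^2$ dominates), one gets $\hat\sigma^2_{\M,n}/\sigma_n^2\ge A_n-o_{\P_n}(1)\to 1$ in either case, which yields \eqref{eqn:special2}.

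The main obstacle is exactly this last step: controlling $\hat\sigma^2_{\M,n}$ under \emph{arbitrarily large} misspecification bias $\|u_n\|$ and \emph{without} any compactness restriction on $\sigma_n^2$ or moment bounds beyond the $(2+\delta)$-th one. The two delicate points are (i) the uniform weak law for $A_n$ with only $(1+\delta/2)$ moments, which requires a truncation argument rather than a mere variance bound, and (ii) the cross term, for which a naive Chebyshev bound fails when $b_n^2$ grows like $n$; the remedy is to measure the fluctuation of $C_n$ relative to $b_n^2$ instead of on an absolute scale. Everything else---the representation, the identity $\V_n(r_n)=\Gamma_n$, and the Lindeberg verification---is routine given Condition~\ref{cond:X}.
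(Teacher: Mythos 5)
Your proposal is correct, and at the top level it follows the paper's route exactly: verify Condition~\ref{cond:sum} with $\Delta_n\equiv 0$ via the exact OLS representation, check the marginal Lindeberg condition using the $(2+\delta)$-moment bound and the leverage condition in Condition~\ref{cond:X} (you use a Lyapunov-type bound where the paper uses H\"older plus Markov, but both collapse to $\max_i X_{i,n}[M](X_n[M]'X_n[M])^{-1}X_{i,n}[M]'\to 0$ with constants depending only on $\tau,\delta,\eps$, hence uniformly over the class), note $\V_n(r_n)=\sigma_n^2\Gamma_n$ so that $K_{1-\alpha}(\corr(\Gamma_n))=K_{1-\alpha}(\corr(\V_n(r_n)))$ and the ratio in \eqref{eqn:special2} collapses to $\sigma_n^2/\hat{\sigma}^2_{\M,n}$, and then invoke the special case of Theorem~\ref{thm:cons} and the arbitrariness of the sequence $\P_n\in\mathbf P_n^{(\mathrm{lm})}(\delta,\tau)$. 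Where you genuinely diverge is the variance-estimator step. The paper delegates this to Proposition~\ref{prop:lmvarest}, which proves the \emph{two-sided} statement $\hat{\sigma}^2_{\M,n}/\E_n[\hat{\sigma}^2_{\M,n}]\to 1$ in $\P_n$-probability: it splits the quadratic form $v_n'H_Mv_n/(n-m)$ into its diagonal part (handled by truncation, as you anticipate) and off-diagonal part (handled by the variance bound $\trace(H_M^2)/(n-m)^2=(n-m)^{-1}$), and controls the cross term in one shot by maximizing the variance bound $\|H_M\mu_n/\sigma_n\|^2/(\|H_M\mu_n/\sigma_n\|^4+(n-m)^2)\le 1/(2(n-m))$ over the bias, which is the same "fluctuation relative to a denominator that includes the bias" idea as your case split on $b_n^2\lessgtr 1$. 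Your treatment of the leading term is a nice simplification: instead of the diagonal/off-diagonal split, you write $v_n'(I-P)v_n=v_n'v_n-v_n'Pv_n$ and discard the nonnegative term $v_n'Pv_n$, which has fixed mean $m$ and is thus $o_{\P_n}(n)$ by Markov, so only the plain WLLN for $\sum_i v_{i,n}^2/n$ (via truncation under the uniform $(2+\delta)$-moment bound) is needed. The trade-off is that you obtain only the one-sided overestimation, which indeed suffices for \eqref{eqn:special2} and the theorem, whereas the paper's two-sided Proposition~\ref{prop:lmvarest} is reused elsewhere (Subsection~\ref{sec:homlinmods:varest}) to show the intervals adapt to correct specification; also, minor bookkeeping you leave implicit — positivity of $\V_n(r_n^{(j)})$ from $\rank(X_n)=p$ eventually, positivity of the POSI constant, and the union bound over the finitely many models — is harmless and handled the same way in the paper.
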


\subsubsection{Coverage of individual parameters}\label{sec:individual}

The statement in Theorem \ref{thm:homlinmods} concerns simultaneous coverage of all coefficients of the model-dependent target parameter. In some applications, it may be of interest to construct confidence intervals only for single coefficients, i.e., coefficients corresponding to a certain regressor. Of course, as simultaneous coverage implies individual coverage, the confidence intervals in the previous section achieve this goal a fortiori. However, shorter confidence intervals can be constructed if one only wants to achieve individual coverage. This is discussed subsequently. Consider the case where one wants to conduct inference on the first column vector of $X_n$ (otherwise just re-arrange the columns of $X_n$). Then, the candidate sets $M_1, \hdots, M_d$ one works with, will all necessarily include the first column of $X_n$, i.e., $1\in M_j$ for all $j=1,\dots, d$. Whereas the candidate models in $\mathsf{M}_n$ are still defined as above, the model-specific target vector changes: Now, for every $\mathbb{M} \in \mathsf{M}_n$ the target is the first coordinate of the vector $\beta^*_{\mathbb{M}, n}$ defined in \eqref{eqn:targethom}, which we denote as $\eta^*_{\mathbb{M}, n}$. Furthermore, in each model $\mathbb{M} \in \mathsf{M}_n$ we estimate this target by $\hat{\eta}_{\mathbb{M}, n}$, the first coordinate of the model-specific OLS estimator defined in \eqref{eq:OLS}. Finally, to define our confidence intervals in this case, let $\Xi_n$ denote the $d \times d$-dimensional matrix with $s,t$-th entry given by
\begin{equation*}
\sigma_n^2\left[ \left( X_n[M_s]' X_n[M_s] \right)^{-1} X_n[M_s]' X_n[M_t] \left( X_n[M_t]' X_n[M_t] \right)^{-1} \right]_{1}.
\end{equation*}
Given $\alpha \in (0, 1)$, define for every $\mathbb{M} \in \mathsf{M}_n$ with corresponding index set $M$ the confidence interval
\begin{equation*}
\mathrm{CI}_{1-\alpha, \mathbb{M}} = \hat{\eta}_{\mathbb{M}, n} \pm \sqrt{\hat{\sigma}^2_{\mathbb{M}, n} \left[ (X_n[M]'X_n[M])^{-1} \right]_1} K_{1-\alpha}(\mathrm{corr}(\Xi_n)).
\end{equation*}
Note that, apart from the choice of the POSI constant, the confidence interval $\mathrm{CI}_{1-\alpha, \mathbb{M}}$ is identical to $\mathrm{CI}^{(1), \mathrm{lm}}_{1-\alpha, \mathbb{M}}$. While the latter interval is based on $K_{1-\alpha}(\mathrm{corr}(\Gamma_n))$, the POSI constant used here is $K_{1-\alpha}(\mathrm{corr}(\Xi_n))$. From the definition of $K_{1-\alpha}$, together with the relationship of $\Xi_n$ and $\Gamma_n$, it follows immediately that 
$$K_{1-\alpha}(\mathrm{corr}(\Xi_n)) \leq K_{1-\alpha}(\mathrm{corr}(\Gamma_n)),$$  
with equality holding only if $\mathcal{I} = \{\{1\}\}$, i.e., no model selection. That is, the confidence intervals for individual parameters constructed here are smaller than the ones guaranteeing simultaneous coverage. The following can now be said about their asymptotic coverage properties.
\begin{theorem}\label{thm:homlinmodsindi}
Let $\alpha \in (0, 1)$, $\delta > 0$ and $\tau \ge 1$, suppose Condition~\ref{cond:X} holds, and let $\hat{\mathbb{M}}_n$ be a (measurable) model selection procedure. Suppose every element of $\mathcal{I}$ contains $1$. Then
\begin{equation*}
\liminf_{n \to \infty} 
\inf_{\mathbb{P}_n \in \mathbf{P}_n^{(\mathrm{lm})} \left(\delta, \tau \right)} \mathbb{P}_n \left(\eta^*_{\hat{\mathbb{M}}_n, n} \in  \mathrm{CI}_{1-\alpha, \hat{\mathbb{M}}_n} \right) \geq 1-\alpha.
\end{equation*}
\end{theorem}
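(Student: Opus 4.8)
The plan is to read Theorem~\ref{thm:homlinmodsindi} as a direct application of the overestimation machinery of Theorem~\ref{thm:cons} to the \emph{reduced} $d$-dimensional problem in which only the first coordinate is retained from each candidate model, paralleling the proof of Theorem~\ref{thm:homlinmods}. Concretely, I would set $k=d$ and, for $j=1,\dots,d$, introduce the scalar weights $w_{i,n}^{(j)}=e_1'(X_n[M_j]'X_n[M_j])^{-1}X_{i,n}[M_j]'$ and summands $g_{i,n}^{(j)}=w_{i,n}^{(j)}(Y_{i,n}-\E(Y_{i,n}))$, each depending only on the $i$-th row. Since OLS is linear, the expansion \eqref{eq:repres} holds \emph{exactly} with $r_n^{(j)}=\hat\eta_{\M_j,n}-\eta^*_{\M_j,n}=\sum_{i=1}^n g_{i,n}^{(j)}$ and remainder $\Delta_n\equiv 0$; moreover, by the definition of $\Xi_n$ one has $\V_n(r_n)=\Xi_n$, so the (scale-free, hence computable) POSI-constant $K_{1-\alpha}(\corr(\Xi_n))$ appearing in $\mathrm{CI}_{1-\alpha,\M}$ equals $K_{1-\alpha}(\corr(\V_n(r_n)))$. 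With $\hat\nu^2_{j,n}=\hat\sigma^2_{\M_j,n}[(X_n[M_j]'X_n[M_j])^{-1}]_1$ and $\hat K_n=K_{1-\alpha}(\corr(\Xi_n))$, the interval $\mathrm{CI}_{1-\alpha,\M_j}$ is exactly the interval $\mathrm{CI}^{(j),\mathrm{oest}}_{1-\alpha,\M_j}$ of Theorem~\ref{thm:cons}; since $\hat K_n=K_{1-\alpha}(\corr(\V_n(r_n)))$ trivially satisfies the eventual lower bound required for its special case, it remains only to verify Condition~\ref{cond:sum} and the overestimation condition \eqref{eqn:special2}.

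Because the claim is uniform over the noncompact class $\mathbf{P}_n^{(\mathrm{lm})}(\delta,\tau)$, I would first reduce to a pointwise statement by a subsequence argument: were the $\liminf\inf$ strictly below $1-\alpha$, one could extract a subsequence and a sequence $\P_n\in\mathbf{P}_n^{(\mathrm{lm})}(\delta,\tau)$ along which coverage stays bounded away from $1-\alpha$, and it suffices to contradict this by checking the hypotheses of Theorem~\ref{thm:cons} for that fixed sequence. For Condition~\ref{cond:sum} the moment identities \eqref{eq:mom} are immediate ($\E_{i,n}g_{i,n}^{(j)}=0$ and $\V_n(r_n^{(j)})=\sigma_n^2[(X_n[M_j]'X_n[M_j])^{-1}]_1\in(0,\infty)$) and $\Delta_n\equiv 0$ trivializes the remainder, so the only genuine work is the marginal Lindeberg condition \eqref{eq:linde}. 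I would establish it by a Lyapunov bound: using $\E|Y_{i,n}-\E Y_{i,n}|^{2+\delta}\le(\tau\sigma_n^2)^{1+\delta/2}$, the normalized Lindeberg sum is dominated by $\tau^{1+\delta/2}\eps^{-\delta}\bigl(\max_i (w_{i,n}^{(j)})^2/\sum_i (w_{i,n}^{(j)})^2\bigr)^{\delta/2}$, in which $\sigma_n^2$ cancels, and a Cauchy--Schwarz estimate bounds the displayed ratio by the maximal leverage $\max_i X_{i,n}[M_j](X_n[M_j]'X_n[M_j])^{-1}X_{i,n}[M_j]'$, which vanishes by Condition~\ref{cond:X}. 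The cancellation of $\sigma_n^2$ and the dependence of the bound on $(\delta,\tau)$ and the design \emph{only} is precisely what delivers uniformity.

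It remains to check \eqref{eqn:special2}, which with the above $\hat\nu^2_{j,n}$ collapses to $\P_n(\hat\sigma^2_{\M_j,n}/\sigma_n^2\le(1+\eps)^{-2})\to 0$, i.e.\ a \emph{one-sided} concentration of the residual variance from below. Writing $Y_n=\mu_n+\sigma_n\xi$ with $\V(\xi_i)=1$ and projecting off $\s(X_n[M_j])$ via $P_{M_j}$ yields, with $m=|M_j|$,
\begin{equation*}
\frac{\hat\sigma^2_{\M_j,n}}{\sigma_n^2}=\frac{1}{n-m}\Bigl(\beta_n+2\langle v,\xi\rangle+\|(I-P_{M_j})\xi\|^2\Bigr),\qquad v=\frac{(I-P_{M_j})\mu_n}{\sigma_n},\quad \beta_n=\|v\|^2\ge 0,
\end{equation*}
where the scaled squared bias $\beta_n$ may be unbounded. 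The diagonal entries of $I-P_{M_j}$ equal $1$ minus the leverages, hence tend to $1$ uniformly by Condition~\ref{cond:X}, so a triangular-array weak law (valid under the uniform $(1+\delta/2)$-moment bound on $\xi_i^2$) gives $\|(I-P_{M_j})\xi\|^2/(n-m)\to 1$; the off-diagonal quadratic form has variance at most $2(n-m)$ and is negligible after scaling, needing only second moments. Finally the cross term is controlled by Chebyshev: its variance is $4\beta_n/(n-m)^2$, and since $\beta_n=\gamma_n(n-m)$ with $\gamma_n\ge 0$, an AM--GM estimate gives
\begin{equation*}
\P_n\Bigl(\tfrac{\beta_n}{n-m}+\tfrac{2\langle v,\xi\rangle}{n-m}<-\eps\Bigr)\le\frac{4\gamma_n/(n-m)}{(\gamma_n+\eps)^2}\le\frac{1}{(n-m)\eps}\to 0
\end{equation*}
\emph{regardless} of the size of $\beta_n$. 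Combining the three contributions yields $\hat\sigma^2_{\M_j,n}/\sigma_n^2\ge 1-o_{\P_n}(1)$, the required overestimation, and Theorem~\ref{thm:cons} then delivers the asserted coverage for $\eta^*_{\hat\M_n,n}$.

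The step I expect to be most delicate is this last one, namely the one-sided concentration of $\hat\sigma^2_{\M_j,n}$ \emph{uniformly} over the feasible class, because both $\mu_n$ (hence the bias $\beta_n$) and $\sigma_n^2$ are entirely unrestricted and only $(2+\delta)$ --- not fourth --- moments are available. The device that makes it work is to leave the nonnegative bias term intact rather than to bound it, so that a large bias can only \emph{help} the overestimation, and to route the residual fluctuation through the AM--GM/Chebyshev estimate above; together with the leverage control of Condition~\ref{cond:X} this simultaneously handles the noncompactness of the parameter set and the absence of higher moments.
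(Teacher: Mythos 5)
Your proposal is correct and follows essentially the same route as the paper: the paper proves this theorem by rerunning the proof of Theorem~\ref{thm:homlinmods} with $j=1$ and the stacked vector $r_n=(r_{n,\M_1}^{(1)},\dots,r_{n,\M_d}^{(1)})'$, i.e.\ verifying Condition~\ref{cond:sum} via the leverage-driven marginal Lindeberg bound in which $\sigma_n^2$ cancels, noting that $\corr(\Xi_n)=\corr(\V_n(r_n))$ is observed so that $\hat{K}_n$ equals the true POSI constant, and invoking the special case of Theorem~\ref{thm:cons} with the overestimation property of $\hat{\sigma}^2_{\M,n}$ supplied by Proposition~\ref{prop:lmvarest}. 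Your only deviations are cosmetic rather than structural: a Lyapunov truncation in place of the paper's H\"older--Markov step for \eqref{eq:linde}, an explicit subsequence reduction where the paper argues along an arbitrary sequence $\P_n$, and a direct one-sided AM--GM/Chebyshev bound on the bias-plus-cross-term where the paper establishes the stronger two-sided statement of Proposition~\ref{prop:lmvarest} and specializes it.
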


We finally mention that similar arguments can be used to construct confidence intervals for model-dependent linear combinations of regression coefficients (i.e., contrasts). Furthermore, analogous constructions can be used to obtain confidence intervals for individual coefficients (or, more generally, model-dependent contrasts) in the examples discussed in Sections~\ref{sec:hetlinmods} and \ref{sec:binary} below. Due to space constraints we do not provide details.

\subsubsection{The POSI-intervals automatically adapt to misspecification}
\label{sec:homlinmods:varest}

Let us for a moment forget about the model selection step and consider the classical construction of confidence intervals for $\beta_{\M,n}^*$ based on the asymptotic normality of $\hat{\beta}_{\M,n}$, for $\M\in\mathsf{M}_n$ fixed. 
It is well known that the usual variance estimator $\hat{\sigma}_{\M,n}^2$ in model $\M\in\mathsf{M}_n$ with index set $M\in\mathcal I$ is consistent for the true error variance $\sigma_n^2$, if and only if, the model $\M$ is asymptotically first order correct. More specifically, the estimator $\hat{\sigma}_{\M,n}^2$ is upward biased when $\M$ is mean-misspecified and the bias is given by $\|(I_n - P_{X_n[M]})\mu_n/\sigma_n\|^2/(n-|M|)$, where $P_{\dots}$ denotes the projection matrix corresponding to the column span of the matrix indicated in the subscript and where $\mu_n = \E(Y_n)$. Consequently, using $\hat{\sigma}_{\M,n}^2$ in the construction of confidence intervals for $\beta_{\M,n}^*$ leads to `conservative' inference in case $\M$ is misspecified, but the resulting inference is still valid. Moreover, if $\M$ is correct then the resulting inference on $\beta_{\M,n}^*$ is also asymptotically efficient in the sense that the obtained intervals have minimal asymptotic length (they coincide with the infeasible intervals calculated with knowledge of $\sigma_n^2$). 

If the working model is now selected by a data dependent selection procedure $\hat{\M}_n : \R^n \to \mathsf{M}_n$, the POSI-intervals suggested in \eqref{eq:POSICIlm} involve the post-model-selection estimator $\hat{\sigma}_{\hat{\M}_n,n}^2$ and we can again ask the question about its consistency properties. Clearly, consistency now depends on the existence of a correct model in the class $\mathsf{M}_n$ of candidate models as well as on the ability of $\hat{\M}_n$ to identify one such correct model. The following proposition provides a precise quantitative formulation of this claim (see Section~\ref{sec:lmvarest} in the appendix for the proof). A similar statement is also discussed in \citet[][Theorem~3.6 and Lemma~C.2]{bachoc14valid}.

\begin{proposition}\label{prop:lmvarest}
Fix $\delta>0$ and $\tau\ge 1$ and suppose that eventually $\rank(X_n)=p$. For any sequence $\P_n \in\mathbf{P}_n^{(\mathrm{lm})}(\delta,\tau)$, the variance estimator $\hat{\sigma}_{\M,n}^2(y) = (n-|M|)^{-1}y'(I_n-P_{X_n[M]})y$ satisfies
\begin{align*}
\P_n\left( \left|\frac{\hat{\sigma}_{\M,n}^2}{\sigma_n^2} \left(1 + \frac{\|(I_n-P_{X_n[M]})\mu_n/\sigma_n\|^2}{n-|M|}\right)^{-1} - 1\right| > \eps \right)
\;\to\;0,
\end{align*}
for every $\eps>0$ and for every $M\in\mathcal I$.
\end{proposition}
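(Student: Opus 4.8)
The plan is to work eventually on the event where $\rank(X_n)=p$ (so that finitely many $n$ are irrelevant for the asymptotic statement), to set $Q_n := I_n - P_{X_n[M]}$, a symmetric idempotent matrix with $\trace(Q_n)=\rank(Q_n)=n-|M|\to\infty$, and to decompose $Y_n=\mu_n+\eps_n$ with $\eps_n := Y_n-\mu_n$ having independent, centered coordinates of common variance $\sigma_n^2$. Abbreviating $b_n := \|Q_n\mu_n\|^2/\sigma_n^2$ (so the normalizing factor equals $1+b_n/(n-|M|)$) and using $Q_n^2=Q_n$, I would expand
\begin{equation*}
(n-|M|)\,\frac{\hat\sigma_{\M,n}^2}{\sigma_n^2} \;=\; \frac{Y_n'Q_nY_n}{\sigma_n^2} \;=\; b_n + 2\,\frac{\mu_n'Q_n\eps_n}{\sigma_n^2} + \frac{\eps_n'Q_n\eps_n}{\sigma_n^2}.
\end{equation*}
Since $\E_n(\eps_n'Q_n\eps_n)=\sigma_n^2\trace(Q_n)=\sigma_n^2(n-|M|)$, dividing by $n-|M|+b_n$ reduces the claim to
\begin{equation*}
\frac{1}{n-|M|+b_n}\left( 2\,\frac{\mu_n'Q_n\eps_n}{\sigma_n^2} + \frac{\eps_n'Q_n\eps_n}{\sigma_n^2} - (n-|M|) \right) \xrightarrow{\ \P_n\ } 0,
\end{equation*}
and, because $n-|M|+b_n\ge n-|M|\to\infty$, it suffices to control each contribution after dividing merely by $n-|M|$ (exploiting $b_n$ in the denominator only for the linear term).

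For the linear (cross) term $A_n:=\mu_n'Q_n\eps_n/\sigma_n^2=(Q_n\mu_n)'\eps_n/\sigma_n^2$ I would compute $\V_n(A_n)=\|Q_n\mu_n\|^2/\sigma_n^2=b_n$ and apply Chebyshev, using the elementary bound $b_n/(n-|M|+b_n)^2\le 1/(n-|M|)$, to obtain $2A_n/(n-|M|+b_n)\to0$; only second moments enter here. For the quadratic term I would split
\begin{equation*}
\frac{\eps_n'Q_n\eps_n}{\sigma_n^2}-(n-|M|) \;=\; \sum_{i=1}^n (Q_n)_{ii}\,\xi_{i,n} \;+\; \frac{1}{\sigma_n^2}\sum_{i\ne j}(Q_n)_{ij}\,\eps_{i,n}\eps_{j,n}, \qquad \xi_{i,n}:=\frac{\eps_{i,n}^2}{\sigma_n^2}-1,
\end{equation*}
using $\sum_i(Q_n)_{ii}=n-|M|$. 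The off-diagonal part $T_n$ again needs only second moments: by independence and centeredness $\E_n(T_n)=0$ and $\E_n(T_n^2)=2\sigma_n^4\sum_{i\ne j}(Q_n)_{ij}^2\le 2\sigma_n^4\trace(Q_n^2)=2\sigma_n^4(n-|M|)$, so Chebyshev yields $T_n/(\sigma_n^2(n-|M|))\to0$.

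The main obstacle is the diagonal part $\sum_i (Q_n)_{ii}\xi_{i,n}$: bounding it through its variance would require a uniform bound on $\E_n(\eps_{i,n}^4)$, which $\mathbf{P}_n^{(\mathrm{lm})}(\delta,\tau)$ does not provide when $\delta<2$. Instead I would use only the moment actually available, namely $\E_n|\xi_{i,n}|^{1+\delta/2}\le c(\delta,\tau)$ for a constant depending on $(\delta,\tau)$ alone (which follows from $\E_n(|\eps_{i,n}|^{2+\delta})\le(\tau\sigma_n^2)^{(2+\delta)/2}$ and the $c_r$-inequality), and invoke a weak law for triangular arrays with $(1+\rho)$-moments, $\rho:=\min(\delta/2,1)\in(0,1]$. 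Concretely, the von Bahr--Esseen inequality with exponent $p=1+\rho\in(1,2]$ gives
\begin{equation*}
\E_n\Big|\sum_i (Q_n)_{ii}\,\xi_{i,n}\Big|^{p}\;\le\; 2\sum_i (Q_n)_{ii}^{\,p}\,\E_n|\xi_{i,n}|^{p}\;\le\; 2\,c(\delta,\tau)\sum_i(Q_n)_{ii}\;=\;2\,c(\delta,\tau)\,(n-|M|),
\end{equation*}
where I used $0\le(Q_n)_{ii}\le1$ so that $(Q_n)_{ii}^{\,p}\le(Q_n)_{ii}$; Markov's inequality then forces $(n-|M|)^{-1}\sum_i(Q_n)_{ii}\xi_{i,n}\to0$ in $\P_n$-probability at rate $(n-|M|)^{-(p-1)}$. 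Combining the three bounds establishes the displayed convergence, hence the proposition; since all constants depend only on $(\delta,\tau)$, the argument in fact delivers convergence uniformly over $\mathbf{P}_n^{(\mathrm{lm})}(\delta,\tau)$.
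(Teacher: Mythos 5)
Your proposal is correct, and its skeleton coincides with the paper's proof: the same decomposition $(n-|M|)\,\hat{\sigma}_{\M,n}^2/\sigma_n^2 = b_n + 2\mu_n'Q_n\eps_n/\sigma_n^2 + \eps_n'Q_n\eps_n/\sigma_n^2$, the same Chebyshev treatment of the cross term (the paper bounds the variance ratio $\|Q_n\mu_n/\sigma_n\|^2/(\|Q_n\mu_n/\sigma_n\|^4+(n-|M|)^2)$ by maximizing over $\|Q_n\mu_n/\sigma_n\|^2$, which is the same computation as your bound $b_n/(n-|M|+b_n)^2\le 1/(n-|M|)$), and the same second-moment bound for the off-diagonal quadratic part via $\sum_{i\ne j}(Q_n)_{ij}^2\le\trace(Q_n^2)=n-|M|$. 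The genuine difference is the diagonal term $\sum_i (Q_n)_{ii}\xi_{i,n}$, which is precisely where the restriction to $(2+\delta)$ moments bites: the paper uses a standard truncation argument (truncate the standardized errors at level $K$, bound the expected tail contribution by $\tau K^{-2\delta/(2+\delta)}$ via H\"older and Markov, apply Chebyshev to the truncated sum, then let $K\to\infty$), whereas you apply the von Bahr--Esseen inequality with exponent $p=1+\min(\delta/2,1)$ together with $0\le (Q_n)_{ii}\le 1$. Both routes use only the assumed uniform moment bound, but yours is shorter, avoids the double limit in $n$ and $K$, and delivers an explicit rate $(n-|M|)^{-(p-1)}$ as well as manifest uniformity over $\mathbf{P}_n^{(\mathrm{lm})}(\delta,\tau)$ (the paper's bounds are in fact also uniform, though this is not displayed); the price is reliance on an external maximal-moment inequality where the paper's truncation is self-contained. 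Two trivial points to tidy up: for $\delta>2$ your stated bound controls $\E_n|\xi_{i,n}|^{1+\delta/2}$ with $1+\delta/2>p=2$, so one application of Lyapunov's inequality is needed to pass to $\E_n|\xi_{i,n}|^{p}$; and you should note explicitly that $\E_n(\xi_{i,n})=0$ by homoskedasticity, which is what makes von Bahr--Esseen applicable.
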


In particular, since the index set $\mathcal I$ is finite, not depending on $n$, Proposition~\ref{prop:lmvarest} shows that the post-model-selection estimator $\hat{\sigma}_{\hat{\M}_n,n}^2$ is consistent for $\sigma_n^2$ (on a relative scale) along sequences $\P_n\in\mathbf{P}_n^{(\mathrm{lm})}(\delta,\tau)$ with the property that 
\begin{equation}\label{eq:CondCons}
\frac{\|(I_n-P_{X_n[\hat{M}_n]})\mu_n/\sigma_n\|^2}{n-|\hat{M}_n|} \;\xrightarrow[n\to\infty]{}\; 0
\end{equation}
in $\P_n$-probability, where $\hat{M}_n$ denotes the index set corresponding to $\hat{\mathbb{M}}_n$. Of course, in general, this condition can not be verified in practice. But it still tells us that if the model selection procedure $\hat{\M}_n$ finds an approximately first order correct model in the sense of \eqref{eq:CondCons}, then the POSI-intervals in \eqref{eq:POSICIlm} have the same asymptotic length as the infeasible intervals that use knowledge of $\sigma_n^2$. 

One can now raise the question whether it is at all possible to construct a uniformly (over $\mathbf{P}_n^{(\mathrm{lm})}(\delta,\tau)$) consistent estimator for $\sigma_n^2$. However, in the framework we consider, and in order to obtain valid confidence intervals, it is necessary to use an estimator that, for certain sequences $\P_n\in \mathbf{P}_n^{(\mathrm{lm})}(\delta,\tau)$, consistently overestimates the variance $\sigma_n^2$. More precisely, in Proposition~\ref{prop:no:consistent:sigma} below (proved in Section~\ref{subsection:proof:no:consistent:sigma} of the supplement), we show that there does not exist a uniformly consistent estimator of the variance $\sigma_n^2$ if $\tau>1$.

\begin{proposition} \label{prop:no:consistent:sigma}
Let $\delta > 0$ and $\tau > 1$. There does \emph{not} exist a sequence of measurable functions $(\hat{\sigma}^2_{n})_{n \in \mathbb{N}}$ with $\hat{\sigma}^2_{n}: \mathbb{R}^n \to [0, \infty)$, so that for every $\varepsilon > 0$
\begin{equation}\label{eqn:conscounter}
\sup_{\mathbb{P}_n \in \mathbf{P}_n^{(\mathrm{lm})}(\delta, \tau)} \mathbb{P}_n \left( \left| \frac{\hat{\sigma}^2_n}{\sigma^2_n} - 1 \right| > \varepsilon \right) \to 0.
\end{equation}
\end{proposition}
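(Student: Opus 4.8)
The plan is to establish this impossibility result by a Le Cam-type argument in which the completely unrestricted mean vector is used to \emph{disguise} a small error variance as a large one. Concretely, I would fix a variance ratio $c>1$ (to be chosen as a function of $\delta$ and $\tau$) and exhibit, for every $n$, a single ``high-variance'' data law together with a whole family of ``low-variance'' data laws whose average against a prior on the mean vector reproduces \emph{exactly} the high-variance law. Since all of these distributions lie in $\mathbf{P}_n^{(\mathrm{lm})}(\delta,\tau)$ but carry two different values of $\sigma_n^2$, no estimator can be consistent on a relative scale against the whole class.

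For the construction I would take the error law $F_0$ to be the Rademacher distribution on $\{-1,+1\}$, so that its variance equals $1$ and the quantity $(\E|\epsilon|^{2+\delta})^{2/(2+\delta)}/\sigma^2$ appearing in the definition of the class equals $1$; put a two-point prior $\nu$ on $\{-m,+m\}$ with $m=\sqrt{c^2-1}$; and set $Q_1:=\nu\ast F_0$, the four-point law on $\pm(m+1),\pm(m-1)$, which has mean $0$ and variance $c^2$. The two competing data laws are then $Q_1^{\otimes n}$, homoskedastic with $\sigma_n^2=c^2$ and mean vector $0$, and, for each fixed $b\in\{-m,m\}^n$, the product law $\P_n^{b}:=\bigotimes_{i=1}^n F_0(\cdot-b_i)$, whose coordinates are independent and homoskedastic with $\sigma_n^2=1$ and whose (unrestricted) mean vector is $b$. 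The decisive algebraic fact, immediate from independence across coordinates, is the mixture identity $\int \P_n^{b}\,d\nu^{\otimes n}(b)=Q_1^{\otimes n}$, which holds because marginally $\nu\ast F_0=Q_1$.

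It then remains to verify membership in the class and to run the contradiction. Each $\P_n^{b}$ has its standardized $(2+\delta)$-moment equal to that of $F_0$, namely $1\le\tau$, so $\P_n^{b}\in\mathbf{P}_n^{(\mathrm{lm})}(\delta,\tau)$; and a direct computation shows that the standardized $(2+\delta)$-moment of $Q_1$ tends to $1$ as $m\to\infty$, so choosing $c$ (hence $m$) large enough, depending only on $\delta$ and $\tau$, guarantees $Q_1^{\otimes n}\in\mathbf{P}_n^{(\mathrm{lm})}(\delta,\tau)$ as well. Now suppose, for contradiction, that a uniformly consistent $\hat{\sigma}^2_n$ existed, and fix $\eps>0$ small enough that the events $\{|\hat{\sigma}^2_n/c^2-1|\le\eps\}$ and $\{|\hat{\sigma}^2_n-1|\le\eps\}$ are disjoint. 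Uniform consistency applied to the sequence $Q_1^{\otimes n}$ forces $\hat{\sigma}^2_n\to c^2$ in $Q_1^{\otimes n}$-probability; on the other hand, averaging the uniform bound $\sup_b\P_n^{b}(|\hat{\sigma}^2_n-1|>\eps)\to0$ against $\nu^{\otimes n}$ and invoking the mixture identity yields $Q_1^{\otimes n}(|\hat{\sigma}^2_n-1|>\eps)\to0$, i.e.\ $\hat{\sigma}^2_n\to1$ in $Q_1^{\otimes n}$-probability. These two conclusions are incompatible, which is the desired contradiction.

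The main obstacle, and the only place where the hypothesis $\tau>1$ is genuinely used, is verifying that the inflated marginal $Q_1$ still obeys the moment bound when $\tau$ is close to $1$: the boundary case $\tau=1$ forces error laws concentrated on two points with no moment slack, so one cannot simply take Gaussian errors. This is precisely what the limit $m\to\infty$ resolves, since spreading the prior mass far out makes $Q_1$ arbitrarily close, in standardized $(2+\delta)$-moment, to a two-point law, so an admissible $c$ exists for every $\tau>1$. Everything else, namely the mixture identity and the final probabilistic contradiction, is then purely formal.
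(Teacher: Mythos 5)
Your proof is correct, and it shares the paper's core idea --- Rademacher errors (standardized $(2+\delta)$-moment exactly $1$, so the slack $\tau>1$ is available), with the unrestricted mean vector used to disguise a larger variance via a mixture identity --- but the implementation is genuinely different in two respects. First, where the paper convolves the shifted Rademacher laws with a \emph{Gaussian} $N(0,\rho)$ and must take $\rho^*$ small so that the standardized moment of $Q(x_i)\ast N(0,\rho)$ stays below $\tau$, you use a discrete two-point prior $\nu$ on $\{-m,+m\}$, so your mixture $Q_1^{\otimes n}=2^{-n}\sum_{b\in\{-m,m\}^n}\mathbb{P}_n^b$ is a \emph{finite} convex combination and your moment verification $\bigl(\tfrac12[(m+1)^{2+\delta}+(m-1)^{2+\delta}]\bigr)^{2/(2+\delta)}/(m^2+1)\to 1$ as $m\to\infty$ is an elementary computation (which checks out); note the amusing inversion that you need $m$ \emph{large} to fit the moment constraint while the paper needs $\rho^*$ \emph{small}, so your variance gap $c^2$ can be made large whereas theirs is necessarily close to $1$. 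Second, and more substantively, the direction of the contradiction differs: the paper derives $\hat{\sigma}^2_{n'}\to 1+\rho^*$ under the component laws $Q_{n'}(x_{n'},0)$ at a \emph{specific} deterministic mean sequence, extracted via conditioning, Fubini, and an almost-sure subsequence argument; you instead run the contradiction at the mixture law itself, averaging the uniform bound $\sup_b \mathbb{P}_n^b(|\hat{\sigma}_n^2-1|>\varepsilon)\to 0$ against $\nu^{\otimes n}$ to get $\hat{\sigma}_n^2\to 1$ under $Q_1^{\otimes n}$, while membership of $Q_1^{\otimes n}$ in $\mathbf{P}_n^{(\mathrm{lm})}(\delta,\tau)$ with $\sigma_n^2=c^2$ forces $\hat{\sigma}_n^2\to c^2$ there. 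This eliminates the subsequence extraction entirely and makes the argument purely finite and formal; the paper's Gaussian-convolution route buys nothing here that your construction does not, so your version is, if anything, the cleaner of the two.
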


%---------------------------------------------------------------------------------------------------------------------------------------------------------------------

\subsection{Inference post-model-selection when fitting fixed design linear models to heteroskedastic data}
\label{sec:hetlinmods}

The feasible sets for $\mathbb{P}_n$ we consider here again depend on two parameters $\delta > 0$ and $\tau \ge 1$ but, compared to the set $\mathbf{P}_n^{(\mathrm{lm})} \left(\delta, \tau \right)$ defined above, we now drop the requirement of homoskedasticity: the distribution of a random $n$-vector $Y_n = (Y_{1,n}, \hdots, Y_{n,n})'$ is an element of $\mathbf{P}_n^{(\mathrm{het})} \left(\delta, \tau \right)$ if and only if the $n$ coordinates of $Y_n$ are independent, the variance $\sigma_{i,n}^2 = \V(Y_{i,n}) \in(0,\infty)$ exists for every $i = 1, \hdots, n$, and 
\begin{equation*}
\max_{i = 1, \hdots, n} \left[\mathbb{E}\left(|Y_{i,n} - \mathbb{E}(Y_{i,n})|\right)^{2+\delta}\right]^{\frac{2}{2+\delta}} \leq \tau \min_{i = 1, \hdots, n} \sigma_{i,n}^2.
\end{equation*}
Here, we consider a situation where one works with candidate sets consisting of heteroskedastic linear models, i.e., where similar as in Section~\ref{sec:homlinmods} one is interested in conducting inference on $\mu_n = \E(Y_n)\in\R^n$, and it is \textit{assumed} that $\mu_n$ is an element of  $\s(X_n)$, the column span of a design matrix $X_n \in \R^{n \times p}$ with $p$ fixed; but where it is now taken into account that the observations may have different variances. We start with a set $\mathcal{I} = \{M_1, \hdots, M_d \}$ as in Subsection~\ref{sec:homlinmods}, and we then define for each $j \in \{1, \hdots, d\}$ the linear, heteroskedastic model $\mathbb{M}_{j, n}$ as follows: the distribution of a random vector $z = (z_1, \hdots, z_n)'$ is an element of $\mathbb{M}_{j, n}$ if and only if there exists a $\beta \in \R^{|M_j|}$ so that the random (residual) vector $z - X[M_j]\beta$ has independent coordinates with positive finite variances and mean zero. The corresponding candidate set of models is then given by
\begin{equation*}
\mathsf{M}_n = \left\{\mathbb{M}_{j,n}: j = 1, \hdots, d \right\}.
\end{equation*}
As in Section~\ref{sec:homlinmods} we assume that $X_n$ satisfies Condition~\ref{cond:X}, and define our model-specific target of inference as in Equation \eqref{eqn:targethom}. Again, we estimate the corresponding target by the model-specific ordinary-least-squares estimator in \eqref{eq:OLS}.
For variance estimation we do no longer use the estimator as defined in Subsection~\ref{sec:homlinmods}, but now take into consideration, that the observations may be heteroskedastic. Therefore, we consider an approach based on estimators suggested by \cite{eicker67}. As in Subsection~\ref{sec:homlinmods}, the variance estimators used here are not uniformly consistent due to potential model misspecification, but overestimate their targets in the sense of Subsection~\ref{sec:overest}. Furthermore, in contrast to the construction of Subsection~\ref{sec:homlinmods}, the construction of the confidence sets now needs to incorporate an upper bound for the POSI constant $K_{1-\alpha}(\corr(\Gamma_n))$, because here $\Gamma_n = \V_n[(\hat{\beta}_{\M_1,n}',\dots, \hat{\beta}_{\M_d,n}')']$, and also $\corr(\Gamma_n)$, is unobserved and can not be estimated consistently due to potential misspecification. Define for every $\mathbb{M} \in \mathsf{M}_n$ with corresponding index set $M$ the Eicker-estimator $\tilde{S}_{\M,n}$ as
\begin{equation*}
\left( X_n[M]'X_n[M] \right)^{-1} X_n[M]' \diag\left(\hat{u}^2_{1,\mathbb{M}}, \hdots, \hat{u}^2_{n,\mathbb{M}} \right) X_n[M] \left( X_n[M]'X_n[M] \right)^{-1},
\end{equation*}
where, for $y\in\R^n$, we let $\hat{u}_{\mathbb{M}}(y) = (\hat{u}_{1, \mathbb{M}}(y), \hdots, \hat{u}_{n, \mathbb{M}}(y))' = y - X_n[M] \hat{\beta}_{\mathbb{M}, n}(y)$, and denote the $j-$th diagonal entry ($j = 1, \hdots, m(\mathbb{M})$) of $\tilde{S}_{\M,n}$ by
$\hat{\sigma}^2_{j,\mathbb{M}, n}$.
Finally, given $\alpha \in (0, 1)$, we define for each $\mathbb{M} \in \mathsf{M}_n$ with corresponding index set $M$ and for every $j = 1, \hdots, m(\mathbb{M})$ the confidence sets
\begin{equation*}
\mathrm{CI}_{1-\alpha, \mathbb{M}}^{(j), \mathrm{hlm}} = \hat{\beta}^{(j)}_{\mathbb{M}, n} \pm  \sqrt{\hat{\sigma}^2_{j,\mathbb{M}, n}} B_{\alpha}(\min(k, p),k),
\end{equation*}
with $k = \sum_{\M\in\mathsf{M}_n} m(\mathbb{M})$, and where $B_\alpha$ is defined at the end of Section~\ref{sec:overest}.

Note, similarly as in Subsection~\ref{sec:homlinmods} above, that up to the choice of the last multiplicative factor $B_{\alpha}(\min(k, p),k)$, an upper bound for the corresponding POSI-constant, this is just the usual confidence interval for the $j$-th coordinate of the coefficient vector one would typically use in practice working with heteroskedastic linear models by following the naive way of ignoring the data-driven model selection step. Our construction delivers an adjustment to that approach, which turns it, regardless of the (measurable) model selection procedure applied, into an asymptotically valid statistical procedure. The main result of this subsection is as follows:
\begin{theorem}\label{thm:hetlinmods}
Let $\alpha \in (0, 1)$, $\delta > 0$ and $\tau \ge 1$, suppose Condition~\ref{cond:X} holds, and let $\hat{\mathbb{M}}_n$ be a (measurable) model selection procedure, i.e., a measurable map from the sample space $\R^n$ to $\mathsf{M}_n$. Then
\begin{equation*}
\liminf_{n \to \infty} \inf_{\mathbb{P}_n \in \mathbf{P}_n^{(\mathrm{het})} 
\left(\delta, \tau \right) } \mathbb{P}_n \left(\beta_{\hat{\mathbb{M}}_n, n}^{* (j)} \in \mathrm{CI}_{1-\alpha, \hat{\mathbb{M}}_n}^{(j), \mathrm{hlm}} \text{ for all } j = 1, \hdots, m(\hat{\mathbb{M}}_n)  \right) \geq 1-\alpha.
\end{equation*}
\end{theorem}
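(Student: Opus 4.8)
The plan is to deduce the theorem from the special case of Theorem~\ref{thm:cons}, combined with Proposition~\ref{prop:overest} for the variance estimators and Lemma~\ref{lem:upper} for the POSI constant. Writing $U_i = Y_{i,n}-\E(Y_{i,n})$ and using that $\hat\beta_{\M,n}-\beta^*_{\M,n} = (X_n[M]'X_n[M])^{-1}X_n[M]'(Y_n-\mu_n)$, the centered stacked estimator admits the \emph{exact} representation $\hat\theta_n-\theta_n^* = \sum_{i=1}^n g_{i,n}(y_i)$ with $\Delta_n\equiv 0$, where the block of $g_{i,n}$ attached to model $\M$ is $(X_n[M]'X_n[M])^{-1}X_{i,n}[M]'\,U_i$. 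Thus each coordinate has the form $g_{i,n}^{(j)}=c_{i,n}^{(j)}U_i$ with deterministic $c_{i,n}^{(j)}$, so $\E_{i,n}(g_{i,n}^{(j)})=0$ and $\V_n(r_n^{(j)})=\sum_i (c_{i,n}^{(j)})^2\sigma_{i,n}^2$, which is finite and, since $\rank(X_n)=p$ eventually, positive. The first genuine task is the marginal Lindeberg condition \eqref{eq:linde}; I would instead verify the stronger Lyapunov condition by bounding $\sum_i |c_{i,n}^{(j)}|^{2+\delta}\E|U_i|^{2+\delta}\big/(\sum_i (c_{i,n}^{(j)})^2\sigma_{i,n}^2)^{(2+\delta)/2}$. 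The moment bound defining $\mathbf{P}_n^{(\mathrm{het})}(\delta,\tau)$ gives $\E|U_i|^{2+\delta}\le(\tau\min_l\sigma_{l,n}^2)^{(2+\delta)/2}$ and $\sigma_{i,n}^2\ge\min_l\sigma_{l,n}^2$, so this ratio is at most $\tau^{(2+\delta)/2}\big(\max_i (c_{i,n}^{(j)})^2/\sum_i (c_{i,n}^{(j)})^2\big)^{\delta/2}$. A Cauchy--Schwarz step in the $(X_n[M]'X_n[M])^{-1}$-inner product yields $(c_{i,n}^{(j)})^2/[(X_n[M]'X_n[M])^{-1}]_{jj}\le h_{i,n}^{(M)}$, where $h_{i,n}^{(M)}:=X_{i,n}[M](X_n[M]'X_n[M])^{-1}X_{i,n}[M]'$, so the ratio is dominated by the maximal leverage $\max_i h_{i,n}^{(M)}$, which tends to $0$ by Condition~\ref{cond:X}. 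This establishes Condition~\ref{cond:sum}.

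\textbf{The Eicker estimators overestimate the variances.} Next I would show that the diagonal entries $\hat\sigma^2_{j,\M,n}=\sum_i (c_{i,n}^{(j)})^2\hat u_{i,\M}^2$ satisfy \eqref{eqn:special2} via Proposition~\ref{prop:overest}. Using $\hat u_{i,\M}=U_i+(\mu_{i,n}-X_{i,n}[M]\beta^*_{\M,n})-(P_{X_n[M]}U)_i$, set $\tilde g_{i,n}^{(j)}=c_{i,n}^{(j)}\hat u_{i,\M}$ (so that $\hat\nu^2_{\rho(\M)+j,n}:=\sum_i (\tilde g_{i,n}^{(j)})^2=\hat\sigma^2_{j,\M,n}$) and decompose $\tilde g_{i,n}^{(j)}=\hat g_{i,n}^{(j)}+a_{i,n}^{(j)}$ with the deterministic bias $a_{i,n}^{(j)}=c_{i,n}^{(j)}(\mu_{i,n}-X_{i,n}[M]\beta^*_{\M,n})$ and $\hat g_{i,n}^{(j)}=g_{i,n}^{(j)}-c_{i,n}^{(j)}(P_{X_n[M]}U)_i$. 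To verify \eqref{eq:negldiff2} for the pair $(g_{i,n}^{(j)},\hat g_{i,n}^{(j)})$ it suffices to note $\sum_i (g_{i,n}^{(j)}-\hat g_{i,n}^{(j)})^2/\V_n(r_n^{(j)})\le \max_i (P_{X_n[M]}U)_i^2/\min_l\sigma_{l,n}^2$, so the key estimate is $\max_i (P_{X_n[M]}U)_i^2=o_{\P_n}(\min_l\sigma_{l,n}^2)$. I would prove this by a union bound together with a Rosenthal inequality for $\E|(P_{X_n[M]}U)_i|^{2+\delta}$, controlling the two resulting terms by $(\max_i h_{i,n}^{(M)})^{\delta/2}|M|$ up to constants depending only on $\delta,\tau$, using $\sum_i h_{i,n}^{(M)}=|M|$, the bound $\sigma_{i,n}^2\le\tau\min_l\sigma_{l,n}^2$, and $\max_i h_{i,n}^{(M)}\to0$. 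Proposition~\ref{prop:overest} then delivers \eqref{eqn:special2} with $\hat\nu^2_{\rho(\M)+j,n}=\hat\sigma^2_{j,\M,n}$.

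\textbf{Bounding the POSI constant.} It remains to exhibit a deterministic $\hat K_n$ with $\hat K_n\ge K_{1-\alpha}(\corr(\V_n(r_n)))$ eventually. Since $\diag(\V_n(r_n))$ has strictly positive entries, $\rank(\corr(\V_n(r_n)))=\rank(\V_n(r_n))$; and because each block $\hat\beta_{\M,n}-\beta^*_{\M,n}$ depends on $U$ only through $P_{X_n}U$ (as $\s(X_n[M])\subseteq\s(X_n)$ with $\rank(X_n)=p$), the linear map $U\mapsto\hat\theta_n-\theta_n^*$ factors through a $p$-dimensional space, whence $\rank(\V_n(r_n))\le\min(k,p)$. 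By Lemma~\ref{lem:upper} and the stated monotonicity of $q\mapsto B_\alpha(q,k)$, $K_{1-\alpha}(\corr(\V_n(r_n)))\le B_\alpha(\rank(\corr(\V_n(r_n))),k)\le B_\alpha(\min(k,p),k)=:\hat K_n$. With these choices the special case of Theorem~\ref{thm:cons} applies, and the intervals $\mathrm{CI}^{(j),\mathrm{oest}}_{1-\alpha,\M}$ it yields coincide exactly with $\mathrm{CI}^{(j),\mathrm{hlm}}_{1-\alpha,\M}$ (using $\hat\theta^{(j)}_{\M,n}=\hat\beta^{(j)}_{\M,n}$, $\hat\nu^2_{\rho(\M)+j,n}=\hat\sigma^2_{j,\M,n}$ and $\hat K_n=B_\alpha(\min(k,p),k)$). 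This gives $\liminf_n\P_n(\cdots)\ge1-\alpha$ for any fixed sequence $\P_n\in\mathbf{P}_n^{(\mathrm{het})}(\delta,\tau)$.

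\textbf{Uniformity and the main obstacle.} Every bound above depends on $\P_n$ only through $\delta$, $\tau$ and the fixed design, so all the required convergences hold along \emph{any} sequence with $\P_n\in\mathbf{P}_n^{(\mathrm{het})}(\delta,\tau)$ for all $n$. By the argument recorded in Subsection~\ref{ssc:discfram}, per-sequence validity for every such sequence upgrades to the uniform conclusion $\liminf_n\inf_{\P_n\in\mathbf{P}_n^{(\mathrm{het})}(\delta,\tau)}\P_n(\cdots)\ge1-\alpha$ (equivalently, by a subsequence/contradiction argument). I expect the main obstacle to be the two uniform negligibility statements — the marginal Lyapunov ratios and $\max_i(P_{X_n[M]}U)_i^2/\min_l\sigma_{l,n}^2$ — which must be controlled purely through the leverages and the moment parameter $\tau$; this is precisely where Condition~\ref{cond:X} and the scaling built into the definition of $\mathbf{P}_n^{(\mathrm{het})}(\delta,\tau)$ (moments bounded relative to the \emph{minimal} variance) are indispensable.
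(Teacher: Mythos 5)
Your proposal is correct and follows essentially the same route as the paper's proof: the exact representation with $\Delta_n\equiv 0$ and Lindeberg driven by the maximal leverage from Condition~\ref{cond:X}, the identical decomposition $\tilde g_{i,n}=\hat g_{i,n}+a_{i,n}$ (your $c_{i,n}^{(j)}(P_{X_n[M]}U)_i$ is precisely the paper's $U_n^{-1}X_{i,n}[M]'X_{i,n}[M]r_{n,\M}$ in coordinates), Proposition~\ref{prop:overest} to obtain \eqref{eqn:special2} for the Eicker diagonals, the rank bound with Lemma~\ref{lem:upper} yielding $\hat K_n=B_\alpha(\min(k,p),k)$, the special case of Theorem~\ref{thm:cons}, and arbitrariness of the sequence $\P_n$ for uniformity. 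The only deviations are technical and harmless: you check Lindeberg via a Lyapunov ratio where the paper bounds the Lindeberg integral directly by H\"older and Markov, and you prove \eqref{eq:negldiff2} through the stronger claim $\max_i (P_{X_n[M]}U)_i^2=o_{\P_n}(\min_l\sigma_{l,n}^2)$ via Rosenthal plus a union bound, whereas the paper gets by with the simpler device of bounding $\E_n\bigl[\sum_i (g_{i,n}^{(j)}-\hat g_{i,n}^{(j)})^2\bigr]$ and applying Markov's inequality.
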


%-------------------------------------------------------------------------------------------------------------------------------------------------------------------------------------------

\subsection{Inference post-model-selection when fitting binary regression models to binary data}
\label{sec:binary}

The feasible sets $\mathbf{P}_n^{(\mathrm{bin})}(\tau)$ for $\P_n$ we consider here depend on a parameter $\tau \in (0,1/4)$ and are defined as follows: the distribution of a random vector $Y_n = (Y_{1,n}, \hdots, Y_{n,n})'$ is an element of $\mathbf{P}_n^{(\mathrm{bin})}(\tau)$ if and only if the $n$ coordinates of $Y_n$ are independent, each coordinate $Y_{i,n}$ takes on either $0$ or $1$, and $\V(Y_{i,n})\ge \tau$.
%
%%
%\begin{equation}
%\mathbf{P}_n^{(\mathrm{bin})} \left(\tau \right) := \left\{Q \in \bigotimes_{i = 1}^n M(\{0,1\}): 
%Q_i(\{0\})Q_i(\{1\}) \ge \tau\; \forall i =1,\dots, n
 %\right\}.
%\end{equation}
%
%
We consider a situation where binary regression models are fit to binary data generated under one of the elements $\P_n\in \mathbf{P}_n^{(\mathrm{bin})}(\tau)$. It is important to point out, however, that unlike other work on misspecified binary regression (e.g., \cite{Ruud83, Kub17}), we here do not assume that the true data generating process $\P_n$ is itself a binary regression model, but we consider the non-parametric case where every observation $Y_{i,n}$ may have its own success rate $p_{i,n} = \P(Y_{i,n}=1)$, with the only restriction that $\V(Y_{i,n}) = p_{i,n}(1-p_{i,n})\ge \tau$. In binary regression the maintained \emph{modeling assumption} is that the probability of a success on the $i$-th observation ($Y_{i,n}=1$), or equivalently its expectation, is given by $h(X_{i,n}\beta)$, for some $\beta\in\R^p$, some response function $h:\R\to (0,1)$ and where $X_{i,n}$ is the $i$-th row of a design matrix $X_n\in\R^{n\times p}$. Usually, when $h$ is invertible, $h^{-1}$ is called the link function. Thus, unlike the previous two examples, here we also have to make a choice for the response function $h$, in addition to selecting variables from $X_n$. Classical choices are the logit and the probit functions, but we allow also for other choices of response functions $h$, as long as they belong to a finite set $\mathcal H = \{h_1,\dots, h_{d_1}\}$ of potential candidates, that does not depend on $n$. Together with the collection $\mathcal I = \{M_1,\dots, M_{d_2}\}\subseteq 2^{\{1,\dots, p\}}\setminus\varnothing$ of candidate regressor subsets, we can define for every $j_1\in\{1,\dots, d_1\}$ and $j_2\in\{1,\dots, d_2\}$ a candidate binary regression model $\mathbb M_{(j_1,j_2),n}$ as follows: the distribution of a random vector $z = (z_1, \hdots, z_n)'$ is an element of $\mathbb M_{(j_1,j_2),n}$ if and only if the $n$ coordinates of $z$ are independent, each coordinate $z_i$ takes on either $0$ or $1$, and there exists a $\beta \in \R^{|M_{j_2}|}$ so that the mean of $z_i$ equals $h_{j_1}(X_{i,n}[M_{j_2}]\beta)$ for $i = 1, \hdots, n$.
%
%\begin{equation}
%\mathbb M_{(j_1,j_2),n} = \left\{ Q\in\bigotimes_{i=1}^n M(\{0,1\}) : 
%\begin{array}{l}
%\exists \beta\in \R^{|M_{j_2}|} : \forall i=1,\dots, n :\\
 %Q_i(\{1\}) = h_{j_1}(X_{i,n}[M_{j_2}]\beta)
%\end{array}
%\right\},
%\end{equation}
%for $j_1\in\{1,\dots, d_1\}$ and $j_2\in\{1,\dots, d_2\}$. 
Thus, our candidate set of size $d = d_1 \cdot d_2$ is given by
\begin{equation*}
\mathsf{M}_n = \left\{\mathbb{M}_{(j_1,j_2),n}: j_1 \in\{1, \hdots, d_1\}, j_2 \in \{1,\dots, d_2\} \right\}.
\end{equation*}

We need to impose some regularity conditions on the possible response functions $h\in\mathcal H$ and the design $X_n$.

\renewcommand{\thecond}{X2}
\begin{cond}\label{cond:X2} 
Let $C>0$ be fixed. Eventually, we have
\begin{enumerate}[(i)]
\item\label{cX2:rank} $\rank(X_n) = p$;
\item\label{cX2:Huber} $\max_{i = 1, \hdots, n} X_{i,n}[M] \left( X_{n}[M]'X_{n}[M]  \right)^{-1} X_{i,n}[M]' \le C/n$, for every $M \in \mathcal{I}$;
\item\label{cX2:eigen} $\lmax(X_n'X_n)/\lmin(X_n'X_n) \le C$;
\end{enumerate}
\end{cond}

\renewcommand{\thecond}{H}
\begin{cond}\label{cond:H} 
The elements $h\in\mathcal H$ have the following properties:
\begin{enumerate}[(i)]
\item\label{cH:cdf} $h:\R\to(0,1)$ is a continuous cumulative distribution function;
\item\label{cH:concave} The functions $\phi_1(\gamma) := \log(h(\gamma))$ and $\phi_2(\gamma) := \log(1-h(\gamma))$ are strictly concave on $\R$;
\item\label{cH:C2} $h$ is twice continuously differentiable and $\phi_1$ and $\phi_2$ have strictly negative second derivatives on $\R$;
\item\label{cH:Hdot} The derivative $\dot{h}$ of $h$ is strictly positive on $\R$;
\end{enumerate}
\end{cond}

\begin{remark}
Condition~\ref{cond:X2} is a strengthened version of Condition~\ref{cond:X}. It is still satisfied if $\|X_{i,n}\|$ is bounded and $\lmin(\frac{1}{n}X_n'X_n)$ is bounded away from $0$, as is typically the case in factorial designs. Note, however, that Condition~\ref{cond:X2} is invariant under scaling of $X_n$, so that, in particular, it does not require that $\lmin(X_n'X_n)\to\infty$, a condition commonly used to prove consistency of the MLE. Furthermore, Condition~\ref{cond:X2}\eqref{cX2:Huber} is implied by 
$$
X2(ii')\quad\max_{i=1,\dots, n} X_{i,n}(X_n'X_n)^{-1}X_{i,n}' \le C/n.
$$
\end{remark}

\begin{remark}
The Conditions~\ref{cond:H}\eqref{cH:cdf} and \ref{cond:H}\eqref{cH:Hdot} are rather natural and essential for parameter identification. Condition~\ref{cond:H}\eqref{cH:C2} is also classical and used to ensure continuity of the Hessian of the log-likelihood \citep[cf.][]{Fahrmeir85, Fahrmeir90}. Finally, Condition~\ref{cond:H}\eqref{cH:concave}, which is implied by Condition~\ref{cond:H}\eqref{cH:C2}, ensures strict concavity of the log-likelihood, which, in turn, guarantees uniqueness of pseudo parameters and the MLE (see Lemma~\ref{lemma:pseudo} and Lemma~\ref{lemma:unifCons} below). 
It is easy to see that Condition~\ref{cond:H} is satisfied, e.g., for response functions corresponding to the classical logit, probit, log-log and complementary log-log link functions discussed in \citet[][p.108]{McCullagh89}.
\end{remark}

Note that since the design matrix $X_n\in\R^{n\times p}$ is fixed, a candidate model $\mathbb M \in \mathsf{M}_n$ can be identified with a pair $\mathbb M \triangleq (h, M) \in \mathcal H\times \mathcal I$. Estimating the parameter $\beta\in\R^{|M|}$ of a candidate model $\M\in\mathsf{M}_n$ is usually done by numerically maximizing the likelihood. The (quasi-)log-likelihood function for model $\M \triangleq (h,M)$ can be expressed as
\begin{equation*}
\ell_{\M,n}(y,\beta) \;=\; \sum_{i=1}^n \left[ y_i \phi_1(X_{i,n}[M]\beta) + (1-y_i)\phi_2(X_{i,n}[M]\beta)\right],
\end{equation*}
where $\phi_1(\gamma) = \log h(\gamma)$ and $\phi_2(\gamma) = \log (1-h(\gamma))$, and $y = (y_1,\dots, y_n)'\in\{0,1\}^n$, $\beta\in\R^{|M|}$. Whenever Condition~\ref{cond:H}\eqref{cH:C2} holds, we denote the matrix of negative second derivatives of $\ell_{\M,n}$ by
$$
H_{\M,n}(y,\beta) \;=\; -\frac{\partial^2\ell_{\M,n}(y,\beta)}{\partial \beta \partial \beta'}
\;=\; X_n[M]'D_{\M,n}(y,\beta)X_n[M],
$$
where $D_{\M,n}(y,\beta)$ is a diagonal matrix with $i$-th diagonal entry equal to 
$$-y_i \ddot{\phi}_1(X_{i,n}[M]\beta) - (1-y_i)\ddot{\phi}_2(X_{i,n}[M]\beta).$$
Note that under Conditions~\ref{cond:X2}\eqref{cX2:rank} and \ref{cond:H}\eqref{cH:C2}, $H_{\M,n}(y,\beta)$ is positive definite.

As our target of inference we take the model dependent vector $\beta_{\M,n}^* \in\R^{|M|}$ that maximizes the expected log-likelihood $\beta\mapsto \E_n[\ell_{\M,n}(\cdot,\beta)]$ under the true data generating distribution $\P_n\in\mathbf{P}_n^{(\mathrm{bin})}(\tau)$. If $\beta_{\M,n}^*$ exists, then it is easy to see that it also minimizes the Kullback-Leibler divergence between the true data generating distribution $\P_n$ and the class of distributions specified by the working model $\M\in\mathsf{M}_n$. Focusing on the Kullback-Leibler minimizer has a longstanding tradition in the misspecification literature dating back at least to \citet{Huber67} (see also \citet{White82} and the references given therein). For references more specific to generalized linear models see \citet{Fahrmeir90} and \citet{lv14model}. 
That this target uniquely exists in the present context of binary regression is the subject of the following lemma.\footnote{A similar claim is made in Theorem~5 of \cite{lv14model} and its proof is deferred to Version~1 of the arXiv preprint \cite{lv10v1}, where it appears to be the case that the \emph{existence} issue has been ignored. For a complete proof of our Lemma~\ref{lemma:pseudo} see Section~\ref{sec:pseudo:proof} of the supplement.}

\begin{lemma} \label{lemma:pseudo}
Suppose that $\rank(X_n)=p$ and \ref{cond:H}(\ref{cH:cdf},\ref{cH:concave}) hold. Then, for every $\M\in\mathsf{M}_n$ and for every $\P_n\in\bigcup_{\delta>0}\mathbf{P}_n^{(\mathrm{bin})}(\delta)$, there exists a unique vector $\beta_{\M,n}^* = \beta_{\M,n}^*(\P_n)\in\R^{m(\M)}$, such that 
\begin{equation*}
\int_{\R^n} \ell_{\M,n}(y,\beta_{\M,n}^*(\P_n)) \,d\P_n(y) \;=\; \sup_{\beta\in\R^{m(\M)}} \int_{\R^n} \ell_{\M,n}(y,\beta) \,d\P_n(y).
\end{equation*}
\end{lemma}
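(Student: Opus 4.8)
The plan is to reduce the statement to the concavity and coercivity of the expected log-likelihood. First I would write $L_n(\beta) := \int_{\R^n} \ell_{\M,n}(y,\beta)\,d\P_n(y)$ and observe that, since each $Y_{i,n}$ is binary with $p_{i,n} = \P(Y_{i,n}=1)$,
\begin{equation*}
L_n(\beta) = \sum_{i=1}^n g_i(X_{i,n}[M]\beta), \qquad g_i(\gamma) := p_{i,n}\phi_1(\gamma) + (1-p_{i,n})\phi_2(\gamma).
\end{equation*}
Because $\P_n\in\bigcup_{\delta>0}\mathbf{P}_n^{(\mathrm{bin})}(\delta)$, the bound $\V(Y_{i,n})=p_{i,n}(1-p_{i,n})\ge\delta>0$ forces $0<p_{i,n}<1$ for every $i$, so both weights $p_{i,n}$ and $1-p_{i,n}$ are strictly positive. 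Since $h$ is a continuous cumulative distribution function with values in $(0,1)$ by Condition~\ref{cond:H}\eqref{cH:cdf}, the functions $\phi_1$ and $\phi_2$ are finite and continuous, whence $L_n$ is finite and continuous on all of $\R^{m(\M)}$.

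For uniqueness I would establish strict concavity of $L_n$. By Condition~\ref{cond:H}\eqref{cH:concave} each $\phi_j$ is strictly concave, so each $g_i$, being a combination of $\phi_1$ and $\phi_2$ with strictly positive weights, is strictly concave. As $\rank(X_n)=p$, the submatrix $X_n[M]$ has full column rank $m(\M)$, so $\beta\mapsto X_n[M]\beta$ is injective; hence for $\beta_1\neq\beta_2$ there is an index $i$ with $X_{i,n}[M]\beta_1\neq X_{i,n}[M]\beta_2$, and the strict inequality from $g_i$ at that coordinate (with all other summands at least weakly concave) propagates to strict concavity of $L_n$. A strictly concave function has at most one maximizer.

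The main obstacle is existence, since this is exactly where the familiar separation pathology of the binary-regression MLE must be ruled out. The crucial point is that, unlike a single sample term $y_i\phi_1+(1-y_i)\phi_2$ (which contains only one of the monotone functions $\phi_1,\phi_2$ and can therefore fail to be coercive under separation), each population term $g_i$ carries both $\phi_1$ and $\phi_2$ with strictly positive weight. As $\gamma\to+\infty$ we have $h(\gamma)\to1$, so $\phi_2(\gamma)=\log(1-h(\gamma))\to-\infty$, and as $\gamma\to-\infty$ we have $h(\gamma)\to0$, so $\phi_1(\gamma)\to-\infty$; consequently $g_i(\gamma)\to-\infty$ as $|\gamma|\to\infty$, and, being continuous, each $g_i$ is bounded above by some finite $m_i$. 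To obtain coercivity of $L_n$ I would argue by contradiction: if $\|\beta_k\|\to\infty$ while $L_n(\beta_k)$ stays bounded below, I pass to a subsequence with $u_k:=\beta_k/\|\beta_k\|\to u$, $\|u\|=1$. Full column rank of $X_n[M]$ gives $X_n[M]u\neq0$, so some row satisfies $X_{i_0,n}[M]u\neq0$, and then $|X_{i_0,n}[M]\beta_k|=\|\beta_k\|\,|X_{i_0,n}[M]u_k|\to\infty$, forcing $g_{i_0}(X_{i_0,n}[M]\beta_k)\to-\infty$ while all remaining terms are bounded above by $\sum_{i\ne i_0}m_i$; hence $L_n(\beta_k)\to-\infty$, a contradiction. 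Thus $L_n$ is coercive, so the nonempty superlevel set $\{\beta: L_n(\beta)\ge L_n(0)\}$ is closed (by continuity) and bounded (by coercivity), hence compact, and $L_n$ attains its supremum on it, yielding a global maximizer. Combining existence with the uniqueness established above completes the proof.
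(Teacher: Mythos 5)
Your proposal is correct and takes essentially the same approach as the paper: the same decomposition $L_n(\beta)=\sum_{i=1}^n\left[p_i\phi_1(X_{i,n}[M]\beta)+(1-p_i)\phi_2(X_{i,n}[M]\beta)\right]$ with $p_i\in(0,1)$, strict concavity via full column rank of $X_n[M]$ for uniqueness, and divergence of the expected log-likelihood along any sequence $\|\beta_k\|\to\infty$ for existence. The only cosmetic difference is in the coercivity step, where the paper extracts a divergent coordinate $|X_{i_k,n}[M]\beta_k|\to\infty$ directly from $\|X_n[M]\beta_k\|^2\ge\lmin(X_n[M]'X_n[M])\|\beta_k\|^2\to\infty$ and uses that all summands are non-positive, while you pass to a limit direction on the unit sphere and bound the remaining summands by their finite maxima; both arguments are valid.
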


Furthermore, it is well known that for some points in the sample space $\{0,1\}^n$ the MLE in the binary regression model does not exist \citep[see, e.g.,][]{Wedderburn76}. But those samples have vanishing asymptotic probability. The following lemma establishes this asymptotic existence of the (quasi-) MLE $\hat{\beta}_{\M,n}$ in the present setting, along with uniform consistency. Its proof is deferred to Section~\ref{sec:unifCons} of the appendix.

\begin{lemma}\label{lemma:unifCons}
Suppose that Conditions~\ref{cond:X2}(\ref{cX2:rank},\ref{cX2:Huber}) and \ref{cond:H}(\ref{cH:cdf},\ref{cH:concave},\ref{cH:C2}) hold and fix $\tau\in(0,1/4)$. Then, for every $n\in\N$, every $\M\in\mathsf{M}_n$ and every $\P_n\in\mathbf{P}_n^{(\mathrm{bin})}(\tau)$, there exists a function $\hat{\beta}_{\M,n}:\{0,1\}^n\to\R^{m(\M)}$ (depending only on $n$ and $\M$) and a set $E_{\M,\P_n,n}\subseteq \{0,1\}^n$, such that
\begin{align*}
\ell_{\M,n}\left(y, \hat{\beta}_{\M,n}(y) + \beta\right) \;<\; \ell_{\M,n}\left(y, \hat{\beta}_{\M,n}(y)\right) \quad \forall y\in\ E_{\M,\P_n,n},\;\forall \beta\neq 0
\end{align*}
and $$\inf_{\M\in\mathsf{M}_n}\inf_{\P_n\in\mathbf{P}_n^{(\mathrm{bin})}(\tau)} \P_n(E_{\M,\P_n,n}) \;\xrightarrow[n\to \infty]{} \; 1.$$
Moreover, for the pseudo parameter $\beta_{\M,n}^*\in\R^{m(\M)}$ of Lemma~\ref{lemma:pseudo}, we have
$$
\limsup_{n\to\infty}\sup_{\substack{\M\in\mathsf{M}_n\\\P_n\in\mathbf{P}_n^{(\mathrm{bin})}(\tau)}}
%\sup_{\P_n\in\mathbf{P}_n^{(\mathrm{bin})}(\tau)} 
\P_n\left( 
	\left\| (X_n[M]'X_n[M])^{1/2}(\hat{\beta}_{\M,n} - \beta^*_{\M,n}(\P_n))\right\| > \delta\right)\; \to\; 0,
$$
as $\delta\to\infty$.
\end{lemma}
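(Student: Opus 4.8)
The plan is to fix $\M\triangleq(h,M)\in\mathsf{M}_n$, pass to the whitened parametrisation $\gamma = A_n(\beta-\beta^*_{\M,n})$ with $A_n = (X_n[M]'X_n[M])^{1/2}$, and reduce the whole statement to confining the rescaled maximiser $\hat\gamma = A_n(\hat\beta_{\M,n}-\beta^*_{\M,n})$ (whose norm is exactly the quantity in the final display) to a ball of radius depending only on $\tau,\mathcal H,C,p$. Writing $w_i = A_n^{-1}X_{i,n}[M]'$, Condition~\ref{cond:X2}(\ref{cX2:Huber}) gives $\|w_i\|^2\le C/n$ while $\sum_i w_iw_i' = I_{m(\M)}$, and with $\eta_i^* = X_{i,n}[M]\beta^*_{\M,n}$ the centred objective
\[
\Lambda_n(\gamma) = \ell_{\M,n}(y,\beta^*_{\M,n}+A_n^{-1}\gamma) - \ell_{\M,n}(y,\beta^*_{\M,n}) = \sum_{i=1}^n\big[\rho_i(\eta_i^*+w_i'\gamma)-\rho_i(\eta_i^*)\big],
\]
with $\rho_i(\eta) = y_i\phi_1(\eta)+(1-y_i)\phi_2(\eta)$, is strictly concave in $\gamma$ because, by Condition~\ref{cond:H}(\ref{cH:C2}), $\rho_i'' = y_i\ddot\phi_1+(1-y_i)\ddot\phi_2<0$ and the $w_i$ span $\R^{m(\M)}$. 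This yields uniqueness (and the strict inequality in the statement) wherever the maximiser exists, so only its existence and location remain.

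The hard part is a preliminary deterministic bound on the pseudo-true predictors: $\max_i|\eta_i^*|\le B$ eventually, with $B$ depending only on $\tau,\mathcal H,C$. Since $\V(Y_{i,n}) = p_{i,n}(1-p_{i,n})\ge\tau$ forces $p_{i,n}\in[p_-,p_+]\subset(0,1)$, each per-observation expected criterion $\psi_i(\eta) = p_{i,n}\phi_1(\eta)+(1-p_{i,n})\phi_2(\eta)$ is strictly concave and coercive, and satisfies $\psi_i(\eta)\le\tfrac12\chi_h(\eta)$ with $\chi_h(\eta)=p_-\phi_1(\eta)+(1-p_+)\phi_2(\eta)\to-\infty$ (using $\phi_1,\phi_2\le0$ and $p_{i,n}\in[p_-,p_+]$). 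As $\eta^* := X_n[M]\beta^*_{\M,n}$ maximises $\Psi(\eta)=\sum_i\psi_i(\eta_i)$ over $\s(X_n[M])\ni 0$, the inequality $\Psi(\eta^*)\ge\Psi(0)\ge -c_0 n$ combined with $\psi_i\le\tfrac12\chi_h$ yields a count bound $\#\{i:|\eta_i^*|>T\}\le c\,n/a(T)$ with $c$ a constant and $a(T)\to\infty$ as $T\to\infty$ (each $\chi_h$ is coercive and $\mathcal H$ is finite). Bootstrapping this with the leverage identity $\max_i|\eta_i^*|\le\sqrt{C/n}\,\|\eta^*\|$ and $\|\eta^*\|^2=\sum_i(\eta_i^*)^2$ gives $\|\eta^*\|^2\le nT^2 + (cC/a(T))\|\eta^*\|^2$; choosing $T$ with $cC/a(T)\le\tfrac12$ forces $\|\eta^*\|^2=O(n)$, hence $\max_i|\eta_i^*|\le\sqrt{2C}\,T =: B$. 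The delicacy is that per-observation coercivity only controls a \emph{fraction} of indices and it is the subspace/leverage constraint that upgrades this into a uniform sup-norm bound; this is the main obstacle, but note $B$ is free of $\P_n$ and $\M$, so it holds simultaneously over the infinite family $\mathbf{P}_n^{(\mathrm{bin})}(\tau)$ and all candidate models.

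With $\max_i|\eta_i^*|\le B$ the remaining ingredients are routine and uniform. The rescaled score $\nabla\Lambda_n(0)=\sum_i\rho_i'(\eta_i^*)w_i$ has $\P_n$-mean zero, since $\E_n\rho_i'(\eta_i^*)=\psi_i'(\eta_i^*)$ and the first-order condition defining $\beta^*_{\M,n}$ (Lemma~\ref{lemma:pseudo}) reads $\sum_i\psi_i'(\eta_i^*)X_{i,n}[M]'=0$; its second moment is $\E_n\|\nabla\Lambda_n(0)\|^2 = \sum_i\V_n(\rho_i'(\eta_i^*))\|w_i\|^2\le V_{\max}\,m(\M)\le pV_{\max}$, where $V_{\max}=\tfrac14\sup_{|\eta|\le B,\,h\in\mathcal H}(\dot\phi_1-\dot\phi_2)^2(\eta)<\infty$, so $\P_n(\|\nabla\Lambda_n(0)\|>L)\le pV_{\max}/L^2$ uniformly. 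For curvature, given a fixed radius one has $|w_i'\gamma|\le\sqrt{C/n}\,\|\gamma\|\le1$ for $n$ large, whence $\eta_i^*+w_i'\gamma\in[-(B+1),B+1]$ and, by Condition~\ref{cond:H}(\ref{cH:C2}), $-\rho_i''(\eta_i^*+w_i'\gamma)\ge c_*:=\min_{|\eta|\le B+1,\,h}\min(-\ddot\phi_1,-\ddot\phi_2)(\eta)>0$, giving $-\nabla^2\Lambda_n(\gamma)\succeq c_*\sum_i w_iw_i' = c_* I$ on the relevant ball.

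Finally, a basin argument closes both claims. On $\{\|\nabla\Lambda_n(0)\|\le L\}$ a second-order Taylor expansion with the curvature bound gives, for $\|\gamma\|=r_0:=2L/c_*+1$ and $n$ large enough that the curvature bound holds on the ball of radius $r_0$, the estimate $\Lambda_n(\gamma)\le Lr_0-\tfrac{c_*}{2}r_0^2<0=\Lambda_n(0)$; by concavity the supremum is then attained in the open ball $\{\|\gamma\|<r_0\}$, so the maximiser exists on this event, and strict concavity gives uniqueness. Defining $E_{\M,\P_n,n}$ as the set where the supremum is attained and $\hat\beta_{\M,n}$ as the (measurably selected, continuous-argmax) maximiser there, we obtain $\P_n(E_{\M,\P_n,n}^c)\le pV_{\max}/L_n^2\to0$ uniformly by letting $L_n\to\infty$ slowly, which proves the existence/uniqueness assertion. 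For consistency, taking $L=c_*(\delta-1)/2$ gives $\{\|\hat\gamma\|\ge\delta\}\subseteq\{\|\nabla\Lambda_n(0)\|>L\}\cup E_{\M,\P_n,n}^c$, so that $\P_n(\|A_n(\hat\beta_{\M,n}-\beta^*_{\M,n})\|\ge\delta)\le 4pV_{\max}/(c_*^2(\delta-1)^2)+\P_n(E_{\M,\P_n,n}^c)$; taking $\limsup_n$ (annihilating the second term uniformly over $\M,\P_n$) and then $\delta\to\infty$ yields the claim.
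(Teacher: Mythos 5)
Your proposal is correct and takes essentially the same route as the paper: a basin-of-attraction argument built from (i) a deterministic bound on the pseudo-predictors $\max_i|X_{i,n}[M]\beta^*_{\M,n}|$, uniform in $\M$ and $\P_n$ (your counting/leverage bootstrap is a variant of the proof of Lemma~\ref{lemma:pseudoBound}), (ii) a Chebyshev bound on the whitened score with variance trace at most $pV_{\max}$ (matching the paper's trace bound), and (iii) a uniform curvature lower bound near $\beta^*_{\M,n}$ (packaged in the paper as Lemma~\ref{lemma:Hcontinuity}), followed by the same strict-concavity argument that negativity on a sphere forces a unique interior maximizer. Your direct deterministic bound $-\nabla^2\Lambda_n(\gamma)\succeq c_* I_{m(\M)}$ on the ball mildly streamlines the paper's two-step comparison of $H_{\M,n}(y,\beta)$ with $H^*_{\M,n}(y)$ and $\E_n[H^*_{\M,n}]$, but the substance of the argument coincides.
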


To construct asymptotically valid confidence intervals for the components of $\beta_{\M,n}^*$, we need an estimate of the asymptotic covariance matrix of $\hat{\beta}_{\M,n}$. In the misspecified setting it is usually not possible to obtain a consistent estimator. We here follow the suggestion of \citet[][p. 491]{Fahrmeir90} who proposed a sandwich-type estimator for misspecified generalized linear models. This estimator fits with the general idea of Section~\ref{sec:overest}. For $\M\in\mathsf{M}_n$, $\M \triangleq (h,M)$, define \noeqref{eq:tildeSlogistic}
\begin{align}\label{eq:tildeSlogistic}
\tilde{S}_{\M,n} = \hat{H}_{\M,n}^{-1}X_n[M]'
	\diag\left(\hat{u}^2_{1,\mathbb{M}}, \hdots, \hat{u}^2_{n,\mathbb{M}} \right) 
	X_n[M] \hat{H}_{\M,n}^{-1},
\end{align} 
where $\hat{H}_{\M,n}(y) = H_{\M,n}(y,\hat{\beta}_{\M,n}(y))$,  
$$
\hat{u}_{i,\mathbb{M}}(y) = \frac{\dot{h}(\hat{\gamma}_{i,n,M}(y))}{h(\hat{\gamma}_{i,n,M}(y))(1-h(\hat{\gamma}_{i,n,M}(y)))}\left(y_i - h(\hat{\gamma}_{i,n,M}(y)) \right)
$$
and $\hat{\gamma}_{i,n,M}(y) = X_{i,n}[M]\hat{\beta}_{\M,n}(y)$, and denote the $j-$th diagonal entry ($j = 1, \hdots, m(\mathbb{M})$) of $\tilde{S}_{\M,n}$ by
\begin{equation}\label{eq:Sbinary}
\hat{\sigma}^2_{j,\mathbb{M}, n}.
\end{equation}
Finally, given $\alpha \in (0, 1)$, we define for each $\mathbb{M} \in \mathsf{M}_n$ and for every $j = 1, \hdots, m(\mathbb{M})$ the confidence sets
\begin{equation*}
\mathrm{CI}_{1-\alpha, \mathbb{M}}^{(j), \mathrm{bin}} = \hat{\beta}^{(j)}_{\mathbb{M}, n} \pm  \sqrt{\hat{\sigma}^2_{j,\mathbb{M}, n}} B_{\alpha}(\min(k, n),k),
\end{equation*}
with $k = \sum_{\M\in\mathsf{M}_n} m(\mathbb{M})$, and where $B_\alpha$ is as defined at the end of Section~\ref{sec:overest}.

These confidence intervals have the same basic structure as in Section~\ref{sec:hetlinmods}, in the sense that they use estimators $\hat{\sigma}_{j,\M,n}^2$ for the asymptotic variances that consistently overestimate their respective target quantities and replace the usual Gaussian quantile by the correction constant $B_{\alpha}(\min(k, n),k)$ that adjusts for the effect of model selection. This leads to asymptotically valid inference post-model-selection, as stated in the following theorem. 

\begin{theorem}\label{thm:binary}
Let $\alpha \in (0, 1)$ and $\tau \in(0,1/4)$, suppose Conditions~\ref{cond:X2} %(\ref{cX2:rank},\ref{cX2:Huber},\ref{cX2:eigen}) 
and \ref{cond:H} %(\ref{cH:cdf},\ref{cH:concave},\ref{cH:C2},\ref{cH:Hdot}) 
hold, and let $\hat{\mathbb{M}}_n$ be a model selection procedure, i.e., a map from the sample space $\{0,1\}^n$ to $\mathsf{M}_n$. Then
\begin{equation*}
\liminf_{n \to \infty} \inf_{\mathbb{P}_n \in \mathbf{P}_n^{(\mathrm{bin})} 
\left(\tau \right) } \mathbb{P}_n \left(\beta_{\hat{\mathbb{M}}_n, n}^{* (j)} \in \mathrm{CI}_{1-\alpha, \hat{\mathbb{M}}_n}^{(j), \mathrm{bin}} \;\forall j = 1, \hdots, m(\hat{\mathbb{M}}_n)  \right) \geq 1-\alpha.
\end{equation*}
\end{theorem}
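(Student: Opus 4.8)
The plan is to verify the high-level Condition~\ref{cond:sum} for the stacked quasi-MLE vector $\hat\theta_n = (\hat\beta_{\M_1,n}',\dots,\hat\beta_{\M_d,n}')'$ centred at $\theta_n^* = (\beta_{\M_1,n}^{*\prime},\dots,\beta_{\M_d,n}^{*\prime})'$, and then to invoke the ``important special case'' of Theorem~\ref{thm:cons} with the sandwich variances $\hat\sigma_{j,\M,n}^2$ and the constant $\hat K_n = B_\alpha(\min(k,n),k)$. Since Theorem~\ref{thm:cons} delivers coverage only along a \emph{fixed} sequence, I would first reduce the uniform statement to a per-sequence statement by the standard subsequence argument of Subsection~\ref{ssc:discfram}: if the claimed uniform bound failed, one could extract a subsequence and distributions $\P_n\in\mathbf{P}_n^{(\mathrm{bin})}(\tau)$ violating coverage, and embed them into a single sequence $\P_n\in\mathbf{P}_n^{(\mathrm{bin})}(\tau)$ contradicting the per-sequence conclusion. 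Hence it suffices to fix an arbitrary sequence $\P_n\in\mathbf{P}_n^{(\mathrm{bin})}(\tau)$ and establish per-sequence coverage.

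The core analytic step is a first-order expansion of each quasi-MLE. Writing $\gamma_{i,M}^* = X_{i,n}[M]\beta_{\M,n}^*$, $w(\gamma) = \dot h(\gamma)/(h(\gamma)(1-h(\gamma)))$, $x_i = X_{i,n}[M]'$, and $\Sigma_{\M,n} = \E_n[H_{\M,n}(\cdot,\beta_{\M,n}^*)]$, I would Taylor-expand the score equation around $\beta_{\M,n}^*$ on the high-probability event $E_{\M,\P_n,n}$ of Lemma~\ref{lemma:unifCons}. Because $\beta_{\M,n}^*$ maximises the expected quasi-log-likelihood (Lemma~\ref{lemma:pseudo}), the expected score vanishes and the centred contributions $x_i\,w(\gamma_{i,M}^*)(Y_{i,n}-p_{i,n})$ have mean zero. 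This yields the representation \eqref{eq:repres} with
\[
g_{i,n}^{(\M\text{-block})} \;=\; \Sigma_{\M,n}^{-1} x_i\, w(\gamma_{i,M}^*)\,(Y_{i,n}-p_{i,n}),
\]
the remainder $\Delta_n$ absorbing the difference between $\Sigma_{\M,n}$ and the realised Hessian together with the second-order curvature terms. Property \eqref{eq:mom} is immediate: the summands are centred, and $0<\V_n(r_n^{(j)})<\infty$ because $\V(Y_{i,n})\ge\tau$, while Conditions~\ref{cond:X2} and \ref{cond:H} guarantee that $\Sigma_{\M,n}$ is invertible and that the linear predictors $\gamma_{i,M}^*$ stay bounded, so $w(\gamma_{i,M}^*)$ is bounded away from $0$ and $\infty$. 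The marginal Lindeberg condition \eqref{eq:linde} then follows from a Lyapunov-type bound: each $|g_{i,n}^{(j)}|$ is of order $\sqrt{\text{leverage}}\cdot\V_n^{1/2}(r_n^{(j)})$ by Condition~\ref{cond:X2}(ii)--(iii), hence uniformly negligible relative to the standard deviation of the sum because $Y_{i,n}$ is bounded; the negligibility of $\Delta_n$ is obtained from the $(X_n[M]'X_n[M])^{1/2}$-consistency rate of Lemma~\ref{lemma:unifCons} and the twice-differentiability of $h$.

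For the variance estimators I would exploit that the sandwich matrix factorises as $\tilde S_{\M,n} = \sum_{i} \tilde g_{i,n}^{(\M)}(\tilde g_{i,n}^{(\M)})'$ with $\tilde g_{i,n}^{(\M)} = \hat H_{\M,n}^{-1} x_i\,\hat u_{i,\M}$, so that $\hat\sigma_{j,\M,n}^2 = \sum_i [\tilde g_{i,n}^{(j)}]^2$ has exactly the form required by Proposition~\ref{prop:overest}. The decisive observation is that misspecification enters only through the mean: expanding $\hat u_{i,\M} = w(\hat\gamma_{i,M})(Y_{i,n}-h(\hat\gamma_{i,M}))$ and writing $Y_{i,n}-h(\hat\gamma_{i,M}) = (Y_{i,n}-p_{i,n}) + (p_{i,n}-h(\gamma_{i,M}^*)) + (h(\gamma_{i,M}^*)-h(\hat\gamma_{i,M}))$, the middle term is a \emph{deterministic} bias. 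I would therefore set the non-stochastic shift $a_{i,n}^{(\M)} = \Sigma_{\M,n}^{-1} x_i\,w(\gamma_{i,M}^*)(p_{i,n}-h(\gamma_{i,M}^*))$ and define $\hat g_{i,n} = \tilde g_{i,n} - a_{i,n}$; the remaining discrepancy $g_{i,n}-\hat g_{i,n}$ is built purely from estimation-error terms $\hat\gamma_{i,M}-\gamma_{i,M}^*$ and $\hat H_{\M,n}^{-1}-\Sigma_{\M,n}^{-1}$, which I would show satisfy \eqref{eq:negldiff} using Lemma~\ref{lemma:unifCons} and Condition~\ref{cond:H}(iii). Proposition~\ref{prop:overest} then gives \eqref{eqn:special2}. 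Finally, since each $g_{i,n}(Y_{i,n}) = (Y_{i,n}-p_{i,n})\,b_{i,n}$ for deterministic $b_{i,n}\in\R^k$, the per-observation covariances are rank one and $\rank(\V_n(r_n))\le\min(n,k)$; by Lemma~\ref{lem:upper} and monotonicity of $q\mapsto B_\alpha(q,k)$ this yields $K_{1-\alpha}(\corr(\V_n(r_n)))\le B_\alpha(\min(n,k),k)=\hat K_n$ eventually, so the special case of Theorem~\ref{thm:cons} applies and delivers the per-sequence coverage, which combined with the reduction completes the proof.

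I expect the main obstacle to be the \emph{uniform} control of the linearisation remainder and, relatedly, the verification of \eqref{eq:negldiff} for the sandwich summands: the estimation-error and bias contributions are entangled through $\hat\gamma_{i,M}$ and $\hat H_{\M,n}^{-1}$, and separating them while keeping every bound uniform over $\mathbf{P}_n^{(\mathrm{bin})}(\tau)$ and over the finite family $\mathsf{M}_n$ requires the simultaneous use of the $O_{\P}(1)$ rate in Lemma~\ref{lemma:unifCons}, the boundedness of the linear predictors, and the curvature bounds from Condition~\ref{cond:H}.
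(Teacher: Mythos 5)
Your proposal reproduces the paper's own proof essentially step for step: the same linearization with summands $g_{i,n,\M} = \E_n[H^*_{\M,n}]^{-1}X_{i,n}[M]'\,w(\gamma^*_{i,M})(y_i-p_{i,n})$ and remainder controlled on the existence event of Lemma~\ref{lemma:unifCons}, the same split $\tilde g_{i,n} = \hat g_{i,n} + a_{i,n}$ with the deterministic bias $a_{i,n,\M} = \E_n[H^*_{\M,n}]^{-1}X_{i,n}[M]'\,w(\gamma^*_{i,M})\bigl(p_{i,n} - h(\gamma^*_{i,M})\bigr)$ fed into Proposition~\ref{prop:overest}, and the same rank bound $\rank(\V_n(r_n))\le\min(k,n)$ combined with Lemma~\ref{lem:upper} and the special case of Theorem~\ref{thm:cons}. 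The one ingredient you assert rather than derive --- the uniform boundedness of the pseudo-parameter linear predictors $\max_i |X_{i,n}[M]\beta^*_{\M,n}|$ and the attendant uniform control of the standardized Hessians --- is precisely what the paper isolates in its auxiliary appendix lemmas, and you correctly identify it as the main technical burden.
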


\begin{remark}\label{rem:canonical:response:function}
It is important to note that if one decides a priori to use only the canonical link function, which, in the present case of binary regression, corresponds to the logistic response function $h^{(c)}(\gamma) := e^\gamma/(1+e^\gamma)$, then Theorem~\ref{thm:binary} holds with the POSI-constant $B_\alpha(\min(k,n),k)$ decreased to $B_\alpha(\min(k,p),k)$. See Corollary~\ref{corr:binaryCanoncial} in Section~\ref{sec:binaryCanonical} of the supplement.
\end{remark}

\begin{remark}
We point out that similar principles used to derive Theorem~\ref{thm:binary} can also be employed to treat other quasi-maximum likelihood or general  M-, and Z-estimation problems \citep[see, e.g.,][for a more general treatment of generalized linear models]{Fahrmeir90}. 
The general theory of M- and Z-estimation as presented, e.g., in \citet[][Sections~3.2 and 3.3]{Vaart96}, usually also leads to expansions of the form required in Conditon~\ref{cond:sum} \citep[cf.][Theorem~3.2.16 and Theorem~3.3.1]{Vaart96}. These results are stated in a pointwise fashion but can be made uniform over large classes of data generating processes by using ideas from Section~2.8 of the same reference.
However, in more specific examples, such as the present binary regression setting, conditions can be directly imposed on the design and the link functions and can be optimized for this setup.
\end{remark}

The main technical difference compared to the previous two examples is a non-trivial existence and uniqueness issue of, both, the target parameters as well as the estimators. The main conceptual difference is that here a data driven model selection procedure $\hat{\M}_n$ may not only select variables among the $p$ candidate regressors in $X_n$, but may also result in a choice of a response function $h$ from some pre-specified class $\mathcal H$. In practice, $\mathcal H$ often contains certain classical candidates such as, e.g., the response functions corresponding to the logit, probit or complementary log-log link function. A working model could then be selected, for instance, by minimizing some penalized (quasi-)likelihood criterion over all possible choices of $(h,M)\in\mathcal H\times \mathcal I$. However, we emphasize once more, that the specifics of the possibly data driven model selection procedure $\hat{\M}_n$ are completely inconsequential for the validity of our proposed confidence intervals and could also involve visual inspection of the data and subjective preferences.

%------------------------------------------------------------------------------------------------------------------------------------------------------------------------------------
\section{Simulation study}\label{sec:sim}

In this section, we present the main findings of an extensive simulation study, the details of which can be found in Section~\ref{app:sim} of the appendix. 

\subsection{Comparison with \cite{Tib15a}}
\label{sec:sim:Tib}

For linear homoskedastic models, we first address the least angle regression (LAR) model selector \cite{efron04least} and compare the ``POSI'' confidence intervals of Theorem~\ref{thm:homlinmods} with the ``TG'' (truncated Gaussian) intervals developed in \cite{Tib15a} (with the plug-in approach for $\sigma^2$). The latter intervals are specifically tailored for the LAR model selector. We consider $n_{step} = 3$ model selectors $\hat{\mathbb{M}}_n^{(1)},\hat{\mathbb{M}}_n^{(2)},\hat{\mathbb{M}}_n^{(3)}$, that are obtained from the LAR algorithm. To compute $\hat{\M}_n^{(k)}(y)$, for $k=1,2,3$, we run $k$ steps of the LAR algorithm, i.e., $\hat{\M}_n^{(k)}$ always selects exactly $k$ variables. As in \cite{Tib15a}, we seek inference for the variable that is selected in the final ($k$-th) step of the LAR algorithm. 
We set $n=50$, $p=10$ and repeat $N = 500$ independent repetitions of data generations, model selections and confidence interval computations. The setup is the same as in \cite{Tib15a} (see Section~\ref{app:sim} in the supplement for details). In Table~\ref{table:homolm}, we report the coverage proportions, the median lengths and the $90 \%$ quantiles of the lengths for each of the six procedures (``POSI'' and ``TG'' for $k=1,2,3$), in different settings. We also report the proportions of times where the three targets corresponding to the regressors selected after step~3 of the LAR algorithm are simultaneously contained by the three respective confidence intervals.

The ``POSI'' confidence intervals always have target-specific and simultaneous coverage above the nominal level. The coverage proportions are large, which is so because these confidence intervals offer strong guarantees: they are valid for any model selection procedure, and simultaneously over all the variables in the selected model. 
Turning to the ``TG'' confidence intervals, we observe that these intervals have coverage probabilities approximately equal to the nominal level when the three targets are considered separately but their median lengths are often larger and never much smaller than the lengths of the ``POSI'' intervals. 
Finally, the $90 \%$ quantiles are always larger for the ``TG'' intervals, for which they can be very large. In Table~\ref{table:homolm}, we sometimes report infinite $90 \%$ quantiles for the ``TG'' intervals. This is because, although the confidence intervals in \cite{Tib15a} always have finite length in theory, the numerical implementation in the \verb@R@ package \verb@selectiveInference@ can return lower or upper bounds equal to $\pm \infty$. In contrast, the confidence intervals suggested in this paper are more robust, in the sense that their $90 \%$ quantile lengths are always less than twice as large as their median lengths.

We believe that the numerical results of Table~\ref{table:homolm} favor the ``POSI'' confidence intervals suggested in this paper over the ``TG'' procedure. Indeed, we have seen that, even though the LAR model selector is used, the ``POSI'' confidence intervals have larger coverage proportions, remain valid when considered simultaneously, generally have smaller median lengths, and never exhibit very large quantile lengths. On top of this, the ``POSI'' confidence intervals are much more broadly applicable, as they have theoretical guarantees for any model selection procedure.

One needs to mention here that \cite{Tib15a} also discuss a bootstrap version of their ``TG'' intervals. These bootstrap confidence intervals have similar coverage properties as the ``TG'' intervals, but much smaller median width. Their width seems to be comparable to the width of our ``POSI'' confidence intervals (cf. Tables~1 and 2 in \cite{Tib15a}). All the advantages of the ``POSI'' method discussed in the preceding paragraph (besides the comments concerning their smaller width) also apply to the bootstrapped ``TG'' intervals. Furthermore, this suggests that our ``POSI'' intervals could potentially be improved by using suitable bootstrap methods as well. However, answering this question goes beyond the scope of the present article.

\begin{table}[h]
\begin{center}
\begin{tabular}{c|ccc|ccc|ccc|c}
$u$ & \multicolumn{3}{c}{Step 1} \vline & \multicolumn{3}{c}{Step 2} \vline & \multicolumn{3}{c}{Step 3}  & Simult. \\ 
& cov. & med. &  qua. & cov. & med. &  qua. & cov. & med. &  qua.  & cov. \\  \hline\hline
\multirow{2}{*}{N} &  1.00 & 6.58 & 7.44 & 0.99 & 6.07 & 6.98 & 0.98 & 6.05 & 7.20 & 0.95 \\ 
& 0.87 & 5.09 & 21.32 & 0.89 & 11.05 & 62.11 & 0.89 & 24.51 & Inf & 0.80 \\ \cline{2-11}
\multirow{2}{*}{L} & 1.00 & 6.52 & 7.79 & 0.99 & 5.99 & 7.39 & 0.98 & 5.99 & 7.59 & 0.95 \\
& 0.91 & 5.15 & 16.37 & 0.91 & 10.25 & 58.06 & 0.87 & 25.09 & Inf & 0.75 \\  \cline{2-11}
\multirow{2}{*}{U}  & 1.00 & 6.58 & 7.26 & 1.00 & 6.08 & 6.72 & 0.99 & 6.09 & 6.99 & 0.95 \\
& 0.90 & 5.08 & 20.46 & 0.90 & 11.93 & 56.45 & 0.90 & 25.02 & Inf & 0.77 \\  \cline{2-11}
\multirow{2}{*}{SN}  &  1.00& 6.56&  7.59 &0.99&  6.05 & 7.16 &0.97  & 6.07&  7.34 & 0.95\\
& 0.90 & 5.08 &16.33& 0.88 &11.55& 59.57& 0.89  &26.79&   Inf & 0.76 \\ \hline\hline
\multirow{2}{*}{N}  & 0.99 & 6.45 & 7.36 &1.00 & 8.43& 13.02 &1.00 & 11.27 &16.63 & 0.97 \\
& 0.91 & 7.71 & 36.29 & 0.91 & 56.00 & Inf & 0.92 & 97.82 & Inf & 0.78  \\ \cline{2-11}
\multirow{2}{*}{L}  & 1.00 & 6.26 & 7.65 & 0.99 & 8.45 & 13.24 & 1.00 & 10.65 & 16.62 & 0.97 \\
& 0.92 & 8.03 & 43.09 & 0.89 & 65.66 & Inf & 0.89 & 97.82 & Inf & 0.81 \\ \cline{2-11}
\multirow{2}{*}{U}  & 0.99 & 6.47 & 7.05 & 0.99 & 8.36 & 13.17 & 1.00 & 11.44 & 16.88 & 0.98\\
& 0.89 & 8.23 & 39.62 & 0.89 & 61.46 & Inf & 0.88 & 117.81 & Inf & 0.84\\ \cline{2-11}
\multirow{2}{*}{SN}  & 0.99 & 6.48 & 7.45 & 1.00 & 8.29 & 13.00 & 1.00 & 10.96 & 16.97 & 0.97 \\
& 0.91 & 8.36 & 33.18 & 0.92 & 55.24 & Inf & 0.90 & 97.82 & Inf & 0.81 \\ 
\end{tabular}%
\end{center}
\caption{Coverage proportion (cov.), median length (med.) and $90 \%$ quantile length (qua.) for the ``POSI'' and ``TG'' confidence intervals at nominal level $1-\alpha=0.9$. The design matrix is generated with independent (upper half of the table) or correlated (lower half) columns. The errors $u$ have normal (N), Laplace (L), uniform (U) or skewed normal (SN) distributions. For each setting, the ``POSI'' intervals correspond to the first row and the ``TG'' intervals correspond to the second row. The coverage proportions, median and quantile lengths are given for each of the three targets for the three first steps of the LAR algorithm. The last column provides the simultaneous coverage proportion of the three targets after step~3.}
\label{table:homolm}
\end{table}

\subsection{The case of `significance hunting'}

Furthermore, we investigate a model selection procedure which we call ``significance hunting'' and which is closely related to the SPAR procedure in \cite{Berk13}. We first sort all the possible candidate models $\mathbb{M}\in\mathsf M_n$ according to their penalized log-likelihood and then select the model $\M$ and index $j$ that maximize the test statistics
\[
\frac{
\left| \hat{\beta}^{(j)}_{\mathbb{M},n} \right|
}{
  \sqrt{\hat{\sigma}^2_{\mathbb{M},n} \left[\left( X_n[M]'X_n[M]  \right)^{-1}\right]_j}
},
\]
among the $n_{best}$ models with largest penalized log-likelihood.
We set $n=100$, $p=5$, $1-\alpha=0.9$ and consider two settings for $\beta$. In the ``zero'' setting, we set $\beta = (0,...,0)'$. In the ``non-zero'' setting, we set $\beta = (2,-1,0,0,1)'$. We consider the values $n_{best}=5$ and $n_{best} = 20$.
The errors are normally generated. In Table~\ref{table:homolm:significance:hunting}, the coverage proportions are significantly lower than in Table~\ref{table:homolm}, and closer to the nominal level. Hence, the confidence intervals suggested in this paper may have conservative coverage proportions for some model selection procedures (such as LAR) but this is somehow necessary, since there exist other model selection procedures (such as ``significance hunting'') for which the coverage proportions are close to the nominal level.

\begin{table}[tbp]
\begin{center}
\begin{tabular}{c|c|ccc}
$n_{best}$ & $\beta$ & cov. & med. & qua. \\ \hline
\multirow{2}{*}{20} & zero & 0.88 & 4.99 & 5.83  \\ \cline{2-5}
 & non-zero & 0.93 & 5.00 & 5.73  \\ \hline
\multirow{2}{*}{5} & zero &  0.92 & 4.87 & 5.35  \\ \cline{2-5}
 & non-zero & 0.94 & 4.94 & 5.41
\end{tabular}%
\end{center}
\caption{Coverage proportion (cov.), median lengths (med.) and $90 \%$ quantile lengths (qua.) of the ``POSI'' confidence intervals at level $1-\alpha=0.9$ for the ``significance hunting'' model selection procedure.}
\label{table:homolm:significance:hunting}
\end{table}

\subsection{Further results}

In Section~\ref{app:sim} of the appendix we provide all the details of the previous simulations as well as further discussions of the results. We also present simulations for the binary regression problem of Section~\ref{sec:binary}, comparing our methods to a procedure suggested by \cite{taylor17post} and to naive intervals that ignore the data driven model selection step. Furthermore, we investigate the effect of misspecification and we also consider the ``significance hunting'' procedure in the binary regression case. The overall picture is similar to the results for the linear model, with the additional aspect that the ``POSI'' intervals remain valid also under misspecification, whereas the coverage probabilities of the methods of, e.g., \cite{taylor17post} can be substantially below the nominal level in that case.

\section{Conclusion}\label{sec:concl}

We have presented a general theory for the construction of asymptotically valid confidence sets post-model-selection. Our methods can be used in a wide number of situations, because they are only based on a standard representation that can often be obtained by simple linearization arguments. We have also applied our theory to construct valid confidence sets after selecting and fitting fixed design linear models to (possibly non-Gaussian) homoskedastic or heteroskedastic data. Moreover, we have investigated the practically very important case when binary regression models are fit to binary data. In this case, in addition to selecting variables from a given design matrix, also the choice of an appropriate link function can be made in a data driven way. The general theory and the proposed methods are applicable irrespective of whether any of the candidate models under consideration is correctly specified, leading to more or less conservative inference depending on the severity of misspecification (see Remark~\ref{rem:adaptive}). This feature is also present in the applications of Section~\ref{sec:homlinmods} (see Subsection~\ref{sec:homlinmods:varest}), \ref{sec:hetlinmods} and \ref{sec:binary}. In simulation experiments we have illustrated that the confidence intervals constructed in the examples compare favorably to existing procedures (typically offering higher coverage with the confidence intervals having comparable or much smaller length), even though they are not tailored towards specific model selection procedures.

Open questions that go beyond the scope of this article, but are currently under investigation, include the extension of the approach discussed here to dependent data; the applicability and performance of bootstrap procedures; and the development of procedures in the spirit of \cite{Berk13} in the challenging situation when the number of models fitted can grow with sample size. In ongoing work we apply our methods to real data and investigate if they can prevent spurious findings while detecting true reproducible effects.

\section*{Acknowledgements}

Results related to the present article were presented in the Statistics and Econometrics Research Seminar at the Department of Statistics and Operations Research at the University of Vienna, and we would like to thank the participants, in particular Hannes Leeb, Benedikt M. P\"otscher and Ulrike Schneider, for helpful comments and suggestions. We are also grateful for the comments and suggestions of two anonymous referees who helped to produce a considerably improved version of the paper.

\appendix

%------------------------------------------------------------------------------------------------------------------------------------------------------------------------------------
\section{Simulation study}\label{app:sim}

In this section, we investigate the confidence intervals suggested in this paper in a numerical study. It is an extended and more detailed version of Section~\ref{sec:sim} in the main article. We consider linear models and binary regression. When studying linear models, we first address the least angle regression (LAR) model selector \cite{efron04least} and compare the confidence intervals of Theorem~\ref{thm:homlinmods} with those developed in \cite{Tib15a}. The latter intervals are specifically tailored for the LAR model selector.
Then, we investigate a model selection procedure which we call ``significance hunting'' and which is arguably representative of a certain practice of data mining.

When studying binary regression, we consider the lasso model selector, with a fixed regularization parameter $\lambda$. We compare the confidence intervals of Theorem \ref{thm:binary} with those suggested by \cite{taylor17post} and with ``naive'' confidence intervals.  
The confidence intervals of \cite{taylor17post} are specific to the lasso model selector. The ``naive'' confidence intervals ignore the model selection step. We also investigate the significance hunting procedure in this case.

\subsection{Linear models}

We study the setting of Section \ref{sec:homlinmods}, where linear models are fit to homoskedastic data. Furthermore, we address the well-specified case, where the true data generating process corresponds to one of the candidate models.
We consider observations of $Y_n = X_n \beta + \sigma u $, where $X_n$ is an $n \times p$ matrix (which will be randomly generated in the simulations), $\beta$ is a $p \times 1$ vector, $\sigma$ is positive and $u$ is an $n \times 1$ vector with independent and identically distributed components which is also independent of $X_n$. 
For each model $\mathbb{M}$ with index set $M \subset \{1,...,p\}$, the $|M| \times 1$ target of inference $\beta_{\mathbb{M},n}^*$ is given by \eqref{eqn:targethom} with $ \mu_n$ replaced by $X_n \beta$.

\subsubsection{Comparison with the confidence intervals of \cite{Tib15a}}

We consider $n_{step} = 3$ model selectors $\hat{\mathbb{M}}_n^{(1)},\hat{\mathbb{M}}_n^{(2)},\hat{\mathbb{M}}_n^{(3)}$, that are obtained from the LAR algorithm (\cite{efron04least} with the function \verb@lar@ of the \verb@R@ package \verb@lars@). To compute $\hat{\M}_n^{(k)}(y)$, for $k=1,2,3$, we run $k$ steps of the LAR algorithm, i.e., $\hat{\M}_n^{(k)}$ always selects exactly $k$ variables. As in \cite{Tib15a}, we seek inference for the variable that is selected in the final ($k$-th) step of the LAR algorithm. That is, when using $\hat{\M}_n^{(k)}$, the target of inference considered by \cite{Tib15a} is $\beta_{\hat{\mathbb{M}}_n^{(k)},n}^{* (\hat{j}_k)}$, where $1\le\hat{j}_k\le k$ is a data dependent index. 
We compare the following two confidence intervals. First, we consider  the interval $\mathrm{CI}_{1-\alpha, \hat{\mathbb{M}}_n^{(k)}}^{(\hat{j}_k), \mathrm{lm}}$ suggested in Theorem~\ref{thm:homlinmods}, which we call ``POSI''. Note that by Theorem~\ref{thm:homlinmods} the intervals $\mathrm{CI}_{1-\alpha, \hat{\mathbb{M}}_n^{(k)}}^{(j), \mathrm{lm}}$, for $j=1,\dots, k$, simultaneously cover the respective coordinates $\beta_{\hat{\mathbb{M}}_n^{(k)},n}^{* (j)}$, for $j=1,\dots, k$, with (asymptotic) probability not smaller than $1-\alpha$. A fortiori the ``POSI'' interval  $\mathrm{CI}_{1-\alpha, \hat{\mathbb{M}}_n^{(k)}}^{(\hat{j}_k), \mathrm{lm}}$ covers $\beta_{\hat{\mathbb{M}}_n^{(k)},n}^{* (\hat{j}_k)}$ with (asymptotic) probability not smaller than $1-\alpha$.
Secondly, we consider the interval $\mathrm{CI}_{1-\alpha, \hat{\mathbb{M}}_n^{(k)}}^{(\hat{j}_k), \mathrm{TG}}$, which we call ``TG'' (truncated Gaussian), and which is suggested in \cite{Tib15a} (Section 2.4 with the plug-in approach for $\sigma^2$). This confidence interval also has asymptotic validity properties, similar to Theorem~\ref{thm:homlinmods}, but under the important restriction that $\hat{\mathbb{M}}_n$ has to be obtained from the LAR procedure. [In \cite{Tib15a}, similar intervals are developed for the lasso or forward-stepwise procedures.] To compute the ``TG'' intervals, we have used the function \verb@larInf@ of the \verb@R@ package \verb@selectiveInference@.

We compare the two confidence intervals above in a simulation study conducted as follows. We set $n=50$, $p=10$, $\sigma=1$, $1-\alpha=0.9$ and $\beta_0 = (-4,4,0,...,0)'\in\R^p$. We repeat $N = 500$ independent repetitions of data generations, model selections and confidence interval computations. To generate a matrix $X_n$ we consider two different cases. In the ``independent'' case, we sample each column of $X_n$ independently. With probability $1/3$, each column is filled with independent entries from either a normal $(0,1)$, a Bernoulli $(1/2)$, or a skewed normal $(0,1,5)$ distribution. Then, each column is normalized to have unit Euclidean norm. In the  ``correlated'' case, we first generate each row of $X_n$ independently from a Gaussian distribution with mean vector $0$ and covariance matrix $( e^{-0.1 |i-j|} )_{1 \leq i,j \leq p}$. Then, each column is normalized to have unit Euclidean norm.
Once $X_n$ is sampled, $Y_n$ is generated by independently sampling the components of $u$ from a normal, Laplace, uniform or skewed normal (with shape parameter $5$) distribution. In each case, the error distribution has mean $0$ and variance $1$. We remark that this data generation setting, in the ``independent'' case for $X_n$, is the same as the one considered in Table~1 of \cite{Tib15a}, where the ``TG'' confidence intervals are numerically investigated. The only difference is that we resample $X_n$ at each of the $500$ steps, while a single realization of $X_n$ is kept throughout the simulation study in \cite{Tib15a}.

For each realization of $X_n$ and $Y_n$, and for every $k=1,2,3$, we compute $\hat{\M}_n^{(k)}$, $\hat{j}_k$, and the corresponding target, as described above. Then, for the two confidence intervals
$\mathrm{CI}_{1-\alpha, \hat{\mathbb{M}}_n^{(k)}}^{(\hat{j}_k), \mathrm{lm}}$ and $\mathrm{CI}_{1-\alpha, \hat{\mathbb{M}}_n^{(k)}}^{(\hat{j}_k), \mathrm{TG}}$, we record the length and whether the target is covered or not.
We also record whether, simultaneously, the three targets $\beta_{\hat{\mathbb{M}}_n^{(3)},n}^{*(j)}$, $j=1,2,3$, corresponding to step~3 of the LAR algorithm, belong to their respective confidence intervals.

In Table \ref{table:homolm}, we report the coverage proportions, the median lengths and the $90 \%$ quantiles of the lengths for each of the six procedures (``POSI'' and ``TG'' for $k=1,2,3$), in different settings. We also report the proportions of times where the three targets are simultaneously contained by the three confidence intervals. We first observe that the results are approximately the same for the four types of error distributions, so that the Gaussian asymptotic approximation is accurate for these values of $n,p$. The ``POSI'' confidence intervals always have target-specific and simultaneous coverage above the nominal level. The coverage proportions are large, which is so because these confidence intervals offer strong guarantees: they are valid for any model selection procedure, and simultaneously over all the variables in the selected model. 

Turning to the ``TG'' confidence intervals, we observe that these intervals have coverage probabilities approximately equal to the nominal level when the three targets are considered separately. This is in agreement with the asymptotic guarantees obtained in \cite{Tib15a}. However, the simultaneous coverage is between $0.75$ and $0.84$ and thus always below the nominal level ($0.90$). In \cite{Tib15a}, no asymptotic results are given concerning simultaneous coverage. This can be a practical limitation. If one wished to use the ``TG'' confidence intervals simultaneously they would have to increase their lengths, for instance by a Bonferroni correction.  

We observe in Table \ref{table:homolm} that the confidence intervals we suggest in this paper have slightly larger median length than the ``TG'' intervals (by about $20 \%$) only in the ``independent'' design case and for the first step of the LAR procedure. In all the other cases, the ``POSI'' confidence intervals have smaller median lengths than the ``TG'' intervals. The difference of median lengths in these cases can be very significant. For instance, in the ``correlated'' design case, at the third step of LAR, the median length for the ``TG'' intervals is about $9$ times as large as for the ``POSI'' ones.  

Finally, the $90 \%$ quantiles are always larger for the ``TG'' intervals, for which they can be very large. In Table~\ref{table:homolm}, we sometimes report infinite $90 \%$ quantiles for the ``TG'' intervals. This is because, although the confidence intervals in \cite{Tib15a} (in the two-sided case as is considered here) always have finite length in theory, the numerical implementation in the \verb@R@ package \verb@selectiveInference@ can return lower or upper bounds equal to $\pm \infty$. [When, say, the lower bound is equal to $- \infty$ and the upper bound is larger than the target value, we consider the target to be covered.] In contrast, the confidence intervals suggested in this paper are more robust, in the sense that their $90 \%$ quantile lengths are always less than twice as large as their median lengths.

We believe that the numerical results of Table~\ref{table:homolm} favor the ``POSI'' confidence intervals suggested in this paper over the ``TG'' procedure. Indeed, we have seen that, even though the LAR model selector is used, the ``POSI'' confidence intervals have larger coverage proportions, remain valid when considered simultaneously, generally have smaller median lengths, and never exhibit very large quantile lengths. On top of this, the ``POSI'' confidence intervals are significantly more broadly applicable, as they have theoretical guarantees for any model selection procedure.

One needs to mention here that \cite{Tib15a} also discuss a bootstrap version of their ``TG'' intervals. These bootstrap confidence intervals have similar coverage properties as the ``TG'' intervals, but much smaller median width. Their width seems to be comparable to the width of our ``POSI'' confidence intervals (cf. Tables~1 and 2 in \cite{Tib15a}). All the advantages of the ``POSI'' method discussed in the preceding paragraph (besides the comments concerning their smaller width) also apply to the bootstrapped ``TG'' intervals. Furthermore, this suggests that our ``POSI'' intervals could potentially be improved by using suitable bootstrap methods as well. However, answering this question goes beyond the scope of the present article.

\begin{table}[h]
\begin{center}
\begin{tabular}{c|ccc|ccc|ccc|c}
$u$ & \multicolumn{3}{c}{Step 1} \vline & \multicolumn{3}{c}{Step 2} \vline & \multicolumn{3}{c}{Step 3}  & Simult. \\ 
& cov. & med. &  qua. & cov. & med. &  qua. & cov. & med. &  qua.  & cov. \\  \hline\hline
\multirow{2}{*}{N} &  1.00 & 6.58 & 7.44 & 0.99 & 6.07 & 6.98 & 0.98 & 6.05 & 7.20 & 0.95 \\ 
& 0.87 & 5.09 & 21.32 & 0.89 & 11.05 & 62.11 & 0.89 & 24.51 & Inf & 0.80 \\ \cline{2-11}
\multirow{2}{*}{L} & 1.00 & 6.52 & 7.79 & 0.99 & 5.99 & 7.39 & 0.98 & 5.99 & 7.59 & 0.95 \\
& 0.91 & 5.15 & 16.37 & 0.91 & 10.25 & 58.06 & 0.87 & 25.09 & Inf & 0.75 \\  \cline{2-11}
\multirow{2}{*}{U}  & 1.00 & 6.58 & 7.26 & 1.00 & 6.08 & 6.72 & 0.99 & 6.09 & 6.99 & 0.95 \\
& 0.90 & 5.08 & 20.46 & 0.90 & 11.93 & 56.45 & 0.90 & 25.02 & Inf & 0.77 \\  \cline{2-11}
\multirow{2}{*}{SN}  &  1.00& 6.56&  7.59 &0.99&  6.05 & 7.16 &0.97  & 6.07&  7.34 & 0.95\\
& 0.90 & 5.08 &16.33& 0.88 &11.55& 59.57& 0.89  &26.79&   Inf & 0.76 \\ \hline\hline
\multirow{2}{*}{N}  & 0.99 & 6.45 & 7.36 &1.00 & 8.43& 13.02 &1.00 & 11.27 &16.63 & 0.97 \\
& 0.91 & 7.71 & 36.29 & 0.91 & 56.00 & Inf & 0.92 & 97.82 & Inf & 0.78  \\ \cline{2-11}
\multirow{2}{*}{L}  & 1.00 & 6.26 & 7.65 & 0.99 & 8.45 & 13.24 & 1.00 & 10.65 & 16.62 & 0.97 \\
& 0.92 & 8.03 & 43.09 & 0.89 & 65.66 & Inf & 0.89 & 97.82 & Inf & 0.81 \\ \cline{2-11}
\multirow{2}{*}{U}  & 0.99 & 6.47 & 7.05 & 0.99 & 8.36 & 13.17 & 1.00 & 11.44 & 16.88 & 0.98\\
& 0.89 & 8.23 & 39.62 & 0.89 & 61.46 & Inf & 0.88 & 117.81 & Inf & 0.84\\ \cline{2-11}
\multirow{2}{*}{SN}  & 0.99 & 6.48 & 7.45 & 1.00 & 8.29 & 13.00 & 1.00 & 10.96 & 16.97 & 0.97 \\
& 0.91 & 8.36 & 33.18 & 0.92 & 55.24 & Inf & 0.90 & 97.82 & Inf & 0.81 \\ 
\end{tabular}%
\end{center}
\caption{Coverage proportion (cov.), median length (med.) and $90 \%$ quantile length (qua.) for the ``POSI'' and ``TG'' confidence intervals at nominal level $1-\alpha=0.9$. The design matrix is generated with independent (upper half of the table) or correlated (lower half) columns. The errors $u$ have normal (N), Laplace (L), uniform (U) or skewed normal (SN) distributions. For each setting, the ``POSI'' intervals correspond to the first row and the ``TG'' intervals correspond to the second row. The coverage proportions, median and quantile lengths are given for each of the three targets for the three first steps of the LAR algorithm. The last column provides the simultaneous coverage proportion of the three targets after step~3.}
\label{table:homolm}
\end{table}

\subsubsection{The case of a significance hunting procedure} \label{section:holm:significance:hunting}

We now run a similar simulation study as for Table \ref{table:homolm}, in the ``independent'' design case, and where we only address the ``POSI'' confidence intervals. As a model selector, we now consider the following procedure, which we call ``significance hunting''. For given $X_n$ and $Y_n$, we first sort all the possible candidate models $\mathbb{M}\in\mathsf M_n$ according to their penalized log-likelihood, with additive penalty term equal to $\lambda |M|$, where $\mathbb{M}$ has index set $M$. Then, for the $n_{best}$ models with largest penalized log-likelihood, we compute all the possible test statistics of the form 
\[
\frac{
\left| \hat{\beta}^{(j)}_{\mathbb{M},n} \right|
}{
  \sqrt{\hat{\sigma}^2_{\mathbb{M},n} \left[\left( X_n[M]'X_n[M]  \right)^{-1}\right]_j}
},
\]
with $j =1,...,|M|$. We then return the pair $\hat{\mathbb{M}},\hat{j}$ with largest test statistics. Hence, the ``significance hunting'' procedure consists first in selecting the $n_{best}$ best models according to the penalized log-likelihood criterion, and then in finding, among these models, the configuration which results in the most significant regression coefficient. We believe that similar procedures may be common, yet unreported, practice. We also observe that, when $n_{best} = 2^p - 1$ , the ``significance hunting'' procedure corresponds to the SPAR procedure in \cite{Berk13}, and yields an asymptotic coverage exactly equal to $1-\alpha$ in Theorem~\ref{thm:homlinmods}, if $\beta$ is the zero vector, so that all candidate models are first order correct. [This can be deduced from Proposition~\ref{prop:lmvarest} and Theorem~\ref{thm:fbl}, where Condition~\ref{cond:sum} is verified, e.g., in the proof of Theorem~\ref{thm:homlinmods}.]

In Table~\ref{table:homolm:significance:hunting}, we report the coverage proportions, median lengths and $90 \%$ quantiles of the ``POSI'' confidence intervals for the ``significance hunting'' model selector, over $500$ repetitions conducted similarly as for Table~\ref{table:homolm}. Here, the target is $\beta_{\hat{\M},n}^{*(\hat{j})}$ and we try to cover it with $\mathrm{CI}_{1-\alpha, \hat{\mathbb{M}}}^{(\hat{j}), \mathrm{lm}}$. We set $p=5$ (so that it becomes numerically easier to compute the penalized log-likelihood for all possible models) and consider two settings for $\beta$. In the ``zero'' setting, we set $\beta = (0,...,0)'$. In the ``non-zero'' setting, we set $\beta = (2,-1,0,0,1)'$. We consider the values $n_{best}=5$ and $n_{best} = 20$.
We set $n=100$, $1-\alpha=0.9$, $\lambda=2$ and $\sigma=1$. The errors are normally generated.

In Table~\ref{table:homolm:significance:hunting}, the coverage proportion is almost equal to, and slightly below, the nominal level when $\beta = (0,...,0)'$ and $n_{best}=20$. The fact that we sometimes obtain coverage slightly below the nominal level is mainly due to the estimation error of $\sigma^2$. Then, the coverage proportion increases when $\beta$ is non-zero or when $n_{best} = 5$. This is well-interpreted, because decreasing $n_{best}$ decreases the weight of the significance hunting in the model selector. Also, if $\beta=0$, the pair $(\hat{\mathbb{M}},\hat{j})$ with largest test statistic corresponds to the target $\beta_{\hat{\M},n}^{*(\hat{j})}=0$ which is ``furthest away'' from its confidence interval $\mathrm{CI}_{1-\alpha, \hat{\mathbb{M}}}^{(\hat{j}), \mathrm{lm}}$. 

In Table~\ref{table:homolm:significance:hunting}, the coverage proportions are significantly lower than in Table~\ref{table:homolm}, and closer to the nominal level. Hence, the confidence intervals suggested in this paper may have conservative coverage proportions for some model selection procedures (such as LAR) but this is somehow necessary, since there exist other model selection procedures (such as ``significance hunting'') for which the coverage proportions are close to the nominal level.

\begin{table}[tbp]
\begin{center}
\begin{tabular}{c|c|ccc}
$n_{best}$ & $\beta$ & cov. & med. & qua. \\ \hline
\multirow{2}{*}{20} & zero & 0.88 & 4.99 & 5.83  \\ \cline{2-5}
 & non-zero & 0.93 & 5.00 & 5.73  \\ \hline
\multirow{2}{*}{5} & zero &  0.92 & 4.87 & 5.35  \\ \cline{2-5}
 & non-zero & 0.94 & 4.94 & 5.41
\end{tabular}%
\end{center}
\caption{Coverage proportion (cov.), median lengths (med.) and $90 \%$ quantile lengths (qua.) of the ``POSI'' confidence intervals at level $1-\alpha=0.9$ for the ``significance hunting'' model selection procedure.}
\label{table:homolm:significance:hunting}
\end{table}

\subsection{Binary regression}

We study the setting of Section \ref{sec:binary}, where binary regression models are fit to binary data. Thus, the data are of the form $X_n, Y_n$, where $X_n = (X_{1,n}',...,X_{n,n}')'$ is $n \times p$ and will be randomly generated and $Y_n$ has independent components given $X_n$ and takes values in $\{0,1\}^n$. Here, we only consider the canonical link function, so $\mathcal H = \{h^{(c)}\}$ with $h^{(c)}(x) = \frac{e^x}{1+e^x}$. Thus, each candidate model $\M\in\mathsf M_n$ is identified with the corresponding set $M\in\mathcal I= 2^{\{1,\dots, p\}}\setminus\varnothing$ of selected regressors. Here, $d=d_2 = 2^p-1$.

For the distribution of $Y_n$ given $X_n$, we let $\P( Y_{i,n} = 1 ) = h^{(c)}(\gamma_i)$ and we consider two different cases for the construction of $\gamma\in\R^n$. In the ``well-specified'' case, we let $\gamma = X_n \beta$ for a fixed $\beta \in \mathbb{R}^p$. In the ``misspecified'' case, we consider an $n \times \bar{p}$ matrix $\bar{X}_n$, with $\bar{p} > p$ and where the first $p$ columns of $\bar{X}_n$ correspond to $X_n$. Then, we let $\gamma = \bar {X}_n \bar{\beta}$ where $\bar{\beta}$ is a fixed $\bar{p} \times 1$ vector. 
For a given model $\mathbb{M}$ and a given $X_n$, the target of inference $\beta_{\M,n}^*$ is given by Lemma~\ref{lemma:pseudo}, where $\mathbb{P}_n $ in this lemma is the (conditional) distribution of $Y_n$ under $\P$ (given $X_n$).

\subsubsection{Comparison with the confidence intervals of \cite{taylor17post} and with the naive procedure}

As model selection procedure $\hat{\M}_n$ we consider the lasso for logistic regression (cf. \cite{friedman2010regularization}) with a fixed regularization parameter $\lambda$. [We maximize the difference of the log-likelihood and of $\lambda$ times the $L^1$ norm of $\beta$.] In order to compute $\hat{\M}_n$, we use the function \verb@glmnet@ of the \verb@R@ package \verb@glmnet@. The corresponding coefficient index of interest is $j = 1$, that is, we are interested in the first coefficient $\beta_{\hat{\M}_n,n}^{*(1)}$ of the post-model-selection target. We consider the confidence interval $\mathrm{CI}_{1-\alpha, \hat{\mathbb{M}}_n}^{(1), \mathrm{bin}}$ of Theorem~\ref{thm:binary} (with the adjustment discussed in Remark~\ref{rem:canonical:response:function}) which we call ``POSI''. We also consider the confidence interval $\mathrm{CI}_{1-\alpha, \hat{\mathbb{M}}_n}^{(1), \mathrm{lasso}}$ suggested in \cite{taylor17post}, which we compute by using the function \verb@fixedLassoInf@ of the \verb@R@ package \verb@selectiveInference@. This confidence interval is developed for the lasso model selector and has some asymptotic guarantees which are discussed in \cite{taylor17post}. We call it ``LASSO''.  Finally, we consider the ``naive'' confidence interval defined by
\begin{equation} \label{eq:naive:CI:logistic}
\mathrm{CI}_{1-\alpha, \hat{\mathbb{M}}_n}^{(j), \mathrm{naive}}
=
\hat{\beta}^{(j)}_{\hat{\mathbb{M}}_n, n} \pm
q_{1-\alpha/2}
\sqrt{ (\bar{S}_{\hat{\M}_n,n})_{jj} },
\end{equation}
where $q_{1-\alpha/2}$ is the $1-\alpha/2$ quantile of the standard Gaussian distribution and where $\bar{S}_{\M,n}$ is as $\tilde{S}_{\M,n}$ in \eqref{eq:tildeSlogistic} but with $\hat{u}^2_{i,\M}$ replaced by
$(h^{(c)}(\hat{\gamma}_{\M,i})(1-h^{(c)}(\hat{\gamma}_{\M,i})))$
with $\hat{\gamma}_{\M,i} = X_{i,n} \hat{\beta}_{\M,n}$. The ``naive'' interval is constructed by a plug-in of the estimate $\hat{\beta}_{\M,n}$ in the expression of the asymptotic variance of $\hat{\beta}_{\M,n}$, under the assumptions that the model $\M$ is fixed and contains the true distribution of $Y_n$ (see also the proof of Corollary~\ref{corr:binaryCanoncial}). Hence, this interval ignores the model selection step, and the potential misspecification. 

We compare the three confidence intervals in a simulation study where we repeat $N=1000$ data generations, model selections and confidence interval computations, similarly as for Table \ref{table:homolm}.  We set $1-\alpha=0.9$, $p=10$ and $n=30$ or $n=100$. The rows of the design matrix $X_n$ are independently generated from a Gaussian distribution with mean vector zero, variances $1$ and off-diagonal covariances $\rho = 0.2$.
We consider the well-specified and misspecified settings as described above. In the well-specified setting, $\beta$ is equal to $(0,...,0)'$ (``zero''), equal to $(1,0,...,0)'$ (``sparse'') or equal to $(1/n^{1/2})( (-1,1),...,(-1,1) )'$ (``scaled''). In the misspecified case, we set $\bar{p} = 21$, we generate $\bar{X}_n$ in the same way as $X_n$ (up to the change of dimension) and we set $\bar{\beta} = ((-3/2,3/2,0),...,(-3/2,3/2,0))' \in \mathbb{R}^{\bar{p}}$. We set $\lambda$ equal to $0.012 n$ (``small'') or to $0.05 n$ (``large''). 

The results are reported in Table \ref{table:logistic}. We observe that all the confidence interval lengths decrease when $n$ increases, which is natural. The ``POSI'' confidence intervals always have coverage proportions above the nominal level. In fact, the coverage proportions are quite large, which is explained by a similar argument as for Table \ref{table:homolm}: since the ``POSI'' intervals are valid for any model selection procedure, and simultaneously over the selected coefficients, they become conservative when applied specifically to the lasso and only for the first coefficient. In Table~\ref{table:logistic}, the ``POSI'' intervals compare favorably with the ``LASSO'' ones. Indeed, the median lengths are generally comparable between the ``POSI'' and ``LASSO'' intervals (less than a factor $2$ between the two median lengths). Depending on the situation, any of these two intervals can have the smallest median length. On the other hand, the $90 \%$ quantile lengths are always larger for the ``LASSO'' intervals. In some cases, they can be up to $7$ times as large as the ``POSI'' ones (for instance in the ``well-specified'', ``scaled'' $\beta$, ``large'' $\lambda$ setting with $n=100$). Also, more importantly in our opinion, the ``LASSO'' intervals can have coverage proportions way below the nominal level (down to $0.25$ instead of $0.9$), even though the lasso model selector is used here. In fact, the ``LASSO'' intervals have sufficient coverage proportion only in the cases where $\beta$ has up to one non-zero coefficient, and the coverage proportion becomes too small otherwise. In particular, in the ``misspecified'' setting, the ``LASSO'' coverage becomes very small.

Turning to the ``naive'' intervals, we observe that they are smaller than the ``POSI'' ones, by approximately a factor $2$. For both the ``POSI'' and ``naive'' intervals, the $90 \%$ quantile lengths are moderately above the median lengths and are never very large. Importantly, the ``naive'' intervals can have coverage proportions significantly below the nominal level (down to $0.68$ instead of $0.9$). This is in agreement with the fact that these intervals have no asymptotic guarantee, in the post-model-selection context. Hence, these intervals are smaller than the ``POSI'' ones at the price of not offering reliable coverage properties. [\cite{bachoc14valid} and \cite{leeb13various} reach a similar conclusion in the linear regression context.]

\begin{table}[tbp]
\begin{center}
\begin{tabular}{c|c|c|ccc|ccc|ccc}
 $\beta$ ($\bar{\beta}$) & $\lambda$ & $n$  & \multicolumn{3}{c}{cov.} \vline &
\multicolumn{3}{c}{med.} \vline & \multicolumn{3}{c}{qua.}  \\ 
& & & P & L &  N & P & L &  N & P & L &  N  \\ 
\hline\hline
\multirow{2}{*}{zero} & \multirow{2}{*}{small} &
30 & 0.99 & 0.89 & 0.84 & 4.26 & 7.44 & 2.09 & 6.97 & 43.33 & 3.42 \\
& & 100 & 1.00 &0.90& 0.85 &1.73& 2.46& 0.78&  1.95& 13.04& 0.87 \\ \cline{2-12}
\multirow{2}{*}{sparse} & \multirow{2}{*}{small} &
30 &0.97 & 0.88& 0.93 &5.88& 5.99& 3.05& 11.40& 26.10& 8.35  \\
& & 100 & 1.00 &0.88 &0.85& 2.21& 1.59& 1.01 & 2.80&  6.51& 1.24 \\ \cline{2-12}
\multirow{2}{*}{scaled} & \multirow{2}{*}{small} &
30 & 0.99& 0.87& 0.85 &4.49& 6.64& 2.21&  7.11& 35.73& 3.86  \\
& & 100 & 1.00& 0.83 &0.83 &1.74 &2.13& 0.79 & 2.00& 10.91& 0.89 \\ \cline{2-12}
\multirow{2}{*}{scaled} & \multirow{2}{*}{large} &
30 & 0.99& 0.88& 0.79& 3.97& 5.90& 1.89&  6.30& 28.56& 3.05  \\
& & 100 & 1.00& 0.85& 0.68& 1.63& 2.31& 0.74 & 1.90 &13.52& 0.84 \\ \hline\hline %\cline{1-13}
\multirow{2}{*}{dense} & \multirow{2}{*}{small} &
30 & 0.99& 0.68 &0.98& 5.47& 6.55& 3.37&  9.75& 36.35& 7.08  \\
& & 100 &1.00 &0.25 &0.98& 2.22& 1.23& 1.01 & 2.83 & 3.50 &1.24 \\ \cline{2-12}
\multirow{2}{*}{dense} & \multirow{2}{*}{large} &
30 & 0.99 &0.74 &0.99& 5.09& 4.65 &2.67 & 9.41& 20.99& 6.64  \\
& & 100 & 1.00 &0.48 &0.98 &2.12& 1.42& 0.97 & 2.64&  5.58 &1.19 \\
\end{tabular}%
\end{center}
\caption{Binary regression with the lasso model selector. Coverage proportion (cov.), median lengths (med.) and $90 \%$ quantile lengths (qua.) of the ``POSI'' (P), ``LASSO'' (L) and ``naive'' (N) confidence intervals for the lasso model selection procedure at level $1-\alpha=0.9$. We consider the well-specified (upper part of the table) and misspecified (lower part) cases.}
\label{table:logistic}
\end{table}

\subsubsection{The case of a significance hunting procedure}

Similarly as in Section \ref{section:holm:significance:hunting}, we consider a significance hunting procedure for which we study the ``POSI'' and ``naive'' confidence intervals. For the significance hunting procedure, we consider the $n_{best}$ models with largest penalized log-likelihood criteria (we subtract $\lambda |M|$ from the maximum log-likelihood for a model $\M$ with index set $M$). Then, we consider test statistics of the form 
\[
\frac{
\left| \hat{\beta}^{(j)}_{\mathbb{M},n} \right|
}{
 \sqrt{ (\bar{S}_{\hat{\M},n})_{jj} }
},
\]
with the notation of \eqref{eq:naive:CI:logistic}
and otherwise proceed as in Section \ref{section:holm:significance:hunting}. 

In Table \ref{table:logistic:significance:hunting}, we report the results for the significance hunting procedure, where we have repeated $N=1000$ data generations, model selections and confidence interval computations as for Table \ref{table:homolm:significance:hunting}. We set $\lambda=2$, $p=5$ and $1-\alpha=0.9$. The design matrices $X_n$ are randomly generated as for Table~\ref{table:logistic} but with $\rho = 0.8$. We consider the ``well-specified'' case with $\beta$ equal to $(0,...,0)'$ (``zero'') or to $(-1,1,0,0,0)'$ (``non-zero''). We set $n_{best} = 20$ or $n_{best} = 5$ and $n=30$ or $n=100$. 

We observe that for both intervals, similarly as in Table~\ref{table:logistic}, the lengths decrease when $n$ goes from $30$ to $100$ and the quantile lengths are above the median lengths by a factor less than $3$. Also, the ``naive'' intervals are about half of the length of the ``POSI'' ones.
For the same reasons as for Table \ref{table:homolm:significance:hunting}, the coverage proportions decrease when $n_{best} = 20$ or when $\beta = (0,...,0)'$. The ``POSI'' intervals can have smaller coverage proportions than for the lasso model selector, down to $0.95$ (for a nominal level equal to $0.9$). 
Finally, the coverage proportions of the ``naive'' intervals are always much too small, with a minimum of $0.38$. Hence we have another illustration, more pronounced than in Table~\ref{table:logistic}, that these intervals do not offer reliable guarantees for post-model selection inference.

\begin{table}[tbp]
\begin{center}
\begin{tabular}{c|c|c|cc|cc|cc}
$n_{best}$ & $\beta$  & $n$ & \multicolumn{2}{c}{cov.} &
\multicolumn{2}{c}{med.} & 
\multicolumn{2}{c}{qua.} \\ 
& & & P & N & P & N & P & N \\ \hline
\multirow{4}{*}{20} & 
\multirow{2}{*}{zero} & 
30 & 0.95& 0.39& 4.40& 2.63& 6.22& 3.63  \\ 
 & 
 & 
100 & 0.95 &0.38 &2.12 &1.23& 2.49& 1.43  \\ \cline{2-9}
 & 
\multirow{2}{*}{non-zero} & 
30 & 0.98 &0.65& 4.78& 2.91& 7.13& 4.20  \\ 
 & 
 & 
100 & 0.99& 0.78 &2.41 &1.40 &2.95 &1.68  \\ \cline{1-9}
\multirow{4}{*}{5} & 
\multirow{2}{*}{zero} & 
30 & 0.96 &0.55 &2.57& 1.48 &5.63 &3.32  \\ 
 & 
 & 
100 &  0.96 &0.58& 1.23 &0.71& 2.20& 1.27 \\ \cline{2-9}
 & 
\multirow{2}{*}{non-zero} & 
30 & 0.98& 0.68& 4.08& 2.53 &6.89 &3.97 \\ 
 & 
 & 
100 & 0.99 &0.78 &2.32& 1.35& 2.86 &1.64  \\ 
\end{tabular}%
\end{center}
\caption{Binary regression with the significance hunting procedure. Coverage proportion (cov.), median length (med.) and $90\%$ quantile length (qua.) of the ``POSI'' (P) and ``naive'' (N) confidence intervals. }
\label{table:logistic:significance:hunting}
\end{table}

To conclude the simulation study in the binary case, we believe that the results in Tables \ref{table:logistic} and \ref{table:logistic:significance:hunting} provide a complimentary picture of the confidence intervals suggested in this paper. Indeed, these intervals can be much shorter than the ``LASSO'' ones and they are never more than twice as large as the ``LASSO'' or ``naive'' intervals. Furthermore, only they have sufficient coverage proportions in all the settings studied. The two other types of intervals can yield significant under-coverage. More precisely, the `LASSO'' intervals can exhibit strong under-coverage even though the lasso model selector is used. The ``naive'' intervals can yield small coverage proportions for the lasso model selector, and yield even smaller ones for the significance hunting procedure.
Finally, the ``POSI'' intervals are asymptotically valid for any model selection procedure, while the ``LASSO'' ones can only be used in conjunction with the lasso model selector, and the ``naive'' ones do not have asymptotic guarantees in the post-model-selection context at all.

%============================================================================================================
%============================================================================================================

\section{Auxiliary results}\label{app:aux}

In this section, for every $n \in \N$ and for some $k \in \N$, not depending on $n$, let $z_{1,n}, \hdots, z_{n,n}$ be independent $k$-variate random vectors defined on a probability space $(\Omega_n, \mathcal{A}_n, \mathbb{P}_n)$. Denote 
\begin{equation}
r_n \quad = \quad (r^{(1)}_n, \hdots, r^{(k)}_n)' \quad = \quad \sum_{i = 1}^n (z^{(1)}_{i,n}, \hdots, z^{(k)}_{i,n})',
\end{equation}
and let
%a
\begin{equation}
S_n \quad = \quad \sum_{i = 1}^n z_{i,n} z_{i,n}'.
\end{equation}

In this section, the expectation operator and the variance-covariance operator w.r.t. $\mathbb{P}_n$ is denoted by $\mathbb{E}_n$ and $\mathbb{V}_n$, respectively.

\begin{condition}\label{cond:zmom}
For every $n \in \N$, every $j \in \{1, \hdots, k\}$ and every $i \in \{1, \hdots, n\}$ 
\begin{equation}\label{EQ:mom}
\E_n(z^{(j)}_{i,n}) = 0 \quad \text{ and } \quad \mathbb{V}_n(r^{(j)}_n) = 1.
\end{equation}
Furthermore, for every $j \in \{1, \hdots, k\}$ we have
\begin{equation}\label{EQ:conv}
\mathbb{P}_n \circ r^{(j)}_n \quad \Rightarrow \quad N(0,1),
\end{equation}
and for every $\varepsilon > 0$
\begin{equation}\label{EQ:neg}
\max_{1 \leq i \leq n} \mathbb{P}_n\left(|z^{(j)}_{i,n}|\geq \varepsilon \right) \to 0.
\end{equation}
\end{condition}

The first statement in the subsequent lemma is essentially Corollary 2 in \cite{pollak1972} combined with a tightness argument. The second statement is obtained via an application of Raikov's theorem (\cite{raikov}, cf. the statement given in \cite{gnedenko} on p. 143).

\begin{lemma}\label{lemma:zmom}
Suppose Condition~\ref{cond:zmom} holds. Then
\begin{equation}
d_w \big( \mathbb{P}_n \circ r_n, N(0, \mathbb{V}_n(r_n)) \big) \to 0.
\end{equation}
Furthermore, for every $\varepsilon > 0$ it holds that
\begin{equation}\label{eqn:empvar}
\mathbb{P}_n \left(\big \|S_n -\mathbb{V}_n(r_n) \big \| \geq \varepsilon   \right) \to 0,
\end{equation}
and hence that
\begin{equation}\label{eqn:empclt}
\mathbb{P}_n \left( d_w \big( \mathbb{P}_n \circ r_n, N(0, S_n) \big) \geq \varepsilon\right) \to 0.
\end{equation}

\end{lemma}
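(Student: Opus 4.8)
The plan is to prove the three displays in turn, deducing the $S_n$-concentration statement \eqref{eqn:empvar} and the final display \eqref{eqn:empclt} from the first display together with a continuity argument. The guiding device throughout is a subsequence argument, which is forced on us because the matrices $\mathbb{V}_n(r_n)$ need not converge. Note first that by \eqref{EQ:mom} each $\mathbb{V}_n(r_n)$ is a correlation matrix: its diagonal entries equal $1$ and, by Cauchy--Schwarz, its off-diagonal entries lie in $[-1,1]$; the set of such matrices is compact.

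For the first display I would first argue tightness: by \eqref{EQ:conv} each marginal $\mathbb{P}_n\circ r_n^{(j)}$ converges to $N(0,1)$ and is hence tight, and tightness of all $k$ marginals yields tightness of the joint laws $\mathbb{P}_n\circ r_n$. Fix an arbitrary subsequence; along a further subsequence I may assume both $\mathbb{P}_n\circ r_n\Rightarrow\mu$ for some probability measure $\mu$ and, by compactness, $\mathbb{V}_n(r_n)\to\Sigma$ for some correlation matrix $\Sigma$. The negligibility condition \eqref{EQ:neg} makes $\{z_{i,n}\}$ a uniformly asymptotically negligible triangular array of independent summands, so its row-sum limit $\mu$ is infinitely divisible. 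By \eqref{EQ:conv} every coordinate marginal of $\mu$ is $N(0,1)$, and by Corollary~2 of \cite{pollak1972} an infinitely divisible law all of whose marginals are normal is itself multivariate normal, so $\mu=N(0,\Sigma')$ for some correlation matrix $\Sigma'$. To identify $\Sigma'=\Sigma$ I would apply the one-dimensional CLT to the linear combinations $a'r_n$ for $a\in\{e_j,\,e_j\pm e_l\}$: the marginal Lindeberg condition holds (it is equivalent to \eqref{EQ:conv} and \eqref{EQ:neg}, cf.\ Remark~\ref{rem:lindeequiv}) and transfers to such finite linear combinations, so $a'r_n$, having variance $a'\mathbb{V}_n(r_n)a\to a'\Sigma a$, converges to $N(0,a'\Sigma a)$; since also $a'r_n\Rightarrow a'\mu = N(0,a'\Sigma' a)$, equating variances over these $a$ pins down every entry and gives $\Sigma'=\Sigma$. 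Consequently, along the subsequence, $\mathbb{P}_n\circ r_n\Rightarrow N(0,\Sigma)$ and $N(0,\mathbb{V}_n(r_n))\Rightarrow N(0,\Sigma)$ (Gaussian laws depend continuously on their covariance), so the $d_w$-distance tends to $0$; as the subsequence was arbitrary, the first display follows by the subsequence principle.

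For \eqref{eqn:empvar} I would first record that independence and \eqref{EQ:mom} give $\mathbb{E}_n(S_n)=\sum_i\mathbb{E}_n(z_{i,n}z_{i,n}')=\mathbb{V}_n(r_n)$. Passing again to a subsequence along which $\mathbb{V}_n(r_n)\to\Sigma$, I would apply Raikov's theorem (\cite{raikov}; cf.\ \cite{gnedenko}, p.~143): for the negligible sum $a'r_n=\sum_i a'z_{i,n}$ converging to $N(0,a'\Sigma a)$, the quadratic variation $a'S_n a=\sum_i (a'z_{i,n})^2$ concentrates in $\mathbb{P}_n$-probability at the limiting variance $a'\Sigma a$. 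Taking $a\in\{e_j,\,e_j\pm e_l\}$ and polarizing identifies every entry of $S_n$ as converging in probability to the corresponding entry of $\Sigma$, so $\|S_n-\mathbb{V}_n(r_n)\|\to 0$ in probability along the subsequence (entrywise convergence of a fixed-size matrix forces operator-norm convergence). The subsequence principle then yields \eqref{eqn:empvar}. Finally, \eqref{eqn:empclt} follows by the triangle inequality
\[
d_w\big(\mathbb{P}_n\circ r_n,\,N(0,S_n)\big)\;\le\;
d_w\big(\mathbb{P}_n\circ r_n,\,N(0,\mathbb{V}_n(r_n))\big)
+ d_w\big(N(0,\mathbb{V}_n(r_n)),\,N(0,S_n)\big),
\]
where the first term tends to $0$ deterministically by the first display, and for the second term I would use that $\Sigma\mapsto N(0,\Sigma)$ is continuous in $d_w$, hence uniformly continuous on a compact neighbourhood of the set of correlation matrices; by \eqref{eqn:empvar}, with probability tending to one $S_n$ lies within any prescribed $\delta$ of that compact set, making the second term smaller than $\varepsilon$ with probability tending to one.

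The hard part will be the first display: verifying uniform asymptotic negligibility so that the weak limits are infinitely divisible, invoking \cite{pollak1972} to upgrade ``normal marginals'' to ``jointly normal'', and---because the covariances need not converge---identifying the limiting covariance through the subsequence/linear-combination device rather than by a direct moment computation. Once that is in place, \eqref{eqn:empvar} and \eqref{eqn:empclt} are comparatively routine, relying on Raikov's theorem and the continuity of the Gaussian family in its covariance.
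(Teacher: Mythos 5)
Your proposal is correct and its skeleton coincides with the paper's own proof: tightness of $\mathbb{P}_n\circ r_n$ plus a subsequence along which $\mathbb{V}_n(r_n)\to\Sigma$, Corollary~2 of \cite{pollak1972} (with joint UAN supplied by \eqref{EQ:neg}) to upgrade normal marginals of the infinitely divisible limit to joint normality, Raikov's theorem applied to one-dimensional projections for \eqref{eqn:empvar}, and the triangle inequality plus continuity of $\Sigma\mapsto N(0,\Sigma)$ for \eqref{eqn:empclt} (on this last point you are in fact more explicit than the paper, which dismisses it as immediate). The one genuine divergence is how the covariance of the weak limit is identified: the paper observes that $[r_n^{(j)}]^2\Rightarrow \chi^2_1$ with matching means, so $[r_n^{(j)}]^2$ is uniformly integrable by Theorem~5.4 of \cite{bill2}, and the bound $|r_n^{(s)}r_n^{(t)}|\le \frac{1}{2}\left([r_n^{(s)}]^2+[r_n^{(t)}]^2\right)$ then makes the products uniformly integrable, so that moment convergence identifies the limit covariance as $\Sigma$ in one stroke; you instead run one-dimensional Lindeberg CLTs along the directions $a\in\{e_j,\,e_j\pm e_l\}$ and match variances. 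Your route works, but two compressed steps deserve spelling out. First, the assertion that the marginal Lindeberg conditions ``transfer'' to $a=e_j\pm e_l$ is not free: expanding $\mathbb{E}_n\big[(a'z_{i,n})^2\,\mathbbm{1}\{|a'z_{i,n}|\ge c\}\big]$ produces cross terms of the form $\mathbb{E}_n\big[(z^{(j)}_{i,n})^2\,\mathbbm{1}\{|z^{(l)}_{i,n}|\ge c/2\}\big]$, which the marginal conditions do not directly control; one needs the truncation bound by $\delta^2\,\mathbb{P}_n(|z^{(l)}_{i,n}|\ge c/2) + \mathbb{E}_n\big[(z^{(j)}_{i,n})^2\,\mathbbm{1}\{|z^{(j)}_{i,n}|>\delta\}\big]$ together with $\sum_{i=1}^n \mathbb{P}_n(|z^{(l)}_{i,n}|\ge c/2)\le 4/c^2$, which follows from Chebyshev and $\mathbb{V}_n(r_n^{(l)})=1$. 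Second, both in your identification step and in your Raikov step the degenerate directions $a'\Sigma a=0$ must be treated separately, since the normalization by $a'\mathbb{V}_n(r_n)a$ breaks down there; as in the paper's proof of \eqref{eqn:empvar}, Markov's inequality settles these directly, because $a'S_na\ge 0$ has mean $a'\mathbb{V}_n(r_n)a\to 0$. With these repairs your argument is complete: the paper's uniform-integrability device buys the covariance identification and the degenerate directions uniformly at the cost of a moment-convergence argument, while your directional route stays entirely within the Lindeberg/Raikov machinery already set up, at the cost of the cross-term bookkeeping above.
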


\begin{proof}
For the first claim, let $n'$ be an arbitrary subsequence. From Equation \eqref{EQ:mom} we see that $\mathbb{E}_n(r_n) = 0$ and that $\mathbb{V}_n(r_n)$ is norm-bounded and, hence, that $\mathbb{P}_n \circ r_n$ is tight. Therefore, there exists a subsequence $n''$ of $n'$ along which $\mathbb{V}_n(r_n)$ converges to $\Sigma$, say, and along which $\mathbb{P}_n \circ r_n$ converges weakly. We now need to show that $d_w(\mathbb{P}_{n''} \circ r_{n''}, N(0, \Sigma)) \rightarrow 0$, which then proves the statement in view of the triangle inequality, continuity of $\Gamma \mapsto N(0, \Gamma)$ w.r.t. $d_w$, and the fact that $n'$ was arbitrary. That the weak limit of $\mathbb{P}_{n''} \circ r_{n''}$ must be normal follows from Equations \eqref{EQ:conv} and \eqref{EQ:neg}, applying Corollary 2 in \cite{pollak1972}. The mean vector of the limiting distribution of $\mathbb{P}_{n''} \circ r_{n''}$ is $0$ from \eqref{EQ:conv}. It remains to verify that the covariance matrix of the limiting distribution is $\Sigma$. From Equations \eqref{EQ:mom} and \eqref{EQ:conv} and, e.g., Theorem 5.4 in \cite{bill2} it follows that $[r_{n}^{(j)}]^2$ is uniformly integrable for $j = 1, \hdots, k$. The inequality $ab \leq \frac{1}{2}(a^2 + b^2)$, together with the fact that the sum of two uniformly integrable sequences is uniformly integrable, then shows uniform integrability of $r_n^{(s)}r_n^{(t)}$, and hence (e.g., again Theorem 5.4 in \cite{bill2} together with weak convergence of $\mathbb{P}_{n''} \circ r_{n''}$ and the continuous mapping theorem) that the covariance matrix of the limiting distribution of $\mathbb{P}_{n''} \circ r_{n''}$ coincides with $\Sigma$.

To prove the second claim, we start with the observation that it suffices to verify that for every $\varepsilon > 0$ and every $\gamma \in \R^k$ it holds that
\begin{equation}
\mathbb{P}_n \left(|\gamma'\left( S_n - \mathbb{V}_n(r_n) \right) \gamma | \geq \varepsilon \right) \to 0.
\end{equation}
To see this, it suffices to first take $\gamma$ equal to the elements of the standard basis in $\R^k$ in order to show that the diagonal entries converge to zero. Then, taking $\gamma$ equal to $(1,1,0,\dots, 0)'$, $(0,1,1,0,\dots, 0)'$, etc., and using symmetry shows that also the entries above and below the main diagonal converge. Continuing this process with vectors containing exactly three, four, five, etc., consecutive ones, establishes the claim.
Next, to verify the statement in the previous display, let $\gamma \in \R^k$, $\eps > 0$, and let $n'$ be an arbitrary subsequence. Choose $n''$ a subsequence of $n'$ along which $\mathbb{P}_n \circ r_n \Rightarrow N(0, \Sigma)$ - such a subsequence exists because of the already established part of the lemma. We also already know from the uniform integrability argument above, that then $\mathbb{V}_{n''}(r_{n''}) \rightarrow \Sigma$, and hence that $\sigma_{n''}^2 := \gamma' \mathbb{V}_{n''}(r_{n''}) \gamma \rightarrow \gamma' \Sigma \gamma =: \sigma^2$. Now, if $\sigma^2 = 0$, then eventually $\sigma_{n''}^2 < \varepsilon/2$, and by Markov's inequality
\begin{equation}
 \mathbb{P}_{n''} \left(\big|\gamma'\left(S_{n''}-\mathbb{V}_{n''}(r_{n''}) \right)\gamma \big| \geq \varepsilon   \right) 
\leq 
\mathbb{P}_{n''} \left(\gamma' S_{n''} \gamma \geq \frac{\varepsilon}{2}   \right) 
\leq 
\frac{2 \sigma_{n''}^2}  { \varepsilon}  \to 0.
\end{equation}
Suppose next that $\sigma^2 > 0$. Then, we can assume without loss of generality that $0 <\delta_1 \le \sigma_{n''}^2 \leq \delta$ for some $\delta,\delta_1 \in \R$, and it remains to verify that
\begin{equation}\label{eq:remains}
\mathbb{P}_{n''} \left( \left|\frac{\gamma' S_{n''} \gamma}{\sigma_{n''}^2 } - 1 \right| \geq \frac{\varepsilon}{\delta}   \right) \to 0.
\end{equation}
To that end, define
\begin{equation}
\xi_{i,n''} \quad = \quad \frac{\gamma' z_{i, n''}}{\sigma_{n''}}
\end{equation}
where $\sigma_{n''}$ denotes the positive square root of $\sigma_{n''}^2$, and note that by Equation~\eqref{EQ:mom} we have $\mathbb{E}_{n''} (\xi_{i,n''} ) = 0$, that by construction $\sigma_{n''}^{-2} \gamma' S_{n''} \gamma = \sum_{i = 1}^{n''} \xi_{i,n''}^2$, and that $\mathbb{V}_{n''}(\sum_{i = 1}^{n''} \xi_{i,n''}) = 1$. Note also that it follows from Equation~\eqref{EQ:neg} that for every $\bar{\delta} > 0$ we have
\begin{equation}
\max_{1 \leq i \leq n''} \mathbb{P}_{n''} \left(|\xi_{i, n''}| \geq \bar{\delta} \right) \to 0 \text{ as } n'' \rightarrow \infty.
\end{equation}
Furthermore, since $\mathbb{P}_{n''} \circ r_{n''} \Rightarrow N(0, \Sigma)$, we see that $\sum_{i = 1}^{n''} \xi_{i,n''}$ is asymptotically normal with mean $0$ and variance $1$. But then Equation~\eqref{eq:remains} follows from Raikov's theorem (\cite{gnedenko}, p. 143, Theorem 4). Since $n'$ was arbitrary, this proves the second statement. 

The statement in Equation \eqref{eqn:empclt} is an immediate consequence of the triangle inequality and the first two statements.
\end{proof}

\begin{condition}\label{cond:zmomstd}
For every $n \in \N$, every $j \in \{1, \hdots, k\}$ and every $i \in \{1, \hdots, n\}$ we have
\begin{equation}\label{EQ:momsts}
\mathbb{E}_n(z^{(j)}_{i,n}) = 0 \quad \text{ and } \quad 0 < \mathbb{V}_n(r^{(j)}_n) < \infty.
\end{equation}
Furthermore, setting $r^{(j)}_{n,*} = \frac{r^{(j)}_{n}}{\sqrt{\mathbb{V}_n(r^{(j)}_n)}}$ and $z^{(j)}_{i, n, *} = \frac{z^{(j)}_{i, n}}{\sqrt{\mathbb{V}_n(r^{(j)}_n)}}$, for every $j \in \{1, \hdots, k\}$ we have
\begin{equation}\label{EQ:convstd}
\mathbb{P}_n \circ r^{(j)}_{n,*} \Rightarrow N(0,1),
\end{equation}
and for every $\varepsilon > 0$
\begin{equation}\label{EQ:negstd}
\max_{1 \leq i \leq n} \mathbb{P}_n\left(|z^{(j)}_{i,n, *}|\geq \varepsilon \right) \to 0 \text{ as } n \to \infty.
\end{equation}
\end{condition}

\begin{lemma}\label{lemma:nonstd}
Suppose Condition~\ref{cond:zmomstd} holds. Then, for every $\varepsilon > 0$, we have
\begin{equation}\label{EQ:diags}
\mathbb{P}_n \left( \|\diag(\mathbb{V}_n(r_n))^{-1} \diag(S_n)  - I_k\| \geq \varepsilon \right) \to 0
\end{equation}
and
\begin{equation}\label{EQ:corrs}
\mathbb{P}_n \left( \|\corr \left( S_n \right) - \corr \left(  \mathbb{V}_n(r_n) \right)\| \geq \varepsilon \right) \to 0.
\end{equation}
Furthermore, for every $\eps > 0$, $\hat{r}_{n,*} = \diag(S_n)^{\dagger/2} ~  r_n$ satisfies 
\begin{equation}
\mathbb{P}_n \left(d_w\left(\mathbb{P}_n \circ  \hat{r}_{n,*}  , N(0, \corr(S_n))\right) \geq \eps \right) \rightarrow 0.
\end{equation}

\end{lemma}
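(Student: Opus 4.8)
The plan is to reduce the claim to the two already-established parts \eqref{EQ:diags} and \eqref{EQ:corrs} of this lemma together with the first statement of Lemma~\ref{lemma:zmom}, applied to the \emph{standardized} array. First I would observe that the rescaled quantities $z_{i,n,*}^{(j)}$ and $r_{n,*}^{(j)}$ of Condition~\ref{cond:zmomstd} satisfy Condition~\ref{cond:zmom}: indeed \eqref{EQ:momsts} gives $\mathbb{E}_n(z_{i,n,*}^{(j)})=0$ and $\mathbb{V}_n(r_{n,*}^{(j)})=1$, while \eqref{EQ:convstd} and \eqref{EQ:negstd} supply the remaining two requirements. Hence the first statement of Lemma~\ref{lemma:zmom}, applied to $z_{i,n,*}$, yields
\[
d_w\big(\mathbb{P}_n\circ r_{n,*},\,N(0,\mathbb{V}_n(r_{n,*}))\big)\to 0,
\]
and since $\mathbb{V}_n(r_{n,*})=\corr(\mathbb{V}_n(r_n))$ this is the (deterministic) null sequence I will use for the ``middle'' term below. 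Here $r_{n,*}=\diag(\mathbb{V}_n(r_n))^{-1/2}r_n$, the diagonal matrix being invertible by \eqref{EQ:momsts}.

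Next I would split the target quantity by the triangle inequality for $d_w$ into three pieces,
\begin{align*}
d_w\big(\mathbb{P}_n\circ\hat{r}_{n,*},N(0,\corr(S_n))\big)
&\le d_w\big(\mathbb{P}_n\circ\hat{r}_{n,*},\mathbb{P}_n\circ r_{n,*}\big)\\
&\quad+d_w\big(\mathbb{P}_n\circ r_{n,*},N(0,\corr(\mathbb{V}_n(r_n)))\big)\\
&\quad+d_w\big(N(0,\corr(\mathbb{V}_n(r_n))),N(0,\corr(S_n))\big).
\end{align*}
The middle term is the deterministic null sequence from the previous step. For the third term I would use that correlation matrices form a compact set on which $\Gamma\mapsto N(0,\Gamma)$ is (uniformly) continuous w.r.t. $d_w$; then the convergence \eqref{EQ:corrs} of $\corr(S_n)$ to $\corr(\mathbb{V}_n(r_n))$ in $\mathbb{P}_n$-probability transfers to $d_w$-closeness of the corresponding Gaussians in $\mathbb{P}_n$-probability.

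The remaining work is the first term, which I expect to be the principal obstacle, since it is exactly where the deterministic normalization is replaced by the random one. Writing $\hat{r}_{n,*}=A_n r_{n,*}$ with the diagonal matrix $A_n:=\diag(S_n)^{\dagger/2}\diag(\mathbb{V}_n(r_n))^{1/2}$, I would first show $A_n\to I_k$ in $\mathbb{P}_n$-probability: by \eqref{EQ:diags} the $j$-th diagonal entry $[S_n]_{jj}/\mathbb{V}_n(r_n^{(j)})$ tends to $1$, so with probability tending to one all $[S_n]_{jj}>0$ and the $j$-th entry of $A_n$, namely $\sqrt{\mathbb{V}_n(r_n^{(j)})/[S_n]_{jj}}$, tends to $1$. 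Since moreover $\mathbb{E}_n(r_{n,*})=0$ and each coordinate of $r_{n,*}$ has unit variance, Chebyshev's inequality shows $r_{n,*}$ is stochastically bounded; consequently $\hat{r}_{n,*}-r_{n,*}=(A_n-I_k)r_{n,*}\to 0$ in $\mathbb{P}_n$-probability. Because $r_{n,*}$ is tight and $\hat{r}_{n,*}-r_{n,*}\to0$ in probability, a standard converging-together argument (pass to an arbitrary subsequence, extract a weakly convergent further subsequence of $\mathbb{P}_n\circ r_{n,*}$, note $\mathbb{P}_n\circ\hat{r}_{n,*}$ has the same weak limit, and use that every subsequence admits such a sub-subsequence) shows that the deterministic sequence $d_w(\mathbb{P}_n\circ\hat{r}_{n,*},\mathbb{P}_n\circ r_{n,*})$ converges to $0$. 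Combining the three bounds, the left-hand side is dominated by a sum of two deterministic null sequences and one sequence converging to $0$ in $\mathbb{P}_n$-probability, which is precisely the asserted convergence.
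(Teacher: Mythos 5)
Your proposal, as written, proves only the final display of the lemma and not the lemma itself: the convergences \eqref{EQ:diags} and \eqref{EQ:corrs} are \emph{claims of this lemma}, not prior results, and nothing earlier in the paper establishes them --- Lemma~\ref{lemma:zmom} is formulated for the unit-variance array of Condition~\ref{cond:zmom}, and its covariance statement \eqref{eqn:empvar} concerns $S_n-\mathbb{V}_n(r_n)$ for that array, not the ratio and correlation statements you invoke. Since both the first and the third term of your triangle-inequality decomposition consume \eqref{EQ:diags} and \eqref{EQ:corrs} as inputs, labeling them ``already-established'' leaves the argument incomplete (though not wrong). The fix is short and uses exactly the observation you open with: the standardized array satisfies Condition~\ref{cond:zmom}, so the \emph{second} statement of Lemma~\ref{lemma:zmom} (which you never invoke) gives $\|S_{n,*}-\mathbb{V}_n(r_{n,*})\|\to 0$ in $\mathbb{P}_n$-probability, where $S_{n,*}=\sum_{i=1}^n z_{i,n,*}z_{i,n,*}'$. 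Since $\diag(S_{n,*})=\diag(\mathbb{V}_n(r_n))^{-1}\diag(S_n)$ and $\mathbb{V}_n(r_{n,*})=\corr(\mathbb{V}_n(r_n))$ has unit diagonal, \eqref{EQ:diags} follows immediately; and \eqref{EQ:corrs} then follows by sandwiching with the very matrix $A_n=\diag(S_n)^{\dagger/2}\diag(\mathbb{V}_n(r_n))^{1/2}$ you introduce later, via
\[
\corr(S_n)-\corr(\mathbb{V}_n(r_n)) \;=\; A_n\bigl(S_{n,*}-\mathbb{V}_n(r_{n,*})\bigr)A_n' \;+\; A_n\,\mathbb{V}_n(r_{n,*})\,A_n' \;-\; \mathbb{V}_n(r_{n,*}),
\]
with $A_n\to I_k$ in $\mathbb{P}_n$-probability by \eqref{EQ:diags}. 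This is precisely the paper's argument for the first two parts.

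Your argument for the final display is correct and, once the first two parts are supplied, closes the proof. It differs mildly from the paper's route: the paper passes through the \emph{random} Gaussian $N(0,S_{n,*})$ via \eqref{eqn:empclt} and leaves the remaining reduction implicit, whereas you pass through the \emph{deterministic} Gaussian $N(0,\corr(\mathbb{V}_n(r_n)))$ using only the first statement of Lemma~\ref{lemma:zmom} together with \eqref{EQ:corrs} and uniform continuity of $\Gamma\mapsto N(0,\Gamma)$ on a compact set; the latter buys you a cleaner separation into two deterministic null sequences plus one $o_{\mathbb{P}_n}(1)$ term, and makes explicit what the paper's one-line conclusion glosses over. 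Two details deserve a remark, and both check out: the identity $\hat{r}_{n,*}=A_n r_{n,*}$ holds \emph{exactly} even on the event $[S_n]_{jj}=0$ (the Moore--Penrose convention zeroes that coordinate on both sides, and this event has vanishing probability by \eqref{EQ:diags}, as you note); and on that same event $\corr(S_n)$ need not have unit diagonal, so the compact set for the uniform-continuity step should be taken as the nonnegative definite matrices with diagonal entries bounded by one --- the argument is unaffected. Your subsequence/converging-together proof that the deterministic sequence $d_w(\mathbb{P}_n\circ\hat{r}_{n,*},\mathbb{P}_n\circ r_{n,*})$ vanishes is valid for any metric $d_w$ metrizing weak convergence, which is the right level of generality given that the paper does not fix one.
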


\begin{proof}
For the statements in Equations \eqref{EQ:diags} and \eqref{EQ:corrs} we first note that the triangular array $z_{i,n,*}^{(j)}$, and the corresponding quantities $r_{n, *}$ and $r_{n,*}^{(j)}$, satisfy Condition \ref{cond:zmom}. Hence, Lemma \ref{lemma:zmom} is applicable, and shows, in particular, for every $\varepsilon > 0$ and with the abbreviation $S_{n,*} = \sum_{i = 1}^n z_{i,n,*} z_{i,n,*}'$, that
\begin{equation}
\mathbb{P}_n \left(\| S_{n,*} -\mathbb{V}_n(r_{n, *}) \| \geq \varepsilon   \right) \to 0 \quad \text{ as } n \rightarrow \infty .
\end{equation}
Noting that the diagonal entries of $\mathbb{V}_n(r_{n, *})$ are all equal to $1$ (in fact $\mathbb{V}_n(r_{n, *})$ $=$ $\corr (\mathbb{V}_n(r_{n}))$) and that $\diag \left(S_{n,*}\right) = \diag(\mathbb{V}_n(r_n))^{-1} \diag(S_n)$ holds, establishes the claimed convergence in \eqref{EQ:diags}. But this together with the preceding display then establishes the convergence in \eqref{EQ:corrs}, because, using the abbreviation $A_n = \diag(S_n)^{\dagger/2}\diag(\mathbb{V}_n(r_n))^{1/2}$, we have $A_n \to I_k$ in $\mathbb{P}_n$-probability and
\begin{align*}
\corr(S_n) - \corr(\mathbb{V}_n(r_n))
&=
\diag(S_n)^{\dagger/2} S_n \diag(S_n)^{\dagger/2} - \mathbb{V}_n(r_{n,*})\\
&=
A_n (S_{n,*} - \mathbb{V}_n(r_{n,*}) ) A_n' \\
&\quad+ A_n \mathbb{V}_n(r_{n,*}) A_n' -  \mathbb{V}_n(r_{n,*}),
 \end{align*}
 which converges to zero in $\mathbb{P}_n$-probability.
The last part is an application of the statements already established, together with 
\begin{equation}
\mathbb{P}_n \left( d_w\left( \mathbb{P}_n \circ r_{n, *}, N(0, S_{n,*})  \right)  \geq \eps \right) \to 0 \text{ for every } \eps > 0,
\end{equation}
which we obtain (as above) from Lemma \ref{lemma:zmom}. 
\end{proof}

\begin{lemma}\label{lemma:POSIcont}
For every $\alpha \in (0, 1)$ the map $\Gamma \mapsto K_{1-\alpha}(\Gamma)$ is continuous on the subset of $k \times k$-dimensional covariance matrices of $\R^{k \times k}$. 
\end{lemma}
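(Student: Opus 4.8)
The plan is to prove sequential continuity: fix a convergent sequence of covariance matrices $\Gamma_n \to \Gamma$ and show $K_{1-\alpha}(\Gamma_n) \to K_{1-\alpha}(\Gamma)$. Writing $F_\Gamma$ for the cumulative distribution function of $\|Z\|_\infty$ with $Z \sim N(0,\Gamma)$, we have by definition $K_{1-\alpha}(\Gamma) = \inf\{t \ge 0 : F_\Gamma(t) \ge 1-\alpha\}$. The first step is the weak-convergence input: since the characteristic functions satisfy $\exp(-\tfrac12 s'\Gamma_n s) \to \exp(-\tfrac12 s'\Gamma s)$ for every $s \in \R^k$, we get $N(0,\Gamma_n) \Rightarrow N(0,\Gamma)$, and the continuous mapping theorem applied to the continuous map $z \mapsto \|z\|_\infty$ yields $\|Z_n\|_\infty \Rightarrow \|Z\|_\infty$ for $Z_n \sim N(0,\Gamma_n)$, $Z \sim N(0,\Gamma)$; equivalently, $F_{\Gamma_n}(t) \to F_\Gamma(t)$ at every continuity point $t$ of $F_\Gamma$.

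The key analytic step is to show that, whenever $\Gamma \neq 0$, the function $F_\Gamma$ is continuous on $\R$ and strictly increasing on $[0,\infty)$. Continuity (absence of atoms) is easy: for $t>0$ a union bound gives $\mathbb{P}(\|Z\|_\infty = t) \le \sum_{j=1}^k \mathbb{P}(|Z^{(j)}| = t) = 0$, since each coordinate $Z^{(j)}$ is either a nondegenerate normal (hence continuous) or identically $0$; and $F_\Gamma(0) = \mathbb{P}(Z = 0) = 0$ because some diagonal entry of $\Gamma$ is positive. For strict monotonicity I would use that the law of $Z$ is supported on the subspace $V = \Image(\Gamma)$, on which it possesses a strictly positive Lebesgue density. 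For $0 \le s < t$ the shell $\{v \in V : s < \|v\|_\infty < t\}$ is a nonempty open subset of $V$ (it contains $\tfrac{s+t}{2}v_0$ for any $v_0 \in V$ with $\|v_0\|_\infty = 1$), hence has positive Lebesgue measure in $V$ and therefore positive probability, which gives $F_\Gamma(s) < F_\Gamma(t)$.

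Combining these facts, when $\Gamma \neq 0$ the quantile $q := K_{1-\alpha}(\Gamma)$ is strictly positive (as $F_\Gamma(0) = 0 < 1-\alpha$) and is the unique point with $F_\Gamma(q) = 1-\alpha$. For any small $\eps > 0$ we have $F_\Gamma(q-\eps) < 1-\alpha < F_\Gamma(q+\eps)$ by strict monotonicity, and since $q \pm \eps$ are continuity points of $F_\Gamma$, the weak-convergence input gives $F_{\Gamma_n}(q-\eps) < 1-\alpha \le F_{\Gamma_n}(q+\eps)$ for all large $n$; by definition of the quantile this forces $q - \eps \le K_{1-\alpha}(\Gamma_n) \le q + \eps$ eventually, and letting $\eps \downarrow 0$ proves the claim. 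The degenerate case $\Gamma = 0$ is handled separately and more easily: then $\|Z_n\|_\infty \to 0$ in probability (since $\mathbb{E}\|Z_n\|_\infty^2 \le \trace(\Gamma_n) \to 0$), so for every $\eps > 0$ we have $F_{\Gamma_n}(\eps) \ge 1-\alpha$ eventually, whence $0 \le K_{1-\alpha}(\Gamma_n) \le \eps$ and $K_{1-\alpha}(\Gamma_n) \to 0 = K_{1-\alpha}(0)$.

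The main obstacle is the strict-monotonicity step, which must accommodate \emph{singular} covariance matrices: the naive argument that $\|Z\|_\infty$ has a density fails when $\Gamma$ is rank-deficient, and one must instead localize to the support subspace $V = \Image(\Gamma)$ and argue that the relevant shell has positive measure there. Everything else (the weak-convergence input and the final sandwich argument) is routine; the only other point requiring minor care is the boundary behaviour near $\Gamma = 0$, where the limiting distribution has an atom and the two-sided sandwich must be replaced by the one-sided bound used above.
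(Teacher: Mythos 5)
Your proof is correct, and its skeleton matches the paper's: weak convergence $N(0,\Gamma_n)\Rightarrow N(0,\Gamma)$ plus the continuous mapping theorem, then continuity and strict monotonicity of the distribution function of $\|Z\|_\infty$ in the case $\Gamma\neq 0$, with the degenerate case $\Gamma=0$ treated separately. The differences are in how the two nontrivial sub-steps are discharged. First, where the paper simply asserts that for $\Gamma\neq 0$ the distribution function of $\|Z\|_\infty$ is ``easy to see'' to be continuous and strictly increasing on $[0,\infty)$, you actually supply the argument, and you correctly identify the one place where the naive reasoning breaks: for singular $\Gamma$ the law of $Z$ has no density on $\R^k$, so you localize to $V=\Image(\Gamma)$, where the Gaussian has a strictly positive density, and show the shell $\{v\in V: s<\|v\|_\infty<t\}$ is a nonempty open subset of $V$. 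This is exactly the gap a careful reader must fill in the paper's version, and your treatment of it (including the observation that $\Gamma\neq 0$ forces a positive diagonal entry, so $\P(Z=0)=0$) is sound. Second, for $\Gamma=0$ the paper uses a smoothing device: it adds an independent nonnegative random variable $z$ with continuous, strictly increasing distribution function and $1-\alpha$-quantile $\eps$, uses $\|Z_n\|_\infty + z \Rightarrow z$ and the monotonicity bound $K_{1-\alpha}(\Gamma_n)\le K_n$ to get $\limsup_n K_{1-\alpha}(\Gamma_n)\le\eps$. Your route via $\E\|Z_n\|_\infty^2\le\trace(\Gamma_n)\to 0$ and Markov's inequality is more elementary and arguably cleaner, since it avoids introducing the auxiliary variable altogether. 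Finally, in the nondegenerate case the paper cites the equivalence of weak convergence of distribution functions and of quantile functions, whereas you run the standard two-sided sandwich $F_{\Gamma_n}(q-\eps)<1-\alpha\le F_{\Gamma_n}(q+\eps)$ directly; these are interchangeable, and your version is self-contained. In short: same strategy, with the paper's omitted details worked out correctly and a simpler argument at $\Gamma=0$.
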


\begin{proof}
Let $\Gamma_n$ be a sequence of covariance matrices converging to $\Gamma$. By definition, $K_{1-\alpha}(\Gamma_n)$ is the $1-\alpha$-quantile of the distribution of $\|Z_n\|_{\infty}$, where $Z_n$ is a Gaussian random vector with mean $0$ and covariance matrix $\Gamma_n$. By the continuous mapping theorem, $\|Z_n\|_{\infty}$ converges weakly to $\|Z\|_{\infty}$, where $Z$ is a Gaussian random vector with mean $0$ and covariance matrix $\Gamma$. In case $\Gamma \neq 0$ it is easy to see that the distribution function of $\|Z\|_{\infty}$ is everywhere continuous and strictly increasing on $[0,\infty)$, and the result then follows, because weak convergence of distribution functions is equivalent to weak convergence of the corresponding quantile functions. Consider now the case where $\Gamma = 0$. Fix $0 < \varepsilon < 1$. Let $z$ be a random variable taking values in $[0, \infty)$, with continuous and strictly increasing (on $[0,\infty)$) distribution function and $1-\alpha$-quantile equal to $\varepsilon$. Clearly, $\|Z_n\|_{\infty} + z$ converges weakly to $z$. Hence $K_n$, say, the $1-\alpha$ quantile of $\|Z_n\|_{\infty} + z$ converges to $\varepsilon$. From $K_{1-\alpha}(\Gamma_n) \leq K_n$ it then follows that
\begin{equation*}
0 \quad \leq \quad \limsup_{n \rightarrow \infty} K_{1-\alpha}(\Gamma_n) \quad \leq \quad \varepsilon.
\end{equation*}
Therefore, $K_{1-\alpha}(\Gamma_n) \to 0 = K_{1-\alpha}(0)$.
\end{proof}

\begin{lemma}\label{lemma:auxi}
For $n \in \N$, for every $i = 1, \hdots, n$, and for $j = 1, 2$, let  $a_{i,n}(j)$ and $b_{i,n}(j)$ be random variables on a probability space $(\Omega_n, \mathcal{A}_n, \mathbb{P}_n)$. Furthermore, let $a_{i,n}=a_{i,n}(1)$ and $b_{i,n}=b_{i,n}(1)$.

\begin{enumerate}
\item If $\mathbb{P}_n(\sum_{i = 1}^n a^2_{i,n} = 0) \to 0$ holds, and if $\frac{\sum_{i = 1}^n (a_{i,n} - b_{i,n})^2}{\sum_{i = 1}^n a^2_{i,n} } = o_{\mathbb{P}_n}(1)$, then 
\begin{equation}
\mathbb{P}_n \left( \bigg| \frac{\sum_{i = 1}^n b^2_{i,n} }{\sum_{i = 1}^n  a^2_{i,n}} - 1 \bigg| \geq \eps \right) \to 0 \quad \text{ for every } \eps > 0.
\end{equation}
\item If $\mathbb{P}_n(\sum_{i = 1}^n a^2_{i,n}(j) = 0) \to 0$ and $\frac{\sum_{i = 1}^n (a_{i,n}(j) - b_{i,n}(j))^2}{\sum_{i = 1}^n a^2_{i,n}(j) } = o_{\mathbb{P}_n}(1)$ holds for $j = 1, 2$, then for every $\eps > 0$ 
\begin{align}
&\mathbb{P}_n \bigg( \bigg | \frac{\sum_{i = 1}^n a_{i,n}(1)  a_{i,n}(2)}{\sqrt{\sum_{i = 1}^n  a_{i,n}^2(1) } \sqrt{  \sum_{i = 1}^n  a^2_{i,n}(2)}} 
\\[7pt] & \hspace{3cm}  - \quad  \frac{\sum_{i = 1}^n b_{i,n}(1)  b_{i,n}(2)}{\sqrt{\sum_{i = 1}^n  b_{i,n}^2(1)}  \sqrt{\sum_{i = 1}^n  b^2_{i,n}(2)}} \bigg | \geq \eps \bigg) \to 0.
\end{align}
\item Suppose that $\mathbb{V}_n(\sum_{i = 1}^n a_{i,n}) > 0$ holds eventually. Assume that for every $n$ the random variables $a_{i,n}$ for $i = 1, \hdots, n$ have mean $0$ and are uncorrelated, and that it holds that  
\begin{equation}\label{eqn:consia}
\mathbb{P}_n\left(\bigg|\frac{\sum_{i = 1}^n a^2_{i,n}}{\mathbb{V}_n(\sum_{i = 1}^n a_{i,n})} - 1\bigg| > \eps\right) \to 0 \text{ for every } \eps > 0,
\end{equation}
that
\begin{equation}\label{eqn:maxto0}
\frac{\max_{i = 1, \hdots, n} \mathbb{V}_n(a_{i,n})}
{\mathbb{V}_n(\sum_{i = 1}^n a_{i,n})} \to 0,
\end{equation}
and that $\frac{\sum_{i = 1}^n (a_{i,n} - b_{i,n})^2}{\sum_{i = 1}^n a^2_{i,n} } = o_{\mathbb{P}_n}(1)$. Then for every array of real numbers $c_{i,n}$ we have
\begin{equation}
\mathbb{P}_n \left(  \frac{\sum_{i = 1}^n (b_{i,n} + c_{i,n})^2}{\mathbb{V}_n(\sum_{i = 1}^n a_{i,n})} \leq 1 - \eps \right) \to 0 \quad \text{ for every } \eps > 0.
\end{equation}
\end{enumerate}
\end{lemma}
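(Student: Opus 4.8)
The plan is to handle the three parts in order; parts (1) and (2) are elementary consequences of the geometry of the Euclidean norm on $\R^n$, while part (3) carries the real content. Throughout I view $(a_{i,n})_{i=1}^n$, $(b_{i,n})_{i=1}^n$, etc.\ as vectors in $\R^n$ and write $\|\cdot\|$ for the Euclidean norm. For part (1) I would work on the event $\{\sum_i a_{i,n}^2>0\}$, whose probability tends to $1$ by hypothesis. With $A_n=\|a_{\cdot,n}\|$, $B_n=\|b_{\cdot,n}\|$ and $D_n=\|a_{\cdot,n}-b_{\cdot,n}\|$, the reverse triangle inequality gives $|B_n-A_n|\le D_n$, so $|B_n/A_n-1|\le D_n/A_n=(\sum_i(a_{i,n}-b_{i,n})^2/\sum_i a_{i,n}^2)^{1/2}=o_{\mathbb{P}_n}(1)$; squaring yields $B_n^2/A_n^2\to1$ in $\mathbb{P}_n$-probability, which is the claim.

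For part (2) I would first note that, applying the estimate from part (1) to each $j\in\{1,2\}$, the normalized vectors $\hat a(j):=a_{\cdot,n}(j)/\|a_{\cdot,n}(j)\|$ and $\hat b(j):=b_{\cdot,n}(j)/\|b_{\cdot,n}(j)\|$ satisfy $\|\hat a(j)-\hat b(j)\|=o_{\mathbb{P}_n}(1)$: writing $\hat b(j)-\hat a(j)=a_{\cdot,n}(j)\big(\|b_{\cdot,n}(j)\|^{-1}-\|a_{\cdot,n}(j)\|^{-1}\big)-\big(a_{\cdot,n}(j)-b_{\cdot,n}(j)\big)/\|b_{\cdot,n}(j)\|$, the first term has norm $|\,\|a_{\cdot,n}(j)\|/\|b_{\cdot,n}(j)\|-1\,|=o_{\mathbb{P}_n}(1)$ and the second has norm $\le(D_n/A_n)(A_n/B_n)=o_{\mathbb{P}_n}(1)$. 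The two quantities compared in the statement are precisely the inner products $\langle\hat a(1),\hat a(2)\rangle$ and $\langle\hat b(1),\hat b(2)\rangle$, whose difference splits as $\langle\hat a(1)-\hat b(1),\hat a(2)\rangle+\langle\hat b(1),\hat a(2)-\hat b(2)\rangle$; since all four vectors are unit vectors, Cauchy--Schwarz bounds this by $\|\hat a(1)-\hat b(1)\|+\|\hat a(2)-\hat b(2)\|=o_{\mathbb{P}_n}(1)$.

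For part (3), set $V_n=\mathbb{V}_n(\sum_i a_{i,n})=\sum_i\mathbb{V}_n(a_{i,n})$, the last equality by uncorrelatedness. The first step is to pass from $b$ to $a$: since \eqref{eqn:consia} forces $\sum_i a_{i,n}^2/V_n\to1$ in probability, the hypothesis $\sum_i(a_{i,n}-b_{i,n})^2/\sum_i a_{i,n}^2=o_{\mathbb{P}_n}(1)$ upgrades to $\|a_{\cdot,n}-b_{\cdot,n}\|/\sqrt{V_n}=o_{\mathbb{P}_n}(1)$, and the triangle inequality $\|b_{\cdot,n}+c_{\cdot,n}\|\ge\|a_{\cdot,n}+c_{\cdot,n}\|-\|a_{\cdot,n}-b_{\cdot,n}\|$ reduces the claim, after choosing auxiliary tolerances, to showing that $\mathbb{P}_n(\sum_i(a_{i,n}+c_{i,n})^2/V_n\le1-\eps)\to0$ for every $\eps>0$.

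The crux is this reduced statement, and the obstacle is that the deterministic array $c_{\cdot,n}$ is arbitrary, so $C_n^2:=\sum_i c_{i,n}^2$ may be huge relative to $V_n$ and cannot simply be discarded. Expanding $\sum_i(a_{i,n}+c_{i,n})^2=\sum_i a_{i,n}^2+2W_n+C_n^2$ with $W_n=\sum_i c_{i,n}a_{i,n}$, and using $\sum_i a_{i,n}^2/V_n\to1$, it suffices to show $(2W_n+C_n^2)/V_n\ge-\eps/2$ with probability tending to one. Here $W_n$ has mean zero and, by uncorrelatedness, variance $\mathbb{V}_n(W_n)=\sum_i c_{i,n}^2\mathbb{V}_n(a_{i,n})\le m_nC_n^2$, where $m_n:=\max_i\mathbb{V}_n(a_{i,n})$. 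Since $\{2W_n+C_n^2\le-\tfrac{\eps}{2}V_n\}=\{W_n\le-\tfrac12 C_n^2-\tfrac{\eps}{4}V_n\}$, Chebyshev's inequality combined with the AM--GM bound $(\tfrac12 C_n^2+\tfrac{\eps}{4}V_n)^2\ge\tfrac{\eps}{2}V_nC_n^2$ gives
$$\mathbb{P}_n\Big(W_n\le-\tfrac12 C_n^2-\tfrac{\eps}{4}V_n\Big)\le\frac{\mathbb{V}_n(W_n)}{\big(\tfrac12 C_n^2+\tfrac{\eps}{4}V_n\big)^2}\le\frac{m_nC_n^2}{\tfrac{\eps}{2}V_nC_n^2}=\frac{2m_n}{\eps V_n}$$
(the case $C_n=0$ being trivial, as then $W_n=0$), which tends to $0$ by \eqref{eqn:maxto0}. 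The cancellation of $C_n^2$ in the final step is exactly what makes the bound uniform over the arbitrary array $c_{\cdot,n}$, so this completing-the-square estimate is the heart of the proof; assembling it with the first reduction finishes part (3).
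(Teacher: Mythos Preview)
Your argument is correct throughout. Parts (1) and (2) proceed essentially as in the paper --- the paper expands the squared quotient and applies Cauchy--Schwarz, while you use the reverse triangle inequality on norms and compare unit vectors, but the content is the same.

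Part (3) is where your route genuinely diverges. The paper decomposes $\sum_i(b_{i,n}+c_{i,n})^2/V_n-1$ into three pieces $F_n+G_n+H_n$ (the $(b-a)^2$ part, the $(a+c)^2$ part, and a cross term), computes the mean $d_n=\sum_i c_{i,n}^2/V_n$ and variance of the random part of $G_n$, and then argues by \emph{contradiction via subsequences}, splitting into the cases $d_n\to c<\infty$ and $d_n\to\infty$ and handling each separately. Your approach avoids this case split entirely: after the same reduction to bounding $\mathbb{P}_n(W_n\le -\tfrac12 C_n^2-\tfrac{\eps}{4}V_n)$, your AM--GM bound $(\tfrac12 C_n^2+\tfrac{\eps}{4}V_n)^2\ge\tfrac{\eps}{2}V_nC_n^2$ makes $C_n^2$ cancel in the Chebyshev estimate, giving the clean bound $2m_n/(\eps V_n)$ uniformly in $c_{\cdot,n}$. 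This is more direct and more elementary than the paper's subsequence argument; it trades the case analysis for a single inequality, at no cost in generality.
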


\begin{proof}
For the first part note that the quotient under consideration is well defined with probability converging to one, that
\begin{equation}
\frac{\sum_{i = 1}^n b^2_{i,n} }{\sum_{i = 1}^n  a^2_{i,n}} - 1 = o_{\mathbb{P}_n}(1) + 2 \frac{\sum_{i = 1}^n  a_{i,n} [b_{i,n} - a_{i,n}]}{\sum_{i = 1}^n  a^2_{i,n}},
\end{equation}
and that by the Cauchy-Schwarz inequality 
\begin{equation}
\bigg| \frac{\sum_{i = 1}^n  a_{i,n} [b_{i,n} - a_{i,n}]}{\sum_{i = 1}^n  a^2_{i,n}} \bigg|
\leq 
\sqrt{\frac{\sum_{i = 1}^n (a_{i,n} - b_{i,n})^2}{\sum_{i = 1}^n  a^2_{i,n}}} = o_{\mathbb{P}_n}(1).
\end{equation}
For the second part note that the quotients are well defined with probability converging to 1 (by applying Part 1), and write
\begin{equation}
\frac{\sum_{i = 1}^n a_{i,n}(1)  a_{i,n}(2)}{\sqrt{\sum_{i = 1}^n  a_{i,n}^2(1) } \sqrt{  \sum_{i = 1}^n  a^2_{i,n}(2)}} - \frac{\sum_{i = 1}^n b_{i,n}(1)  b_{i,n}(2)}{\sqrt{\sum_{i = 1}^n  b_{i,n}^2(1)}  \sqrt{\sum_{i = 1}^n  b^2_{i,n}(2)}}
\end{equation}
as the sum of
\begin{align}
A_n &:= \frac{\sum_{i = 1}^n \big(a_{i,n}(1) - b_{i,n}(1)\big)\big(a_{i,n}(2) - b_{i,n}(2)\big)}{\sqrt{\sum_{i = 1}^n  a_{i,n}^2(1) } \sqrt{  \sum_{i = 1}^n  a^2_{i,n}(2)}}
\\[7pt]
B_n &:= \frac{\sum_{i = 1}^n b_{i,n}(2) \big(a_{i,n}(1) - b_{i,n}(1)\big)}{\sqrt{\sum_{i = 1}^n  a_{i,n}^2(1) } \sqrt{  \sum_{i = 1}^n  a^2_{i,n}(2)}}
\\[7pt]
C_n &:= \frac{\sum_{i = 1}^n b_{i,n}(1)\big(a_{i,n}(2) - b_{i,n}(2)\big)}{\sqrt{\sum_{i = 1}^n  a_{i,n}^2(1) } \sqrt{  \sum_{i = 1}^n  a^2_{i,n}(2)}}
\end{align}
and
\begin{align}
D_n :&= \frac{\sum_{i = 1}^n b_{i,n}(1)b_{i,n}(2)}{\sqrt{\sum_{i = 1}^n  a_{i,n}^2(1) } \sqrt{  \sum_{i = 1}^n  a^2_{i,n}(2)}} - \frac{\sum_{i = 1}^n b_{i,n}(1)  b_{i,n}(2)}{\sqrt{\sum_{i = 1}^n  b_{i,n}^2(1)}  \sqrt{\sum_{i = 1}^n  b^2_{i,n}(2)}}.
\end{align}
Using Cauchy-Schwarz inequality, the assumptions, and the first part of the lemma, we now see that $A_n$, $B_n$, $C_n$, and $D_n$ are $o_{\mathbb{P}_n}(1)$. 

For the third part we note that (eventually)
\begin{equation}
\frac{\sum_{i = 1}^n (b_{i,n} + c_{i,n})^2}{\mathbb{V}_n(\sum_{i = 1}^n a_{i,n})} - 1 \quad = \quad F_n + G_n + H_n,
\end{equation}
where
\begin{align}\label{eqn:Fn}
F_n \quad &:= \quad \frac{\sum_{i = 1}^n (b_{i,n} - a_{i,n})^2}{\mathbb{V}_n(\sum_{i = 1}^n a_{i,n})} = o_{\mathbb{P}_n}(1) \\[7pt]
\label{eqn:Gn}
G_n \quad &:= \quad \frac{\sum_{i = 1}^n (a_{i,n} + c_{i,n})^2}{ \mathbb{V}_n(\sum_{i = 1}^n a_{i,n})} - 1 = \frac{\sum_{i = 1}^n c_{i,n}(2a_{i,n} + c_{i,n})}{ \mathbb{V}_n(\sum_{i = 1}^n a_{i,n})}+ o_{\mathbb{P}_n}(1)  \\[7pt]
\label{eqn:Hn}
H_n \quad &:= \quad 2 \frac{\sum_{i = 1}^n(b_{i,n} - a_{i,n})(a_{i,n} + c_{i,n})}{ \mathbb{V}_n(\sum_{i = 1}^n a_{i,n})} 
\end{align}
where the second equality for $F_n$ follows from the last assumption appearing in Part 3 together with \eqref{eqn:consia}, and the second equality for $G_n$ follows from \eqref{eqn:consia}. By the Cauchy-Schwarz inequality and the last assumption appearing in Part 3
\begin{equation}\label{eqn:Hnbound}
|H_n| \leq o_{\mathbb{P}_n}(1) \sqrt{G_n + 1}.
\end{equation}
Now, define
\begin{equation}
\kappa_n \quad = \quad \frac{\sum_{i = 1}^n c_{i,n}(2a_{i,n} + c_{i,n})}{ \mathbb{V}_n(\sum_{i = 1}^n a_{i,n})}
\end{equation}
and note that, since by assumption $\mathbb{E}_n(a_{i,n}) = 0$, we have
\begin{equation}
\mathbb{E}_n(\kappa_n) = \frac{\sum_{i = 1}^n c_{i,n}^2}{\mathbb{V}_n(\sum_{i = 1}^n a_{i,n})} =: d_n \geq 0,
\end{equation}
and it holds, using uncorrelatedness of $a_{i,n}$ for $i = 1, \hdots, n$, that
\begin{equation}\label{eqn:kappavb}
\mathbb{V}_n(\kappa_n) = 4 \frac{\sum_{i = 1}^n c_{i,n}^2 \mathbb{V}_n(a_{i,n})}{\mathbb{V}^2_n(\sum_{i = 1}^n a_{i,n})}  \leq 4 \frac{\max_{i = 1, \hdots, n} \mathbb{V}_n(a_{i,n})}{\mathbb{V}_n(\sum_{i = 1}^n a_{i,n})} ~ d_n.
\end{equation}
We need to verify that for every $\eps > 0$ it holds that
\begin{equation}
\mathbb{P}_n \left( F_n + G_n + H_n \leq - \eps \right) \to 0.
\end{equation}
We argue by contradiction: Suppose there exists an $\eps > 0$ so that the convergence in the previous display does not hold. Then, by compactness of the Cartesian product of the extended real line with the unit interval, there exists a subsequence $n'$ along which $d_n$ converges to a $c \in [0, \infty]$ and along which the probability in the previous display converges to a $\gamma \in (0, 1] $. Suppose first that $0 \leq c < \infty$. Then, from Equation \eqref{eqn:kappavb} and Assumption \eqref{eqn:maxto0}, we see that $\kappa_{n'}$, and hence $G_{n'}$, converges to $c$ in $\mathbb{P}_{n'}$-probability, and, by Equation \eqref{eqn:Hnbound}, that $H_{n'}$ converges to $0$ in $\mathbb{P}_{n'}$-probability, showing that $F_{n'} + G_{n'} + H_{n'}$ converges to $c \geq 0$ in $\mathbb{P}_{n'}$-probability, and hence that the sequence in the previous display converges along $n'$ to $0<\gamma$, a contradiction. Assume next that $c = \infty$, and assume then, without loss of generality, that $d_{n'} > 0$ holds. We show that $(F_{n'} + G_{n'} + H_{n'})/d_{n'}$ converges to $1$ in $\mathbb{P}_{n'}$-probability, which then again contradicts $\gamma \in (0, 1]$. Note that it suffices to verify that $\kappa_{n'}/d_{n'}$ converges to $1$  in $\mathbb{P}_{n'}$-probability. But this follows, because the expectation of $\kappa_{n'}/d_{n'}$ is $1$, and because, by relation~\eqref{eqn:kappavb}, its variance is bounded from above by
\begin{equation}
4 d_{n'}^{-1} \frac{\max_{i = 1, \hdots, n'} \mathbb{V}_{n'}(a_{i,n'})}{ \mathbb{V}_{n'}(\sum_{i = 1}^{n'} a_{i,n'})} \to 0,
\end{equation}
where we used $d_{n'} \to \infty$ and Assumption~\eqref{eqn:maxto0} to obtain the limit.
\end{proof}

%------------------------------------------------------------------------------------------------------------------------------------------------------------
%------------------------------------------------------------------------------------------------------------------------------------------------------------

\section{Proofs for Section 2}
\label{app:posi}

\subsection{Proof of Lemma \ref{lemma:equivSn}}
\label{sec:equivSn}

We actually prove the following more detailed statement.

\begin{lemma}\label{lemma:equivSn:full}
Under Condition \ref{cond:sum}, for $\eps > 0$ we have
\begin{align*}
&\mathbb{P}_n \left( \|\diag(\mathbb{V}_n(r_n))^{-1} \diag(S_n)  - I_k\| \geq \varepsilon \right) \to 0, \\[7pt]
&\mathbb{P}_n \left( \|\corr \left( S_n \right) - \corr \left(  \mathbb{V}_n(r_n) \right)\| \geq \varepsilon \right) \to 0 \quad\text{and}\\[7pt]
&\mathbb{P}_n \left(d_w\left(\mathbb{P}_n \circ  \left[\diag(S_n)^{\dagger/2} \left(\hat{\theta}_n - \theta_n^* \right)\right] , N(0, \corr(S_n))\right) \geq \eps \right) \to 0.
\end{align*}
The last statement remains valid upon replacing $S_n$ by $\mathbb{V}_n(r_n)$ and then reduces to $d_w\left(\mathbb{P}_n \circ  \left[\diag(\mathbb{V}_n(r_n))^{\dagger/2} \left(\hat{\theta}_n - \theta_n^* \right)\right] , N(0, \corr(\mathbb{V}_n(r_n)))\right) \to 0$.
\end{lemma}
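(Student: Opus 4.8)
The plan is to reduce the statement to the auxiliary results of Appendix~\ref{app:aux}, which already carry out the probabilistic heavy lifting, and then to absorb the remainder term $\Delta_n$ by a perturbation argument. First I would set $z_{i,n} := g_{i,n}(y_i)$ for $i=1,\dots,n$. By Remark~\ref{rem:gin} each $g_{i,n}$ depends on $y_i$ alone, so the product structure $\mathbb{P}_n=\bigotimes_{i=1}^n\mathbb{P}_{i,n}$ makes the $z_{i,n}$ independent $k$-vectors, and with $r_n=\sum_i g_{i,n}$ and $S_n=\sum_i g_{i,n}g_{i,n}'$ these coincide with the objects of the appendix and with the definition \eqref{eq:Sn}. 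I would then verify Condition~\ref{cond:zmomstd}: the mean-zero property $\mathbb{E}_n(z_{i,n}^{(j)})=0$ and $0<\mathbb{V}_n(r_n^{(j)})<\infty$ are exactly \eqref{eq:mom}, while the marginal weak convergence \eqref{EQ:convstd} and the uniform smallness \eqref{EQ:negstd} of the standardized summands are precisely the two conditions that Remark~\ref{rem:lindeequiv} identifies as equivalent to the Lindeberg condition \eqref{eq:linde} of Condition~\ref{cond:sum}.

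With Condition~\ref{cond:zmomstd} in force, Lemma~\ref{lemma:nonstd} applies verbatim. Its first two displays are exactly the first two assertions of the present lemma, namely the convergence of $\diag(\mathbb{V}_n(r_n))^{-1}\diag(S_n)$ to $I_k$ and of $\corr(S_n)$ to $\corr(\mathbb{V}_n(r_n))$ in $\mathbb{P}_n$-probability. Its last display gives the $d_w$-approximation, but centered at $r_n$ rather than at $\hat{\theta}_n-\theta_n^*$: writing $T_n':=\diag(S_n)^{\dagger/2}r_n=\hat{r}_{n,*}$, we obtain $\mathbb{P}_n(d_w(\mathbb{P}_n\circ T_n',N(0,\corr(S_n)))\ge\eps)\to 0$ for every $\eps>0$.

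It then remains to replace $r_n$ by $\hat{\theta}_n-\theta_n^*=r_n+\Delta_n$, i.e.\ to control $T_n:=\diag(S_n)^{\dagger/2}(\hat{\theta}_n-\theta_n^*)=T_n'+\diag(S_n)^{\dagger/2}\Delta_n$. I would show that the perturbation $T_n-T_n'=\diag(S_n)^{\dagger/2}\Delta_n$ tends to $0$ in $\mathbb{P}_n$-probability. Coordinatewise, on the event $\{[S_n]_j>0\}$ one has the identity
\[
\big[\diag(S_n)^{\dagger/2}\Delta_n\big]^{(j)}
= \frac{\Delta_n^{(j)}}{\sqrt{\mathbb{V}_n(r_n^{(j)})}}\;\sqrt{\frac{\mathbb{V}_n(r_n^{(j)})}{[S_n]_j}},
\]
whose first factor is $o_{\mathbb{P}_n}(1)$ by the last part of Condition~\ref{cond:sum} and whose second factor is $O_{\mathbb{P}_n}(1)$, since the already-established convergence $[S_n]_j/\mathbb{V}_n(r_n^{(j)})\to 1$ also forces $\mathbb{P}_n([S_n]_j=0)\to 0$. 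Summing over the finitely many $j=1,\dots,k$ gives $\|T_n-T_n'\|\to 0$ in $\mathbb{P}_n$-probability. I would then invoke the standard property of the metric $d_w$ that convergence in probability of the difference of two random vectors forces the $d_w$-distance of their laws to zero, obtaining $a_n:=d_w(\mathbb{P}_n\circ T_n,\mathbb{P}_n\circ T_n')\to 0$ (a \emph{deterministic} sequence, since both are image measures under $\mathbb{P}_n$). The triangle inequality $d_w(\mathbb{P}_n\circ T_n,N(0,\corr(S_n)))\le a_n+d_w(\mathbb{P}_n\circ T_n',N(0,\corr(S_n)))$ then yields, once $a_n<\eps/2$, the event inclusion giving $\mathbb{P}_n(d_w(\mathbb{P}_n\circ T_n,N(0,\corr(S_n)))\ge\eps)\to 0$. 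The variant with $\mathbb{V}_n(r_n)$ in place of $S_n$ follows identically, using the first part of Lemma~\ref{lemma:zmom} for the now-deterministic target $N(0,\corr(\mathbb{V}_n(r_n)))$ and noting that $\diag(\mathbb{V}_n(r_n))^{-1/2}\Delta_n$ is $o_{\mathbb{P}_n}(1)$ directly by Condition~\ref{cond:sum}, with no pseudoinverse subtlety.

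I expect the main obstacle to be the passage from $r_n$ to $\hat{\theta}_n-\theta_n^*$ under the \emph{data-dependent} normalization $\diag(S_n)^{\dagger/2}$: because $S_n$ is random and can in principle vanish in a coordinate, the Moore--Penrose square root must be handled on the event $\{[S_n]_j>0\}$, and the negligibility of $\Delta_n$ (stated in Condition~\ref{cond:sum} relative to $\mathbb{V}_n^{1/2}(r_n^{(j)})$) has to be transferred to the random scale $[S_n]_j^{1/2}$ via the ratio convergence established in the first step. Everything else is bookkeeping on top of Lemmas~\ref{lemma:zmom} and~\ref{lemma:nonstd}.
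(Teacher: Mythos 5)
Your proposal is correct and follows essentially the same route as the paper's proof: apply Lemma~\ref{lemma:nonstd} to the array $z_{i,n}=g_{i,n}\circ\pi_{i,n}$ (with Condition~\ref{cond:zmomstd} verified through Condition~\ref{cond:sum} and the equivalence in Remark~\ref{rem:lindeequiv}), and then absorb $\Delta_n$ by showing $\diag(S_n)^{\dagger/2}\Delta_n\to 0$ in $\mathbb{P}_n$-probability. Your explicit treatment of the pseudoinverse on the event $\{[S_n]_j>0\}$ and of the transfer from the deterministic scale $\mathbb{V}_n^{1/2}(r_n^{(j)})$ to the random scale $[S_n]_j^{1/2}$, as well as the $d_w$-perturbation step via the triangle inequality, merely spells out what the paper compresses into its final two sentences, so there is no gap.
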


\begin{proof}
Lemma \ref{lemma:nonstd} applied to the array $z_{i,n} = g_{i,n} \circ \pi_{i,n}$ defined on the space $(\R^{n \times \ell}, \mathcal{B}(\R^{n \times \ell}), \mathbb{P}_n)$, where $\pi_{i,n}: \R^{n \times \ell} \to \R^{1 \times \ell}$ extracts the $i-th$ row of an $n \times \ell$ matrix (to verify Condition~\ref{cond:zmomstd} we use Condition \ref{cond:sum} and replace the Lindeberg condition as discussed in Remark \ref{rem:lindeequiv}), shows that for every $\eps > 0$ we have
\begin{align}\label{eqn:varconvintro}
&\mathbb{P}_n \left( \|\diag(\mathbb{V}_n(r_n))^{-1} \diag(S_n)  - I_k\| \geq \varepsilon \right) \to 0, \\
&\mathbb{P}_n \left( \|\corr \left( S_n \right) - \corr \left(  \mathbb{V}_n(r_n) \right)\| \geq \varepsilon \right) \to 0
\end{align}
and
\begin{equation}
\mathbb{P}_n \left( d_w\left(\mathbb{P}_n \circ  \hat{r}_{n,*}  , N(0, \corr(S_n))\right) \geq \eps \right) \to 0,
\end{equation}
where $\hat{r}_{n, *} = \diag(S_n)^{\dagger/2} r_n$. The last part of Condition~\ref{cond:sum} together with Equation \eqref{eqn:varconvintro} now shows that $ \diag(S_n)^{\dagger/2} \Delta_n \to 0$ w.r.t. $\mathbb{P}_n$, so that 
\begin{equation}
\hat{r}_{n, *} = \diag(S_n)^{\dagger/2} \left( \hat{\theta}_n - \theta^*_n \right) + o_{\mathbb{P}_n}(1),
\end{equation}
which then proves the claim.
\end{proof}

%------------------------------------------------------------------------------------------------------------------------------------------------------------

\subsection{Proof of Theorem \ref{thm:fbl}}

For any (measurable) model selection procedure $\hat{\mathbb{M}}_n$ we have
\begin{align}
& \mathbb{P}_n\left( \theta^{*(j)}_{\hat{\mathbb{M}}_n, n} \in \mathrm{CI}_{1- \alpha, \hat{\mathbb{M}}_n}^{(j), \mathrm{est}} \text{ for all } j = 1, \hdots, m(\hat{\mathbb{M}}_n) \right) \\
\geq \quad &
\mathbb{P}_n \left(\theta^{*(j)}_{\mathbb{M}, n}   \in  \mathrm{CI}_{1-\alpha, \mathbb{M}}^{(j), \mathrm{est}} \text{ for all } \mathbb{M} \in \mathsf{M}_n \text{ and } j \in \{1, \hdots, m(\mathbb{M})\}  \right).
\end{align}
It hence suffices to verify that the lower bound converges to $1-\alpha$. And for that (cf. Equation \eqref{eqn:hvarconv} below, and Condition \ref{cond:sum}) it suffices to verify that the following quantity converges to $1-\alpha$:
\begin{equation}
\mathbb{P}_n \left( \| \diag(\hat{S}_n)^{\dagger/2} (\hat{\theta}_n - \theta^*_n) \|_{\infty} \leq K_{1-\alpha}(\corr(\hat{S}_n)) \right).
\end{equation}
Lemma \ref{lemma:equivSn:full} shows that for every $\eps > 0$ we have
\begin{align}\label{eqn:varconv}
&\mathbb{P}_n \left( \|\diag(\mathbb{V}_n(r_n))^{-1} \diag(S_n)  - I_k\| \geq \varepsilon \right) \to 0 \\
\label{eqn:corrconv}
&\mathbb{P}_n \left( \|\corr \left( S_n \right) - \corr \left(  \mathbb{V}_n(r_n) \right)\| \geq \varepsilon \right) \to 0 \\
\label{eqn:wconv}
&\mathbb{P}_n \left( d_w\left(\mathbb{P}_n \circ  \left[ \diag(S_n)^{\dagger/2} (\hat{\theta}_n - \theta^*_n) \right]  , N(0, \corr(S_n))\right) \geq \varepsilon\right) \to 0.
\end{align}
This also shows that the two conditions imposed  on $\hat{S}_n$ in the statement of the theorem are indeed equivalent. Furthermore, we immediately see that from any of these two assumptions, together with the previous display, it follows that for every $\eps > 0$ we have
\begin{align}\label{eqn:hvarconv}
&\mathbb{P}_n \left( \|\diag(\mathbb{V}_n(r_n))^{-1} \diag(\hat{S}_n)  - I_k\| \geq \varepsilon \right) \to 0 \\
\label{eqn:hcorrconv}
&\mathbb{P}_n \left( \|\corr \left( \hat{S}_n \right) - \corr \left(  \mathbb{V}_n(r_n) \right)\| \geq \varepsilon \right) \to 0 \\
\label{eqn:hwconv}
&\mathbb{P}_n \left( d_w\left(\mathbb{P}_n \circ  \bar{r}_{n, *}  , N(0, \corr(\hat{S}_n))\right) \geq \varepsilon\right) \to 0,
\end{align}
where $\bar{r}_{n, *} = \diag(\hat{S}_n)^{\dagger/2} (\hat{\theta}_n - \theta^*_n)$. Next, let $n'$ be an arbitrary subsequence of $n$, and let $n''$ be a subsequence of $n'$ along which the norm-bounded sequence $\corr(\mathbb{V}_n(r_n))$ converges to $\bar{\Sigma}$, say. By \eqref{eqn:hcorrconv} it holds that $\corr(\hat{S}_{n''})$ converges to $\bar{\Sigma}$ in $\mathbb{P}_{n''}$-probability, and from the previous display it follows that $\mathbb{P}_{n''} \circ  \bar{r}_{n'',*}  \Rightarrow N(0, \bar{\Sigma})$. Combining these two statements, it then follows that 
\begin{equation}
\mathbb{P}_{n''} \circ (\bar{r}_{{n''}, *}, \corr(\hat{S}_{n''})) \Rightarrow Q_{\bar{\Sigma}} \otimes \delta_{\bar{\Sigma}},
\end{equation}
where $Q_{\bar{\Sigma}} := N(0, \bar{\Sigma})$, and where $\delta_{\bar{\Sigma}}$ denotes point mass at $\bar{\Sigma} \in \R^{k \times k}$. Now, define the map $F: \R^k \times \R^{k \times k}_{s, \geq 0}\to\R$ via $(z, \Sigma) \mapsto \|z\|_{\infty} - K_{1-\alpha}(\Sigma)$, where $\R^{k \times k}_{s, \geq 0}$ denotes the set of real, symmetric and nonnegative definite $k \times k$ dimensional matrices, and note that the map $F$ is continuous everywhere (cf. Lemma~\ref{lemma:POSIcont}). It follows from the continuous mapping theorem together with the previous display that
\begin{equation}
\mathbb{Q}_{n''} := \mathbb{P}_{n''} \circ F(\bar{r}_{{n''}, *}, \corr(\hat{S}_{n''})) \Rightarrow Q_{\bar{\Sigma}} \circ \left(\|.\|_{\infty} - K_{1-\alpha}(\bar{\Sigma})\right).
\end{equation}
Since the diagonal elements of $\bar{\Sigma}$ are ones (by its definition together with Condition \ref{cond:sum}), one can easily show that the $Q_{\bar{\Sigma}}$-probability of $\|.\|_{\infty} - K_{1-\alpha}(\bar{\Sigma})$ being equal to $0$ is $0$. It hence follows from the Portmanteau theorem, together with the definition of $ K_{1-\alpha}(\bar{\Sigma})$ and the previous display, that
\begin{align}
&\mathbb{P}_{n''} \left( \| \diag(\hat{S}_{n''})^{\dagger/2} (\hat{\theta}_{n''} - \theta^*_{n''}) \|_{\infty} \leq K_{1-\alpha}(\corr(\hat{S}_{n''})) \right) \\ 
= \quad &\mathbb{Q}_{n''} \left((-\infty, 0]  \right)
\to Q_{\bar{\Sigma}} \left(y \in \R^k: \|y\|_{\infty} \leq K_{1-\alpha}(\bar{\Sigma})\right) = 1-\alpha.
\end{align}
This finishes the proof. \hfill\qed

\subsection{Proof of Proposition \ref{prop:obtest}}

As in the proof of Lemma \ref{lemma:equivSn:full}, Lemma \ref{lemma:nonstd} applied to the array $z_{i,n} := g_{i,n} \circ \pi_{i,n}$ defined on $(\R^{n \times \ell}, \mathcal{B}(\R^{n \times \ell}), \mathbb{P}_n)$ shows (in particular) that for every $\eps > 0$ 
\begin{align}
&\mathbb{P}_n \left( \|\diag(\mathbb{V}_n(r_n))^{-1} \diag(S_n)  - I_k\| \geq \varepsilon \right) \to 0.
\end{align}
This shows that the two conditions given in the statement of the proposition are indeed equivalent, and, together with Condition \ref{cond:sum}, it also shows that for every $j = 1, \hdots, k$ we have $\mathbb{P}_n(\sum_{i = 1}^n [z^{(j)}_{i,n}]^2 = 0 ) \to 0$. Now, for $j = 1, \hdots, k$, we apply the first part of Lemma~\ref{lemma:auxi} (with $a_{i,n} = g^{(j)}_{i,n} \circ \pi_{i,n}$ and $b_{i,n} = \hat{g}^{(j)}_{i,n}$) to obtain for every $\eps > 0$ that
\begin{equation}
\mathbb{P}_n \left( \|\diag(S_n)^{\dagger} \diag(\hat{S}_n)  - I_k\| \geq \varepsilon \right) \to 0.
\end{equation}
Next, we can, in a similar way, apply the second part of Lemma \ref{lemma:auxi} to obtain
\begin{equation}
\mathbb{P}_n \left( \|\corr \left( \hat{S}_n \right) - \corr \left(  S_n \right)\| \geq \varepsilon \right) \to 0.
\end{equation}
This finishes the proof. \hfill\qed

\subsection{Proof of Theorem \ref{thm:cons}}

Similarly as in the proof of Theorem \ref{thm:fbl} we now need to verify that
\begin{equation}\label{eqn:statem}
\liminf_{n \to \infty} \mathbb{P}_n \left( \| \diag(\hat{\nu}^2_n)^{\dagger/2} \left[r_n + \Delta_n \right]\|_{\infty} \leq \hat{K}_n \right) \geq 1- \alpha.
\end{equation}
We make the following preparatory observation: Denote the event on which $\kappa_n$ is well defined by $A_n$ (recall that $\mathbb{P}_n(A_n) \to 1$), and let $\eps > 0$. Observe that the limit inferior in Equation \eqref{eqn:statem} is not smaller than
\begin{equation}
\liminf_{n \to \infty}
 \mathbb{P}_n \left( \kappa_n \| \diag(\mathbb{V}_n(r_n))^{-1/2} \left[r_n + \Delta_n \right] \|_{\infty} \leq  K_{1-\alpha}(\corr(\mathbb{V}_n(r_n))), A_n \right),
\end{equation}
which, in turn, is bounded from below (using that $\kappa_n$ is positive on $A_n$, and Equation \eqref{eqn:asptup}) by
\begin{equation}\label{eq:liminfbd}
\liminf_{n \to \infty} \mathbb{P}_n \left( \| \diag(\mathbb{V}_n(r_n))^{-1/2} \left[r_n + \Delta_n \right] \|_{\infty} \leq \frac{K_{1-\alpha}(\corr(\mathbb{V}_n(r_n)))}{1 + \eps} \right).
\end{equation}
Now, we argue by contradiction, and suppose that \eqref{eqn:statem} is false: Then there exists a $\delta > 0$, that can be chosen independently of $\eps$, so that the limit inferior in \eqref{eq:liminfbd} is an element of $[0, 1-\alpha - \delta)$. Next, let $n'(\eps)$ denote a subsequence along which \eqref{eq:liminfbd} is attained. Arguing as in the proof of Theorem \ref{thm:fbl} (borrowing some of its notation) we can obtain a subsequence $n''(\eps)$ of $n'(\eps)$ along which the sequence of probabilities in the preceding display converges to
\begin{align}
Q_{\bar{\Sigma}(\eps)}\left(y \in \R^k: \|y\|_{\infty} \leq \frac{K_{1-\alpha}(\bar{\Sigma}(\eps))}{1+\eps} \right) \in [0, 1-\alpha - \delta).
\end{align}
Note that $\eps > 0$ was arbitrary, and let $\eps_m > 0$ converge to $0$. Assume (otherwise pass to a subsequence) that the sequence of correlation matrices $\bar{\Sigma}(\eps_m)$ (with diagonal entries equal to $1$) converges to $\bar{\Sigma}$, say. It is then not difficult to obtain (by a weak convergence argument involving Portmanteau theorem) the contradiction 
\begin{align}
& Q_{\bar{\Sigma}(\eps_m)}\left(y \in \R^k: \|y\|_{\infty} \leq \frac{K_{1-\alpha}(\bar{\Sigma}(\eps_m))}{1+\eps_m} \right) \\ 
&\xrightarrow[m\to\infty]{} \quad  Q_{\bar{\Sigma}}\left(y \in \R^k: \|y\|_{\infty} \leq K_{1-\alpha}(\bar{\Sigma}) \right) = 1-\alpha.
\end{align}
The remaining part follows immediately from what we have already established. \hfill\qed

\subsection{Proof of Proposition \ref{prop:overest}}

Fix $j$ and note that with the same notation and argumentation as in the beginning of the proof of Proposition~\ref{prop:obtest}, and with the convention of Remark~\ref{rem:gin}, for every $\eps > 0$, it holds that
\begin{equation}\label{eqn:overest1}
\mathbb{P}_n \left(\left|\frac{\sum_{i = 1}^n [g_{i,n}^{(j)}]^2}{\sum_{i = 1}^n \mathbb{V}_n(g_{i,n}^{(j)})} - 1 \right| \geq \eps \right) \to 0.
\end{equation}
Obviously, 
\begin{equation}
\tilde{g}_{i,n}^{(j)} = \left( \hat{g}_{i,n}^{(j)} - g_{i,n}^{(j)}\right) + g_{i,n}^{(j)} + a_{i,n}^{(j)}. 
\end{equation}
Equation~\eqref{eq:negldiff}, or equivalently (equivalence being due to  Equation~\eqref{eqn:overest1} above) Equation~\eqref{eq:negldiff2}, together with Part~3 of Lemma~\ref{lemma:auxi} (applied with: $a_{i,n} = g_{i,n}^{(j)}$, $b_{i,n} = \left( \hat{g}_{i,n}^{(j)} - g_{i,n}^{(j)}\right) + g_{i,n}^{(j)}$ and $c_{i,n} = a_{i,n}^{(j)}$) now shows that
\begin{equation}
\mathbb{P}_n \left( \frac{\sum_{i = 1}^n [\tilde{g}_{i,n}^{(j)}]^2 }{\sum_{i = 1}^n \mathbb{V}_n(g_{i,n}^{(j)})} \leq 1-\eps \right) \to 0 \text{ for every } \eps > 0,
\end{equation}
implying the claimed statement. Note that Equation~\eqref{eqn:maxto0} in Lemma~\ref{lemma:auxi} is satisfied here because Condition~\ref{cond:sum} (in particular the Lindeberg condition in Equation \eqref{eq:linde}) implies the corresponding Feller condition
\begin{equation}\label{eqn:feller}
\frac{\max_{i = 1, \hdots, {n}} \mathbb{V}_{n}(g_{i,{n}}^{(j)})}{\sum_{i = 1}^{n} \mathbb{V}_{n}(g_{i,{n}}^{(j)})} \to 0.
\end{equation}
All remaining assumptions in Part 3 of Lemma \ref{lemma:auxi} can be easily checked using Condition~\ref{cond:sum}, \eqref{eq:negldiff} and \eqref{eqn:overest1}. \hfill\qed

\subsection{Proof of Lemma \ref{lem:upper}} 

Let $\omega := \rank(\Gamma)$ and let $Z \sim N(0, \Gamma)$. Since $\Gamma$ is a correlation matrix of rank $\omega$, by the spectral decomposition, we can find a $k \times \omega$-dimensional matrix $V$ so that $VV' = \Gamma$. In particular if $\eps \sim N(0, I_{\omega})$ it holds that $V \eps \sim N(0, \Gamma)$, and hence the $1-\alpha$-quantiles of the distributions of $\|Z\|_{\infty}$ and of $\| V\eps\|_{\infty} = \max_{i = 1, \hdots, k } |v_i \eps |$ coincide, $v_i$ denoting the $i$-th row of $V$. Since $\Gamma$ is a correlation matrix it furthermore holds that each row $v_i$ of $V$ has Euclidean norm less than or equal to $1$. From the discussion after the definition of $B_{\alpha}$ it then follows that $K_{1-\alpha}(\Gamma)$, the $1-\alpha$-quantile of the distributions of $\|Z\|_{\infty}$, is not greater than $B_{\alpha}(\omega, k)$. \hfill\qed

%-----------------------------------------------------------------------------------------------------------------------------------------------
%-----------------------------------------------------------------------------------------------------------------------------------------------

\section{Proofs for Section 3}
\label{app:appl}

\subsection{Proof of Proposition~\ref{prop:lmvarest}}
\label{sec:lmvarest}

Fix $\delta>0$, $\tau\ge1$, $n\in\N$, $\P_n\in\mathbf{P}_n^{(\mathrm{lm})}(\delta,\tau)$ and $\M\in\mathsf{M}_n$ with corresponding index set $M\in\mathcal I$. Let $\mu_n$ and $\sigma_n^2$ be the mean vector and the (component-wise) variance of $\P_n$, as defined in Section~\ref{sec:homlinmods}. Abbreviate $m = |M|$, $u_n = u_n(y) = y - \mu_n$ and $H_M = I_n-P_{X_n[M]}$. The mean of $\hat{\sigma}_{\M,n}^2(y) = (n-m)^{-1} y'H_My$ is easily seen to be
$\E_n[\hat{\sigma}_{\M,n}^2] = \mu_n'H_M\mu_n/(n-m) + \sigma_n^2$, eventually.
Now consider
\begin{align}
&\left|\frac{\hat{\sigma}_{\M,n}^2}{\E_n[\hat{\sigma}_{\M,n}^2]} - 1\right|
\quad=\quad
\left|\frac{2\mu_nH_M u_n/(n-m) + u_n'H_M u_n/(n-m) - \sigma_n^2}{\mu_n' H_M \mu_n/(n-m) + \sigma_n^2}\right|\notag\\
&\quad=\quad
\left|\frac{2(\mu_n/\sigma_n)H_M (u_n/\sigma_n)/(n-m) + (u_n/\sigma_n)'H_M (u_n/\sigma_n)/(n-m) - 1}{(\mu_n/\sigma_n)' H_M (\mu_n/\sigma_n)/(n-m) + 1}\right|\notag\\
&\quad\le\quad
2\left|\frac{(\mu_n/\sigma_n)H_M (u_n/\sigma_n)}{(\mu_n/\sigma_n)'H_M (\mu_n/\sigma_n) + (n-m)}\right| +
\left| \frac{(u_n/\sigma_n)'H_M (u_n/\sigma_n)}{n-m} - 1\right|. \label{eq:prop:ConservativeLinMod}
\end{align}
The first fraction on the last line of the previous display converges to zero in $\P_n$-probability, because its mean is $0$ and its variance is upper bounded by $\|H_M \mu_n/\sigma_n\|^2/( \|H_M \mu_n/\sigma_n\|^4 + (n-m)^2 ) $, which converges to $0$, as is seen by maximizing it with respect to $\|H_M \mu_n/\sigma_n\|^2$. To show that the second fraction converges to one, abbreviate the random $n$-vector $v_n = u_n/\sigma_n$, and note that $v_n$ has independent standardized components under $\P_n$. Now decompose the quadratic form as
$$
\frac{v_n'H_Mv_n}{n-m} \quad=\quad \frac{1}{n-m}\sum_{i=1}^n (H_M)_{ii} v_{i,n}^2 + \frac{1}{n-m}\sum_{i\ne j} (H_M)_{ij} v_{i,n}v_{j,n},
$$
and note that $(n-m)^{-1}\sum_{i\ne j} (H_M)_{ij} v_{i,n}v_{j,n}$ has mean zero and variance equal to $(n-m)^{-2}\sum_{i\ne j} [(H_M)_{ij}]^2\le (n-m)^{-2}\trace(H_M^2) = (n-m)^{-1}\to0$. To show that $(n-m)^{-1}\sum_{i=1}^n (H_M)_{ii} v_{i,n}^2$ converges to one, we use a standard truncation argument. For $K>0$ define $\tilde{v}_{i,n} = v_{i,n}\{|v_{i,n}|\le K\}$, $S_n = \frac{1}{n-m}\sum_{i=1}^n(H_M)_{ii} v_{i,n}^2$, $\tilde{S}_n = \frac{1}{n-m}\sum_{i=1}^n(H_M)_{ii} \tilde{v}_{i,n}^2$ and
\begin{align*}
D_n &:= S_n - \tilde{S}_n 
=
\frac{1}{n-m}\sum_{i=1}^n(H_M)_{ii} v_{i,n}^2\{|v_{i,n}|>K\} \;\ge\; 0.
\end{align*}
Using first H\"older's inequality, and then Markov's inequality (recall that $\mathbb{E}(v^2_{i,n}) = 1$), $\max_{i = 1, \hdots, n} \left(\E_n[|v_{i,n}|^{2+\delta}]\right)^{2/(2+\delta)} \leq \tau$ (recall that $\P_n\in\mathbf{P}_n^{(\mathrm{lm})}(\delta,\tau)$), and $\trace(H_M) = n-m$, the mean of $D_n$ can be bounded by
\begin{align*}
\E_n[D_n] &\le \frac{1}{n-m}\sum_{i=1}^n(H_M)_{ii} \left(\E_n[|v_{i,n}|^{2+\delta}]\right)^{2/(2+\delta)}\P_n(|v_{i,n}|^2>K^2)^{\delta/(2+\delta)}\\
&\le 
\tau K^{-\frac{2\delta}{2+\delta}}.
\end{align*}
Now for $\eps>0$,
\begin{align*}
&\P_n(|S_n-1|>\eps) \\
&\quad\le
\P_n(|S_n-\tilde{S}_n|>\eps/2)  +  \P_n(|\tilde{S}_n- \E_n[\tilde{S}_n] + \E_n[\tilde{S}_n]-1|>\eps/2)\\
&\quad\le
2\tau K^{-\frac{2\delta}{2+\delta}}/\eps + 16\V_n[\tilde{S}_n]/\eps^2 + \P_n(\tau K^{-\frac{2\delta}{2+\delta}}>\eps/4).
\end{align*}
Since the variance of $\tilde{S}_n$ clearly converges to zero as $n\to\infty$, for every $K>0$, the limit superior of $\P_n(|S_n-1|>\eps)$ is bounded by a quantity that approaches zero as $K\to\infty$. Thus, we have established the convergence $\hat{\sigma}_{\M,n}^2/\E_n[\hat{\sigma}_{M,n}^2] \to 1$, in $\P_n$-probability.
Therefore, we can write
\begin{align*}
\frac{\hat{\sigma}_{\M,n}^2}{\sigma_n^2} 
=
\frac{\hat{\sigma}_{\M,n}^2}{\E_n[\hat{\sigma}_{\M,n}^2]} \frac{\E_n[\hat{\sigma}_{\M,n}^2]}{\sigma_n^2}
=
(1+o_{\P_n}(1))  
\left(
\frac{(\mu_n/\sigma_n)'H_M(\mu_n/\sigma_n)}{n-m} + 1
\right),
\end{align*}
which finishes the proof.\hfill\qed

\subsection{Proof of Theorem \ref{thm:homlinmods}}

For every $n \in \N$, let $\mathbb{P}_n \in \mathbf{P}_n^{(\mathrm{lm})}(\delta, \tau)$. Abbreviate $u_{i,n}(y_i) := y_i - \mu_{i,n}$, and $u_n(y) = (u_{1,n}(y_1), \hdots, u_{n,n}(y_n))'$, where $y=(y_1,\dots, y_n)'\in\R^n$ and $\mu_n = (\mu_{1,n},\dots, \mu_{n,n})'$. We note that by assumption
\begin{equation}\label{eqn:astau}
\frac{\max_{i = 1, \hdots, n} \mathbb{E}_{i,n} (|u_{i,n}|^{2+\delta})^{\frac{2}{2+\delta}}}{\sigma_n^2} \leq \tau.
\end{equation}
We now verify Condition \ref{cond:sum} (with $\Delta_n \equiv  0$): Since this condition is formulated in a component-wise fashion, it suffices to verify it for an arbitrary component $j\in\{1,\dots, m(\M)\}$ of the estimation error $r_{n,\M}$ in an arbitrary model $\M\in\mathsf{M}_n$ with corresponding index set $M\in\mathcal I$. Recall that 
\begin{equation}
\beta_{\mathbb{M}, n}^{*} = \left(X_n[M]' X_n[M] \right)^{-1} X_n[M]'\mu_n = U_n^{-1} X_n[M]'\mu_n,
\end{equation}
for $U_n := X_n[M]' X_n[M] $. The $j$-th coordinate of the estimation error
\begin{equation}
r_{n,\M}(y) \;:=\;  \hat{\beta}_{\M,n}(y) - \beta_{\mathbb{M}, n}^{*}  = U_n^{-1}X_n[M]'u_n(y)
\end{equation}
can be written as
\begin{equation}
\sum_{i = 1}^n e_{|M|}'(j)U_n^{-1} X_{i,n}[M]' u_{i,n}(y_i) =: \sum_{i=1}^n g^{(j)}_{i,n,\M}(y_i),
\end{equation}
where $e_m(j)$ denotes the $j$-th element of the canonical basis of $\R^m$. 
By definition $\mathbb{E}_{i,n} \left( g_{i,n,\M}^{(j)} \right) = 0$  and $0 < \mathbb{V}_{n} \left( r_{n,\M}^{(j)} \right) < \infty$ holds (eventually), the latter following from
\begin{equation}\label{eqn:varhomlin}
\mathbb{V}_{n} \left( r_{n,\M} \right) = \sigma_n^2 U_n^{-1}
\end{equation}
together with Condition~\ref{cond:X}. We now verify that for every $\eps > 0$ it holds that
\begin{equation}\label{eqn:lindehomreg}
\mathbb{V}_n^{-1}\left(r_{n,\M}^{(j)}\right) \sum_{i = 1}^n \int_\R \left[ g_{i,n,\M}^{(j)}\right]^2 \left\{ |g_{i,n,\M}^{(j)}| \geq \eps \mathbb{V}_n^{\frac{1}{2}}\left(r_{n,\M}^{(j)}\right) \right\} d \mathbb{P}_{i,n} \to 0.
\end{equation}
An application of H\"older's inequality (with $p = \frac{2+\delta}{2}$ and $q = \frac{2+\delta}{\delta}$) shows that the quantity to the left in the previous display is bounded from above by
\begin{equation}
\frac{\max_{i = 1, \hdots, n} \mathbb{E}_{i,n} (|u_{i,n}|^{2+\delta})^{\frac{2}{2+\delta}}}{\sigma_n^2}\left( \max_{i = 1, \hdots, n} \mathbb{P}_{i,n} \left( | g_{i,n,\M}^{(j)}  | \geq \eps \mathbb{V}_n^{\frac{1}{2}}\left(r_{n,\M}^{(j)}\right) \right) \right)^{\frac{\delta}{2 + \delta}} ,
\end{equation}
which, using the bound \eqref{eqn:astau} and Markov's inequality, does not exceed
\begin{equation}\label{eq:thm:homlinmods:w2}
\tau ~ \left( 
\frac{\max_{i = 1, \hdots, n}  \left( U_n^{-1}X_{i,n}[M]'X_{i,n}[M]U_n^{-1}\right)_{j} }{ \left( U_n^{-1}\right)_{j}}
\right)^{\frac{\delta}{2 + \delta}} ~ \eps^{-\frac{2\delta}{2 + \delta}} .
\end{equation}
Finally, since the term within brackets coincides with
\begin{equation}
\max_{i = 1, \hdots, n} \bigg( X_{i,n}[M] U_n^{-\frac{1}{2}} \frac{W_{j,n}}{\|W_{j,n}\|} 
U_n^{-\frac{1}{2}} X_{i,n}[M]' \bigg),
\end{equation}
for 
\begin{align}
W_{j,n} = 
U_n^{-\frac{1}{2}} e_{|M|}(j) e'_{|M|}(j) U_n^{-\frac{1}{2}},
\end{align}
and since this quantity is not greater than
\begin{equation}
\max_{i = 1, \hdots, n} X_{i,n}[M]U_n^{-1} X_{i,n}[M]' \to 0,
\end{equation}
where convergence holds by Condition \ref{cond:X}, the statement in \eqref{eqn:lindehomreg} follows. Since $\M$ and $j$ were arbitrary, we have verified Condition~\ref{cond:sum}. 

Now, for $\M$, $M$ and $j$ as before, set $\hat{\nu}_{j,n,\M}^2 = \hat{\sigma}_{\M,n}^2 (U_n^{-1})_{j}$. Thus, using \eqref{eqn:varhomlin} and Proposition~\ref{prop:lmvarest}, we see that

\begin{align*}
\mathbb{P}_n \left( \sqrt{ \frac{[\mathbb{V}_n(r_{n,\M})]_j
}{\hat{\nu}_{j,n,\M}^2 }} \geq 1+\eps   \right) 
=
\mathbb{P}_n \left( \sqrt{ \frac{\sigma_n^2
}{\hat{\sigma}_{\M,n}^2 } } \geq 1+\eps   \right) 
\to 0.
\end{align*}
Finally, note that for the stacked vector $r_n = (r_{n,\M_1}',\dots, r_{n,\M_d}')'$, we have
\begin{equation}
\mathbb{V}_n(r_n) = \sigma_n^2 \Gamma_n, 
\end{equation}
and that $K_{1-\alpha}(\corr(\Gamma_n)) = K_{1-\alpha}(\corr(\mathbb{V}_n(r_n))) > 0$. It now follows from Theorem \ref{thm:cons} (the special case with $\hat{K}_n = K_{1-\alpha}(\corr(\Gamma_n))$, that for any (measurable) model selection procedure $\hat{\mathbb{M}}_n$ it holds that

\begin{equation}
\liminf_{n \to \infty} \mathbb{P}_n \left(\beta_{\hat{\mathbb{M}}_n, n}^{*, (j)} \in \mathrm{CI}_{1-\alpha, \hat{\mathbb{M}}_n}^{(j), \mathrm{lm}} \text{ for all } j = 1, \hdots, m(\hat{\mathbb{M}}_n)  \right) \geq 1-\alpha.
\end{equation}

The theorem now follows because the selection $\mathbb{P}_n$ we started with was arbitrary. \hfill\qed

\subsection{Proof of Theorem \ref{thm:homlinmodsindi}}
\label{subsection:proof:thm:homlinmodsindi}

The proof is analogous to that of Theorem~\ref{thm:homlinmods}, with the only modification that throughout $j=1$ and that the stacked vector $r_n$ is now given by $r_n = (r_{n,\M_1}^{(1)},\dots, r_{n,\M_d}^{(1)})'$, so that
$\mathbb{V}_n(r_n) = \sigma_n^2 \Xi_n$ and thus $K_{1-\alpha}(\mathrm{corr}(\Xi_n)) = K_{1-\alpha}(\mathrm{corr}(\mathbb{V}_n(r_n))) > 0$.\hfill\qed

%--------------------------------------------------------------------------------------------------------------------------------------------------------------------------------------

\subsection{Proof of Proposition~\ref{prop:no:consistent:sigma}} \label{subsection:proof:no:consistent:sigma}

We argue by contradiction, and assume existence of a sequence of measurable functions $(\hat{\sigma}^2_{n})_{n \in \mathbb{N}}$  with $\hat{\sigma}^2_{n}: \mathbb{R}^n \to [0, \infty)$ and so that for every $\varepsilon > 0$ Equation \eqref{eqn:conscounter} holds. First, we define for every $x \in \mathbb{R}^n$ and every $\rho \geq 0$  the product measure
\begin{equation}
Q_n(x, \rho) = \bigotimes_{i = 1}^n \bigg( Q(x_i) \ast N(0, \rho)\bigg),
\end{equation}
where $Q(x_i)$ puts mass $1/2$ to $x_i + 1$ and to $x_i - 1$, respectively, and $Q(x_i) \ast N(0, \rho)$ denotes the convolution of $Q(x_i)$ and $N(0, \rho)$, where we interpret $N(0, 0)$ as point mass at $0$, i.e., $Q(x_i) \ast N(0, 0) = Q(x_i)$. For simplicity, denote the $q$-th absolute central moment of a distribution $F$ on the Borel sets of $\R$ by $m_q(F)$. We note that $Q_n(x, 0) \in \mathbf{P}_n^{(\mathrm{lm})}(\delta, \tau)$, because $m_{2+h}( Q(x_i))^{\frac{2}{2+h}} = 1$ for every $h \geq 0$, and since $\tau > 1$ holds by assumption. It is easy to verify that for every $h \geq 0$ the quantity $m_{2+h}( Q(x_i)  \ast N(0, \rho) )^{\frac{2}{2+h}}$ does not depend on $x_i$, and that we have $m_{2+h}( Q(x_i)  \ast N(0, \rho) )^{\frac{2}{2+h}} \to 1$ as $\rho \to 0$. Hence there exists a $\rho^* > 0$, so that for every $\rho \in[0,\rho^*]$ and every $x \in \mathbb{R}^n$ we have $Q_n(x, \rho) \in \mathbf{P}_n^{(\mathrm{lm})}(\delta, \tau)$. Therefore, by our assumption, it holds for every $\varepsilon > 0$ that
\begin{equation}\label{eqn:contra}
\sup_{x \in \mathbb{R}^n}  \sup_{0 \leq \rho \le \rho^*} ~ Q_n(x, \rho) \left( \bigg | \frac{\hat{\sigma}^2_n}{1 + \rho} - 1 \bigg | > \varepsilon \right) \to 0.
\end{equation}
Next, let $(F, \mathcal{F}, \mathbb{Q})$ be a probability space on which, for every $n$, there are defined two independent random $n$-vectors $X_n^{(1)}$ and $X_n^{(2)}$, so that $\mathbb{Q}
\circ X_n^{(1)} \sim Q_n(0, 0)$ and $\mathbb{Q}
\circ X_n^{(2)} \sim N(0, \rho^*I_n)$, and hence the distribution of $Y_n := X_n^{(1)} + X_n^{(2)}$ is $Q_n(0, \rho^*)$. Let $\varepsilon > 0$ be fixed. From the previous display it follows that $\mathbb{Q}(|\hat{\sigma}^2_n(Y_n) - (1 + \rho^*)  | > \varepsilon )$ converges to $0$. Since the conditional distribution of $Y_n$ given $X_n^{(2)}$ is $Q_n(X_n^{(2)}, 0)$, it furthermore holds that
\begin{equation}
\mathbb{Q}(|\hat{\sigma}^2_n(Y_n) - (1 + \rho^*)  | > \varepsilon ) = \mathbb{E}_{\mathbb{Q}} \left( Q_n(X_n^{(2)}, 0)(|\hat{\sigma}^2_n - (1 + \rho^*)  | > \varepsilon )\right) \to 0,
\end{equation}
from which it now follows that $I_n(\omega) = Q_n(X_n^{(2)}(\omega), 0)(|\hat{\sigma}^2_n - (1 + \rho^*)  | > \varepsilon ) \to 0$ in $\mathbb{Q}$-probability as $n \to \infty$. Thus, there exists a subsequence $n'$ so that $I_{n'} \to 0$, $\mathbb{Q}$-almost surely. As a consequence, there exists $\bar{\omega} \in \Omega$ for which $I_{n'}(\bar{\omega}) \to 0$ as $n' \to \infty$. But this now means that for $x_{n'} = X_{n'}^{(2)}(\bar{\omega})$ it holds that $Q_{n'}(x_{n'}, 0)(|\hat{\sigma}^2_{n'} - (1 + \rho^*)  | > \varepsilon ) \to 0$. Since $\rho^* > 0$, this contradicts Equation \eqref{eqn:contra} which implies $Q_{n'}(x_{n'}, 0)(|\hat{\sigma}^2_{n'} - 1  | > \varepsilon ) \to 0$.
 \hfill\qed

%--------------------------------------------------------------------------------------------------------------------------------------------------------------------------------------

\subsection{Proof of Theorem \ref{thm:hetlinmods}}

We proceed as in the proof of Theorem~\ref{thm:homlinmods}, noting that now we allow for heteroskedasticity, so that $\sigma_{i,n}^2 = \mathbb{V}_{i,n}(u_{i,n})$ depends on $i$. The bound \eqref{eqn:astau} in the proof of Theorem \ref{thm:homlinmods} is now replaced by
\begin{equation}\label{eqn:astauhet}
\frac{\max_{i = 1, \hdots, n} \mathbb{E}_{i,n} (|u_{i,n}|^{2+\delta})^{\frac{2}{2+\delta}}}{\min_{i = 1, \hdots, n} \sigma_{i,n}^2} \leq \tau.
\end{equation}
To verify Condition \ref{cond:sum} we replace \eqref{eqn:varhomlin} in the proof of Theorem \ref{thm:homlinmods} by
\begin{align}\label{eqn:varhetlin}
\mathbb{V}_{n} \left( r_{n,\M} \right) =  
U_n^{-1} \left(\sum_{i = 1}^n \sigma_{i,n}^2 X_{i,n}[M]'X_{i,n}[M] \right)U_n^{-1}
\end{align}
which, replacing each $\sigma_{i,n}^2$ by $\min_{i = 1, \hdots, n} \sigma_{i,n}^2 > 0$, is seen to be eventually positive by Condition~\ref{cond:X}. For the verification of the Lindeberg condition \eqref{eqn:lindehomreg} we use essentially the same argument as in the proof of Theorem \ref{thm:homlinmods}, now using \eqref{eqn:astauhet} above. Hence, Condition~\ref{cond:sum} holds.
Next, we verify \eqref{eqn:special2} in Theorem~\ref{thm:cons} by means of Proposition~\ref{prop:overest}. Let
\begin{align*}
&\tilde{g}_{i,n,\M}(y) = U_n^{-1}X_{i,n}[M]'\hat{u}_{i,\M}(y) \\
&\quad=
U_n^{-1}X_{i,n}[M]'\left( u_{i,n}(y_i)  + X_{i,n}(\beta_{\M,n}^* - \hat{\beta}_{\M,n}(y)) + \mu_{i,n}-X_{i,n}[M]\beta_{\M,n}^*\right)\\
&\quad= \hat{g}_{i,n,\M}(y) + a_{i,n,\M},
\end{align*}
where $\hat{g}_{i,n,\M} = U_n^{-1}X_{i,n}[M]'u_{i,n} - U_n^{-1}X_{i,n}[M]'X_{i,n}[M]r_{n,\M}$ and $a_{i,n,\M}= U_n^{-1}X_{i,n}[M]'(\mu_{i,n}-X_{i,n}[M]\beta_{\M,n}^*)\in\R^{|M|}$, and recall that $g_{i,n,\M} = U_n^{-1}X_{i,n}[M]'u_{i,n}$.
Therefore, 
$$
\hat{\nu}_{j,\M,n}^2 := \hat{\sigma}_{j,\M,n}^2 = \sum_{i=1}^n \left[\tilde{g}_{i,n,\M}^{(j)}\right]^2,
$$
and, from \eqref{eqn:varhetlin}, 
\begin{align*}
&\E_n\left(g_{i,n,\M}^{(j)} - \hat{g}_{i,n,\M}^{(j)}\right)^2
= 
\left[U_n^{-1}X_{i,n}[M]'X_{i,n}[M]\V_n(r_{n,\M})X_{i,n}[M]'X_{i,n}[M]U_n^{-1}\right]_j \\
&\quad\le
(e_{|M|}(j)'U_n^{-1}X_{i,n}[M]')^2 
\left(\max_{i=1,\dots, n}\sigma_{i,n}^2\right) 
\max_{i=1,\dots, n}X_{i,n}[M]U_n^{-1}X_{i,n}[M]'.
\end{align*}
Moreover, $\sum_{i=1}^n \V_n(g_{i,n,\M}^{(j)})=\V_n(r_{n,\M}^{(j)}) \ge [U_n^{-1}]_j\min_{i=1,\dots, n}\sigma_{i,n}^2 $.
Thus, \eqref{eq:negldiff2} follows, because
$$
\sum_{i=1}^n(e_{|M|}(j)'U_n^{-1}X_{i,n}[M]')^2 = [U_n^{-1}]_j
\quad\text{and}\quad\frac{\max_{i=1,\dots, n}\sigma_{i,n}^2}{\min_{i=1,\dots, n}\sigma_{i,n}^2}\le \tau,
$$
in view of \eqref{eqn:astauhet}, and, finally, because of Condition~\ref{cond:X}. Consequently, Proposition~\ref{prop:overest} implies \eqref{eqn:special2} for the proposed variance estimators.
If $r_n$ is now, again, the stacked vector of the $r_{n,\M_1},\dots, r_{n,\M_d}$, then, by Lemma~\ref{lem:upper}, we have that $B_{\alpha}(\min(k, p),k) \geq K_{1-\alpha}(\corr(\mathbb{V}_n(r_n))) > 0$ (it is easy to see that the rank of the $k \times k$-dimensional matrix $\mathbb{V}_n(r_n)$ can not exceed $p$), so we can apply the special case discussed in Theorem~\ref{thm:cons} to conclude that 
\begin{equation}
\liminf_{n \to \infty}  \mathbb{P}_n \left(\beta_{\hat{\mathbb{M}}_n, n}^{*, (j)} \in \mathrm{CI}_{1-\alpha, \hat{\mathbb{M}}_n}^{(j), \mathrm{hlm}} \text{ for all } j = 1, \hdots, m(\hat{\mathbb{M}}_n)  \right) \geq 1-\alpha,
\end{equation}
which proves the claim as the sequence $\mathbb{P}_n \in \mathbf{P}_n^{(\mathrm{het})}(\delta, \tau)$ was arbitrary.\hfill\qed

%--------------------------------------------------------------------------------------------------------------------------------------------------------------------------------------

\subsection{Proof of Lemma~\ref{lemma:pseudo}}
\label{sec:pseudo:proof}

Fix $\M\in\mathsf{M}_n$ and $\P_n\in\bigcup_{\delta>0}\mathbf{P}_n^{(\mathrm{bin})}(\delta)$ and recall that $\M\triangleq (h,M)\in \mathcal H\times \mathcal I$. For $i=1,\dots, n$, we abbreviate $p_i = \P_{i,n}(\{1\})$ and for $\gamma\in\R$, $\phi_1(\gamma) = \log(h(\gamma))$, $\phi_2(\gamma) = \log(1-h(\gamma))$, and we note that $p_i\in(0,1)$. Thus, the expected log-likelihood function can be expressed as
\begin{equation}\label{eq:Eloglik}
\beta \mapsto \int_{\R^n}\ell_{\M,n}(y,\beta)\,d\P_n(y) = \sum_{i=1}^n \left[p_i\phi_1(X_{i,n}[M]\beta) + (1-p_i)\phi_2(X_{i,n}[M]\beta)\right].
\end{equation}
The function in the previous display is continuous on its domain $\R^{|M|}$, by Condition~\ref{cond:H}\eqref{cH:cdf}. To see that it also has a maximizer on $\R^{|M|}$, consider an arbitrary sequence $\beta_k\in\R^{|M|}$ such that $\|\beta_k\|\to\infty$ as $k\to\infty$. Then $\|X_n[M]\beta_k\|^2 \ge \|\beta_k\|^2 \lmin(X_n[M]'X_n[M]) \to \infty$ as $k\to \infty$, by Condition~\ref{cond:X2}\eqref{cX2:rank}, so that at least for a sequence $i_k\in\{1,\dots, n\}$, we must have $|X_{i_k,n}[M]\beta_k|\to \infty$ as $k\to\infty$. Therefore, using the fact that $p_i\in(0,1)$ for $i=1,\dots, n$, it is easy to see that the sequence of summands in \eqref{eq:Eloglik} corresponding to the indices $i_k$, $k\in\N$, with $\beta$ replaced by $\beta_k$, converges to $-\infty$ as $k\to \infty$. Since for each $k$ the remaining summands in \eqref{eq:Eloglik} are non-positive, we see that the expected log-likelihood diverges to $-\infty$ along $(\beta_k)$, and thus, by continuity, attains its maximum at some $\beta_{\M,n}^*=\beta_{\M,n}^*(\P_n,X_n[M])\in\R^{|M|}$. 

For uniqueness, we show that the function in \eqref{eq:Eloglik} is strictly concave.
Take $\beta_1,\beta_2\in\R^{|M|}$, $\beta_1\ne\beta_2$ and $\alpha\in(0,1)$, and note that because $X_n[M]$ is of full rank $|M|$, we must have $X_n[M]\beta_1\ne X_n[M]\beta_2$. Thus, there is at least one $i_0\in\{1,\dots, n\}$ such that $X_{i_0,n}[M]'\beta_1 \ne X_{i_0,n}[M]'\beta_2$, and, by strict concavity (Condition~\ref{cond:H}\eqref{cH:concave}), $\phi_j(\alpha X_{i_0,n}[M]'\beta_1 + (1-\alpha)X_{i_0,n}[M]'\beta_2)> \alpha\phi_j( X_{i_0,n}[M]'\beta_1) + (1-\alpha) \phi_j( X_{i_0,n}[M]'\beta_2)$, for $j=1,2$. For the remaining indices $i\ne i_0$, the same inequalities hold, but are possibly not strict. Therefore, the expected log-likelihood in \eqref{eq:Eloglik} is strictly concave and the maximizer $\beta_{\M,n}^*\in\R^{|M|}$ is unique. \hfill{\qed}

%--------------------------------------------------------------------------------------------------------------------------------------------------------------------------

\subsection{Auxiliary results for Section~\ref{sec:binary}}

\begin{lemma}\label{lemma:pseudoBound}
Suppose that Conditions~\ref{cond:X2}(\ref{cX2:rank},\ref{cX2:Huber}) and \ref{cond:H}(\ref{cH:cdf},\ref{cH:concave}) hold and fix $\tau\in(0,1/4)$. There exists a finite positive constant $K^*(\tau, C)$, depending only on $\tau$ and the constant $C$ from Condition~\ref{cond:X2}\eqref{cX2:Huber}, such that eventually 
$$
\sup_{\substack{\M\in\mathsf{M}_n\\\P_n\in\mathbf{P}_n^{(\mathrm{bin})}(\tau)}} \max_{i=1,\dots, n}|X_{i,n}[M]\beta_{\M,n}^*(\P_n)| \;\le\; K^*(\tau, C).
$$ 
Here, $\beta_{\M,n}^*(\P_n)$ is the pseudo parameter from Lemma~\ref{lemma:pseudo}.
\end{lemma}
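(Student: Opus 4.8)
The plan is to bound the vector of linear predictors $\gamma^* := X_n[M]\beta_{\M,n}^*(\P_n)\in\R^n$ in the supremum norm by first reducing the problem to a bound on its $\ell_1$-norm of order $n$, exploiting the small-leverage Condition~\ref{cond:X2}\eqref{cX2:Huber}, and then obtaining the $\ell_1$-bound from a crude comparison of the expected log-likelihood at $\beta_{\M,n}^*$ with its value at $\beta=0$. Throughout write $p_i=\P_{i,n}(\{1\})$; since $\V(Y_{i,n})=p_i(1-p_i)\ge\tau$ with $\tau<1/4$, every $p_i$ lies in $[\pi_\tau,1-\pi_\tau]$ with $\pi_\tau=(1-\sqrt{1-4\tau})/2\in(0,1/2)$ depending only on $\tau$. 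By Lemma~\ref{lemma:pseudo} the maximizer $\beta_{\M,n}^*$ exists and is unique, and the expected log-likelihood equals $F(\beta)=\sum_i\ell_i(X_{i,n}[M]\beta)$ with $\ell_i(\gamma)=p_i\phi_1(\gamma)+(1-p_i)\phi_2(\gamma)$, $\phi_1=\log h$, $\phi_2=\log(1-h)$.

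First I would carry out the leverage reduction. Let $P=X_n[M](X_n[M]'X_n[M])^{-1}X_n[M]'$ be the orthogonal projection onto the column span; since $\gamma^*\in\s(X_n[M])$ we have $P\gamma^*=\gamma^*$, and because $P$ is symmetric idempotent its $i$-th row $P_i$ satisfies $\|P_i\|_2^2=P_{ii}\le C/n$ eventually, by Condition~\ref{cond:X2}\eqref{cX2:Huber}. Cauchy--Schwarz then gives $|\gamma_i^*|=|P_i\gamma^*|\le\sqrt{C/n}\,\|\gamma^*\|_2$ for every $i$, i.e. $\|\gamma^*\|_\infty^2\le(C/n)\|\gamma^*\|_2^2$. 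Combining this with the elementary inequality $\|\gamma^*\|_2^2\le\|\gamma^*\|_\infty\|\gamma^*\|_1$ and cancelling one factor of $\|\gamma^*\|_\infty$ (the claim being trivial when $\gamma^*=0$) yields $\|\gamma^*\|_\infty\le(C/n)\|\gamma^*\|_1$. Hence it suffices to produce a constant $K_1=K_1(\tau,\mathcal H)$ with $\|\gamma^*\|_1\le K_1 n$, since then $\|\gamma^*\|_\infty\le CK_1=:K^*$.

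Next I would establish the $\ell_1$-bound by comparing objective values. On one hand $F(\beta_{\M,n}^*)\ge F(0)=\sum_i\ell_i(0)\ge-c_0 n$, where $c_0:=\max_{h\in\mathcal H}(|\log h(0)|+|\log(1-h(0))|)<\infty$ since $\mathcal H$ is finite. On the other hand I claim a uniform linear upper bound $\ell_i(\gamma)\le-a|\gamma|+b$ valid for all $\gamma\in\R$, all $i$, all $\P_n\in\mathbf{P}_n^{(\mathrm{bin})}(\tau)$ and all $h\in\mathcal H$, with $a>0$ and $b$ depending only on $\tau$ and $\mathcal H$. Granting this, $-c_0 n\le F(\beta_{\M,n}^*)=\sum_i\ell_i(\gamma_i^*)\le-a\|\gamma^*\|_1+bn$, so $\|\gamma^*\|_1\le((b+c_0)/a)\,n$, the desired $K_1 n$.

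The main obstacle is precisely this uniform linear bound $\ell_i(\gamma)\le-a|\gamma|+b$, which must be derived from concavity alone, since Condition~\ref{cond:H}\eqref{cH:C2} (differentiability) is \emph{not} assumed in this lemma. The key observation is that $\phi_2=\log(1-h)$ is non-increasing (as $h$ is a distribution function) and strictly concave by Condition~\ref{cond:H}\eqref{cH:concave}, and tends to $-\infty$ as $\gamma\to\infty$ because $h(\gamma)\to1$; hence there exist $\gamma_1<\gamma_2$ with $\phi_2(\gamma_2)<\phi_2(\gamma_1)$, and concavity forces the chord slopes to be non-increasing, giving $\phi_2(\gamma)\le\phi_2(\gamma_2)+s(\gamma-\gamma_2)$ for $\gamma\ge\gamma_2$ with slope $s<0$. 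Using $\phi_1\le0$ and $1-p_i\ge\pi_\tau$ then controls $\ell_i$ linearly for large positive $\gamma$; the symmetric argument with $\phi_1=\log h$ (which tends to $-\infty$ as $\gamma\to-\infty$) handles large negative $\gamma$, while on the bounded middle range $\ell_i\le0$. Taking the worst constants over the finite family $\mathcal H$, and recalling that $\pi_\tau$ depends only on $\tau$, produces uniform $a,b$, and hence the constant $K^*(\tau,C)$ (depending, as throughout, also on the fixed finite family $\mathcal H$).
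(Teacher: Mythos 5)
Your proof is correct, but it takes a genuinely different route from the paper's. The paper also compares the expected log-likelihood at $\beta_{\M,n}^*$ with its value at $\beta=0$, but its mechanism is a counting argument in the normalized parametrization: setting $U_{M,n}=X_n[M]'X_n[M]/n$, it shows via Condition~\ref{cond:X2}\eqref{cX2:Huber} that for $\xi=1/\sqrt2$ at least $n(1-\xi^2)/C$ indices satisfy $|X_{i,n}[M]\beta|\ge\xi\|U_{M,n}^{1/2}\beta\|$, bounds each such summand by $\tau\, g_h\bigl(\xi\|U_{M,n}^{1/2}\beta\|\bigr)$ with $g_h(t)=\sup_{|\gamma|>t}\min\{\phi_1(\gamma),\phi_2(\gamma)\}$ (using $p_i(1-p_i)\ge\tau$ and $\phi_1+\phi_2\le\min\{\phi_1,\phi_2\}$), and concludes $\|U_{M,n}^{1/2}\beta_{\M,n}^*\|$ is bounded because $g_h(t)\to-\infty$; the leverage condition is then used a second time, through $\max_i\|U_{M,n}^{-1/2}X_{i,n}[M]'\|\le\sqrt{C}$, to convert this into the stated bound on $\max_i|X_{i,n}[M]\beta_{\M,n}^*|$. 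You instead work directly with the fitted vector $\gamma^*=X_n[M]\beta_{\M,n}^*$, converting leverage into the interpolation $\|\gamma^*\|_\infty\le(C/n)\|\gamma^*\|_1$ and obtaining the $\ell_1$-bound from a global linear-decay envelope $\ell_i(\gamma)\le-a|\gamma|+b$, derived via the chord-slope monotonicity of the concave functions $\phi_1,\phi_2$ together with $\min(p_i,1-p_i)\ge\pi_\tau$. Your chord argument is exactly right in that it uses only Conditions~\ref{cond:H}(\ref{cH:cdf},\ref{cH:concave}) — no differentiability, which the lemma indeed does not assume — and your interpolation step, including the cancellation of one factor of $\|\gamma^*\|_\infty$, is valid. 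The trade-off: the paper's argument is more parsimonious on the link side, needing only $g_h(t)\to-\infty$ with no rate, whereas you need (and correctly extract from concavity) a quantitative linear decay; in exchange you dispense entirely with the counting argument and the normalized parameter $U_{M,n}^{1/2}\beta$, which makes the proof shorter and arguably more transparent. Both proofs yield constants depending on $\tau$, $C$ and the finite family $\mathcal H$ (the paper's notation $K^*(\tau,C)$ also silently absorbs the maximum over $\mathcal H\times\mathcal I$, just as you note), and both require $n$ large enough that Condition~\ref{cond:X2} holds and Lemma~\ref{lemma:pseudo} applies, which you invoke correctly.
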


\begin{proof}
We begin by establishing the following preliminary result. For every pair $(h,M)\in\mathcal H\times\mathcal I$, there exists a bounded set $B_{h,M}(\tau,C)\subseteq \R^{|M|}$, such that eventually $U_{M,n}^{1/2}\beta_{\M,n}^*(\P_n)\in B_{h,M}(\tau,C)$, for all $\P_n\in\mathbf{P}_n^{(\mathrm{bin})}(\tau)$, where $U_{M,n} = X_n[M]'X_n[M]/n$. Here, $C>0$ is the constant from Condition~\ref{cond:X2}. Fix $(h,M)\in\mathcal H\times\mathcal I$ and $t>0$, and define $g_h(t) = \sup_{|\gamma|>t} \min\{\phi_1(\gamma), \phi_2(\gamma)\}$ and the set $B_{h,M}(\tau,C)$ by
$$
B_{h,M}(\tau,C) = \{ v\in\R^{|M|}: g_h(\|v\|/\sqrt{2}) \ge 2C[\phi_1(0) + \phi_2(0)]/\tau\},
$$
where $\phi_1$ and $\phi_2$ are as in Condition~\ref{cond:H}\eqref{cH:concave}. Note that, indeed, $B_{h,M}(\tau,C)$ is bounded, because $g_h(t) \to -\infty$ as $t\to\infty$, in view of Condition~\ref{cond:H}\eqref{cH:cdf}. Next, fix $n$ large enough, such that Conditions~\ref{cond:X2}(\ref{cX2:rank},\ref{cX2:Huber}) hold. Then the pseudo parameter $\beta_{\M,n}^*$ of Lemma~\ref{lemma:pseudo} uniquely exists. For $\beta\in\R^{|M|}$ and $\xi>0$, define $R_{\beta,n}(\xi) = \{ i\le n : |X_{i,n}[M]\beta|\ge \xi\|U_{M,n}^{1/2}\beta\|\}$. Now fix $\xi>0$ and $\beta\in\R^{|M|}$ such that $\|U_{M,n}^{1/2}\beta\|=1$, and observe that
\begin{align*}
1 \;&=\; \beta'X_n[M]'X_n[M]\beta/n \;=\; \frac{1}{n} \sum_{i=1}^n (X_{i,n}[M]U_{M,n}^{-1/2}U_{M,n}^{1/2}\beta)^2 \\
&\le\; \xi^2 + \frac{1}{n} \sum_{i\in R_{\beta,n}(\xi)} \|U_{M,n}^{-1/2}X_{i,n}[M]'\|^2
\;\le\;\frac{1}{n} |R_{\beta,n}(\xi)| C \;+\; \xi^2,
\end{align*}
which implies that  $\inf_{\beta\in\R^{|M|}} |R_{\beta,n}(\xi)| = \inf_{\beta:\|U_{M,n}^{1/2}\beta\|=1} |R_{\beta,n}(\xi)| \ge n(1-\xi^2)/C$. Since $\phi_1$ and $\phi_2$ are negative, we get for every $\P_n\in\mathbf{P}_n^{(\mathrm{bin})}(\tau)$, every $\beta\in\R^{|M|}$ and for $\xi=1/\sqrt{2}$, that
\begin{align*}
\E_n[\ell_{\M,n}(\cdot,\beta)] \;&\le\; \sum_{i\in R_{\beta,n}(\xi)}  \P_{i,n}(\{1\})\P_{i,n}(\{0\})  \Big( \phi_1(X_{i,n}[M]\beta) + \phi_2(X_{i,n}[M]\beta)\Big)\\
&\le \; \tau \,|R_{\beta,n}(\xi)| \,g_h(\xi\|U_{M,n}^{1/2}\beta\|) \;\le\; n \frac{\tau}{2C} g_h(\|U_{M,n}^{1/2}\beta\|/\sqrt{2}),
\end{align*}
where $\M \triangleq (h,M)$.
Therefore, we have $n[\phi_1(0) + \phi_2(0)] \le \E_n[\ell_{\M,n}(\cdot, 0)] \le \E_n[\ell_{\M,n}(\cdot, \beta_{\M,n}^*(\P_n)] \le n \tau g_h(\|U_{M,n}^{1/2}\beta_{\M,n}^*\|/\sqrt{2}) / (2C)$, which yields 
$$
g_h(\|U_{M,n}^{1/2}\beta_{\M,n}^*\|/\sqrt{2}) \;\ge \; \frac{[\phi_1(0) + \phi_2(0)] 2C}{\tau},
$$
i.e., $U_{M,n}^{1/2}\beta_{\M,n}^*(\P_n)\in B_{h,M}(\tau, C)$. So we have established the preliminary result. 
Since the bounded set $B_{h,M}(\tau, C)$ depends only on the indicated quantities, there exists a finite positive constant $K^*_{h,M}(\tau, C)$, depending on the same quantities, such that for all large $n$, for all $\P_n\in\mathbf{P}_n^{(\mathrm{bin})}(\tau)$ and for all $\M\in\mathsf{M}_n$,
\begin{align*}
&\max_{i}|X_{i,n}[M]\beta_{\M,n}^*(\P_n)| \le \max_i \|U_{M,n}^{-1/2}X_{i,n}[M]'\| \|U_{M,n}^{1/2}\beta_{\M,n}^*(\P_n)\|\\
&\quad\le
\sqrt{C}K_{h,M}^*(\tau, C) \le \max_{(h,M)\in\mathcal H\times \mathcal I}\sqrt{C}K_{h,M}^*(\tau, C) =:K^*(\tau, C).
\end{align*}
This finishes the proof.
\end{proof}

\begin{lemma}\label{lemma:Hcontinuity}
Suppose that Conditions~\ref{cond:X2}(\ref{cX2:rank},\ref{cX2:Huber}) and \ref{cond:H}(\ref{cH:cdf},\ref{cH:concave},\ref{cH:C2}) hold and fix $\tau\in(0,1/4)$. 
\begin{list}{\thercnt}{
	 \usecounter{rcnt}
        \setlength\itemindent{0pt}
        \setlength\leftmargin{0pt}
        \setlength\partopsep{0pt} %     \setlength\bottomsep{0pt}
}
	\renewcommand{\theenumi}{(\roman{enumi})}
	\renewcommand{\labelenumi}{{\theenumi}} 
\item \label{l:Hcont:A}
There exist positive constants $\overline{K}(\tau, C)$ and $\underline{K}(\tau, C)$, depending only on $\tau$ and the constant $C$ from Condition~\ref{cond:X2}\eqref{cX2:Huber}, such that for all sufficiently large $n\in\N$, for all $\M\in\mathsf{M}_n$, all $\P_n\in\mathbf{P}_n^{(\mathrm{bin})}(\tau)$ and all $y\in\{0,1\}^n$, 
\begin{align*}
\overline{K}(\tau, C) &\ge
\lmax\left((X_n[M]'X_n[M])^{-1/2}H_{\M,n}^*(y)(X_n[M]'X_n[M])^{-1/2}\right)\\
&\ge\lmin\left((X_n[M]'X_n[M])^{-1/2}H_{\M,n}^*(y)(X_n[M]'X_n[M])^{-1/2}\right) \ge \underline{K}(\tau, C),
\end{align*}
where $H_{\M,n}^*(y) := H_{\M,n}(y,\beta_{\M,n}^*(\P_n))$, $H_{\M,n}(y,\beta) = -\frac{\partial^2 \ell_{\M,n}(y,\beta)}{\partial\beta\partial\beta'}$ with $\beta\in\R^{m(\M)}$ and $\beta_{\M,n}^*(\P_n)$ is the pseudo parameter of Lemma~\ref{lemma:pseudo}.

\item \label{l:Hcont:B}
For $\delta>0$ and $n$ sufficiently large, such that the pseudo parameter $\beta_{\M,n}^*$ of Lemma~\ref{lemma:pseudo} exists, define 
$$
N_{\M,\P_n,n}(\delta) = \left\{ \beta\in\R^{m(\M)} : \left\| (X_n[M]'X_n[M])^{1/2}(\beta - \beta_{\M,n}^*(\P_n)) \right\| \le \delta\right\}.
$$
Then, for every $\delta>0$,
$$
\sup_{\substack{\M\in\mathsf{M}_n\\\P_n\in\mathbf{P}_n^{(\mathrm{bin})}(\tau)}}
\sup_{y\in\{0,1\}^n}\sup_{\beta\in N_{\M,\P_n,n}(\delta)} \left\| H_{\M,n}^*(y)^{-1/2}H_{\M,n}(y,\beta)H_{\M,n}^*(y)^{-1/2} - I_{m(\M)} \right\|
$$
converges to zero as $n\to\infty$.

\item \label{l:Hcont:C}
Suppose that, in addition, also Condition~\ref{cond:X2}\eqref{cX2:eigen} holds. Then there exists a positive finite constant $K(\tau, C)$, depending only on $\tau$ and the constant $C$ from Condition~\ref{cond:X2}, such that eventually
$$
\sup_{\M\in\mathsf{M}_n}\sup_{\P_n\in\mathbf{P}_n^{\mathrm{(bin)}}(\tau)}\frac{\lmax(\E_{\P_n}[H_{\M,n}^*])}{\lmin(\E_{\P_n}[H_{\M,n}^*])} \;\le\; K(\tau, C).
$$

\end{list}

\end{lemma}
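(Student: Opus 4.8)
The plan is to prove the three parts in turn, the common engine being the representation of the Hessian as the design-weighted diagonal form $H_{\M,n}(y,\beta) = X_n[M]'D_{\M,n}(y,\beta)X_n[M]$, where $D_{\M,n}(y,\beta)$ is diagonal with $i$-th entry $d_i(\beta) = -y_i\ddot\phi_1(X_{i,n}[M]\beta)-(1-y_i)\ddot\phi_2(X_{i,n}[M]\beta)$, together with the uniform bound on the linear predictors at the pseudo-parameter supplied by Lemma~\ref{lemma:pseudoBound}. Throughout I abbreviate $A = X_n[M]'X_n[M]$, $\gamma_i^* = X_{i,n}[M]\beta_{\M,n}^*(\P_n)$, and $d_i^* = d_i(\beta_{\M,n}^*(\P_n))$, so that $H_{\M,n}^*(y) = X_n[M]'\diag(d_1^*,\dots,d_n^*)X_n[M]$.

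For part~\ref{l:Hcont:A}, I would first note that for any unit vector $v$, the Rayleigh quotient of $A^{-1/2}H_{\M,n}^*(y)A^{-1/2}$ equals $\sum_i d_i^* w_i^2$ with $w = X_n[M]A^{-1/2}v$, and $\|w\|^2 = v'A^{-1/2}AA^{-1/2}v = 1$; hence every eigenvalue of this normalized matrix lies between $\min_i d_i^*$ and $\max_i d_i^*$. Since $y_i\in\{0,1\}$, each $d_i^*$ equals either $-\ddot\phi_1(\gamma_i^*)$ or $-\ddot\phi_2(\gamma_i^*)$, and both of these are continuous and strictly positive on $\R$ by Condition~\ref{cond:H}\eqref{cH:C2}. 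By Lemma~\ref{lemma:pseudoBound} we have $|\gamma_i^*|\le K^*(\tau,C)$ eventually, uniformly in $\M$, $\P_n$ and $i$, so all $\gamma_i^*$ lie in a fixed compact interval; minimizing and maximizing the finitely many continuous functions $-\ddot\phi_j$ (over $h\in\mathcal H$, $j\in\{1,2\}$) over that interval produces constants $0<\underline K(\tau,C)\le d_i^*\le\overline K(\tau,C)<\infty$, which is the assertion.

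For part~\ref{l:Hcont:B}, I would write $H_{\M,n}^*(y)^{-1/2}H_{\M,n}(y,\beta)H_{\M,n}^*(y)^{-1/2} - I = H_{\M,n}^*(y)^{-1/2}\bigl(H_{\M,n}(y,\beta)-H_{\M,n}^*(y)\bigr)H_{\M,n}^*(y)^{-1/2}$ and bound its operator norm. Because $H_{\M,n}(y,\beta)-H_{\M,n}^*(y)=X_n[M]'(D_{\M,n}(y,\beta)-D_{\M,n}^*)X_n[M]$ with diagonal middle factor, and because part~\ref{l:Hcont:A} gives $\lmax\bigl(H_{\M,n}^*(y)^{-1/2}AH_{\M,n}^*(y)^{-1/2}\bigr)\le\underline K^{-1}$, this norm is at most $\underline K^{-1}\max_i|d_i(\beta)-d_i^*|$. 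The decisive step is then that Condition~\ref{cond:X2}\eqref{cX2:Huber} forces the linear predictors to move little: for $\beta\in N_{\M,\P_n,n}(\delta)$,
\[
|X_{i,n}[M](\beta-\beta_{\M,n}^*)| \;\le\; \|X_{i,n}[M]A^{-1/2}\|\,\|A^{1/2}(\beta-\beta_{\M,n}^*)\| \;\le\; \sqrt{C/n}\,\delta,
\]
which tends to $0$ as $n\to\infty$ for each fixed $\delta$, while $X_{i,n}[M]\beta$ stays in a fixed compact interval of radius $K^*+\delta\sqrt C$. Uniform continuity of the finitely many $\ddot\phi_j$ on that interval then bounds $\max_i|d_i(\beta)-d_i^*|$ by a modulus of continuity evaluated at $\delta\sqrt{C/n}$, uniformly in $\M$, $\P_n$, $y$ and $\beta\in N_{\M,\P_n,n}(\delta)$, giving the claimed convergence. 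This uniformity is the main obstacle: it is the leverage bound of Condition~\ref{cond:X2}\eqref{cX2:Huber} that converts the fixed-radius neighborhood $N_{\M,\P_n,n}(\delta)$ into uniformly vanishing perturbations of every individual linear predictor.

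For part~\ref{l:Hcont:C}, I would observe that $\E_{\P_n}[H_{\M,n}^*] = X_n[M]'\diag(\bar d_1,\dots,\bar d_n)X_n[M]$ with $\bar d_i = -p_i\ddot\phi_1(\gamma_i^*)-(1-p_i)\ddot\phi_2(\gamma_i^*)$ and $p_i=\E(Y_{i,n})$, a convex combination of the two quantities already controlled in part~\ref{l:Hcont:A}; hence $\underline K\le\bar d_i\le\overline K$, which yields $\underline K\,A\preceq \E_{\P_n}[H_{\M,n}^*]\preceq\overline K\,A$ in the Loewner order and therefore $\lmax(\E_{\P_n}[H_{\M,n}^*])/\lmin(\E_{\P_n}[H_{\M,n}^*])\le(\overline K/\underline K)\,\lmax(A)/\lmin(A)$. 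Finally, since $X_n[M]$ is a column submatrix of $X_n$, zero-padding the domain vectors shows $\sigma_{\min}(X_n)\le\sigma_{\min}(X_n[M])$ and $\sigma_{\max}(X_n[M])\le\sigma_{\max}(X_n)$, so that $\lmax(A)/\lmin(A)\le\lmax(X_n'X_n)/\lmin(X_n'X_n)\le C$ by Condition~\ref{cond:X2}\eqref{cX2:eigen}. Taking $K(\tau,C)=C\,\overline K(\tau,C)/\underline K(\tau,C)$ then completes the proof.
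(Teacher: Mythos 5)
Your proof is correct and follows essentially the same route as the paper's own proof: the diagonal representation $H_{\M,n}(y,\beta)=X_n[M]'D_{\M,n}(y,\beta)X_n[M]$ together with the uniform predictor bound of Lemma~\ref{lemma:pseudoBound} for part~\ref{l:Hcont:A}, the leverage bound of Condition~\ref{cond:X2}\eqref{cX2:Huber} turning the fixed-radius neighborhood into $O(\delta\sqrt{C/n})$ perturbations of each linear predictor combined with uniform continuity of the finitely many $\ddot{\phi}_j$ for part~\ref{l:Hcont:B}, and the combination of part~\ref{l:Hcont:A} with Condition~\ref{cond:X2}\eqref{cX2:eigen} for part~\ref{l:Hcont:C}. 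The only differences are cosmetic: you make the Rayleigh-quotient and Loewner-order bookkeeping explicit, and you spell out the eigenvalue-interlacing step $\lmax(X_n[M]'X_n[M])/\lmin(X_n[M]'X_n[M])\le \lmax(X_n'X_n)/\lmin(X_n'X_n)\le C$ that the paper's one-line proof of part~\ref{l:Hcont:C} leaves implicit.
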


\begin{proof}
First, fix $n$ large enough, such that the bound of Condition~\ref{cond:X2}\eqref{cX2:Huber} holds, the pseudo parameter of Lemma~\ref{lemma:pseudo} exists and the bound of Lemma~\ref{lemma:pseudoBound} applies. Fix $\M\in\mathsf{M}_n$ and $\P_n\in\mathbf{P}_n^{(\mathrm{bin})}(\tau)$. Since $H_{\M,n}(y,\beta) =$ \linebreak $X_n[M]'D_{\M,n}(y,\beta)X_n[M]$, for a diagonal matrix $D_{\M,n}(y,\beta)$ whose $i$-th diagonal entry is given by 
$$
-y_i\ddot{\phi}_1(X_{i,n}[M]\beta) - (1-y_i)\ddot{\phi}_2(X_{i,n}[M]\beta)\;>\;0,
$$
in view of Conditions~\ref{cond:H}(\ref{cH:cdf},\ref{cH:concave},\ref{cH:C2}), we see that $H_{\M,n}^*(y)$ is positive definite. Moreover, from continuity and positivity of $-\ddot{\phi}_j$, $j=1,2$, Lemma~\ref{lemma:pseudoBound} and finiteness of $\mathcal H$, we conclude that the diagonal entries of $D_{\M,n}(y,\beta_{\M,n}^*(\P_n))$ are lower and upper bounded by positive constants that depend only on $\tau$ and $C$. This finishes the claim in \ref{l:Hcont:A}. Consider now
\begin{align}
&\sup_{\beta\in N_{\M,\P_n,n}(\delta)} \|H_{\M,n}^*(y)^{-1/2} H_{\M,n}(y,\beta) H_{\M,n}^*(y)^{-1/2} - I_{m(\M)}\| \notag\\
&\quad\le\; 
\|X_n[M]H_{\M,n}^*(y)^{-1/2}\|^2 \sup_{\beta\in N_{\M,\P_n,n}(\delta)} \|D_{\M,n}(y,\beta)  - D_{\M,n}(y,\beta_{\M,n}^*(\P_n)\| \notag\\
&\quad\le\;
\frac{\|X_n[M](X_n[M]'X_n[M])^{-1}X_n[M]'\|}{\min_{i=1,\dots, n}(-y_i\ddot{\phi}_1(X_{i,n}[M]\beta_{\M,n}^*) - (1-y_i)\ddot{\phi}_2(X_{i,n}[M]\beta_{\M,n}^*))} \times\notag\\
&\quad\quad\quad
\sup_{\beta\in N_{\M,\P_n,n}(\delta)}\max_{\substack{i=1,\dots, n\\j=1,2}} |\ddot \phi_j(X_{i,n}[M]\beta) - \ddot \phi_j(X_{i,n}[M]\beta_{\M,n}^*)|. \label{eq:Hcontinuity}
\end{align}
We have just seen that the minimum on the far right side of the previous display is lower bounded by a positive constant that depends only on $\tau$ and $C$. To finish the proof, note that for $\beta\in N_{\M,\P_n,n}(\delta)$, we have 
\begin{align*}
&|X_{i,n}[M]\beta - X_{i,n}[M]\beta_{\M,n}^*| \\ &\le \|X_{i,n}[M](X_n[M]'X_n[M])^{-1/2}\| \|(X_n[M]'X_n[M])^{1/2}(\beta - \beta_{\M,n}^*)\|\le \delta \sqrt{C/n}
\end{align*} 
and that $|X_{i,n}[M]\beta_{\M,n}^*|\le K^*(\tau,C)$, by Lemma~\ref{lemma:pseudoBound}. Therefore, by uniform continuity of $\ddot{\phi}_j$ on the compact interval $[-K^*(\tau,C) - \delta \sqrt{C}, K^*(\tau,C) + \delta \sqrt{C}]$, for every $\eta>0$, there exists $n_0 = n_0(\eta, \tau, \delta, C,h)$, such that the supremum in \eqref{eq:Hcontinuity} is bounded by $\eta$, for all $n\ge n_0$. Since $\mathcal H$ is finite, the proof of \ref{l:Hcont:B} is finished. 
For part~\ref{l:Hcont:C}, simply combine part~\ref{l:Hcont:A} and Condition~\ref{cond:X2}\eqref{cX2:eigen}.
\end{proof}

%--------------------------------------------------------------------------------------------------------------------------------------------------------------------------------

\subsection{Proof of Lemma~\ref{lemma:unifCons}}
\label{sec:unifCons}

The proof is a variation of the consistency part of the proof of Theorem~4 in \citet{Fahrmeir90}. 
Fix $n$ large enough, such that the bound of Condition~\ref{cond:X2}\eqref{cX2:Huber} holds, the pseudo parameter of Lemma~\ref{lemma:pseudo} exists and the bound of Lemma~\ref{lemma:pseudoBound} applies. Fix $\M\in\mathsf{M}_n$ and $\P_n\in\mathbf{P}_n^{(\mathrm{bin})}(\tau)$, write $\beta_{\M,n}^* = \beta_{\M,n}^*(\P_n)$ and note that by Condition~\ref{cond:H}\eqref{cH:C2}, the function $\beta\mapsto \ell_{\M,n}(y,\beta)$ is twice continuously differentiable on $\R^{m(\M)}$ and thus, for every $y\in\{0,1\}^n$, admits the expansion
\begin{align*}
\ell_{\M,n}(y,\beta) \quad&=\quad \ell_{\M,n}(y,\beta_{\M,n}^*) \;+\; (\beta-\beta_{\M,n}^*)'U_{M,n}^{1/2}U_{M,n}^{-1/2}s_{\M,n}^*(y) \\
&\quad-\; \frac{1}{2}(\beta-\beta_{\M,n}^*)'U_{M,n}^{1/2}U_{M,n}^{-1/2}H_{\M,n}(y,\tilde{\beta}_n)U_{M,n}^{-1/2}U_{M,n}^{1/2}(\beta-\beta_{\M,n}^*), 
\end{align*}
for some $\tilde{\beta}_n\in\{a\beta + (1-a)\beta_{\M,n}^* : a\in[0,1]\}$, and where $s_{\M,n}^*(y) = \frac{\partial \ell_{\M,n}(y,\beta)}{\partial \beta}\Big|_{\beta=\beta_{\M,n}^*}$, $H_{\M,n}(y,\beta) = -\frac{\partial \ell_{\M,n}(y,\beta)}{\partial\beta\partial\beta'}$, and $U_{M,n} = X_n[M]'X_n[M]/n$. 
For $\delta>0$, define $\lambda_n = \sqrt{n}U_{M,n}^{1/2}(\beta-\beta_{\M.n}^*)/\delta$ to rewrite the previous equation as
\begin{align*}
\ell_{\M,n}(y,\beta) \;-\; \ell_{\M,n}(y,\beta_{\M,n}^*) \;&=\; \delta \lambda_n'U_{M,n}^{-1/2}s_{\M,n}^*(y)/\sqrt{n} \\
&\quad-\; \frac{1}{2}\delta^2 \lambda_n'U_{M,n}^{-1/2}(H_{\M,n}(y,\tilde{\beta}_n)/n)U_{M,n}^{-1/2}\lambda_n,
\end{align*}
for all $\beta\in\R^{m(\M)}$ and all $y\in\{0,1\}^n$. For $N_n(\delta) := N_{\M,\P_n,n}(\delta)$ as in Lemma~\ref{lemma:Hcontinuity}\ref{l:Hcont:B}, define 
$
L_n(y,\delta) := \inf_{\beta\in N_n(\delta)} \lmin\left(U_{M,n}^{-1/2}(H_{\M,n}(y,\beta)/n)U_{M,n}^{-1/2}\right), 
$
take $\beta\in\partial N_n(\delta) = \{\beta\in\R^{m(\M)} : \|\sqrt{n}U_{M,n}^{1/2}(\beta-\beta_{\M,n}^*)\|= \delta\}$ and observe that now $\|\lambda_n\|=1$, $\delta\lambda_n'U_{M,n}^{-1/2}s_{\M,n}^*(y)/\sqrt{n} \le \delta \|U_{M,n}^{-1/2}s_{\M,n}^*(y)/\sqrt{n}\|$ and 
\begin{align*}
\frac{1}{2}\delta^2 L_n(y,\delta) \;&\le\; \frac{1}{2}\delta^2 \lmin\left(U_{M,n}^{-1/2}(H_{\M,n}(y,\tilde{\beta})/n)U_{M,n}^{-1/2}\right) \\
\;&\le\; \frac{1}{2}\delta^2 \lambda_n'U_{M,n}^{-1/2}(H_{\M,n}(y,\tilde{\beta})/n)U_{M,n}^{-1/2}\lambda_n,
\end{align*}
for all $y\in\{0,1\}^n$.
Therefore, we have the inclusion 
\begin{align*}
E_{\M,\P_n,n}(\delta) &:= \{y\in\{0,1\}^n:\delta\|U_{M,n}^{-1/2}s_{\M,n}^*(y)/\sqrt{n}\| < \delta^2 L_n(y,\delta)/2\} \\
&\subseteq \{y\in\{0,1\}^n : \forall \beta\in\partial N_n(\delta) : \ell_{\M,n}(y,\beta) < \ell_{\M,n}(y,\beta_{\M,n}^*)\} =: F_n(\delta).
\end{align*}
As a consequence, for every $y \in E_{\M,\P_n,n}(\delta)$, the function $\beta\mapsto \ell_{\M,n}(y,\beta)$ has a local maximum $\hat{\beta}_{\M,n}(y)$ on the interior of $N_n(\delta)$. By strict concavity (Conditions~\ref{cond:H}\eqref{cH:concave} and \ref{cond:X2}\eqref{cX2:rank}), this is a unique global maximum. 
Moreover, we have $F_n(\delta) \subseteq \{y: \hat{\beta}_{\M,n}(y) \in N_n(\delta)\} = \{y: \|\sqrt{n}U_{M,n}^{1/2}(\hat{\beta}_{\M,n}(y) - \beta_{\M,n}^*)\| \le \delta\}$. Hence, 
$$
\P_n(\|\sqrt{n}U_{M,n}^{1/2}(\hat{\beta}_{\M,n}(y) - \beta_{\M,n}^*)\| > \delta)
\;\le\;
\P_n(E_{\M,\P_n,n}(\delta)^c).
$$
It remains to verify that $\P_n(E_{\M,\P_n,n}(\delta)^c)$ is small for large $n$, uniformly in $\M$ and $\P_n$. Take $\eps>0$ and note that
\begin{align}
\P_n(E_{\M,\P_n,n}(\delta)^c) \;&\le\; \notag
\P_n(4\|U_{M,n}^{-1/2}s_{\M,n}^*(\cdot)/\sqrt{n}\|^2\ge \delta^2 L_n^2(\cdot,\delta), L_n^2(\cdot,\delta) \ge \eps^2 ) \notag\\
&\;\quad\;+\; \P_n(L_n^2(\cdot,\delta) < \eps^2 ) \label{eq:lUnifCons:trace}\\
&\le \;
\P_n(4\|U_{M,n}^{-1/2}s_{\M,n}^*(\cdot)/\sqrt{n}\|^2\ge \delta^2 \eps^2) \;+\; \P_n(L_n^2(\cdot,\delta) < \eps^2 )\notag\\
&\le \;
4\frac{\trace(U_{M,n}^{-1/2}\VC_n(s_{\M,n}^*/\sqrt{n})U_{M,n}^{-1/2})}{\delta^2\eps^2} \;+\; \P_n(L_n(\cdot,\delta) < \eps ), \notag
\end{align}
in view of Markov's inequality and since $\E_n[s_{\M,n}^*] = 0$. Note that $\VC_n(s_{\M,n}^*) = X_n[M]'V_{\M,\P_n,n}^*X_n[M]$, for a diagonal matrix $V_{\M,\P_n,n}^*$ whose diagonal entries satisfy 
\begin{align*}
[V_{\M,\P_n,n}^*]_{ii} \;&=\; \P_{i,n}(\{1\})\P_{i,n}(\{0\})(\dot{\phi}_1(X_{i,n}\beta_{\M,n}^*) - \dot{\phi}_2(X_{i,n}\beta_{\M,n}^*))^2 \\
\;&\le\; \sup_{|\gamma|\le K^*(\tau,C)}(\dot{\phi}_1(\gamma) - \dot{\phi}_2(\gamma))^2,
\end{align*}
for the constant $K^*(\tau,C)$ of Lemma~\ref{lemma:pseudoBound}.
Thus, the trace on the last line of display~\eqref{eq:lUnifCons:trace} is bounded by $p\max_{h\in\mathcal H}\sup_{|\gamma|\le K^*(\tau,C)}(\dot{\phi}_1(\gamma) - \dot{\phi}_2(\gamma))^2$, which does not depend on $n$, $\M$ or $\P_n$. Finally, to bound the remaining probability, note that $L_n(y,\delta)$ is lower bounded by the product of $\lmin(U_{M,n}^{-1/2}H_{\M,n}^*(y)U_{M,n}^{-1/2})/n$ and  $\inf_{\beta\in N_n(\delta)}  \lmin(H_{\M,n}^*(y)^{-1/2}H_{\M,n}(y,\beta)H_{\M,n}^*(y)^{-1/2})$. The first factor is itself lower bounded by the positive constant $\underline{K}(\tau, C)$ from Lemma~\ref{lemma:Hcontinuity}\ref{l:Hcont:A}. Thus, $\P_n(L_n(\cdot,\delta) < \eps )$ is upper bounded by
$$
\P_n\left( 
\sup_{\beta\in N_{n}(\delta)} \left\| H_{\M,n}^*(y)^{-1/2}H_{\M,n}(y,\beta)H_{\M,n}^*(y)^{-1/2} - I_{m(\M)} \right\| > 1- \frac{\eps}{\underline{K}(\tau, C)}
\right).
$$
Choosing $\eps = \underline{K}(\tau, C)/2$ and using Lemma~\ref{lemma:Hcontinuity}\ref{l:Hcont:B}, we conclude that for every $\delta>0$,
\begin{align}
&\sup_{\M\in\mathsf{M}_n}\sup_{\P_n\in\mathbf{P}_n^{(\mathrm{bin})}(\tau)} 
\P_n\left( 
	\left\| (X_n[M]'X_n[M])^{1/2}(\hat{\beta}_{\M,n} - \beta^*_{\M,n}(\P_n))\right\| > \delta\right)\label{eq:lUnifCons:sup}\\
&\quad\le	
\sup_{\M\in\mathsf{M}_n}\sup_{\P_n\in\mathbf{P}_n^{(\mathrm{bin})}(\tau)} 
\P_n(E_{\M,\P_n,n}(\delta)^c)\\	
&\quad\le 
16\frac{p\max_{h\in\mathcal H}\sup_{|\gamma|\le K^*(\tau,C)}(\dot{\phi}_1(\gamma) - \dot{\phi}_2(\gamma))^2}{\delta^2\underline{K}(\tau,C)^2}	
+
o(1),
\end{align}
where the $o(1)$ term refers to convergence as $n\to\infty$.
Now, to establish the asymptotic existence of the MLE, we simply take $E_{\M,\P_n,n} :=E_{\M,\P_n,n}(\delta_n)$, for $\delta_n\to\infty$ sufficiently slowly as $n\to\infty$. For the uniform consistency part, note that the limit superior as $n\to\infty$ of the expression in \eqref{eq:lUnifCons:sup} is bounded by a quantity that converges to zero as $\delta\to\infty$. \hfill{\qed}

%------------------------------------------------------------------------------------------------------------------------------------------------------------------------------------

\subsection{Proof of Theorem~\ref{thm:binary}}

Fix $n\in\N$, a candidate model $\M\in\mathsf{M}_n$ and $\P_n \in \mathbf{P}_n^{(\mathrm{bin})}(\tau)$, and let $\E_n$ and $\V_n$ denote the expectation and variance-covariance operators with respect to $\P_n$ on $\{0,1\}^n$. Define $s_{\M,n}(y,\beta) := \partial \ell_{\M,n}(y,\beta)/\partial\beta$ and note that by assumption $\beta\mapsto s_{\M,n}(y,\beta)$ is continuously differentiable on $\R^{m(\M)}$, for all $y\in\{0,1\}^n$.
Therefore, we can expand $s_{\M,n}$ around $\beta_0\in\R^{m(\M)}$ as follows,
\begin{align*}
s_{\M,n}(y,\beta) - s_{\M,n}(y,\beta_0) =\; \int\limits_0^1 H_{\M,n}(y,t\beta + (1-t) \beta_0) dt\cdot (\beta_0-\beta).
\end{align*}
For $n$ sufficiently large, such that $\beta_{\M,n}^* = \beta_{\M,n}^*(\P_n)$ of Lemma~\ref{lemma:pseudo} exists, define $\tilde{H}_{\M,n}(y) := \int_0^1 H_{\M,n}(y,t\beta_{\M,n}^* + (1-t)\hat{\beta}_{\M,n}(y))\,dt$ and note that with this we have 
\begin{equation}\label{eq:thm:binary:sstar}
s_{\M,n}^*(y) \;:=\; s_{\M,n}(y,\beta_{\M,n}^*) \;=\; \tilde{H}_{\M,n}(y)\cdot(\hat{\beta}_{\M,n}(y) - \beta_{\M,n}^*),
\end{equation}
for $y\in E_{\M,n} := E_{\M,\P_n,n}$, the set defined in Lemma~\ref{lemma:unifCons}. Moreover, since $H_{\M,n}(y,\beta)$ is positive definite under Conditions~\ref{cond:H}(\ref{cH:C2}) and \ref{cond:X2}\eqref{cX2:rank}, so are $\tilde{H}_{\M,n}(y)$ and $H_{\M,n}^*(y) := H_{\M,n}(y,\beta_{\M,n}^*)$. Thus, if we set 
$$
\psi_{i,n,\M}^*(y_i) \;:=\; y_i\dot{\phi}_1(X_{i,n}[M]\beta_{\M,n}^*) + (1-y_i)\dot{\phi}_2(X_{i,n}[M]\beta_{\M,n}^*),
$$
$$
g_{i,n,\M}(y_i) \;:=\; \E_n[H_{\M,n}^*]^{-1} X_{i,n}[M]' \left( \psi_{i,n,\M}^*(y_i) - \E_n[\psi_{i,n,\M}^*] \right),
$$
and
$$
\Delta_{n,\M}(y) \;:=\; \hat{\beta}_{\M,n}(y) - \beta_{\M,n}^* - \sum_{i=1}^n g_{i,n,\M}(y_i),
$$
we see that \eqref{eq:repres} is satisfied, that $\E_n[g_{i,n,\M}] = 0$, $r_{n,\M}(y) := \sum_{i=1}^n g_{i,n,\M}(y_i) = \E_n[H_{\M,n}^*]^{-1} s_{\M,n}^*(y)$, because $\E_n[s_{\M,n}^*] = 0$, and that $\V_n[r_{n,\M}^{(j)}]$ is given by
\begin{align*}
\sum_{i=1}^n \left(e_{m(\M)}(j)'\E_n[H_{\M,n}^*]^{-1}X_{i,n}[M]'\right)^2 \V_n[\psi_{i,n,\M}^*],
\end{align*}
where $e_m(j)$ is the $j$-th element of the canonical basis in $\R^m$ and $j\in\{1,\dots, m(\M)\}$. Note that by Lemma~\ref{lemma:pseudoBound}, Conditions~\ref{cond:H}(\ref{cH:cdf},\ref{cH:C2},\ref{cH:Hdot}) and the finiteness of $\mathcal H$, there exists a positive constant $\underline{K'}(\tau, C)$, depending only on $\tau>0$ and $C$ from Condition~\ref{cond:X2}, such that for all large $n$,
\begin{align*}
\infty > \V_n[\psi_{i,n,\M}^*]
&\ge \tau \left(\frac{\dot{h}(X_{i,n}[M]\beta_{\M,n}^*)}{h(X_{i,n}[M]\beta_{\M,n}^*)} + \frac{\dot{h}(X_{i,n}[M]\beta_{\M,n}^*)}{1-h(X_{i,n}[M]\beta_{\M,n}^*)}\right)^2\\
&\ge \underline{K'}(\tau, C) > 0.
\end{align*}
In particular, for such $n$, we have $0 < \V_n[r_{n,\M}^{(j)}] < \infty$. Furthermore, by a similar argument, we obtain the upper bound $|\psi_{i,n,\M}^*(y_i)|^2\le \overline{K'}(\tau, C)$ and, in turn,
\begin{align}\label{eq:thm:binary:bound}
\frac{|g_{i,n,\M}^{(j)}(y_i)|^2}{\V_n[r_{n,\M}^{(j)}]} \le 
2\frac{\overline{K'}(\tau, C)}{\underline{K'}(\tau, C)} 
	\frac{ \left( e_{m(\M)}(j)'\E_n[H_{\M,n}^*]^{-1}X_{i,n}[M]'\right)^2}{\sum_{i=1}^n \left(e_{m(\M)}(j)'\E_n[H_{\M,n}^*]^{-1}X_{i,n}[M]'\right)^2}.
\end{align}
But the numerator of the second fraction on the right of the previous display can be bounded by 
\begin{align*}
&\|(X_n[M]'X_n[M])^{1/2}\E_n[H_{\M,n}^*]^{-1}e_{m(\M)}(j)\|^2  \|(X_n[M]'X_n[M])^{-1/2}X_{i,n}[M]'\|^2\\
&\quad\le
\|X_n[M]\E_n[H_{\M,n}^*]^{-1}e_{m(\M)}(j)\|^2 \cdot C/n,
\end{align*}
in view of Condition~\ref{cond:X2}\eqref{cX2:Huber}, whereas the denominator of that same fraction coincides with $\|X_n[M]\E_n[H_{\M,n}^*]^{-1}e_{m(\M)}(j)\|^2$. Thus, we conclude that also \eqref{eq:linde} is satisfied. Finally, for asymptotic negligibility of $\V_n[r_{n,\M}^{(j)}]^{-1/2}\Delta_{n,\M}^{(j)}(y)$, first note that using \eqref{eq:thm:binary:sstar}, for $y\in E_{\M,n}$, we have
\begin{align*}
&\Delta_{n,\M}(y) = \tilde{H}_{\M,n}(y)^{-1}\tilde{H}_{\M,n}(y)(\hat{\beta}_{\M,n}(y) - \beta_{\M,n}^*) - \E_n[H_{\M,n}^*]^{-1}s_{\M,n}^*(y) \\
&=
\left(\tilde{H}_{\M,n}(y)^{-1}\E_n[H_{\M,n}^*] - I_{m(\M)} \right)\E_n[H_{\M,n}^*]^{-1}s_{\M,n}^*(y)\\
&= \E_n[H_{\M,n}^*]^{-1/2}\left(\E_n[H_{\M,n}^*]^{1/2}\tilde{H}_{\M,n}(y)^{-1}\E_n[H_{\M,n}^*]^{1/2} - I_{m(\M)} \right) \E_n[H_{\M,n}^*]^{1/2}\\
&\quad\times R_{n,\M}^{1/2} R_{n,\M}^{-1/2}r_{n,\M}(y),
\end{align*}
where $R_{n,\M} = \diag(\V_n[r_{n,\M}^{(1)}], \dots, \V_n[r_{n,\M}^{(m(\M))}])$. Therefore, 
\begin{align*}
&\|R_{n,\M}^{-1/2}\Delta_{n,\M}(y)\| \\
&\quad\le 
\|\E_n[H_{\M,n}^*]^{1/2}\tilde{H}_{\M,n}(y)^{-1}\E_n[H_{\M,n}^*]^{1/2} - I_{m(\M)}\| \frac{\lmax(\E_n[H_{\M,n}^*]^{1/2})}{\lmin(\E_n[H_{\M,n}^*]^{1/2})}\\
&\quad\quad\times\frac{\max_j \V_n^{1/2}[r_{n,\M}^{(j)}]}{\min_j \V_n^{1/2}[r_{n,\M}^{(j)}]} \|R_{n,\M}^{-1/2}r_{n,\M}(y)\|.
\end{align*}
Here, $R_{n,\M}^{-1/2}r_{n,\M}(y)$ has mean zero and covariance matrix with ones on the main diagonal, and consequently its norm is bounded in probability. The ratio of the largest and smallest variance component of $r_{n,\M}$ is bounded by the condition number of the matrix $\E_n[H_{\M,n}^*]^{-1}X_n[M]'X_n[M]\E_n[H_{\M,n}^*]^{-1}$ times a constant that depends only on $\tau$ and $C$, because of the previously derived upper and lower bounds on $|\psi_{i,n,\M}^*(y_i)|^2$ and $\V_n[\psi_{i,n,\M}^*]$, respectively. But this condition number is eventually bounded by a finite constant that depends only on $\tau$ and $C$ from Condition~\ref{cond:X2}, in view of Lemma~\ref{lemma:Hcontinuity}\ref{l:Hcont:C}. In particular, this lemma shows that the condition number of $\E_n[H_{\M,n}^*]$ is bounded, eventually. 
Therefore, since $\P_n(E_{\M,n})\to 1$, as $n\to\infty$, it remains to show that $\E_n[H_{\M,n}^*]^{1/2}\tilde{H}_{\M,n}(y)^{-1}\E_n[H_{\M,n}^*]^{1/2}  \to I_{m(\M)}$, in $\P_n$-probability. The result follows if we can show that the eigenvalues of $H_{\M,n}^*(y)^{-1/2}\tilde{H}_{\M,n}(y)H_{\M,n}^*(y)^{-1/2}$ and of $\E_n[H_{\M,n}^*]^{-1/2}H_{\M,n}^*(y)\E_n[H_{\M,n}^*]^{-1/2}$ converge to $1$, in $\P_n$-probability, because for $A=\E_n[H_{\M,n}^*]$, $B=\tilde{H}_{\M,n}(y)$ and $C=H_{\M,n}^*(y)$, we have
\begin{align*}
&\| A^{1/2}B^{-1}A^{1/2}  - I_{m(\M)}\| \\
&=
\| A^{1/2}C^{-1/2}C^{1/2}B^{-1}C^{1/2}C^{-1/2}A^{1/2}  - I_{m(\M)}\| \\
&\le
\| A^{1/2}C^{-1}A^{1/2}\| \|C^{1/2}B^{-1}C^{1/2} - I_{m(\M)}\| + \|A^{1/2}C^{-1}A^{1/2}  - I_{m(\M)}\|.
\end{align*}
For the first of these two, let $N_{\M,n}(\delta) := N_{\M,\P_n,n}(\delta)$ be as in Lemma~\ref{lemma:Hcontinuity}\ref{l:Hcont:B}, which is a convex set, and note that for every $\eps>0$ and every $\delta>0$, 
\begin{align*}
&\P_n\left( \|H_{\M,n}^*(y)^{-1/2}\tilde{H}_{\M,n}(y)H_{\M,n}^*(y)^{-1/2} - I_{m(\M)}\|>\eps\right)\\
&\quad\le
\P_n\Big( \sup_{\beta\in N_{\M,n}(\delta)}\|H_{\M,n}^*(y)^{-1/2} H_{\M,n}(y,\beta)H_{\M,n}^*(y)^{-1/2} - I_{m(\M)}\|>\eps,\\
&\hspace{2cm}  \hat{\beta}_{\M,n}\in N_{\M,n}(\delta)\Big)
+
\P_n\left( \hat{\beta}_{\M,n}\notin N_{\M,n}(\delta)\right)\\
&\quad=
o(1) + \P_n\left( \sqrt{n}\|U_{M,n}^{1/2}(\hat{\beta}_{\M,n} - \beta_{\M,n}^*)\|>\delta\right),
\end{align*}
where $U_{M,n} = X_n[M]'X_n[M]/n$. Since $\delta>0$ was arbitrary Lemma~\ref{lemma:unifCons} shows that the probability on the far left-hand-side of the previous display converges to zero as $n\to\infty$. Finally, for $v_1, v_2\in\R^{m(\M)}$ with $\|v_1\|=\|v_2\|=1$, write
\begin{align*}
&v_1'(\E_n[H_{\M,n}^*]^{-1/2}H_{\M,n}^*(y)\E_n[H_{\M,n}^*]^{-1/2} - I_{m(\M)})v_2\\
&= \sum_{i=1}^n v_1'\E_n[H_{\M,n}^*]^{-1/2}X_{i,n}[M]'X_{i,n}[M] \E_n[H_{\M,n}^*]^{-1/2}v_2 \\
&\hspace{2cm} \times \left(D_{i,\M,n}^*(y_i) - \E_n[D_{i,\M,n}^*] \right),
\end{align*}
where $D_{i,\M,n}^*(y_i) = -y_i\ddot{\phi}_1(X_{i,n}[M]\beta_{\M,n}^*) - (1-y_i)\ddot{\phi}_2(X_{i,n}[M]\beta_{\M,n}^*)$ and \linebreak $\V_n[D_{i,\M,n}^*]$ is bounded by a constant that depends only on $\tau$ and $C$. The mean of the expression in the previous display is clearly equal to zero, while its variance is bounded by $\sum_{i=1}^n (X_{i,n}[M] \E_n[H_{\M,n}^*]^{-1}X_{i,n}[M]')^2$ times a constant that depends only on $\tau$ and $C$. In view of Lemma~\ref{lemma:Hcontinuity}\ref{l:Hcont:A} and Condition~\ref{cond:X2}\eqref{cX2:Huber}, the latter sum is itself bounded by 
$$\sum_{i=1}^n (X_{i,n}[M] (X_n[M]'X_n[M])^{-1}X_{i,n}[M]')^2 \le n C^2 /n^2 \to 0,$$ where we have omitted another constant that depends only on $\tau$ and $C$. We have thus verified Condition~\ref{cond:sum}.

To show that the proposed estimators $\hat{\sigma}_{j,\M,n}^2$ of \eqref{eq:Sbinary} consistently overestimate the asymptotic variances of the MLE, we verify the assumptions of Proposition~\ref{prop:overest} with 
$$
\tilde{g}_{i,n,\M}(y) = \hat{H}_{\M,n}(y)^{-1}X_{i,n}[M]'\hat{\psi}_{i,n,\M}(y),
$$ 
and $a_{i,n,\M} = \E_n[H_{\M,n}^*]^{-1}X_{i,n}[M]'\E_n[\psi_{i,n,\M}^*]$,
where $\hat{H}_{\M,n}(y) = H_{\M,n}(y,\hat{\beta}_{\M,n}(y))$ and $\hat{\psi}_{i,n,\M}(y) = y_i\dot{\phi}_1(X_{i,n}[M]\hat{\beta}_{\M,n}(y)) + (1-y_i)\dot{\phi}_2(X_{i,n}[M]\hat{\beta}_{\M,n}(y))$. In particular, $\hat{g}_{i,n,\M}(y) = \tilde{g}_{i,n,\M}(y) - a_{i,n,\M}$. First note that for any $\gamma\in\R$,
$$
y_i \dot{\phi}_1(\gamma) + (1-y_i)\dot{\phi}_2(\gamma) = 
\frac{\dot{h}(\gamma)}{h(\gamma)(1-h(\gamma))}(y_i-h(\gamma)),
$$ 
so that $\hat{\psi}_{i,n,\M}(y) = \hat{u}_{i,\M}(y)$ and the diagonal entries of $\tilde{S}_{\M,n}$ can, indeed, be represented as
$$
\hat{\sigma}_{j,\M,n}^2(y) = \sum_{i=1}^n\left[ \tilde{g}_{i,n,\M}^{(j)}(y)\right]^2,
$$
for $j=1, \dots, m(\M)$, as required for the application of Proposition~\ref{prop:overest}. Next, consider 
\begin{align*}
|g_{i,n,\M}^{(j)}(y_i) - &\hat{g}^{(j)}_{i,n,\M}(y)|^2
=
|e_{m(\M)}(j)'\Big(\E_n[H_{\M,n}^*]^{-1} X_{i,n}[M]' \psi_{i,n,\M}^*(y_i) \\
&\quad- \hat{H}_{\M,n}(y)^{-1}X_{i,n}[M]'\hat{\psi}_{i,n,\M}(y)\Big)|^2\\
&\le
2|e_{m(\M)}(j)'(\E_n[H_{\M,n}^*]^{-1} - \hat{H}_{\M,n}(y)^{-1})X_{i,n}[M]' \psi_{i,n,\M}^*(y_i)|^2\\
&\quad+
2|e_{m(\M)}(j)'\hat{H}_{\M,n}(y)^{-1}X_{i,n}[M]' (\psi_{i,n,\M}^*(y_i) - \hat{\psi}_{i,n,\M}(y))|^2\\
&\le
2|e_{m(\M)}(j)'(\E_n[H_{\M,n}^*]^{-1} - \hat{H}_{\M,n}(y)^{-1})X_{i,n}[M]'|^2 \overline{K'}(\tau, C)\\
&\quad+
2|e_{m(\M)}(j)'\hat{H}_{\M,n}(y)^{-1}X_{i,n}[M]' (\psi_{i,n,\M}^*(y_i) - \hat{\psi}_{i,n,\M}(y))|^2.
\end{align*}
We want to show that
$$
\frac{\sum_{i=1}^n \left[ g_{i,n,\M}^{(j)}(y_i) - \hat{g}_{i,n,\M}^{(j)}(y) \right]^2}{\V_n[r_{n,\M}^{(j)}]}
\le
\frac{\sum_{i=1}^n \left[ g_{i,n,\M}^{(j)}(y_i) - \hat{g}_{i,n,\M}^{(j)}(y) \right]^2}{\underline{K'}(\tau,C) \|X_n[M]\E_n[H_{\M,n}^*]^{-1}e_{m(\M)}(j) \|^2},
$$
converges to zero in $\P_n$-probability, as in \eqref{eq:negldiff2}, where the inequality follows by the same argument as in \eqref{eq:thm:binary:bound}. Hence, it suffices to show that
\begin{equation}\label{eq:thm:binary:conv1}
\frac{\|X_n[M](\E_n[H_{\M,n}^*]^{-1} - \hat{H}_{\M,n}(y)^{-1})e_{m(\M)}(j)\|^2 }{\|X_n[M]\E_n[H_{\M,n}^*]^{-1}e_{m(\M)}(j) \|^2}
\end{equation}
and
\begin{equation}\label{eq:thm:binary:conv2}
\frac{\|X_n[M]\hat{H}_{\M,n}(y)^{-1}e_{m(\M)}(j) \|^2}{\|X_n[M]\E_n[H_{\M,n}^*]^{-1}e_{m(\M)}(j) \|^2}\max_{i=1,\dots, n} (\psi_{i,n,\M}^*(y_i) - \hat{\psi}_{i,n,\M}(y))^2,
\end{equation}
both converge to zero in $\P_n$-probability. For \eqref{eq:thm:binary:conv1}, simply note that this expression is bounded by
$$
\|I_{m(\M)} - \E_n[H_{\M,n}^*]\hat{H}_{\M,n}(y)^{-1}\|^2
$$
times the  condition number of the matrix $\E_n[H_{\M,n}^*]^{-1}X_n[M]'X_n[M]\E_n[H_{\M,n}^*]^{-1}$. We have already seen above that the latter is bounded by a constant that depends only on $\tau$ and $C$. To see that $\E_n[H_{\M,n}^*]\hat{H}_{\M,n}(y)^{-1}$ converges to $I_{m(\M)}$ in $\P_n$-probability, note that 
\begin{align*}
&\|I_{m(\M)} - \E_n[H_{\M,n}^*]\hat{H}_{\M,n}(y)^{-1}\|^2\\
&\quad\le
\|I_{m(\M)} - \E_n[H_{\M,n}^*]^{1/2}\hat{H}_{\M,n}(y)^{-1}\E_n[H_{\M,n}^*]^{1/2}\|^2 \frac{\lmax(\E_n[H_{\M,n}^*])}{\lmin(\E_n[H_{\M,n}^*])}\\
&\quad\le
\|I_{m(\M)} - \E_n[H_{\M,n}^*]^{1/2}\hat{H}_{\M,n}(y)^{-1}\E_n[H_{\M,n}^*]^{1/2}\|^2 K(\tau, C),
\end{align*}
by Lemma~\ref{lemma:Hcontinuity}\ref{l:Hcont:C}. Furthermore, for every $\eps>0$ and $\delta>0$,
\begin{align*}
&\P_n\left( \|\E_n[H_{\M,n}^*]^{-1/2}\hat{H}_{\M,n}(y)\E_n[H_{\M,n}^*]^{-1/2} - I_{m(\M)}\|>\eps\right)\\
&\quad\le
\P_n\Big( \sup_{\beta\in N_{\M,n}(\delta)}\|\E_n[H_{\M,n}^*]^{-1/2}H_{\M,n}(y,\beta)\E_n[H_{\M,n}^*]^{-1/2} - I_{m(\M)}\|>\eps\Big)\\
&\hspace{1cm} +\P_n\Big(\hat{\beta}_{\M,n} \notin N_{\M,n}(\delta)\Big),
\end{align*}
and we have already seen before that this entails convergence to zero of the probability on the left-hand-side of the previous display. We conclude that \eqref{eq:thm:binary:conv1} does converge to zero in $\P_n$-probability. To establish the same convergence also for \eqref{eq:thm:binary:conv2}, first note that it follows from the previous arguments that the fraction in that display is bounded in $\P_n$-probability. Finally, we have to establish the desired convergence for the maximum in that display. But this follows from the continuity of $\dot{\phi}_1$ and $\dot{\phi}_2$ on $\R$, the bound on $|X_{i,n}[M]\beta_{\M,n}^*|$ from Lemma~\ref{lemma:pseudoBound} and the consistency of Lemma~\ref{lemma:unifCons}. 
Therefore, Proposition~\ref{prop:overest} shows that \eqref{eqn:special2} is satisfied. Note that $\VC_n(r_n)$ has rank no larger than $\min(k,n)$, where $r_n = (r_{n,\M})_{\M\in\mathsf{M}_n}$. Hence, Theorem~\ref{thm:cons}, together with Lemma~\ref{lem:upper}, finishes the proof.\hfill\qed

\subsection{Canonical link function}
\label{sec:binaryCanonical}

\begin{corollary}
\label{corr:binaryCanoncial}
In the setting of Theorem~\ref{thm:binary}, if $\mathcal H$ contains only the canonical link function $h^{(c)}(\gamma) = e^\gamma/(1+e^\gamma)$, then the confidence intervals
\begin{equation}
\mathrm{CI}_{1-\alpha, \mathbb{M}}^{(j), \mathrm{binC}} = \hat{\beta}^{(j)}_{\mathbb{M}, n} \pm  \sqrt{\hat{\sigma}^2_{j,\mathbb{M}, n}} B_{\alpha}(\min(k, p),k),
\end{equation}
satisfy
\begin{equation}
\liminf_{n \to \infty} \inf_{\mathbb{P}_n \in \mathbf{P}_n^{(\mathrm{bin})} 
\left(\tau \right) } \mathbb{P}_n \left(\beta_{\hat{\mathbb{M}}_n, n}^{*, (j)} \in \mathrm{CI}_{1-\alpha, \hat{\mathbb{M}}_n}^{(j), \mathrm{binC}} \;\forall j = 1, \hdots, m(\hat{\mathbb{M}}_n)  \right) \geq 1-\alpha.
\end{equation}
\end{corollary}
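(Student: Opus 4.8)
The plan is to re-run the proof of Theorem~\ref{thm:binary} essentially verbatim and to sharpen only the single step in which the rank of $\V_n(r_n)$ enters. Recall that that proof concludes by observing that $\V_n(r_n)$ has rank at most $\min(k,n)$ and then invoking the special case of Theorem~\ref{thm:cons} together with the bound of Lemma~\ref{lem:upper}, which yields the constant $B_\alpha(\min(k,n),k)$. Since the verification of Condition~\ref{cond:sum} and the consistent-overestimation property of the variance estimators $\hat{\sigma}^2_{j,\M,n}$ (obtained there via Proposition~\ref{prop:overest}) do not refer to the rank at all, they carry over unchanged. Thus all I need is the improved bound $\rank(\V_n(r_n))\le \min(k,p)$ when $\mathcal H=\{h^{(c)}\}$, after which I feed the deterministic choice $\hat K_n = B_\alpha(\min(k,p),k)$ into Theorem~\ref{thm:cons}.

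The key computation exploits the defining feature of the logistic response function. Writing $\phi_1=\log h^{(c)}$ and $\phi_2=\log(1-h^{(c)})$, one checks directly that $\dot\phi_1(\gamma)=1-h^{(c)}(\gamma)$ and $\dot\phi_2(\gamma)=-h^{(c)}(\gamma)$, so that $\dot\phi_1-\dot\phi_2\equiv 1$. Consequently, in the notation of the proof of Theorem~\ref{thm:binary}, the score contribution at the pseudo parameter collapses to $\psi_{i,n,\M}^*(y_i)=y_i\dot\phi_1(\gamma_{i,\M}^*)+(1-y_i)\dot\phi_2(\gamma_{i,\M}^*)=y_i-h^{(c)}(\gamma_{i,\M}^*)$, where $\gamma_{i,\M}^*=X_{i,n}[M]\beta_{\M,n}^*$. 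Hence the leading term becomes $r_{n,\M}(y)=\E_n[H_{\M,n}^*]^{-1}X_n[M]'(y-\mu_{\M,n}^*)$, with $\mu_{\M,n}^*=(h^{(c)}(\gamma_{1,\M}^*),\dots,h^{(c)}(\gamma_{n,\M}^*))'$ a deterministic vector; in particular $r_{n,\M}$ is an affine function of $y$ whose linear part has the deterministic coefficient matrix $\E_n[H_{\M,n}^*]^{-1}X_n[M]'$. This affine structure is exactly what fails for a non-canonical link, where the slope $\dot\phi_1-\dot\phi_2$ varies across observations.

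Stacking the blocks over $\M\in\mathsf{M}_n$ yields $r_n(y)=A_n y+b_n$ for a deterministic $k\times n$ matrix $A_n$ (whose $\M$-block is $\E_n[H_{\M,n}^*]^{-1}X_n[M]'$) and a deterministic vector $b_n$, so that $\V_n(r_n)=A_n\V_n(Y_n)A_n'$ and therefore $\rank(\V_n(r_n))\le\rank(A_n)$. Since $\E_n[H_{\M,n}^*]$ is invertible, the rows of each block span $\s(X_n[M])\subseteq\s(X_n)$, so every row of $A_n$ lies in the $p$-dimensional space $\s(X_n)$ (using $\rank(X_n)=p$ from Condition~\ref{cond:X2}); combined with $A_n$ having only $k$ rows this gives $\rank(A_n)\le\min(k,p)$, whence $\rank(\corr(\V_n(r_n)))=\rank(\V_n(r_n))\le\min(k,p)$ (the diagonal of $\V_n(r_n)$ being strictly positive by Condition~\ref{cond:sum}). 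By Lemma~\ref{lem:upper} and the monotonicity of $q\mapsto B_\alpha(q,k)$ this gives $K_{1-\alpha}(\corr(\V_n(r_n)))\le B_\alpha(\min(k,p),k)$ eventually, so the special case of Theorem~\ref{thm:cons} applies with $\hat K_n=B_\alpha(\min(k,p),k)$ and its variance condition \eqref{eqn:special2}, already established in the proof of Theorem~\ref{thm:binary}, delivers the claimed coverage uniformly over $\mathbf{P}_n^{(\mathrm{bin})}(\tau)$.

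I expect essentially no genuine obstacle: the one substantive observation is the identity $\dot\phi_1-\dot\phi_2\equiv 1$, which linearizes $r_{n,\M}$ in $y$ with a coefficient matrix living in $\s(X_n)$ and thereby collapses the rank from the generic $\min(k,n)$ to $\min(k,p)$. The only point requiring care is to confirm that nothing else in the proof of Theorem~\ref{thm:binary} secretly depends on the rank bound; the verification of Condition~\ref{cond:sum} and the invocation of Proposition~\ref{prop:overest} there rely only on the uniform bounds on $\psi_{i,n,\M}^*$ and on the condition number of $\E_n[H_{\M,n}^*]$ supplied by Lemmas~\ref{lemma:pseudoBound}, \ref{lemma:Hcontinuity} and \ref{lemma:unifCons}, none of which is affected by restricting $\mathcal H$ to the canonical link.
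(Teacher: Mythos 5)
Your proposal is correct and follows essentially the same route as the paper's own proof: the key identity $\dot\phi_1-\dot\phi_2\equiv 1$ for the canonical link (the paper derives it via $\phi_1(\gamma)-\phi_2(\gamma)=\gamma$) makes $r_{n,\M}$ affine in $y$ with coefficient matrix $\E_n[H_{\M,n}^*]^{-1}X_n[M]'$, so $\V_n(r_n)$ becomes the sandwich $\bigl(\E_n[H^*_{\M_s,n}]^{-1}X_n[M_s]'\V(Y_n)X_n[M_t]\E_n[H^*_{\M_t,n}]^{-1}\bigr)_{s,t}$ of rank at most $\min(k,p)$, after which Lemma~\ref{lem:upper} and the special case of Theorem~\ref{thm:cons} apply exactly as in the proof of Theorem~\ref{thm:binary}. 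Your additional checks (rank preservation under passing to $\corr(\cdot)$, monotonicity of $q\mapsto B_\alpha(q,k)$, and that no other step of the proof of Theorem~\ref{thm:binary} depends on the rank bound) are accurate and merely make explicit what the paper leaves implicit.
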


\begin{proof}
From the first few lines of the proof of Theorem~\ref{thm:binary}, we see that the $m(\M)$-dimensional sub-vector $r_{n,\M}(y)$ of $r_n(y)$ that corresponds to the model $\M\triangleq (h,M)\in\{h^{(c)}\}\times \mathcal I$, is given by
$$
r_{n,\M}(y) = \E_n[H_{\M,n}^*]^{-1} s_{\M,n}^*(y),
$$
where $H_{\M,n}^*(y) := H_{\M,n}(y,\beta_{\M,n}^*)$, $s_{\M,n}^*(y) := s_{\M,n}(y,\beta_{\M,n}^*)$ and $s_{\M,n}(y,\beta) := \partial \ell_{\M,n}(y,\beta)/\partial \beta = X_n[M]'C_{\M,n}(y,\beta)$, where $C_{\M,n}(y,\beta)$ is an $n\times 1$ vector with $i$-th entry given by $y_i\dot{\phi}_1(X_{i,n}[M]\beta) + (1-y_i)\dot{\phi}_2(X_{i,n}[M]\beta)$. But it is easy to see that for $h=h^{(c)}$, $\phi_1(\gamma) - \phi_2(\gamma) = \gamma$ and thus $\dot{\phi}_1(\gamma) - \dot{\phi}_2(\gamma) = 1$, so that in this case the matrix $\VC_n(r_n)$ reduces to
$$
\VC_n(r_n) = \left(\E_n[H^*_{\M_s,n}]^{-1} X_n[M_s]'\VC(Y_n)X_n[M_t]\E_n[H^*_{\M_t,n}]^{-1}\right)_{s,t=1}^d.
$$  
The rank of this matrix is not larger than $\min(k,p)$, so by Lemma~\ref{lem:upper} we obtain the smaller bound $K_{1-\alpha}(\corr(\VC_n(r_n)))\le B_\alpha(\min(k,p),k)$.
\end{proof}

\bibliographystyle{imsart-nameyear}
\bibliography{Biblio}

\begin{thebibliography}{42}
% BibTex style file: imsart-nameyear.bst, 2013-01-28
% Default style options (sort=1,type=nameyear).
% Used options (sort=1,type=nameyear).

\bibitem[\protect\citeauthoryear{Arnold}{1980}]{Arnold80}
\begin{barticle}[author]
\bauthor{\bsnm{Arnold},~\bfnm{Steven~F.}\binits{S.~F.}}
(\byear{1980}).
\btitle{Asymptotic Validity of {F} Tests for the Ordinary Linear Model and the
  Multiple Correlation Model}.
\bjournal{J. Amer. Statist. Assoc.}
\bvolume{75}
\bpages{890--894}.
\bdoi{10.1080/01621459.1980.10477568}
\end{barticle}
\endbibitem

\bibitem[\protect\citeauthoryear{Bachoc, Leeb and
  P\"otscher}{2014}]{bachoc14valid}
\begin{barticle}[author]
\bauthor{\bsnm{Bachoc},~\bfnm{F.}\binits{F.}},
  \bauthor{\bsnm{Leeb},~\bfnm{H.}\binits{H.}} \AND
  \bauthor{\bsnm{P\"otscher},~\bfnm{B.~M.}\binits{B.~M.}}
(\byear{2014}).
\btitle{Valid confidence intervals for post-model-selection predictors}.
\bjournal{arXiv:1412.4605}.
\end{barticle}
\endbibitem

\bibitem[\protect\citeauthoryear{Belloni, Chernozhukov and
  Hansen}{2011}]{Bel11a}
\begin{barticle}[author]
\bauthor{\bsnm{Belloni},~\bfnm{A.}\binits{A.}},
  \bauthor{\bsnm{Chernozhukov},~\bfnm{V.}\binits{V.}} \AND
  \bauthor{\bsnm{Hansen},~\bfnm{C.}\binits{C.}}
(\byear{2011}).
\btitle{Inference for high-dimensional sparse econometric models}.
\bjournal{Advances in Economics and Econometrics. 10th World Congress of the
  Econometric Society, Volume III,}
\bpages{245--295}.
\end{barticle}
\endbibitem

\bibitem[\protect\citeauthoryear{Belloni, Chernozhukov and
  Hansen}{2014}]{Bel14a}
\begin{barticle}[author]
\bauthor{\bsnm{Belloni},~\bfnm{A.}\binits{A.}},
  \bauthor{\bsnm{Chernozhukov},~\bfnm{V.}\binits{V.}} \AND
  \bauthor{\bsnm{Hansen},~\bfnm{C.}\binits{C.}}
(\byear{2014}).
\btitle{Inference on treatment effects after selection among high-dimensional
  controls}.
\bjournal{Rev. Econom. Stud.}
\bvolume{{\bf 81}}
\bpages{608--650}.
\end{barticle}
\endbibitem

\bibitem[\protect\citeauthoryear{Berk et~al.}{2013}]{Berk13}
\begin{barticle}[author]
\bauthor{\bsnm{Berk},~\bfnm{R.}\binits{R.}},
  \bauthor{\bsnm{Brown},~\bfnm{L.}\binits{L.}},
  \bauthor{\bsnm{Buja},~\bfnm{A.}\binits{A.}},
  \bauthor{\bsnm{Zhang},~\bfnm{K.}\binits{K.}}, \bauthor{} \AND
  \bauthor{\bsnm{Zhao},~\bfnm{L.}\binits{L.}}
(\byear{2013}).
\btitle{Valid post-selection inference}.
\bjournal{Ann. Statist.}
\bvolume{41}
\bpages{802-837}.
\end{barticle}
\endbibitem

\bibitem[\protect\citeauthoryear{Billingsley}{1968}]{bill2}
\begin{bbook}[author]
\bauthor{\bsnm{Billingsley},~\bfnm{Patrick}\binits{P.}}
(\byear{1968}).
\btitle{Convergence of probability measures}.
\bpublisher{John Wiley \& Sons}.
\bmrnumber{0233396}
\end{bbook}
\endbibitem

\bibitem[\protect\citeauthoryear{Dudley}{2002}]{dudleyreal}
\begin{bbook}[author]
\bauthor{\bsnm{Dudley},~\bfnm{Richard~M}\binits{R.~M.}}
(\byear{2002}).
\btitle{Real analysis and probability}.
\bpublisher{Cambridge University Press}.
\end{bbook}
\endbibitem

\bibitem[\protect\citeauthoryear{Efron et~al.}{2004}]{efron04least}
\begin{barticle}[author]
\bauthor{\bsnm{Efron},~\bfnm{B.}\binits{B.}},
  \bauthor{\bsnm{Hastie},~\bfnm{T.}\binits{T.}},
  \bauthor{\bsnm{Johnstone},~\bfnm{I.}\binits{I.}} \AND
  \bauthor{\bsnm{Tibshirani},~\bfnm{R.}\binits{R.}}
(\byear{2004}).
\btitle{Least angle regression}.
\bjournal{Ann. Statist.}
\bvolume{32}
\bpages{407--499}.
\bdoi{10.1214/009053604000000067}
\bmrnumber{2060166 (2005d:62116)}
\end{barticle}
\endbibitem

\bibitem[\protect\citeauthoryear{Eicker}{1967}]{eicker67}
\begin{binproceedings}[author]
\bauthor{\bsnm{Eicker},~\bfnm{Friedhelm}\binits{F.}}
(\byear{1967}).
\btitle{Limit theorems for regressions with unequal and dependent errors}.
In \bbooktitle{Proceedings of the fifth Berkeley symposium on mathematical
  statistics and probability}
\bvolume{1}
\bpages{59--82}.
\end{binproceedings}
\endbibitem

\bibitem[\protect\citeauthoryear{Fahrmeir}{1990}]{Fahrmeir90}
\begin{barticle}[author]
\bauthor{\bsnm{Fahrmeir},~\bfnm{L.}\binits{L.}}
(\byear{1990}).
\btitle{Maximum likelihood estimation in misspecified generalized linear
  models}.
\bjournal{Statistics}
\bvolume{21}
\bpages{487-502}.
\end{barticle}
\endbibitem

\bibitem[\protect\citeauthoryear{Fahrmeir and Kaufmann}{1985}]{Fahrmeir85}
\begin{barticle}[author]
\bauthor{\bsnm{Fahrmeir},~\bfnm{Ludwig}\binits{L.}} \AND
  \bauthor{\bsnm{Kaufmann},~\bfnm{Heinz}\binits{H.}}
(\byear{1985}).
\btitle{Consistency and Asymptotic Normality of the Maximum Likelihood
  Estimator in Generalized Linear Models}.
\bjournal{Ann. Statist}
\bvolume{13}
\bpages{342--368}.
\end{barticle}
\endbibitem

\bibitem[\protect\citeauthoryear{Fithian, Sun and Taylor}{2015}]{Fit15a}
\begin{barticle}[author]
\bauthor{\bsnm{Fithian},~\bfnm{W.}\binits{W.}},
  \bauthor{\bsnm{Sun},~\bfnm{D.}\binits{D.}} \AND
  \bauthor{\bsnm{Taylor},~\bfnm{J.}\binits{J.}}
(\byear{2015}).
\btitle{Optimal inference after model selection}.
\bjournal{arXiv:1410.2597}.
\end{barticle}
\endbibitem

\bibitem[\protect\citeauthoryear{Friedman, Hastie and
  Tibshirani}{2010}]{friedman2010regularization}
\begin{barticle}[author]
\bauthor{\bsnm{Friedman},~\bfnm{Jerome}\binits{J.}},
  \bauthor{\bsnm{Hastie},~\bfnm{Trevor}\binits{T.}} \AND
  \bauthor{\bsnm{Tibshirani},~\bfnm{Rob}\binits{R.}}
(\byear{2010}).
\btitle{Regularization paths for generalized linear models via coordinate
  descent}.
\bjournal{Journal of statistical software}
\bvolume{33}
\bpages{1}.
\end{barticle}
\endbibitem

\bibitem[\protect\citeauthoryear{Gnedenko and Kolmogorov}{1954}]{gnedenko}
\begin{bbook}[author]
\bauthor{\bsnm{Gnedenko},~\bfnm{BV}\binits{B.}} \AND
  \bauthor{\bsnm{Kolmogorov},~\bfnm{Andrey~Nikolaevic}\binits{A.~N.}}
(\byear{1954}).
\btitle{Limit distributions for sums of independent random variables}.
\bpublisher{Addison-Wesley}.
\end{bbook}
\endbibitem

\bibitem[\protect\citeauthoryear{Huber}{1967}]{Huber67}
\begin{binproceedings}[author]
\bauthor{\bsnm{Huber},~\bfnm{P.~J.}\binits{P.~J.}}
(\byear{1967}).
\btitle{The behavior of maximum likelihood estimates under nonstandard
  conditions}.
In \bbooktitle{Proceedings of the Fifth Berkeley Symposium on Mathematical
  Statistics and Probability}
\bvolume{1}
\bpages{221-233}.
\end{binproceedings}
\endbibitem

\bibitem[\protect\citeauthoryear{Huber}{1973}]{Huber73}
\begin{barticle}[author]
\bauthor{\bsnm{Huber},~\bfnm{Peter~J}\binits{P.~J.}}
(\byear{1973}).
\btitle{Robust Regression: Asymptotics, Conjectures and Monte Carlo}.
\bjournal{Ann. Statist.}
\bvolume{1}
\bpages{799--821}.
\end{barticle}
\endbibitem

\bibitem[\protect\citeauthoryear{Kabaila and Leeb}{2006}]{kabaila06large}
\begin{barticle}[author]
\bauthor{\bsnm{Kabaila},~\bfnm{P.}\binits{P.}} \AND
  \bauthor{\bsnm{Leeb},~\bfnm{H.}\binits{H.}}
(\byear{2006}).
\btitle{On the large-sample minimal coverage probability of confidence
  intervals after model selection}.
\bjournal{J. Amer. Statist. Assoc.}
\bvolume{{ 101}}
\bpages{619--629}.
\end{barticle}
\endbibitem

\bibitem[\protect\citeauthoryear{Kubkowski and Mielniczuk}{2017}]{Kub17}
\begin{barticle}[author]
\bauthor{\bsnm{Kubkowski},~\bfnm{M.}\binits{M.}} \AND
  \bauthor{\bsnm{Mielniczuk},~\bfnm{J.}\binits{J.}}
(\byear{2017}).
\btitle{Active sets of predictors for misspecified logistic regression}.
\bjournal{Statistics}
\bpages{1--23}.
\end{barticle}
\endbibitem

\bibitem[\protect\citeauthoryear{Lee and Taylor}{2014}]{LeeTay14}
\begin{bincollection}[author]
\bauthor{\bsnm{Lee},~\bfnm{Jason~D}\binits{J.~D.}} \AND
  \bauthor{\bsnm{Taylor},~\bfnm{Jonathan~E}\binits{J.~E.}}
(\byear{2014}).
\btitle{Exact Post Model Selection Inference for Marginal Screening}.
In \bbooktitle{Advances in Neural Information Processing Systems 27}
(\beditor{\bfnm{Z.}\binits{Z.}~\bsnm{Ghahramani}},
  \beditor{\bfnm{M.}\binits{M.}~\bsnm{Welling}},
  \beditor{\bfnm{C.}\binits{C.}~\bsnm{Cortes}},
  \beditor{\bfnm{N.~D.}\binits{N.~D.}~\bsnm{Lawrence}} \AND
  \beditor{\bfnm{K.~Q.}\binits{K.~Q.}~\bsnm{Weinberger}}, eds.)
\bpages{136--144}.
\bpublisher{Curran Associates, Inc.}
\end{bincollection}
\endbibitem

\bibitem[\protect\citeauthoryear{Lee et~al.}{2016}]{lee15exact}
\begin{barticle}[author]
\bauthor{\bsnm{Lee},~\bfnm{J.~D.}\binits{J.~D.}},
  \bauthor{\bsnm{Sun},~\bfnm{D.~L.}\binits{D.~L.}},
  \bauthor{\bsnm{Sun},~\bfnm{Y.}\binits{Y.}}, \bauthor{} \AND
  \bauthor{\bsnm{Taylor},~\bfnm{J.~E.}\binits{J.~E.}}
(\byear{2016}).
\btitle{Exact post-selection inference, with application to the lasso}.
\bjournal{Ann. Statist.}
\bvolume{44}
\bpages{907--927}.
\end{barticle}
\endbibitem

\bibitem[\protect\citeauthoryear{Leeb and P\"otscher}{2003}]{leeb03finite}
\begin{barticle}[author]
\bauthor{\bsnm{Leeb},~\bfnm{H.}\binits{H.}} \AND
  \bauthor{\bsnm{P\"otscher},~\bfnm{B.~M.}\binits{B.~M.}}
(\byear{2003}).
\btitle{The finite-sample distribution of post-model-selection estimators, and
  uniform versus non-uniform approximations}.
\bjournal{Econometric Theory}
\bvolume{{ 19}}
\bpages{100--142}.
\end{barticle}
\endbibitem

\bibitem[\protect\citeauthoryear{Leeb and P\"otscher}{2005}]{leeb05model}
\begin{barticle}[author]
\bauthor{\bsnm{Leeb},~\bfnm{H.}\binits{H.}} \AND
  \bauthor{\bsnm{P\"otscher},~\bfnm{B.~M.}\binits{B.~M.}}
(\byear{2005}).
\btitle{Model selection and inference: Facts and fiction}.
\bjournal{Econometric Theory}
\bvolume{{ 21}}
\bpages{21--59}.
\end{barticle}
\endbibitem

\bibitem[\protect\citeauthoryear{Leeb and P\"otscher}{2006}]{leeb2006}
\begin{barticle}[author]
\bauthor{\bsnm{Leeb},~\bfnm{Hannes}\binits{H.}} \AND
  \bauthor{\bsnm{P\"otscher},~\bfnm{Benedikt~M.}\binits{B.~M.}}
(\byear{2006}).
\btitle{Performance limits for estimators of the risk or distribution of
  shrinkage-type estimators, and some general lower risk-bound results}.
\bjournal{Econometric Theory}
\bvolume{22}
\bpages{69--97}.
\bdoi{10.1017/S0266466606060038}
\end{barticle}
\endbibitem

\bibitem[\protect\citeauthoryear{Leeb and P\"otscher}{2008}]{leeb08model}
\begin{binproceedings}[author]
\bauthor{\bsnm{Leeb},~\bfnm{H.}\binits{H.}} \AND
  \bauthor{\bsnm{P\"otscher},~\bfnm{B.~M.}\binits{B.~M.}}
(\byear{2008}).
\btitle{Model Selection}.
In \bbooktitle{Handbook of Financial Time Series}
(\beditor{\bfnm{T.~G.}\binits{T.~G.}~\bsnm{Andersen}},
  \beditor{\bfnm{R.~A.}\binits{R.~A.}~\bsnm{Davis}},
  \beditor{\bfnm{J.~P.}\binits{J.~P.}~\bsnm{Krei{\ss}}} \AND
  \beditor{\bfnm{Th.}\binits{T.}~\bsnm{Mikosch}}, eds.)
\bpages{785--821}.
\bpublisher{Springer}, \baddress{New York, NY}.
\end{binproceedings}
\endbibitem

\bibitem[\protect\citeauthoryear{Leeb, P\"otscher and
  Ewald}{2015}]{leeb13various}
\begin{barticle}[author]
\bauthor{\bsnm{Leeb},~\bfnm{H.}\binits{H.}},
  \bauthor{\bsnm{P\"otscher},~\bfnm{B.~M.}\binits{B.~M.}} \AND
  \bauthor{\bsnm{Ewald},~\bfnm{K.}\binits{K.}}
(\byear{2015}).
\btitle{On various confidence intervals post-model-selection}.
\bjournal{Statist. Sci.}
\bvolume{30}
\bpages{216--227}.
\end{barticle}
\endbibitem

\bibitem[\protect\citeauthoryear{Lv and Liu}{2010}]{lv10v1}
\begin{barticle}[author]
\bauthor{\bsnm{Lv},~\bfnm{J.}\binits{J.}} \AND
  \bauthor{\bsnm{Liu},~\bfnm{J.~S.}\binits{J.~S.}}
(\byear{2010}).
\btitle{Model selection principles in misspecified models}.
\bjournal{arXiv preprint arXiv:1005.5483v1}.
\end{barticle}
\endbibitem

\bibitem[\protect\citeauthoryear{Lv and Liu}{2014}]{lv14model}
\begin{barticle}[author]
\bauthor{\bsnm{Lv},~\bfnm{J.}\binits{J.}} \AND
  \bauthor{\bsnm{Liu},~\bfnm{J.~S.}\binits{J.~S.}}
(\byear{2014}).
\btitle{Model selection principles in misspecified models}.
\bjournal{J. R. Statist. Soc. B}
\bvolume{76}
\bpages{141--167}.
\end{barticle}
\endbibitem

\bibitem[\protect\citeauthoryear{McCullagh and Nelder}{1989}]{McCullagh89}
\begin{bbook}[author]
\bauthor{\bsnm{McCullagh},~\bfnm{P.}\binits{P.}} \AND
  \bauthor{\bsnm{Nelder},~\bfnm{J.~A.}\binits{J.~A.}}
(\byear{1989}).
\btitle{Generalized Linear Models},
\bedition{2nd} ed.
\bpublisher{London, New York: Chapman \& Hall}.
\end{bbook}
\endbibitem

\bibitem[\protect\citeauthoryear{Pollak}{1972}]{pollak1972}
\begin{barticle}[author]
\bauthor{\bsnm{Pollak},~\bfnm{Moshe}\binits{M.}}
(\byear{1972}).
\btitle{A Note on Infinitely Divisible Random Vectors}.
\bjournal{The Annals of Mathematical Statistics}
\bvolume{43}
\bpages{673-675}.
\end{barticle}
\endbibitem

\bibitem[\protect\citeauthoryear{P\"otscher}{2009}]{Poe09a}
\begin{barticle}[author]
\bauthor{\bsnm{P\"otscher},~\bfnm{B.~M.}\binits{B.~M.}}
(\byear{2009}).
\btitle{Confidence sets based on sparse estimators are necessarily large}.
\bjournal{Sankhya}
\bvolume{{ 71}}
\bpages{1--18}.
\end{barticle}
\endbibitem

\bibitem[\protect\citeauthoryear{Raikov}{1938}]{raikov}
\begin{barticle}[author]
\bauthor{\bsnm{Raikov},~\bfnm{D}\binits{D.}}
(\byear{1938}).
\btitle{On a connection between the central limit-law of the theory of
  probability and the law of great numbers}.
\bjournal{Izvestiya Rossiiskoi Akademii Nauk. Seriya Matematicheskaya}
\bvolume{2}
\bpages{323--338}.
\end{barticle}
\endbibitem

\bibitem[\protect\citeauthoryear{Rinaldo et~al.}{2016}]{Rinaldo16}
\begin{barticle}[author]
\bauthor{\bsnm{Rinaldo},~\bfnm{Alessandro}\binits{A.}},
  \bauthor{\bsnm{Wasserman},~\bfnm{Larry}\binits{L.}},
  \bauthor{\bsnm{G'Sell},~\bfnm{Max}\binits{M.}},
  \bauthor{\bsnm{Lei},~\bfnm{Jing}\binits{J.}} \AND
  \bauthor{\bsnm{Tibshirani},~\bfnm{Ryan}\binits{R.}}
(\byear{2016}).
\btitle{Bootstrapping and Sample Splitting For High-Dimensional,
  Assumption-Free Inference}.
\bjournal{arXiv preprint arXiv:1611.05401}.
\end{barticle}
\endbibitem

\bibitem[\protect\citeauthoryear{Ruud}{1983}]{Ruud83}
\begin{barticle}[author]
\bauthor{\bsnm{Ruud},~\bfnm{Paul~A.}\binits{P.~A.}}
(\byear{1983}).
\btitle{Sufficient conditions for the consistency of maximum likelihood
  estimation despite misspecification of distribution in multinomial discrete
  choice models}.
\bjournal{Econometrica}
\bpages{225--228}.
\end{barticle}
\endbibitem

\bibitem[\protect\citeauthoryear{Taylor and Tibshirani}{2017}]{taylor17post}
\begin{barticle}[author]
\bauthor{\bsnm{Taylor},~\bfnm{Jonathan}\binits{J.}} \AND
  \bauthor{\bsnm{Tibshirani},~\bfnm{Robert}\binits{R.}}
(\byear{2017}).
\btitle{Post-selection inference for l1-penalized likelihood models}.
\bjournal{Canadian Journal of Statistics}
\bpages{1-21}.
\end{barticle}
\endbibitem

\bibitem[\protect\citeauthoryear{Tibshirani et~al.}{2015}]{Tib15a}
\begin{barticle}[author]
\bauthor{\bsnm{Tibshirani},~\bfnm{R.~J.}\binits{R.~J.}},
  \bauthor{\bsnm{Rinaldo},~\bfnm{A.}\binits{A.}},
  \bauthor{\bsnm{Tibshirani},~\bfnm{R.}\binits{R.}} \AND
  \bauthor{\bsnm{Wasserman},~\bfnm{L.}\binits{L.}}
(\byear{2015}).
\btitle{Uniform asymptotic inference and the bootstrap after model selection}.
\bjournal{Annals of Statistics, forthcoming}.
\end{barticle}
\endbibitem

\bibitem[\protect\citeauthoryear{Tibshirani et~al.}{2016}]{tibshirani2014exact}
\begin{barticle}[author]
\bauthor{\bsnm{Tibshirani},~\bfnm{Ryan~J}\binits{R.~J.}},
  \bauthor{\bsnm{Taylor},~\bfnm{Jonathan}\binits{J.}},
  \bauthor{\bsnm{Lockhart},~\bfnm{Richard}\binits{R.}} \AND
  \bauthor{\bsnm{Tibshirani},~\bfnm{Robert}\binits{R.}}
(\byear{2016}).
\btitle{Exact post-selection inference for sequential regression procedures}.
\bjournal{J. Amer. Statist. Assoc.}
\bvolume{111}
\bpages{600--620}.
\end{barticle}
\endbibitem

\bibitem[\protect\citeauthoryear{van~de Geer et~al.}{2014}]{van14a}
\begin{barticle}[author]
\bauthor{\bparticle{van~de} \bsnm{Geer},~\bfnm{S.}\binits{S.}},
  \bauthor{\bsnm{B{\"u}hlmann},~\bfnm{P.}\binits{P.}},
  \bauthor{\bsnm{Ritov},~\bfnm{Y.}\binits{Y.}} \AND
  \bauthor{\bsnm{Dezeure},~\bfnm{R.}\binits{R.}}
(\byear{2014}).
\btitle{On asymptotically optimal confidence regions and tests for
  high-dimensional models}.
\bjournal{Ann. Statist.}
\bvolume{{\bf 42}}
\bpages{1166--1202}.
\end{barticle}
\endbibitem

\bibitem[\protect\citeauthoryear{van~der Vaart and Wellner}{1996}]{Vaart96}
\begin{bbook}[author]
\bauthor{\bparticle{van~der} \bsnm{Vaart},~\bfnm{Aad~W.}\binits{A.~W.}} \AND
  \bauthor{\bsnm{Wellner},~\bfnm{Jon~A.}\binits{J.~A.}}
(\byear{1996}).
\btitle{Weak Convergence and Empirical Processes: With Applications to
  Statistics}.
\bpublisher{Springer}.
\end{bbook}
\endbibitem

\bibitem[\protect\citeauthoryear{Wedderburn}{1976}]{Wedderburn76}
\begin{barticle}[author]
\bauthor{\bsnm{Wedderburn},~\bfnm{R.~W.~M.}\binits{R.~W.~M.}}
(\byear{1976}).
\btitle{On the Existence and Uniqueness of the Maximum Likelihood Estimates for
  Certain Generalized Linear Models}.
\bjournal{Biometrika}
\bvolume{63}
\bpages{27--32}.
\end{barticle}
\endbibitem

\bibitem[\protect\citeauthoryear{White}{1982}]{White82}
\begin{barticle}[author]
\bauthor{\bsnm{White},~\bfnm{Halbert}\binits{H.}}
(\byear{1982}).
\btitle{Maximum Likelihood Estimation of Misspecified Models}.
\bjournal{Econometrica}
\bvolume{50}
\bpages{1--25}.
\end{barticle}
\endbibitem

\bibitem[\protect\citeauthoryear{Zhang}{2017}]{zhang15spherical}
\begin{barticle}[author]
\bauthor{\bsnm{Zhang},~\bfnm{Kai}\binits{K.}}
(\byear{2017}).
\btitle{Spherical Cap Packing Asymptotics and Rank-Extreme Detection}.
\bjournal{IEEE Trans. Inform. Theory}
\bvolume{63}.
\end{barticle}
\endbibitem

\bibitem[\protect\citeauthoryear{Zhang and Zhang}{2014}]{Zha14a}
\begin{barticle}[author]
\bauthor{\bsnm{Zhang},~\bfnm{C.~H.}\binits{C.~H.}} \AND
  \bauthor{\bsnm{Zhang},~\bfnm{S.}\binits{S.}}
(\byear{2014}).
\btitle{Confidence intervals for low dimensional parameters in high dimensional
  linear models}.
\bjournal{J. Roy. Statist. Soc. Ser. B}
\bvolume{{\bf 76}}
\bpages{217--242}.
\end{barticle}
\endbibitem

\end{thebibliography}

\end{document}